\newtheorem{The}{Theorem}[section]
\newtheorem{Lemme}[The]{Lemma}
\newtheorem{Prop}[The]{Proposition}
\newtheorem{Cor}[The]{Corollary}
\theoremstyle{definition}
\theoremstyle{remark}
\newtheorem{Rk}[The]{Remark}
\title{\normalsize 
\textbf{
CLUSTER CAPACITY FUNCTIONALS AND ISOMORPHISM THEOREMS FOR GAUSSIAN FREE FIELDS
}}
\author{}
\date{}
\newcommand{\Ed}{-}
\newcommand{\E}{\mathbb{E}}
\newcommand{\R}{\mathbb{R}}
\newcommand{\Z}{\mathbb{Z}}
\newcommand{\N}{\mathbb{N}}
\newcommand{\F}{\mathcal{F}}
\newcommand{\G}{\mathcal{G}}
\newcommand{\K}{\mathcal{K}}
\newcommand{\A}{\mathcal{A}}
\newcommand{\W}{\mathcal{W}}
\renewcommand{\P}{\mathbb{P}}
\newcommand{\eps}{\varepsilon}
\newcommand{\I}{{\cal I}}
\newcommand{\V}{{\cal V}}
\renewcommand{\phi}{\varphi}
\renewcommand{\tilde}{\widetilde}
\renewcommand{\hat}{\widehat}
\renewcommand{\epsilon}{\varepsilon}
\renewcommand\theequation{\thesection.\arabic{equation}}
\newcommand{\tend}[2]{\displaystyle\mathop{\longrightarrow}_{#1\rightarrow#2}}
\newcommand{\diff}{\,\mathrm{d}}
\newcommand{\h}{\mathit{\mathbf{h}}}
\definecolor{Red}{rgb}{1,0,0} 
\definecolor{Blue}{rgb}{0,0,1}
\definecolor{Olive}{rgb}{0.41,0.55,0.13}
\definecolor{Yarok}{rgb}{0,0.5,0}
\definecolor{Green}{rgb}{0,1,0}
\definecolor{MGreen}{rgb}{0,0.8,0}
\definecolor{DGreen}{rgb}{0,0.55,0}
\definecolor{Yellow}{rgb}{1,1,0}
\definecolor{Cyan}{rgb}{0,1,1}
\definecolor{Magenta}{rgb}{1,0,1}
\definecolor{Orange}{rgb}{1,.5,0}
\definecolor{Violet}{rgb}{.5,0,.5}
\definecolor{Purple}{rgb}{.75,0,.25}
\definecolor{Brown}{rgb}{.75,.5,.25}
\definecolor{Grey}{rgb}{.7,.7,.7}
\definecolor{Black}{rgb}{0,0,0}
\def\red{\color{Red}}
\def\blue{\color{Blue}}
\newcommand{\AP}[1]{{\blue{[\small AP: #1]}}}
\newcommand{\PF}[1]{{\red{[\small PF: #1]}}}
\newenvironment{myitemize}
{ \begin{itemize}
    \setlength{\itemsep}{1pt}
    \setlength{\parskip}{1pt}
    \setlength{\parsep}{1pt}     }
{ \end{itemize}                  }
\titleformat{\subsection}[runin]{\normalfont\bfseries}{\thesubsection.}{.5em}{}[.]\titlespacing{\subsection}{0pt}{2ex plus .1ex minus .2ex}{.8em}
\titleformat{\subsubsection}[runin]{\normalfont\itshape}{\thesubsubsection.}{.3em}{}[.]\titlespacing{\subsubsection}{0pt}{1ex plus .1ex minus .2ex}{.5em}
\titleformat{\paragraph}[runin]{\normalfont\itshape}{\theparagraph.}{.3em}{}[.]\titlespacing{\paragraph}{0pt}{1ex plus .1ex minus .2ex}{.5em}
\begin{document}




\maketitle

\begin{center}
\vspace{-1.3cm}
Alexander Drewitz$^1$, Alexis Pr\'evost$^2$ and Pierre-Fran\c cois Rodriguez$^3$ 
\end{center}

\vspace{-0.1cm}
\begin{abstract}
\centering
\begin{minipage}{0.90\textwidth}
\vspace{0.3cm}
    We investigate 
    level sets of the Gaussian free field on continuous transient metric graphs $\widetilde{\mathcal G}$ and study the capacity of its level set clusters.
    We prove, without any further assumption on the base graph $\mathcal{G}$, that the capacity of sign clusters on $\widetilde{\mathcal G}$ is finite almost surely. This leads to a new and effective criterion to determine whether the sign clusters of the free field on $\widetilde{\mathcal G}$ are bounded or not. It also elucidates why the critical parameter for percolation of level sets on~$\widetilde{\mathcal G}$ vanishes in most instances in the massless case and establishes the continuity of this phase transition in a wide range of cases, including all vertex-transitive graphs. When the sign clusters on $\widetilde{\mathcal G}$ do not percolate, we further determine by means of isomorphism theory the exact law of the capacity of compact clusters at any height. Specifically, we derive this law from an extension of Sznitman's refinement of Lupu's recent isomorphism theorem relating the free field and random interlacements, proved along the way, and which holds under the sole assumption that sign clusters on $\widetilde{\mathcal G}$ are bounded. Finally, we show that the law of the cluster capacity functionals obtained in this way actually characterizes the isomorphism theorem, i.e.~the two are equivalent.

\end{minipage}
\end{abstract}

\vspace{0.6cm}
\begin{minipage}{0.85\textwidth}
{\small
}
\end{minipage}

\vspace{3cm}
\begin{flushleft}

\noindent\rule{5cm}{0.4pt} \hfill December 2021 \\
\bigskip
\begin{multicols}{2}

$^1$Universit\"at zu K\"oln\\
Department of Mathematics\\
and Computer Science\\
Division of Mathematics  \\
Weyertal 86--90 \\
50931 K\"oln, Germany. \\
\url{adrewitz@uni-koeln.de}\\[2em]

$^2$University of Cambridge\\
Faculty of Mathematics \\
Wilberforce Road\\
Cambridge CB3 0WA, United Kingdom\\
\url{ap2195@cam.ac.uk}\\[2em]

\columnbreak
\thispagestyle{empty}
\bigskip
\medskip
\hfill$^3$Imperial College London\\
\hfill Department of Mathematics\\
\hfill London SW7 2AZ \\
\hfill United Kingdom\\
\hfill \url{p.rodriguez@imperial.ac.uk} 
\end{multicols}
\end{flushleft}

\newpage
\newpage
\section{Introduction}
\label{sec:intro}
In this article, we consider the Gaussian free field $\varphi$ on the cable system $\widetilde{\mathcal{G}}$ associated to an arbitrary transient weighted graph $\mathcal{G}$; see the discussion around \eqref{deflambdarho} below for the precise setup. Cable processes have increasingly proved an insightful object of study, as shown for instance in the recent articles \cite{MR3502602}, \cite{MR3492939}, \cite{LW18}, \cite{DrePreRod2},  \cite{DiWi} and \cite{werner2020clusters}. In the present work, we investigate a well-chosen observable, the capacity of finite clusters in the excursion set $E^{\geq h}$ of $\varphi$ above height~$h\in \mathbb{R}$, see \eqref{eq:capobservable} below. This quantity features prominently in our article~\cite{DrePreRodCritExp}. Our main result, stated below in Theorem \ref{T:main} -- see also Section \ref{sec:results} for a more exhaustive discussion -- underlines the central nature of this observable and unveils some of its deeper ramifications.

To wit, our findings imply for instance that the cluster capacity observable at height $h=0$ is finite almost surely, for \textit{any} transient graph $\mathcal{G}$, see Theorem~\ref{T:main},\ref{maint1} (our setup allows for a killing measure, including the degenerate case of Dirichlet boundary conditions, which will play an important role below). This immediately leads to a much improved understanding of why the height $h=0$ tends to be critical for the percolation problem $\{ E^{\geq h }: h \in \R\}$ in the massless case, i.e.\ in the absence of killing, and more generally when $\h_{\text{kill}}<1$ (see \eqref{defh0} below). A simple criterion, see \eqref{capcondition} on p.\pageref{capcondition} and Theorem~\ref{T:main},\ref{maint1}, which covers an extensive number of cases, can then be used to check if the sign clusters of $\varphi$ percolate or not. 

For instance, see Corollary~\ref{C:noperc}, as a consequence of this criterion, our results yield that the sign clusters of $\varphi$ on \textit{any} vertex-transitive graph with no killing are bounded and thus establish the phase transition of $\{ E^{\geq h }: h \in \R\}$ as being second order. Corresponding results hold for the loop soup $\mathcal{L}_{1/2}$, see  Corollary~\ref{Cor:loopsoups}; see also the discussion following Theorem \ref{T:main} regarding the current state of affairs. 
 
When the sign clusters of $\varphi$ are bounded -- which holds e.g.~when \eqref{capcondition} holds -- we are able to identify the distribution of the cluster capacity observable at any level $h \in \R$, see Theorem~\ref{T:main},\ref{maint3} below. This law is explicitly characterized by \eqref{eq:laplacecap}, introduced on p.\pageref{eq:laplacecap} (see~also \eqref{eq:hdensity} for the corresponding density). Moreover, we show that this information is \textit{equivalent} to the `strong Ray-Knight-type' isomorphism recently derived in \cite{MR3492939} (refining \cite{MR3502602}, see also \eqref{eqcouplingintergffszn} on p.\pageref{eqcouplingintergffszn}) under slightly stronger assumptions than those to follow. This identity relates the free field itself with the local times of random interlacements on $\widetilde{\mathcal{G}}$. Thus, we effectively obtain a characterization of an isomorphism theorem (in the non-interacting case) in terms of the free field alone. In fact, for massless graphs (or even if $\h_{\text{kill}}<1$) our results imply under $\hyperref[eq:laplacecap]{(\emph{\textnormal{Law}}_0)}$ the dichotomy $\tilde{h}_* \in{\{0,\infty\}}$, where $\tilde{h}_*$ refers to the corresponding critical level; cf.~Theorem~\ref{T:main},\ref{maint4}. We further refer to the forthcoming article \cite{Pre1} for sharpness and limitations to the validity of these results. The identity \eqref{eq:laplacecap} is derived in \cite{DrePreRodCritExp} by means of differential formulas, and has important consequences regarding the (near-)critical regime for level sets of $\varphi$ on $\tilde{\mathcal{G}}$; see \cite{DrePreRodCritExp} regarding these matters.

\bigskip
We now introduce our setup and refer to Section~\ref{s:usefulresults} for details. We consider a transient weighted graph $ \G= (\overline{G},\bar{\lambda},\bar{\kappa}),$ where $\overline{G}$ is a finite or countably infinite set, $\bar{\lambda}_{x,y}\in{[0,\infty)}$, $x,y\in\overline{G},$ are non-negative weights satisfying $\bar{\lambda}_{x,y} =\bar{\lambda}_{y,x} \geq 0$ and $\bar{\lambda}_{x,x}=0$ for all $x,y\in \overline{G}.$ Furthermore, $\bar{\kappa}_x\in{[0,\infty]}$, $x\in\overline{G},$ is a killing measure, possibly infinite. To deal with the latter in a convenient way, given $ \G= (\overline{G},\bar{\lambda},\bar{\kappa}),$ we introduce the triplet $(G,\lambda, \kappa)$, to which we will mostly refer throughout the article, by setting $(G,\lambda, \kappa)= (\overline{G}^M,\bar{\lambda}^M,\bar{\kappa}^M)$, the latter being defined in \eqref{eq:defGfinite}, with $M$ a certain set of `mid-points' given by \eqref{eq:0bcreduction1new}. In particular, this definition entails that $(G,\lambda, \kappa)= (\overline{G},\bar{\lambda},\bar{\kappa})$ whenever $\bar{\kappa}_x < \infty$ for all $ x\in \overline{G}$. 
Otherwise $(G,\lambda, \kappa)$ is obtained by suitable `enhancement' of~$ \G$ (exploiting network equivalence). As a result, the killing measure $\kappa$ is finite everywhere, i.e.~$\kappa_x < \infty$ for all $x \in G$. 

We always tacitly assume that the induced graph $(G,E)$ with edge set $E =\{ \{x,y \}: x,y \in G,\,  \lambda_{x,y}>0\}$ is connected and locally finite. We write $x\sim y$ when $\{x,y\}\in{E},$ and we define
\begin{equation}
\label{deflambdarho}
\begin{split}
  &\lambda_x=\kappa_x+\sum_{y\in{G}}\lambda_{x,y},\ \rho_x=\frac{1}{2\kappa_x} \text{ for }x\in{G} \text{  and  } \rho_{x,y}=\frac{1}{2\lambda_{x,y}}\text{ for }x\sim y\in{G}  
   \end{split}
\end{equation}
(with $\rho_x=\infty$ when $\kappa_x=0$
). One naturally associates to $ \G$ a continuous version $\tilde{ \G},$ the corresponding cable system or metric graph, obtained by replacing each edge $e =\{x,y \} \in E$ by an open interval $I_e$ of length $\rho_{x,y}$, glued to $G$ through its endpoints $x$ and $y.$ One further attaches to each vertex $x\in G$ an additional interval $I_x$ isometric to $[0,\rho_x),$ glued to $x$ through $0$ (we refer to Section~\ref{S:I_x} and Remark \ref{R:lawh},\ref{R:removecables2} for their raison-d'\^etre).

One then defines (e.g.\ in terms of its associated Dirichlet form, see \eqref{eq:quadraticform} and \eqref{Dirichlet} below for details) a diffusion process $({X}_t)_{t \geq 0 }$ on $\tilde{\G} \cup \{\Delta\}$, where $\Delta$ denotes an (absorbing) cemetery state, which can be viewed as Brownian motion on the cable system.  The process $X$ induces a pure jump process $Z=(Z_t)_{t\geq0}$ on $G\cup \{\Delta\}$, which we refer to as its\textit{ trace} (or \textit{print})\textit{ on $G$}, see \eqref{traceonG}, associated to a corresponding trace form. The induced process $Z$ has the law of the continuous time Markov chain that jumps from $x \in G$ to $y \in G$ at rate $\lambda_{x,y}$ and is killed at rate $\kappa_x.$ Similarly, the trace of $X$ on $\{x\in{\overline{G}}:\,\bar{\kappa}_x<\infty\}$ has the law of the continuous time Markov chain on $\overline{G}$ that jumps from $x \in \overline{G}$ to $y \in \overline{G}$ at rate $\bar{\lambda}_{x,y}$ and is killed at rate $\bar{\kappa}_x.$
We write $P_x$ for the canonical law of $X_{\cdot}$ with starting point $x \in \tilde{\G}$, and occasionally $P_x^{\tilde{\G}}$ in place of $P_x$ to stress the dependence on the datum $\tilde{\G}$. We say that $X_{\cdot}$ is killed if $X_{\cdot}$ exits $\tilde{\G}$ via $I_x$ for some $x\in{G}$ with $\kappa_x>0$ (which is equivalent to $Z$ being killed, i.e.~entering $\Delta$). Accordingly, we define 
\begin{equation}
\label{defh0}
\h_{\text{kill}}(x)\stackrel{\text{def.}}{=}P_x(X_{\cdot} \text{ is killed}), \text{ for all }x\in\tilde{\G}.
\end{equation}
Moreover, we say that $\h_{\text{kill}}<1$ if $\h_{\text{kill}}(x)<1$ for all $x \in \tilde{\G},$ or equivalently if $\h_{\text{kill}}(x)<1$ for some $x\in \tilde{\G}$ 
(recall that $(G,E)$ is assumed to be a connected graph). An important family of graphs satisfying $\h_{\text{kill}}<1$ are \textit{massless} graphs with $\bar{\kappa} =\kappa \equiv 0$, or equivalently $\h_{\text{kill}}(\cdot)=0$.
 
Our results deal with the graph $\G$ and its associated metric graph $\tilde{\G}$, when $\G$ is transient; that is, when the Markov chain $Z$ is transient, which we tacitly assume from now on. 
In particular, the graph $\G$ may be finite when $\kappa \not\equiv 0.$ We then define the Gaussian free field on~$\tilde\G$, whose canonical law $\P^G$ (occasionally denoted as $\P^G_{\tilde{\G}}$), defined on the space $C(\tilde{\G},\R)$ endowed with the $\sigma$-algebra generated by the coordinate maps $\phi_x,$ $x\in{\tilde{\G}},$ is such that
\begin{equation}
\label{defGFF}
\text{under $\P^G,$ $(\phi_x)_{x\in{\tilde{\G}}}$ is a centered Gaussian field with covariance function $g(\cdot,\cdot)$.}
\end{equation}
Here, $g(\cdot,\cdot)$ refers to the Green density of $X_{\cdot}$ with respect to the Lebesgue measure $m$ on $\widetilde{\mathcal{G}}$, see~\eqref{Greendef}. %
The restriction of this process to $G$ has the same law as the usual Gaussian free field on $\G$ associated to the discrete Markov chain $Z$.

\bigskip
We now describe our main results, which deal with the excursion sets $E^{\geq h}\stackrel{\text{def.}}{=}\{y\in{\tilde{\G}}:\phi_y\geq h\}$ of $\varphi$, for varying height $h\in\R.$ We endow $\tilde{ \G}$ with the (geodesic) distance $d(\cdot,\cdot)$ such that all intervals $I_e$, $e\in E$, and $I_x$, when $\rho_x< \infty$, have length one (rather than $\rho_{e}$ and $\rho_x,$ respectively). 
Albeit not essential, we assume for convenience that $d$ also assigns length one to $I_x$ when $\rho_x= \infty$ (by means of some strictly increasing bijection $[0,1) \to [0,\infty)$).
The clusters, i.e.~maximal connected components, of $E^{\geq h}$, are defined as
\begin{align} \label{Ehx0}
\begin{split}
&{E}^{\geq h}(x_0)\stackrel{\text{def.}}{=} \big \{y\in{\tilde{\G}}:\,x_0\leftrightarrow y\text{ in }E^{\geq h} \big \}, \text{ for $x_0\in{\tilde{\G}}$, $h\in \R;$}
\end{split}
\end{align}
here, for measurable ${A}\subset\tilde{\G}$ and $x,y\in{\tilde{\G}}$, we write 
$\{x\leftrightarrow y \text{ in } {A}\}$ if there exists a (continuous) path from $x$ to $y$ in ${A},$ and we say that ${A}$ is connected in $\tilde{\G}$ if $z\leftrightarrow z'$ in ${A}$ for all $z,z'\in{{A}}.$ A central role in this work will be played by the cluster capacity functional
\begin{equation}
\label{eq:capobservable}
\mathrm{{\rm cap}}({E}^{\geq h}(x_0)), \text{ for $h\in \R$, $x_0\in \tilde{\G}$;}
\end{equation}
We refer to \eqref{defcap} and \eqref{defcapinfinity} below for the definition of $\text{cap}(A)$, the electrostatic capacity of $A$, for arbitrary closed, possibly unbounded subsets $A$ of $\tilde{\G}$. For instance, in case $A\subset G$ is finite (or more generally if $A'\subset \tilde{\G}$ is compact and $\partial A' = A$), then $\text{cap}(A)$ (and $\text{cap}(A')$) coincide with the usual capacity of the set $A$ for the discrete chain $Z$.

One of our interests is on the percolative properties of the set $E^{\geq h}$ (with respect to $d$). We introduce the corresponding critical parameter
\begin{equation}
\label{defh*bou}
\begin{split}
 \widetilde h_* 
&=\inf \big \{ h \in \R :  \text{ for all $x_0\in{\tilde{\G}}$, }\, \P^G(E^{\geq h}(x_0) \text{ is unbounded})=0 \big \}
 \end{split}
\end{equation}
(with the convention $\inf\varnothing= \infty$; note that $\widetilde h_*$ is equivalently defined as the smallest level $h$ such that $\P^G$-a.s.~$E^{\geq h}$ contains no unbounded connected component). A fortiori, \eqref{defh*bou} entails that for each $h< \widetilde{h}_*,$ with positive $\P^G$-probability the discrete set $E^{\geq h}\cap G$ contains a percolating connected component in the usual sense (i.e., the component is unbounded with respect to the graph distance on $(G,E)$). In other words, the corresponding critical parameter $h_*$ (see for instance (1.8) in \cite{DrePreRod2} for its definition) satisfies $h_* \geq  \widetilde h_* $. 
Other natural definitions of critical parameters associated to the sets $\{ E^{\geq h} , h \in \R\}$ exist and will be of interest, see \eqref{defh*com} and \eqref{defh*cap} below. They correspond to several natural ways of measuring the `magnitude' of clusters in $E^{\geq h}$, and \eqref{eq:capobservable} reflects one such choice, based on capacity as a measure of size. 

We now briefly introduce the process of random interlacements on $\tilde{\G},$ see \cite{MR2680403}, \cite{MR3308116} and \cite{MR2525105}, to the extent necessary to formulate our main findings; further details are provided in Section~\ref{subsec:RI}. The interlacement process will play a prominent role in the present context, due to recent isomorphisms, see \cite{MR3502602}, \cite{MR3492939} and \eqref{eqcouplingintergffszn} below, relating it to $\varphi$ in a very explicit fashion. Under a suitable probability measure $\P^I,$ for each $u>0,$ random interlacements at level $u$ on $\tilde{\mathcal{G}}$ constitute a Poisson point process $\omega_u$ with intensity $u\nu_{\tilde{\G}},$  where $\nu_{\tilde{\G}}$ is a measure on doubly non-compact 
trajectories modulo time-shift (when $\kappa \not\equiv 0,$ these trajectories may be killed by the measure $\kappa$ before escaping to infinity, i.e., they may `exit $\tilde{\G}$ via $I_x$' for some $x\in{G}$ with $\kappa_x>0$; see \eqref{defQK} and \eqref{definter} for the precise definition of $\nu_{\tilde{\G}}$). We denote by $(\ell_{x,u})_{x\in{\tilde{\G}}}$ the continuous field of local times associated to $\omega_u,$ i.e.\ the sum of the local time densities relative to the Lebesgue measure on $\tilde{\G}$ of all the trajectories in $\omega_u.$ We then define the interlacement set as $\I^u = \{x \in \tilde{\G}: \ell_{x,u} > 0 \}$, a random open subset of $\tilde{\G}$. Without any further assumptions on ${\mathcal{G}}$, it can be shown that for all $u>0,$
\begin{equation}
    \label{usualiso}
    \Big(\ell_{x,u}+\frac{1}{2}\phi_x^2\Big)_{x\in{\tilde{\G}}}\text{ has the same law under }\P^G\otimes\P^I\text{ as }\Big(\frac12(\phi_x+\sqrt{2u})^2\Big)_{x\in{\tilde{\G}}}\text{ under }\P^G;
\end{equation}
see \cite{MR2892408} for the original derivation of this result on the (discrete) base graph graph $\mathcal{G}$ in case $\kappa\equiv0$, based on the generalized second Ray-Knight theorem of \cite{MR1813843}; see also Proposition~6.3 of \cite{MR3502602} and (1.27)--(1.30) in \cite{MR3492939} for extensions to $\tilde{\G}$. We refer to Remark~\ref{R:nomassconversion} below regarding a justification for the validity of \eqref{usualiso} in the present setup, which is more general. As first observed in \cite{MR3502602}, the isomorphism \eqref{usualiso} implies a stochastic domination of each connected component of $\mathcal{I}^u$ by a level-set cluster of $\varphi$, which straightforwardly  yields (recall \eqref{defh0}) that
\begin{equation}
\label{ifhkill<1thenh_*>0}
\text{ if }\h_{\text{kill}}<1,\text{ then }\tilde{h}_*\geq0,
\end{equation}
see the paragraph following \eqref{couplingusualiso} below for details. 
The reverse inequality $\tilde{h}_*\leq0$ is an entirely different matter and has so far only been verified in a handful of cases (see below Theorem \ref{T:main} for a list). Part of our main result addresses this issue.

Under additional assumptions, refining the link between $\mathcal{I}^u$ and level-sets of $\varphi$ described above \eqref{ifhkill<1thenh_*>0}, the identity \eqref{usualiso} can be considerably strengthened. Indeed, Theorem 2.4 in \cite{MR3492939} asserts that, if 
\begin{align}
&\label{eq:0bounded}\tag{Sign} \P^G\text{-a.s., }E^{\geq0}\text{ only contains bounded connected components,}
\end{align}
and $g|_{G \times G}$ is uniformly bounded on the diagonal, see also (1.42) in \cite{MR3492939} for a slightly weaker condition (but see below; our results will imply that this latter condition is in fact unnecessary), 
then
\begin{align}
&\label{eqcouplingintergffszn}\tag{Isom}
	\begin{array}{l}
	\big(\phi_x 1_{x\notin{\mathcal{C}_u}}+\sqrt{\phi_x^2+2\ell_{x,u}}\,  1_{x\in{\mathcal{C}_u}}\big)_{x\in{\tilde{\G}}}\text{ has the same law} \\[0.8em]
	\text{under }{\P}^{I}\otimes\P^G \text{ as }\big(\phi_x+\sqrt{2u}\big)_{x\in{\tilde{\G}}}\text{ under }\P^G,\text{ for all }u\geq0,
	\end{array}
\end{align}
where $\mathcal{C}_u$ denotes the closure of the union of the connected components of those sign clusters $\{x\in{\tilde{\G}}:|\phi_x|>0\}$ that intersect the interlacement set $\I^u.$ In particular, noting that $\ell_{x,u}=0$ if $x \notin \mathcal{C}_u$,  \eqref{eqcouplingintergffszn} is seen to yield \eqref{usualiso} upon taking squares. In practice, the main obstacle to deducing the identity \eqref{eqcouplingintergffszn} is showing that \eqref{eq:0bounded} holds (cf.~the discussion following Theorem~\ref{T:main}).

Our main result investigates the newly introduced capacity observable \eqref{eq:capobservable} and explores the links between this quantity, the value of the critical parameter $\tilde{h}_*$ in \eqref{defh*bou} and the validity of the identity \eqref{eqcouplingintergffszn}. A natural structural property that will appear in this context is the (weak) condition that
\begin{align} 
&\label{capcondition}
\tag{Cap}
\mathrm{cap}(A)=\infty\text{ for all ($d$-)unbounded, closed, connected sets }A\subset \widetilde{\mathcal{G}}
\end{align}
(see \eqref{capconditiondis} for an equivalent formulation in terms of the base graph $\G$ and below \eqref{eq:capobservable} for the definition of $\text{cap}(\cdot)$ in the present context). One can for instance show that \eqref{capcondition} is verified whenever the Green function $g|_{G \times G}$ is uniformly bounded on the diagonal, see Lemma \ref{equivcapcondition} below (cf.~also \eqref{gbounded} for a slightly more general condition). In particular, \eqref{capcondition} holds on any vertex-transitive graph.

We now present a succinct version of our main result. It entails several findings which are discussed in Section \ref{sec:results} in a more comprehensive form. For later reference we introduce the condition
\begin{align}
	&\label{eq:laplacecap}\tag{$\text{Law}_h$}
	\E^G\big[\exp\big(-u\, \mathrm{cap}\big({E}^{\geq h}(x_0)\big)\big) 1_{\phi_{x_0}\geq h}\big]=\P^G\big(\phi_{x_0}\geq \sqrt{2u+h^2}\big)\text{ for all }u\geq0, \,  x_0 \in \widetilde{\mathcal{G}}; 
\end{align}
note that the Laplace transform in \eqref{eq:laplacecap} can be equivalently described in terms of an associated density $\rho_h$, which is explicit, see \eqref{eq:hdensity} and Lemma \ref{equivalenceforthelaw} below. 

\begin{The}
\label{T:main}
	Let $\G$ be a transient weighted graph. 
	Then:
	\begin{enumerate}[label={\arabic*)}]
		\item \label{maint1}
		$\P^G$-a.s., the random variable $\mathrm{cap}(E^{\geq 0}(x_0))$ is finite for all $x_0\in{\tilde{\G}}.$ In particular, the condition \eqref{capcondition} implies \eqref{eq:0bounded} (see Theorem \ref{mainresult} and Corollary \ref{maincor} for details).
		\item\label{maint3}
		The following implications hold true (cf.\ also Fig. \ref{fig1_equiv} below):
		\begin{equation*}
		\begin{tikzcd}[row sep=large]
		\tilde{h}_*\leq0
		\stackrel{\text{Cor. \ref{percoath*}}}{\Longleftrightarrow}  
		\eqref{eq:0bounded}
		\stackrel{\text{Thm. \ref{mainresultcap}}}{\Longrightarrow} 				
		\hyperref[eq:laplacecap]{(\emph{\textnormal{Law}}_0)}
		\stackrel{\text{Thm. \ref{couplingintergff}}}{\Longleftrightarrow} 
\eqref{eqcouplingintergffszn}
		\stackrel{\text{Thm. \ref{couplingintergff}}}{\Longleftrightarrow} 
	\eqref{eq:laplacecap}_{h \geq 0}.
		\end{tikzcd}
		\end{equation*}
		In particular, in view of \eqref{ifhkill<1thenh_*>0}, if $\G$ is a transient weighted graph such that $\h_{\textnormal{kill}}<1$ and \eqref{capcondition} is fulfilled, then $\tilde{h}_*=0$ and the law of $\mathrm{cap}({E}^{\geq h}(x_0))$ is characterized by \eqref{eq:laplacecap}, for $h \geq 0$ (equivalently, \eqref{eqcouplingintergffszn} holds).
		
\item\label{maint4} 

 If \hyperref[eq:laplacecap]{\emph{($\text{Law}_0$)}} holds but \eqref{eq:0bounded} does not hold, then $\tilde{h}_*=\infty$ (see Corollary~\ref{dichotomy} for details).

In particular, in view of \eqref{ifhkill<1thenh_*>0}, if \hyperref[eq:laplacecap]{\emph{($\text{Law}_0$)}} holds and $\h_{\textnormal{kill}}<1,$ then $\tilde{h}_* \in{\{0,\infty\}}$. 
	\end{enumerate}
	\end{The}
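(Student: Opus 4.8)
The plan is to treat the three parts in the logical order 1) $\to$ 2) $\to$ 3): the finiteness statement \ref{maint1} is precisely what later frees the isomorphism \eqref{eqcouplingintergffszn} from the auxiliary boundedness hypothesis on $g$, and the dichotomy \ref{maint4} is a consequence of the equivalences \ref{maint3}. The computational heart of everything is a single bookkeeping identity. Since $\I^u$ avoids a fixed closed set $A$ with conditional probability $e^{-u\,\mathrm{cap}(A)}$, and since under \eqref{eqcouplingintergffszn} the modified field equals $\varphi_{x_0}$ exactly when the sign cluster of $x_0$ is not met by $\I^u$ (and is strictly positive otherwise), one obtains
\begin{equation*}
\P^G\big(\phi_{x_0}\ge\sqrt{2u}\big)=\P\big(\text{modified field at }x_0\le0\big)=\P\big(x_0\notin\mathcal C_u,\ \phi_{x_0}\le0\big)=\mathbb E^G\big[e^{-u\,\mathrm{cap}(E^{\ge0}(x_0))}1_{\phi_{x_0}\ge0}\big],
\end{equation*}
where the first equality is \eqref{eqcouplingintergffszn} together with the symmetry $\varphi\mapsto-\varphi$, the second is the definition of $\mathcal C_u$, and the third again uses $\varphi\mapsto-\varphi$ and the interlacement hitting formula. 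This is exactly $(\mathrm{Law}_0)$. Hence on any graph where \eqref{eqcouplingintergffszn} is already available --- in particular finite graphs, or graphs with Dirichlet conditions where \eqref{eq:0bounded} holds trivially --- $(\mathrm{Law}_0)$ holds with equality.

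For \ref{maint1} I would first reduce the claim to the one-sided estimate
\begin{equation*}
\mathbb E^G\big[e^{-u\,\mathrm{cap}(E^{\ge0}(x_0))}1_{\phi_{x_0}\ge0}\big]\ \ge\ \P^G\big(\phi_{x_0}\ge\sqrt{2u}\big),\qquad u>0,
\end{equation*}
since letting $u\downarrow0$ and using $e^{-u\,\mathrm{cap}}\to1_{\{\mathrm{cap}<\infty\}}$ forces $\P^G(\phi_{x_0}\ge0,\,\mathrm{cap}(E^{\ge0}(x_0))<\infty)\ge\P^G(\phi_{x_0}>0)$, which is the a.s.\ finiteness. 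To prove the estimate I would exhaust $\tilde{\G}$ by the metric graphs $\tilde{\G}_n$ obtained by imposing Dirichlet conditions outside a ball $B_n$; on each $\tilde{\G}_n$ the field has bounded clusters and bounded diagonal Green function, so by the previous paragraph $(\mathrm{Law}_0)$ holds there with equality. Removing the killing is monotone in both quantities: the on-diagonal Green function increases, so the right-hand sides increase to $\P^G(\phi_{x_0}\ge\sqrt{2u})$; and the capacity of a fixed set decreases, since killing only adds $\int f^2\,\mathrm d\kappa$ to the Dirichlet form. Using the Markov decomposition $\varphi=\varphi^{(n)}+\xi^{(n)}$ with $\xi^{(n)}\to0$, and reverse Fatou (the integrands are bounded by $1$), the inequality on $\tilde{\G}$ then reduces to the lower bound $\liminf_n\mathrm{cap}_n(E^{\ge0}_n(x_0))\ge\mathrm{cap}(E^{\ge0}(x_0))$ on $\{\phi_{x_0}\ge0\}$.

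Establishing this last lower bound is where I expect the real difficulty to lie, and it is the main obstacle of the whole theorem. The two monotonicities above push capacity in opposite directions --- more killing enlarges it on a fixed set, while the cluster $E^{\ge0}_n(x_0)$ is trapped in $B_n$ and is a smaller set --- and sign connectivity is notoriously discontinuous in the field near its zero set, so $E^{\ge0}_n(x_0)$ need not converge monotonically to $E^{\ge0}(x_0)$. The plan is to control this using the precise Gibbs--Markov structure of the cable free field together with lower semicontinuity of capacity along the exhaustion, so that the capacity measured along the approximation does not drop below the true capacity in the limit. Crucially one needs only an inequality, never convergence, which is exactly what prevents the argument from erroneously forcing $(\mathrm{Law}_0)$ on every graph. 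Given \ref{maint1}, its ``In particular'' is immediate: under \eqref{capcondition} an unbounded cluster would have infinite capacity, contradicting finiteness, so \eqref{eq:0bounded} must hold.

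For \ref{maint3}, the equivalence $\tilde{h}_*\le0\Leftrightarrow\eqref{eq:0bounded}$ is the definition \eqref{defh*bou} together with a continuity-at-criticality argument ruling out an unbounded cluster exactly at height $0$ (again using \ref{maint1}). The implication $\eqref{eq:0bounded}\Rightarrow(\mathrm{Law}_0)$ is the opening computation, now legitimate on a general graph precisely because \ref{maint1} removes the bounded-Green-function assumption from the input isomorphism. For $(\mathrm{Law}_0)\Leftrightarrow\eqref{eqcouplingintergffszn}\Leftrightarrow(\mathrm{Law}_h)_{h\ge0}$, the forward direction $\eqref{eqcouplingintergffszn}\Rightarrow(\mathrm{Law}_h)$ repeats the bookkeeping at height $h\ge0$, while the converse characterization --- that the single-point transform $(\mathrm{Law}_0)$ forces the full coupling --- is the genuinely new and harder direction: the squared isomorphism \eqref{usualiso} always holds, so one must upgrade it to the signed identity by showing that $(\mathrm{Law}_0)$ pins down the hitting probabilities of all sign clusters by $\omega_u$, leaving no room for extra sign changes in the coupling. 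Finally \ref{maint4} is a contradiction argument: assuming $(\mathrm{Law}_0)$ but not \eqref{eq:0bounded}, the unbounded level-$0$ cluster has finite capacity by \ref{maint1}, and transporting percolation upward through the now-valid isomorphism \eqref{eqcouplingintergffszn} yields unbounded clusters at every height, so no finite critical value is possible and $\tilde{h}_*=\infty$; combined with \eqref{ifhkill<1thenh_*>0} this gives the stated dichotomy.
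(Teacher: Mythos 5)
Your architecture matches the paper's: the opening ``bookkeeping identity'' is exactly Proposition \ref{couplingimplytheorem} (that \eqref{eqcouplingintergff} implies $\eqref{eq:laplacecap}_{h\geq0}$), the finite-volume exhaustion with Dirichlet conditions is the sequence $\G_n$ of \eqref{eq:def_approx}, the one-sided estimate followed by $u\downarrow0$ is precisely how Theorem \ref{mainresult} is proved, and your contradiction argument for part \ref{maint4} is Corollary \ref{dichotomy}. However, there are two genuine gaps at the places you yourself flag as hard. First, for part \ref{maint1} you propose the Markov decomposition $\varphi=\varphi^{(n)}+\xi^{(n)}$ and then need $\liminf_n\mathrm{cap}_n(E_n^{\geq0}(x_0))\geq\mathrm{cap}(E^{\geq0}(x_0))$; with that coupling this is genuinely delicate because sign clusters are discontinuous functionals of the field near its zero set, and you leave the step as a plan. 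The paper avoids the issue entirely by building the approximation from a single loop soup (Lemma \ref{LemmaapproGFF}), so that $\P$-a.s.\ $\varphi^{(n)}=\varphi^{(\infty)}$ \emph{exactly} on any fixed compact $K$ for large $n$; one then first restricts the cluster to $K$ (which only decreases capacity, preserving the inequality by \eqref{capincreasing}), observes that the restricted clusters literally stabilize, passes $n\to\infty$ for fixed $K$, and only then lets $K\nearrow\tilde{\G}$ using \eqref{defcapinfinity}. Your ``lower semicontinuity'' is thus obtained from monotonicity plus local stabilization, not from a continuity argument for sign connectivity, and without such a coupling the step does not obviously close.

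Second, the implication $\hyperref[eq:laplacecap]{(\text{Law}_0)}\Rightarrow\eqref{eqcouplingintergffszn}$ is the hardest part of the theorem and your proposal only gestures at it (``pins down the hitting probabilities\ldots leaving no room for extra sign changes''). Upgrading the squared identity \eqref{usualiso} directly in infinite volume is not how this can be made to work; the paper instead builds a second approximation scheme, this time for random interlacements (Lemmas \ref{limitKn} and \ref{approximationprop}, which require comparing the intensity measures $\nu_{\tilde{\G}_n}$ and $\nu_{\tilde{\G}}$ and controlling the defect measure via \eqref{massmu}), and then uses $\hyperref[eq:laplacecap]{(\text{Law}_0)}$ on \emph{both} $\G_n$ and $\G$ to show $\P(x\in\mathcal{C}_{u,n})\to\P(x\in\mathcal{C}_{u,\infty})$ through the explicit Gaussian formula, upgrading this to almost sure stabilization along a subsequence by Borel--Cantelli (see \eqref{IinftyimpliesIn}). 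This quantitative use of $\hyperref[eq:laplacecap]{(\text{Law}_0)}$ to prevent mass from escaping is the key mechanism and is absent from your sketch. Relatedly, your claim that the direction $\tilde{h}_*\leq0\Rightarrow\eqref{eq:0bounded}$ follows from part \ref{maint1} plus ``continuity at criticality'' is not accurate: part \ref{maint1} only gives finite capacity of $E^{\geq0}(x_0)$, which does not preclude unboundedness unless \eqref{capcondition} holds; the paper obtains Corollary \ref{percoath*} by running the chain $\tilde h_*\leq 0\Rightarrow\eqref{eq:laplacecap}_{h>0}\Rightarrow\hyperref[eq:laplacecap]{(\text{Law}_0)}\Rightarrow\eqref{eqcouplingintergffszn}$ and then invoking the dichotomy of Corollary \ref{dichotomy}, so the first equivalence in part \ref{maint3} cannot be established independently of the isomorphism machinery.
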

To appreciate the strength of Theorem~\ref{T:main}, we highlight one particular consequence, which follows directly from items~\ref{maint1} and~\ref{maint3} above together with Corollary~\ref{equivcapcondition},\ref{equivcapcondition_b} below.

\begin{Cor}[No percolation at criticality] \label{C:noperc} Let $\mathcal{G}$ be a vertex-transitive, massless, transient weighted graph. Then ($\tilde{h}_*=0$ and) the clusters of $E^{\geq 0}$ are $\P$-a.s.~bounded.
\end{Cor}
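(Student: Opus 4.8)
The plan is to recognize Corollary~\ref{C:noperc} as a direct synthesis of the pieces already assembled in Theorem~\ref{T:main}, so that essentially no new probabilistic input is required. The two structural features of a vertex-transitive, massless, transient graph $\G$ that I would isolate at the outset are the following. First, being massless means $\kappa\equiv 0$, hence $\h_{\text{kill}}(\cdot)\equiv 0$ and in particular $\h_{\text{kill}}<1$. Second, vertex-transitivity forces the diagonal of the Green function to be constant, i.e.\ $g(x,x)$ does not depend on $x\in G$ (automorphism-invariance of $g$), while transience makes this common value finite. Thus $g|_{G\times G}$ is trivially uniformly bounded on the diagonal, and by Lemma~\ref{equivcapcondition} (see also Corollary~\ref{equivcapcondition},\ref{equivcapcondition_b}) the capacity condition \eqref{capcondition} holds.

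With these two observations in hand, the conclusion follows by feeding them into Theorem~\ref{T:main}. From \ref{maint1}, the implication \eqref{capcondition} $\Rightarrow$ \eqref{eq:0bounded} gives at once that $\P^G$-a.s.\ every connected component of $E^{\geq 0}$ is bounded, which is precisely the assertion that the clusters of $E^{\geq 0}$ are a.s.\ bounded. For the value of the critical parameter, I would combine two facts: on the one hand \eqref{ifhkill<1thenh_*>0} together with $\h_{\text{kill}}<1$ yields $\tilde{h}_*\geq 0$; on the other hand the equivalence \eqref{eq:0bounded} $\Leftrightarrow \tilde{h}_*\leq 0$ recorded in \ref{maint3} (via Corollary~\ref{percoath*}), applied to the already-established \eqref{eq:0bounded}, yields $\tilde{h}_*\leq 0$. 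The two bounds force $\tilde{h}_*=0$, giving the parenthetical claim.

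Since the corollary is an assembly of cited results, there is no genuine obstacle internal to its proof; the only point requiring a short argument is the verification of \eqref{capcondition}, and this itself reduces---via Lemma~\ref{equivcapcondition}---to the elementary observation that transitivity makes $x\mapsto g(x,x)$ constant and transience makes it finite. All the real content sits upstream, in Theorem~\ref{T:main},\ref{maint1} (finiteness of the cluster capacity functional at height $0$, whence \eqref{capcondition} $\Rightarrow$ \eqref{eq:0bounded}) and in the equivalence $\tilde{h}_*\leq 0 \Leftrightarrow$ \eqref{eq:0bounded} of \ref{maint3}. I would therefore present the proof as a two-line deduction, being careful only to spell out why transitivity plus transience delivers a bounded diagonal Green function, since that is the sole place where the specific hypotheses on $\G$ actually enter.
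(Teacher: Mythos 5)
Your proposal is correct and follows essentially the same route as the paper, which derives this corollary directly from Theorem~\ref{T:main},\ref{maint1} and \ref{maint3} together with Lemma~\ref{equivcapcondition},\ref{equivcapcondition_b} (vertex-transitivity gives a constant, finite diagonal Green function, hence \eqref{gbounded} and thus \eqref{capcondition}, whence \eqref{eq:0bounded}; the value $\tilde{h}_*=0$ then follows by combining \eqref{ifhkill<1thenh_*>0} with the immediate implication \eqref{eq:0bounded} $\Rightarrow \tilde{h}_*\leq 0$). The only cosmetic remark is that the paper packages the $\tilde{h}_*=0$ conclusion in Corollary~\ref{maincor} (via $\kappa\equiv 0$), but this is the same argument.
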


We further refer to Corollary \ref{Cor:loopsoups} below for interesting consequences of Theorem \ref{T:main} regarding loop soups, and to \cite{DrePreRodCritExp} regarding the (near-)critical picture associated to the (continuous) phase transition exhibited by Corollary~\ref{C:noperc}.

We now elaborate on the results of Theorem \ref{T:main} in due detail and give some ideas concerning their proofs. 
In part \ref{maint1} of Theorem \ref{T:main}, the finiteness of the capacity functional \eqref{eq:capobservable} at height $h=0$ -- which, remarkably, holds without any further assumption on $\mathcal{G}$ -- can loosely be regarded as an indication that the sign clusters of the Gaussian free field on $\tilde{\G}$ do not percolate, at least when measured in terms of capacity, cf.~also \eqref{defh*cap} and Theorem~\ref{mainresult} below. Condition \eqref{capcondition} formalizes this intuition, since it directly implies that closed connected sets have finite capacity if and only if they are bounded. Thus,  if \eqref{capcondition} holds true, so does \eqref{eq:0bounded}, which in turn directly entails $\tilde{h}_*\leq0,$ see \eqref{defh*bou}. The condition \eqref{capcondition} is moreover usually easy to verify, since it depends only on the structure of the graph $\G,$ and not on the Gaussian free field. As alluded to above, the inequality $\tilde{h}_*\leq0$ had previously only been proved on a certain number of graphs with $\kappa\equiv0$, which all verify condition \eqref{capcondition}, namely:
\begin{myitemize}
    \item $\Z^d,$ $d\geq3,$ with unit weights, see Theorem 1 and Proposition 5.5 in \cite{MR3502602}. This proof could actually be easily extended to all amenable, vertex-transitive graphs, and such graphs verify \eqref{capcondition}, see Lemma \ref{equivcapcondition},\ref{equivcapcondition_b}.
    \item The $(d+1)$-regular tree $\mathbb{T}_d$, $d\geq2,$ with unit weights, see Proposition 4.1 in \cite{MR3492939}. It is easy to prove that these graphs verify \eqref{capcondition}, using Lemma \ref{equivcapcondition},\ref{capconditionontrees}, the fact that $e_{K,\mathbb{T}^d}(x) \geq c(d)$ (which holds uniformly over connected finite subsets $K \subset \mathbb{T}_d$ and $x \in \partial K$), along with the isoperimetric bound $|\partial K| \geq c'(d)|K|$  (see for instance \cite{MR3616731}, p.80).
    \item Any tree $
    \mathbb T$ with unit weights such that $\{x\in\mathbb {T}:\,R_x^{\infty}>A\}$ only has bounded components for some $A>0,$ where $R_x^{\infty}$ is the effective resistance between $x$ and infinity for the descendants of $x,$ see Proposition 2.2 in \cite{MR3765885}. These graphs verify \eqref{capcondition} by Lemma \ref{equivcapcondition},\ref{capconditionontrees}.
    \item Any transient graph with controlled weights (see e.g.\ condition $(p_0)$ in \cite{DrePreRod2}), such that the volume of balls have polynomial growth and the Green function decreases polyonomially fast, see Proposition~5.2 in \cite{DrePreRod2}. These graphs verify \eqref{capcondition}, see Lemma~3.2 in~\cite{DrePreRod2}.
\end{myitemize}
Hence, Theorem \ref{T:main} 
subsumes and generalizes all these previous results, and it covers many new cases, such as \textit{all} vertex-transitive graphs, see Lemma \ref{equivcapcondition},\ref{equivcapcondition_b} below. What is more, without assuming that \eqref{capcondition} is fulfilled, it is possible to construct a graph $\G$ such that $\tilde{h}_*\le 0$ fails to hold, see Proposition \ref*{Pre1:h*infinity} in 
\cite{Pre1}. One can also easily find examples of graphs such that \eqref{eq:0bounded} is verified, while \eqref{capcondition} is not, see Remark \ref{R:mainresults1},\ref{signwithoutcap}, or Proposition \ref*{Pre1:Z20counterexample} in \cite{Pre1} for more details. A further, very interesting question is whether there exist examples of graphs $\mathcal{G}$ not satisfying $\hyperref[eq:laplacecap]{(\emph{\textnormal{Law}}_0)}$, or any of the other equivalent conditions appearing in Theorem \ref{T:main},\ref{maint3}.

A stepping stone for the proof of Theorem \ref{T:main},\ref{maint1} (and, as will soon turn out, of Part~\ref{maint3} as well) is the observation that the identity \eqref{eqcouplingintergffszn}, if assumed to hold, implies $\eqref{eq:laplacecap}_{h \geq 0}$, see Proposition~\ref{couplingimplytheorem} and Lemma \ref{isomequivisom'} below. Crucially, this observation can be applied immediately when $\G$ is a finite (transient) graph, for \eqref{eqcouplingintergffszn} is then a direct consequence of the isomorphism between loop soups and the Gaussian free field, see \cite{MR2815763} and \cite{MR3502602}, that we recall in \eqref{eqcouplingloopsgff}. We refer to Lemma \ref{Theforfinite}, proved in Appendix \ref{App:isom} using similar ideas as in the proof of Theorem 8 in \cite{LuSaTa}, for corresponding details. 

Equipped with \eqref{eqcouplingintergffszn}, and thus \eqref{eq:laplacecap}$_{h\geq0},$ on finite transient graphs we then  approximate the Gaussian free field on any infinite transient graph $\G$ by the Gaussian free field on a sequence of finite transient graphs $\G_n$ increasing to $\G$ as $n\to \infty$, see \eqref{eq:def_approx} and Lemma \ref{LemmaapproGFF}. The fact that our setup allows for $0$-boundary conditions (i.e.\ $\bar{\kappa}_x =\infty$ for some $x \in G$) is central for this purpose.
The capacity functional \eqref{eq:capobservable} has certain desirable monotonicity properties under this approximation, see \eqref{capGngeqcapG}, and  Theorem \ref{T:main},\ref{maint1} corresponds to the information that survives in the limit $n \to \infty$ without further assumptions on $\G$.

Let us now comment on Part~\ref{maint3} of Theorem \ref{T:main} and its proof. Figure \ref{fig1_equiv} illustrates  the various implications involved in its statement in a more explicit fashion and will hopefully provide some useful guidance for the reader.
\begin{figure}[h]
		\begin{equation*}
\begin{array}{c}
\tilde{h}_*\leq 0 \,
\stackrel[a)]{\text{Cor.\ \ref{percoath*}}}{\Longleftrightarrow}  \,
\eqref{eq:0bounded}  \,
\stackrel[b)]{\text{Thm.\ \ref{mainresultcap}}}{\Longrightarrow}  \,
\eqref{eq:laplacecap}_{h \geq 0}
\\ \\ \text{ and }\\ \\
\hyperref[eq:laplacecap]{({\text{Law}}_0)}  \,
\stackrel[c)]{\text{Lem.\ \ref{couplingisalwaystrue}}}
\Longrightarrow  \,
\eqref{eqcouplingintergffszn}  \,
\stackrel[d)]{\text{Prop.\ \ref{couplingimplytheorem}}}
\Longrightarrow  \,
\eqref{eq:laplacecap}_{h \geq 0}.
\end{array}
\end{equation*}
		\vspace{-0.3cm}
\caption{The detailed chain of implications constituting Theorem \ref{T:main},\ref{maint3}. The implications in the second line immediately yield the equivalence of $\hyperref[eq:laplacecap]{({\text{Law}}_0)}$, \eqref{eqcouplingintergffszn}
 and $\eqref{eq:laplacecap}_{h \geq 0}.
$}
\label{fig1_equiv}
\end{figure}

The equivalence a) in Figure \ref{fig1_equiv} entails that 
if $\tilde{h}_*=0,$ then the level sets of the GFF never percolate at the critical point $h=0,$ 
 even if \eqref{capcondition} (which imply \eqref{eq:0bounded}) is not verified. We comment on its proof at the very end of this discussion. Implication~b) represents the desired improvement over the argument delineated above yielding Theorem~\ref{T:main},\ref{maint1}, by which the full information \eqref{eq:laplacecap}$_{h\geq0}$ survives in the limit as $n \to \infty$ under the assumption that the sign clusters of $\varphi$ are bounded (which holds e.g.\ under condition \eqref{capcondition}). In fact, when \eqref{capcondition} is satisfied, we also provide an explicit formula for the law of the capacity of clusters above negative levels,  see Theorem~\ref{mainresultcap} for further details; see also Remark \ref{R:isomisom},4), Lemma \ref{h-hsamelaw} and Remark \ref{approimplylawcap},\ref{stronglawnegative} regarding the (related) symmetry properties relating compact clusters in $E^{\geq h}$ and $E^{\geq-h}$, for arbitrary $h>0$.

The exact formula \eqref{eq:laplacecap}$_{h\geq0}$ describing the law of the capacity functional \eqref{eq:capobservable} is of course instrumental and witnesses a certain degree of integrability of the model $\{ E^{\geq h }: h \in \R\}$. For instance, one can immediately deduce from it (see \eqref{eq:hdensity}) that the capacity of critical clusters has heavy tails satisfying
\begin{equation}
\label{eq:cap_tails}
\P^G\big(  \mathrm{cap}\big({E}^{\geq 0}(x_0)\big) \geq r \big) \sim \big(\pi^2g(x_0,x_0)r\big)^{-1/2}, \text{ as }r \to \infty.
\end{equation}
Further to \eqref{eq:cap_tails}, one can use \eqref{eq:laplacecap}$_{h\geq0}$ to directly deduce bounds on 
various quantities of interest related to the (near-)critical behavior for the percolation of $\{ E^{\geq h }: h \in \R\}$, see \cite{DrePreRodCritExp}. The approach using differential formulas developed therein actually leads to an independent proof of the implication b), along with extended results valid on any transient graph $\G$, see Theorem 1.1 in \cite{DrePreRodCritExp}. Incidentally, an explicit formula for the probability of the event $\{x\longleftrightarrow y$ in $E^{\geq0}\}$ has also been obtained in Proposition 5.2 of \cite{MR3502602}, and  was a key ingredient for all previous proofs of the inequality $\tilde{h}_*\leq0.$

We now turn to the equivalences c) and d) in the second line of Figure \ref{fig1_equiv}. The direct (i.e.~right) implications appearing there already imply the equivalences. The direct implication in d) is another application of our initial observation, Proposition~\ref{couplingimplytheorem}, applied above in the context of Theorem \ref{T:main},\ref{maint1} for finite graphs only, but remaining valid in infinite volume.

Remarkably, the direct implication in c) asserts that it is sufficient to know that the law of the capacity of the sign clusters is given by \hyperref[eq:laplacecap]{($\text{Law}_0$)} in order to deduce the strong version \eqref{eqcouplingintergffszn} of the isomorphism theorem. In particular, together with b), this implies that \eqref{eqcouplingintergffszn} holds whenever  \eqref{eq:0bounded} is verified, which generalizes Theorem 2.4 of \cite{MR3492939} that required stronger assumptions, cf.~the above discussion leading to \eqref{eqcouplingintergffszn}. 

Extending the setting in which the identity \eqref{eqcouplingintergffszn} is valid is also interesting as this relation has already been useful in \cite{MR3492939} and \cite{MR3765885} to compare the critical parameter for the percolation of random interlacements and the Gaussian free field on discrete trees, and in \cite{DrePreRod2} to prove strong percolation for the level sets of the discrete Gaussian free field at a positive level on a large class graphs, for instance $\Z^d,$ $d\geq3,$ or various fractal graphs. It is not always easy to check that the conditions (1.32) and (1.34), or (1.42), of Theorem 2.4 in \cite{MR3492939} are exactly verified, see the proof of Corollary 5.3 in \cite{DrePreRod2} which sparked our interest, and it can thus be interesting to replace them by the weaker condition \eqref{capcondition}, which is easier to verify.

The proof of c) requires deriving a full-fledged isomorphism theorem relating random interlacements and the Gaussian free field on an adequate class of graphs, assuming the identity \hyperref[eq:laplacecap]{($\text{Law}_0$)} alone.
In order to prove \eqref{eqcouplingintergffszn}, we employ an approximation scheme, starting from a finite-volume setup. The scheme is similar in spirit to the previously used approximation for $\varphi$, but more involved, as it requires approximating random interlacements on infinite graphs by random interlacements on finite graphs, see Lemma \ref{approximationprop}. Combining the approximations for the free field and the interlacement process, we then obtain \eqref{eqcouplingintergffszn} if 
\hyperref[eq:laplacecap]{($\text{Law}_0$)} is fulfilled, see Lemma~\ref{couplingisalwaystrue}.

Moreover, our proof of \eqref{eqcouplingintergffszn}, which relies on taking a suitable limit rather than proceeding directly in infinite volume and using the Markov property as in \cite{MR3492939}, immediately lets us derive a signed version of the isomorphism for random interlacements on discrete graphs, taking advantage of the equivalent discrete isomorphism for the loop soup, \eqref{eqcouplingloopsgffdis}. As a by-product of the proof, we thus obtain a version of the isomorphism \eqref{eqcouplingintergffszn} for the discrete graph $\G$ in Theorem~\ref{couplingintergff}, see \eqref{eqcouplingintergffdis}, similar to the version of the second Ray-Knight theorem from Theorem~8 in \cite{LuSaTa}.

Finally, the isomorphism \eqref{eqcouplingintergffszn} has another interesting consequence, stated in Theorem~\ref{T:main},\ref{maint4} and Corollary~\ref{dichotomy}: if \hyperref[eq:laplacecap]{{($\text{Law}_0$)}} holds but \eqref{eq:0bounded} does not hold, then $\tilde{h}_*=\infty.$ This can be regarded as a partial converse to the implication \eqref{eq:0bounded} $\Longrightarrow$ \hyperref[eq:laplacecap]{{($\text{Law}_0$)}} from part~\ref{maint3}, which leads to a dichotomy for the value of $\tilde{h}_*$ in case $\h_{\textnormal{kill}}<1$. In particular, if $\G$ is a graph such that $\tilde{h}_*\leq0,$ then $E^{\geq h}$ is $\P^G$-a.s.\ bounded for all $h>0,$ and thus \eqref{eq:laplacecap} holds for all $h>0,$ see Theorem~\ref{mainresultcap}. Taking the limit as $h\searrow0,$ one can then prove that \hyperref[eq:laplacecap]{($\text{Law}_0$)}, and thus \eqref{eqcouplingintergffszn}, hold. Since $\tilde{h}_*\neq\infty,$ this means that \eqref{eq:0bounded} must hold, and thus we also obtain Theorem \ref{T:main},\ref{maint3},a) (see Figure 1). 

\medskip

We now explain how this article is organized. Section \ref{s:usefulresults} recalls the main objects of interest, the diffusion $X,$ the Gaussian free field, and random interlacements on the cable system in the present (broad) setup. It also supplies suitable notions of equilibrium measure and capacity on $\tilde{\G}$, see Lemma \ref{GA}, \eqref{defequilibriumcable} and \eqref{defcap}. 

Section \ref{sec:results} contains the detailed versions of all our findings, which together imply Theorem \ref{T:main}, and that we  prove in the rest of the article. The central results are the three Theorems \ref{mainresult}, \ref{mainresultcap} and \ref{couplingintergff}, along with their respective corollaries.

Section \ref{sec:prep} gathers various key preliminary results, notably Proposition~\ref{couplingimplytheorem}, which derives  \eqref{eq:laplacecap}$_{h\geq0}$ as a consequence of \eqref{eqcouplingintergffszn} (or more precisely, an equivalent but more handy formulation \eqref{eqcouplingintergff} introduced in Section \ref{sec:results}). It also contains the approximation scheme for $\varphi$, see Lemma \ref{LemmaapproGFF}, as well as the isomorphism \eqref{eqcouplingintergffszn} on finite graphs, see Lemma \ref{Theforfinite}. These results are the ingredients of various arguments in the sequel. 

First, Section \ref{sec:mainviainter} is devoted to the proof of Theorems \ref{mainresult} and \ref{mainresultcap}, which roughly correspond to Theorem \ref{T:main},\ref{maint1}, 
and \ref{maint3},b) in Figure 1, but contain more detailed results. Their proof quickly follows from the preparatory work done in Section \ref{sec:prep}.

Section \ref{sec:iso} is then concerned with the proof of the isomorphism between random interlacements and the Gaussian free field \eqref{eqcouplingintergffszn} under the condition \hyperref[eq:laplacecap]{($\text{Law}_0$)}, and to its consequences, Corollaries \ref{dichotomy} and \ref{percoath*}. At the technical level, an important role is played by the approximation of random interlacements on a graph $\G,$ by random interlacements on a sequence of graphs increasing to $\G,$ see Lemmas \ref{limitKn} and \ref{approximationprop}. Some concluding remarks and open questions are gathered at the end of that section.

Throughout the article, we will sometimes add $\tilde{\G}$ as a subscript to the notation to stress the underlying graph $\G$ that we consider. For the reader's orientation, we note that the conditions \eqref{eq:0bounded}, \eqref{eq:laplacecap} and \eqref{eqcouplingintergffszn} are all introduced above Theorem \ref{T:main}, and that the condition \eqref{eqcouplingintergff} is introduced above Theorem \ref{couplingintergff}.

\medskip

\textit{Acknowledgements.} Part of this work was carried out while the research of P-FR was supported by the ERC-Grant CriBLaM. AD and AP thank I.H.E.S.\ and Hugo Duminil-Copin for their hospitality at these early stages. The research of AD is supported by the Deutsche Forschungsgemeinschaft (DFG) grant DR 1096/1-1, that of AP by the Engineering and Physical Sciences Research Council (EPSRC) grant EP/R022615/1 and Isaac Newton Trust (INT) grant G101121. We thank Tom Hutchcroft for his comments about condition \eqref{capcondition}, which partially stimulated Appendix~\ref{subsec:capandkappa=0}. We thank A.-S.\ Sznitman for pointing out the short proof of \eqref{eq:capinfinity} given in \eqref{eq:theta_short}, as well as an anonymous referee.

\section{Preliminaries and useful results}
\label{s:usefulresults}
We return to the framework described around \eqref{deflambdarho}, consisting of a transient weighted graph $ \G$, the induced triplet $(G,\lambda,\kappa)$ satisfying $\kappa_x<\infty$ for all $x\in{{G}}$ and the associated cable system $\widetilde{\mathcal{G}}$. We now define the various objects attached to this setup. 
We first sketch a construction of the canonical diffusion $X$ on $\widetilde{\mathcal{G}}$ and of its trace  on suitable subsets $F$ of $G$ from the associated Dirichlet form in Section~\ref{subsec:diff}. In Section \ref{subsec:pottheory} we introduce several aspects of potential theory on $\widetilde{\mathcal{G}}$ in this general framework, which can be conveniently defined probabilistically by `enhancements', exploiting instances of network equivalence on the base graph $\mathcal{G}$, see Lemma~\ref{GA} below. 
We then briefly discuss the cables $I_x$ (Section \ref{S:I_x}) and their role in taking suitable graph limits, recall the Gaussian free field $\varphi$ and its Markovian decomposition (Section \ref{subsec:GFF}), and supply the definition of random interlacements 
in the present context (Section \ref{subsec:RI}). 

\medskip


Recall the definition of the cable system $\tilde{\G}$: first, each edge $e=\{x,y\}\in E$ is replaced by an open interval $I_e,$ isometric to $(0,\rho_{x,y}),$ see \eqref{deflambdarho}. In addition, an open interval $I_x$ of length $\rho_x(=\frac{1}{2\kappa_x})$ (possibly unbounded) is attached to each vertex $x$ of ${G}$. The cable system $\tilde{\G}$ is then obtained by glueing together the intervals $I_e,$ $e\in{E},$ to $G$ through their respective endpoints, and by glueing one endpoint of $I_x,$ $x\in{G},$ to $x.$ Note that ${G}$ can be naturally viewed as a subset of $\tilde{\G}.$ The elements of ${G}$ will still be called \textit{vertices} and the intervals $I_e,$ $e\in{E},$ and $I_x,$ $x\in{{G}},$ will be referred to as the \textit{edges} of $\tilde{\G}.$ 

The canonical distance on each ${I}_e,$ $e\in{E},$ and ${I}_x,$ $x\in{G},$ is denoted by $\rho_{\tilde{\G}}(\cdot, \cdot).$ Note that $\rho_{\tilde{\G}}(x,y)$ is only defined if $x$ and $y$ are on the same edge. In a slight abuse of notation, for any edge $e=\{x,y\}\in{E}$ and any $t\in{[0,\rho_{x,y}}],$ we denote by $x+t\cdot I_e=y+(\rho_{x,y}-t)\cdot I_e$ the point of ${I}_e$ at ($\rho_{\tilde{\G}}$-)distance $t$ from $x,$ and for any vertex $x\in{G}$ and $t\in{[0,\rho_{x}}),$ by $x+t\cdot I_x$ the point of ${I}_x$ at distance $t$ from $x.$ We also consider the distance $d$ on $\tilde{\G},$ cf.\ above \eqref{Ehx0}, which is such that $d(x,y),$ $x,y\in{\tilde{\G}}$, is the minimal length of a continuous path between $x$ and $y,$ when changing the length of each $I_e,$ $e\in{E\cup G}$ from $\rho_e$ to $1.$ In particular, the restriction of $d(\cdot,\cdot) $ to $G\times G$ is just the graph distance $d_{\G}$ on $\G.$ We consider $(\tilde{\G},d)$ as a metric space, and for $A\subset \tilde{\G}$ we define $\partial A$ as the boundary of $A$ in $\tilde{\G}$ for $d$. Finally throughout the article, we say that a set $K\subset\tilde{\G}$ is compact if it is compact for the distance $d.$

\subsection{The canonical diffusion on the cable system}
	\label{subsec:diff}
We define the set of forward trajectories $W^+_{\tilde{\G}}$ as the set of functions $w^+:[0,\infty)\rightarrow\tilde{\G}\cup\{\Delta\},$ where $\Delta$ is a cemetery point (not in $\tilde{\G}$), for which there exists $\zeta\in{[0,\infty]}$ such that $w^+_{|{[0,\zeta)}}\in{C([0,\zeta),\tilde{\G})}$ and, when $\zeta<\infty,$ $w^+(t)=\Delta$ for all $t\geq\zeta.$ For each $t\geq0$ we denote by $X_t$ the projection at time $t,$ i.e.\ $X_t(w^+)=w^+(t)$ for all $w^+\in{W^+_{\tilde{\G}}},$ and by $\mathcal{W}^+_{\tilde{\G}}$ the $\sigma$-algebra on $W^+_{\tilde{\G}}$ generated by $X_t,$ $t\geq0.$ By $m$ we denote the Lebesgue measure on $\tilde{\G},$ which can be informally described as the sum of the Lebesgue measures on each $I_e,$ $e\in{E},$ and $I_x,$ $x\in{G},$ with the normalization $m(I_e)=\rho_e$ and $m(I_x)= \rho_x$ (with, say, mass $1$ associated to each sub-interval of Euclidean length $1$). We proceed to define a diffusion on $\tilde{\G},$ which we will characterize through its associated Dirichlet form. In order to define the latter, introduce for measurable $f:\tilde{\G}\rightarrow\R$,  
\begin{equation}
\label{eq:quadraticform}
    (f,f)_{m}\stackrel{\text{def.}}{=}\sum_{e\in{E\cup{G}}}\int_{I_e}f^2\, \mathrm{d}m_{|I_e},
\end{equation}
the corresponding Hilbert space $L^2(\tilde{\G},m)\stackrel{\text{def.}}{=}\{f:\tilde{\G}\rightarrow\R \text{ measurable}; \,(f,f)_{m}<\infty\}$ (modulo the usual equivalence relation) and $(f,g)_{m}$ the associated quadratic form on $L^2(\tilde{\G},m)$ obtained via polarization. Let $C_0(\tilde{\G})$ be the closure for the $\|\cdot\|_\infty$-norm of the set of continuous functions with compact support on $\tilde{\G}$ and let $D(\tilde{\G},m)\subset L^2(\tilde{\G},m)$ be the space of functions $f\in{C_0(\tilde{\G})}$ such that $f_{|I_e}\in{W^{1,2}(I_e,m_{|I_e}})$ for all $e\in{E\cup{G}}$ and 
\begin{equation*}
    \sum_{e\in E\cup{G}}\|f_{|I_e}\|_{W^{1,2}(I_e,m_{|I_e})}^2<\infty,
\end{equation*}
where $W^{1,2}(I_e,m_{|I_e})$ denotes the respective Sobolev space on $I_e.$ We now define the Dirichlet form on $L^2(\tilde{\G},m)$ (in which $D({\tilde{\G}},m)$ is densely embedded), 
\begin{equation}
\label{Dirichlet}
    \mathcal{E}_{\tilde{\G}}(f,g)\stackrel{\text{def.}}{=}\frac12(f',g')_m\text{ for all }f,g\in{D({\tilde{\G}},m)}.
\end{equation}
By Theorem 7.2.2.\ in \cite{MR2778606}, one associates to each $x\in{\tilde{\G}}$ an $m$-symmetric diffusion starting in $x$ with state space $\tilde{\G}\cup\{\Delta \}$ to the Dirichlet form $\mathcal{E}_{\tilde{\G}}.$ We denote by $P_x\, (=P_x^{\tilde{\G}})$ its law on $(W_{\tilde{\G}}^+,\mathcal{W}_{\tilde{\G}}^+)$ and also define, for any non-negative measure $\mu$ on $\tilde{\G}$ with countable support $\text{supp}(\mu)$, the measures
\begin{equation}
\label{probmeasure}
P_\mu\stackrel{\text{def.}}{=}\sum_{x\in{\text{supp}(\mu)}}\mu_xP_x.
\end{equation}  
Note that $\zeta = \inf \{ t \geq 0 : X_t =\Delta\}$ is either $\infty,$ or the first time $X$ blows up (i.e., $X$ escapes all $d$-bounded sets) 
or gets killed (i.e., exits $ \tilde{\G}$ through some $I_x$ with $\kappa_x>0$). Informally, one can obtain a diffusion with law $P_x$ as follows: first, one runs a Brownian motion starting at $x$ on $I_e,$ with $x\in{I_e},$  $e\in{E\cup{G}},$ until a vertex $y$ is reached. Then one chooses uniformly at random an edge or vertex $v$ among $\{ y\} \cup \{\{y,z\}: z\sim y\}$ and runs a Brownian excursion on $I_v$ until a vertex is reached; this procedure is iterated until either the process blows up or the open end of the interval $I_x$ is reached for some $x\in{{G}},$ in which case the process is killed at that time. We refer to Section 2 of \cite{DrePreRod} or \cite{MR3502602} for a more formal description of this construction on $\Z^d,$ $d\geq3.$ 

We now briefly review how to take \textit{traces} of the process $X$ on suitable subsets $F$ of $\tilde{\G}$. One can show, analogously to Section 2 of \cite{MR3502602}, that the process $X$ under $P^{\tilde{\G}}_x$ allows for a space-time continuous family of local times $(\ell_y(t))_{y\in{\tilde{\G}},t\geq0}.$ Therefore, using that $P^{\tilde{\G}}_x$ lives on the canonical space $(W_{\tilde{\G}}^+,\mathcal{W}_{\tilde{\G}}^+),$ for all sets $F \subset \tilde\G$ of the form $F = \bigcup_{e \in F_1} \overline{I}_e \cup \bigcup_{x \in F_2} \{x \}$,
where $F_1\subset E\cup{G}$ and $F_2\subset{G}$ are arbitrary, we can define the time change
\begin{equation*}
    \tau_t^F\stackrel{\text{def.}}{=}\inf\Big\{s>0:\,\int_0^s 1_{\{X_u\in{  \bigcup_{e \in F_1} I_e}\}}\diff u+\sum_{y\in{F_2}}\ell_y(s)>t\Big\}\text{ for all }t\geq0\text{ and }w^+\in{W^+_{\tilde{\G}}}.
\end{equation*}
Here, we use the convention $\inf\varnothing=\zeta$ 
and denote the trace of $X$ on ${F}$ by $X^F=(X_{\tau_t^F})_{t\geq0},$ with the convention $X_\infty = \Delta$, which corresponds to a time changed process with respect to a positive continuous additive functional (PCAF), see (A.2.36) and below in \cite{MR2778606} for instance. As a first application of this definition, letting 
\begin{equation}
    \label{traceonG}
    Z\stackrel{\text{def.}}{=} X^{G} \text{ (the trace of $X$ on $G$)}\quad 
    \end{equation} 
it follows from Theorem 6.2.1.\ in \cite{MR2778606} that for all $x\in{{G}}$ the law of $Z$ under $P_x^{\tilde{\G}}$ is that of the continuous time Markov chain that jumps from $x \in G$ to $y \in G$ at rate $\lambda_{x,y}$ and is killed at rate $\kappa_x.$ Furthermore, the local times $(\ell_y(\zeta))_{y\in G}$ of $X$ after being killed have the same law under $P_x^{\tilde{\G}}$ as the total occupation times of that jump process (after being killed), see for instance (1.97) and (2.80) in \cite{MR2932978}.  We also denote by $(\hat{Z}_n)_{n\in\N}$ the discrete time skeleton of $Z,$ i.e.\ the sequence of elements of $G$ visited by the process $Z$, with the convention that $\hat{Z}_n = \Delta$ for all large enough $n$ if $Z$ gets killed.

\subsection{Elements of potential theory on $\widetilde{\G}$}

Our next goal is to supply workable notions of equilibrium measure and capacity on $\widetilde{\G}$, for arbitrary closed (and in particular compact) subsets of $\widetilde{\G}$, as necessary in order to investigate observables like $\mathrm{cap}(E^{\geq h}(x_0))$ (cf.\ Theorem \ref{T:main}). We first define the Green function of an open set $U\subset\tilde\G$ by
\begin{equation}
\label{Greendef}
g_{U}(x,y)=E_x[\ell_y(T_U)]\text{ for all } x,y\in{\tilde{\G}},
\end{equation}
where $E_x$ denotes expectation with respect to $P_x= P^{\tilde{\G}}_x$ and $T_U=\inf\{t\geq0:X_t\notin{U}\}$ is the first exit time of $U,$ with the convention $\inf\varnothing=\zeta.$ We simply write $g= g_{\tilde{\G}}$ for the usual Green function on $\tilde{\G}.$  

\label{subsec:pottheory}
We now introduce the notions of equilibrium measure and capacity on $\tilde{\G}$ by `enhancements', see Lemma \ref{GA} below. This will allow to directly reformulate the equilibrium problem in a discrete setup and to thereby import the respective standard versions of these notions on transient graphs, see \eqref{defequilibriumcable}, \eqref{defcap} and \eqref{defcapinfinity} below. In particular, this approach immediately provides several useful identities, e.g.\ relating exit distributions for the diffusion $X$ with the corresponding equilibrium measure, cf.\ \eqref{exitequi} and \eqref{consistencyequilibrium}.  

On the (transient) graph $(G,\lambda, \kappa)$ associated to $\mathcal{G}$, for all finite $A\subset G$ the equilibrium measure and capacity of $A$ are defined by 
\begin{equation}
\label{defeAcap}
    e_{A,\G}(x)\stackrel{\mathrm{def.}}{=}\lambda_xP_{x}(\tilde{H}_{A}(\hat{Z})=\infty) 1_{A}(x) \text{ for all }x\in{{G}}, \quad \text{and} \quad\mathrm{{\rm cap}}_{\mathcal{G}}(A)\stackrel{\mathrm{def.}}{=}\sum_{x\in{A}}e_{A,\G}(x),
\end{equation}
where $\tilde{H}_{A}(\hat{Z}) \stackrel{\mathrm{def.}}{=}\inf\{n\geq1,\ \hat{Z}_n\in{A}\},$ with $\inf\varnothing=\infty,$ is the first return time to $A$ for the discrete time random walk $\hat{Z}$ on $\G$, cf.\ below \eqref{traceonG}. The following observation is key.

\begin{Lemme}[Enhancements]
\label{GA}
For all countable sets $A\subset\tilde{\G}$ without accumulation point in $\tilde{\G},$ there exists a unique graph $\G^{A}=(G^A,\lambda^A,\kappa^A)$ with vertex set $G^A=A\cup G,$ such that
\begin{align}
&\text{(with a slight abuse of notation), $\tilde{\G}$ is a subset of $\tilde{\G}^A$, the cable system of $\G^{A}$;} \label{eq:GAsubsetG}\\
& \text{for all }x\in{{G}^A,}\text{ the laws of the traces } X^{G^A}=(X_{\tau^{{G}^{A}}_t})_{t\geq0}\text{ 
under }P^{\tilde{\G}}_x\text{ and }P^{\tilde{\G}^{A}}_x coincide; \label{lawGA}
\end{align}
\end{Lemme}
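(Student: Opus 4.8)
The plan is to build $\G^{A}$ by subdividing the cables of $\tilde\G$ at the points of $A$ according to the series law for conductances, and then to realize the two cable systems as a \emph{single} metric measure space, so that the associated diffusions — and with them their traces on $G^{A}$ — coincide outright. Points of $A$ already lying in $G$ require no change. If $A$ meets the interior of an edge $I_{e}$, $e=\{x,y\}\in E$, list $A\cap I_{e}=\{a_{1},\dots,a_{k}\}$ at increasing $\rho_{\tilde\G}$-distances $0<t_{1}<\dots<t_{k}<\rho_{x,y}$ from $x$, promote the $a_{i}$ to vertices, and replace the edge $\{x,y\}$ by the consecutive edges $\{x,a_{1}\},\{a_{1},a_{2}\},\dots,\{a_{k},y\}$ with weights the halved reciprocals of the respective segment lengths, i.e.\ $\lambda^{A}_{x,a_{1}}=\tfrac1{2t_{1}}$, $\lambda^{A}_{a_{i},a_{i+1}}=\tfrac1{2(t_{i+1}-t_{i})}$, $\lambda^{A}_{a_{k},y}=\tfrac1{2(\rho_{x,y}-t_{k})}$; the identity $t_{1}+(t_{2}-t_{1})+\dots+(\rho_{x,y}-t_{k})=\rho_{x,y}$ is precisely the network equivalence alluded to above. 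Points of $A$ on a killing cable $I_{x}$ are treated identically, the terminal segment (of length $\rho_{x}-t_{\max}$) becoming the killing cable of the last new vertex with killing rate its halved reciprocal, and $\kappa^{A}_{x}=0$ once $I_{x}$ has been split; when $\rho_{x}=\infty$ this terminal cable is infinite and the corresponding killing rate vanishes (and if $A\cap I_{x}$ is infinite there is no terminal vertex at all). Since $A$ has no accumulation point in $\tilde\G$, every finite-length cable contains finitely many points of $A$ and each vertex of $G^{A}$ has finite degree (vertices in $A$ have degree at most two), so $\G^{A}$ is locally finite, and it is manifestly connected.

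By construction the total $\rho$-length and the Lebesgue measure $m$ of each original cable are preserved under the subdivision, so the cable system $\tilde\G^{A}$, endowed with $\rho_{\tilde\G^{A}}$ and $m$, is canonically isometric to $\tilde\G$; under this identification (the advertised abuse of notation) $\tilde\G$ is realized as a subset of $\tilde\G^{A}$, giving \eqref{eq:GAsubsetG}. Moreover a continuous function on this common space restricts to a $W^{1,2}$ function on each cable of $\tilde\G$ if and only if it does so on each (sub)cable of $\tilde\G^{A}$ — the restriction of a $W^{1,2}$ function to a subinterval is again $W^{1,2}$, and two $W^{1,2}$ pieces agreeing continuously at a junction glue back to a $W^{1,2}$ function — and the square-summability of the Sobolev norms and the $C_{0}$ condition are unaffected; hence $D(\tilde\G,m)=D(\tilde\G^{A},m)$ and $\mathcal{E}_{\tilde\G}=\mathcal{E}_{\tilde\G^{A}}$ as Dirichlet forms on the common space $L^{2}(\tilde\G,m)$.

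Consequently, by the uniqueness of the Hunt process associated to a regular Dirichlet form (Theorem 7.2.2 in \cite{MR2778606}, used to define $X$ in the first place), the diffusions under $P^{\tilde\G}_{x}$ and $P^{\tilde\G^{A}}_{x}$ are literally the same process for every $x$ in the common space; their traces $X^{G^{A}}$ on $G^{A}$ therefore coincide, which is \eqref{lawGA}. Transience of $\G^{A}$ follows because $Z^{A}\stackrel{\mathrm{def.}}{=}X^{G^{A}}$ is a trace of $X$ whose further trace on $G\subseteq G^{A}$ is the transient chain $Z$ (a recurrent irreducible $Z^{A}$ would visit $G$ infinitely often, forcing $Z$ to be recurrent). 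For uniqueness, let $\G'=(A\cup G,\lambda',\kappa')$ also satisfy \eqref{eq:GAsubsetG} and \eqref{lawGA}. By Theorem 6.2.1 in \cite{MR2778606}, the trace of a cable diffusion on its vertex set is the continuous-time Markov chain whose jump rates are the edge weights and whose killing rates are the killing measure; applied to $\G'$, the law of $X^{A\cup G}$ determines $\lambda'$ and $\kappa'$ completely. Since \eqref{lawGA} forces this trace to agree with the common trace of $X$ on $G^{A}$, we obtain $\lambda'=\lambda^{A}$ and $\kappa'=\kappa^{A}$, i.e.\ $\G'=\G^{A}$.

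The point demanding the most care — and which makes \eqref{lawGA} collapse to a triviality — is the verification that the two cable systems carry the \emph{identical} Dirichlet form: one must confirm that the domain condition is genuinely insensitive to the subdivision, and one must bookkeep the killing cables correctly, in particular the degenerate case $\rho_{x}=\infty$ in which splitting produces new vertices carrying no killing.
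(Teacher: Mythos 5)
Your construction of the graph $\G^A$ itself agrees with the paper's (the series-law weights in \eqref{eq:enh1}, the bookkeeping \eqref{eq:enh1.1}, and the treatment of split killing cables), and your uniqueness argument via the trace characterization of Theorem 6.2.1 in \cite{MR2778606} is sound. But the central step of your proof contains a genuine error: in this paper's convention the cable system attaches a cable $I_z$ to \emph{every} vertex $z$ of the graph, of length $\rho_z=1/(2\kappa_z)$, hence an \emph{infinite} cable whenever $\kappa_z=0$. The new vertices created by the subdivision -- the points $z_k(e)$ in the interior of an edge, and all but the last point $z_k(x)$ on a killing cable, as well as the original vertex $x$ once $\kappa^A_x$ has been set to $0$ -- all have zero killing under $\kappa^A$, so in $\tilde{\G}^{A}$ each of them carries a new infinite cable that simply does not exist in $\tilde{\G}$. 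Consequently $\tilde{\G}^{A}$ is \emph{not} canonically isometric to $\tilde{\G}$: one only has the proper inclusion $\tilde{\G}=\tilde{\G}^{A}\setminus I$, where $I$ is the union of these extra cables (this is exactly why \eqref{eq:GAsubsetG} asserts a subset relation rather than an equality). Your claims that $D(\tilde{\G},m)=D(\tilde{\G}^{A},m)$, that $\mathcal{E}_{\tilde{\G}}=\mathcal{E}_{\tilde{\G}^{A}}$ on a common space, and that the two diffusions are ``literally the same process'' therefore all fail: the two Dirichlet forms live on different $L^2$-spaces, and the diffusion under $P^{\tilde{\G}^{A}}_x$ makes excursions into $I$ which the diffusion under $P^{\tilde{\G}}_x$ cannot make.

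As a result, \eqref{lawGA} does not ``collapse to a triviality''; it is precisely the point requiring an argument. One must show that the excursions of the $\tilde{\G}^{A}$-diffusion into the extra cables $I$ (which a.s.\ return to their starting vertex, by recurrence of one-dimensional Brownian motion, but consume time) do not affect the trace on $G^A$, because the time change defining $X^{G^A}$ is blind to the time spent in $I$. The paper handles this by the trace-form reasoning around \eqref{dirge} (Theorem 6.2.1 in \cite{MR2778606}): the trace of the $\tilde{\G}^{A}$-diffusion on $\tilde{\G}=\tilde{\G}^{A}\setminus I$ has law $P^{\tilde{\G}}_x$, and \eqref{lawGA} then follows by composing time changes, in view of \eqref{traceonG}. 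Your argument can be repaired by inserting exactly this step, but as written the identification on which everything rests is false in the paper's setup.
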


\begin{proof}
We first introduce the weights $\lambda^A$ and the killing measure $\kappa^A$. For each $e=\{x_0,x_1 \}\in E$, let $A\cap I_e=\{z_1(e),\dots, z_{n-1}(e)\}$, where $n = n(e) \geq 1$ is such that $n-1=|A\cap I_e|$ and the $z_{k}(e)$'s are labeled by order of appearance as one traverses the (open) edge $I_e$ from, say, $x_0$ to $x_1$ (the underlying choice of orientation of $e$ will not affect the definition of $\lambda^A, \kappa^A$ in \eqref{eq:enh1} below). For later convenience, we set $z_0(e)=x_0$ and $z_n(e)=x_1$, and drop the argument $e$ in the sequel whenever no risk of confusion arises. Similarly, for $x \in G$, we enumerate $A\cap I_x=\{z_1(x),\dots, z_{n-1}(x)\}$ (with $n=n(x)\in{\N\cup\{\infty\}}$ such that $n-1=|A\cap I_{x}|$ if $|A\cap I_{x}|<\infty,$ and $n=\infty$ otherwise) according to increasing distance from $x$, and set $z_0(x)=x$. We then define, for $z, z' \in G^A = G\cup A$, 
\begin{equation}
\label{eq:enh1}
\begin{split}
\lambda_{z,z'}^{A}&=\begin{cases}
\frac{1}{2\rho_{\tilde{\G}}(z,z')},& \text{if } \{z,z'\}=\{z_{k-1}(v),z_k(v)\} \text{} \text{ for some }v \in E\cup G \text{ and } k \geq 1,\\
0,&\text{otherwise,}
\end{cases}\\
\kappa_z^{A}&=\begin{cases}
\frac{\kappa_{x}}{1-2\kappa_{x}\rho_{\tilde{\G}}(x,z)},&\text{if $x= z_{n-1}(x)$ for some $x \in G$ (with $n=n(x)<\infty$)},
\\0,&\text{otherwise}.
\end{cases}
\end{split}
\end{equation}
Thus, each edge $e\in E$ is replaced by a linear chain of $n=n(e)$ edges $\{ z_{k-1}, z_k \}$, $1\leq k \leq n$, with weights $\lambda_{ z_{k-1}, z_k}^{A}$, and similarly a chain of $n(x)-1$ edges is attached to each $x \in G$, with killing $\kappa_{ z_{n-1}(x)}^{A}$ at its `dangling' end. By \eqref{eq:enh1} and \eqref{deflambdarho}, for all $e=\{ x_0,x_1\} \in E$ and $x \in G$,
\begin{equation}
\label{eq:enh1.1}
\begin{split}
&\sum_{k=1}^{n(e)} \rho^A_{z_{k-1},z_k}= \sum_{k=1}^{n(e)} \rho_{\tilde{\G}}(z_{k-1},z_k)  =\rho_{\tilde{\G}}(x_0,x_1)=\rho_{x_0,x_1},\\
&\sum_{k=1}^{n(x)} \rho^A_{z_{k-1},z_k} + \frac1{2\kappa_{ z_{n(x)-1}}^{A}} =  \sum_{k=1}^{n(x)} \rho_{\tilde{\G}}(z_{k-1},z_k)  + \frac1{2\kappa_x}- \rho_{\tilde{\G}}(x,z_{n(x)-1})=\rho_x, \text{ if $n(x)<\infty$,}\\
&\sum_{k=1}^{\infty} \rho^A_{z_{k-1},z_k} =  \sum_{k=1}^{\infty} \rho_{\tilde{\G}}(z_{k-1},z_k)=\rho_x, \text{ if $n(x)=\infty.$}
\end{split}
\end{equation}
Therefore, $\tilde{\G}$ can be identified with the set $\tilde{\G}^A \setminus I$, where $\tilde{\G}^A$ is the cable system associated to $(G^A,\lambda^A,\kappa^A)$ and $I = I_1\cup I_2\cup I_3$, where 
\[
I_1= \bigcup_{e \in E} \bigcup_{ k=1}^{n(e)-1} I_{z_k(e)}, \quad I_2= \bigcup_{x \in G,n(x)<\infty} \bigcup_{ k=1}^{n(x)-2} I_{z_k(x)}
\quad \text{ and } \quad I_3 =\bigcup_{x \in G,n(x)=\infty} \bigcup_{ k=1}^{\infty} I_{z_k(x)}.
\]
By a similar reasoning as detailed below around \eqref{dirge}, it then follows that for all $x\in{\tilde{\G}}$ (viewed as a subset of $\tilde{\G}^A$), the law of the trace of $X$ on $\tilde{\G}$ under $P^{\tilde{\G}^{A}}_x$ is $P^{\tilde{\G}}_x.$ In view of \eqref{traceonG}, the claim \eqref{lawGA} then follows.  
\end{proof}

By slightly adapting the above arguments, one defines the graph $(\overline{G}^M,\bar{\lambda}^M,\bar{\kappa}^M)$ alluded to at the beginning of Section~\ref{sec:intro}, see above \eqref{deflambdarho}, as follows. Given $\G=(\overline{G},\bar{\lambda},\bar{\kappa})$, possibly with $\bar{\kappa}_x =\infty$ for some $x \in \overline{G}$, let
\begin{equation}
\label{eq:0bcreduction1new}
M\stackrel{\text{def.}}{=}\{ a : \text{ midpoint of $I_e$ for some $e \in E_{\bar{\kappa}}$}\}
\end{equation}
where $   E_{\bar{\kappa}}=\big \{\{x,y\}:\,x,y \in \overline{G},\,\bar{\lambda}_{x,y}>0,\bar{\kappa}_x=\infty\text{ and }\bar{\kappa}_y<\infty \big\}$ and
$I_e$ is an interval isomorphic to the open interval $(0,1/(2\bar{\lambda}_{x,y}))$ glued at $0$ to $y$, with boundary $\{x,y\}$.
 Now, by a small extension of Lemma \ref{GA}, one constructs from $\G=(\overline{G},\bar{\lambda},\bar{\kappa})$ the graph 
\begin{equation}
\label{eq:defGfinite}
(G,\lambda,\kappa) \stackrel{\text{def.}}{=}(\overline{G}^M,\overline{\lambda}^M,\overline{\kappa}^M)\text{ with }\overline{G}^M=\{ x \in \overline{G}: \bar{\kappa}_x < \infty\} \cup M\text{ and }M\text{ as in }\eqref{eq:0bcreduction1new},
\end{equation}
by treating $I_e$ for $e=\{ x,y\} \in E_{\bar{\kappa}}$ with $\bar{\kappa}_y <\infty$ in the same manner as $I_y$ in \eqref{eq:enh1} (whence $\lambda_{y,a}=\bar{\lambda}_{y,a}^M=2\bar{\lambda}_{y,x},$ $\kappa_y=\bar{\kappa}^M_y=0$ and $\kappa_a=\bar{\kappa}_a^M = 2\bar{\lambda}_{y,x}$ for $a\in M$ the midpoint of $I_{x,y}$), and keeping the same weights and killing measures for the other vertices. Plainly, $(\overline{G}^M,\overline{\lambda}^M,\overline{\kappa}^M)$ satisfies $\overline{\kappa}^M< \infty$. Similarly as below \eqref{traceonG}, it follows from Theorem 6.2.1.\ in \cite{MR2778606} that the law of the trace of $X$ (under
$P_{\cdot}^{\tilde{\G}}$) on $\{x\in{\overline{G}}:\,\bar{\kappa}_x<\infty\}$ is that of the continuous time Markov chain on $\overline{G}$ that jumps from $x \in \overline{G}$ to $y \in \overline{G}$ at rate $\bar{\lambda}_{x,y}$ and is killed at rate $\bar{\kappa}_x,$ hence justifying our choice of $(G,\lambda,\kappa)$ as in \eqref{eq:defGfinite} to define the cable system $\tilde{\G}.$ Note also that $(G,\lambda,\kappa)=\G$ when $\bar{\kappa}<\infty$ since $E_{\bar{\kappa}}=\varnothing$ in that case.

The following remark turns out handy in a couple of instances in this article.

\begin{Rk}[Generating any given cable system from a graph without killing] \label{R:nomassconversion} 
As an application of Lemma \ref{GA}, given $(G,\lambda, \kappa)$ and the corresponding cable system $\tilde{\G}$, one can naturally associate $\tilde{\G}$ to a triplet $(G', \lambda', \kappa')$ with $\kappa' \equiv 0$. To do so, one considers, for each $I_x$ with $\kappa_x \in (0,\infty)$ a sequence $z_n(x)$, $n \geq 0,$ converging to the open end of $I_x$ (note that such a sequence does not have an accumulation point in $\tilde{\G}$). Then, with $A= \{ z_n(x): n \geq 0, \, x\in G \text{ s.t. }\kappa_x \in (0,\infty)\}$, one defines $G'=G^A$ and $\lambda'=\lambda^A$ as given by Lemma~\ref{GA} (note that $\kappa^A \equiv 0$ by \eqref{eq:enh1}). By \eqref{eq:GAsubsetG}, one has that $\tilde{\G}\subset \tilde{\G}^A$ and $\tilde{\G}$ is in fact obtained from $\tilde{\G}^A$ by removing all (unbounded) cables $I_x$, $x\in A$. In particular, combining this observation with the isomorphism \cite{MR2892408}, which holds on $(G',\lambda')$, one readily infers that \eqref{usualiso} holds for $\tilde{\G}$.
\end{Rk}

We now extend the definition of the equilibrium measure from \eqref{defeAcap} to the cable graph setting. When $K$ is a compact subset of $\tilde{\G},$ we define its \textit{exterior} boundary
\begin{equation}
\label{defpartialext}
    \hat{\partial}K=\left\{x\in{ K}:\,P_x\left(X_{L_K}=x,L_K>0\right)>0\right\},
\end{equation} 
where $L_K=\sup\{t>0:X_t\in{K}\}$ is the last exit time of $K,$ with the convention $\sup\varnothing=0.$ Note that $ \hat{\partial}K$ is finite since $K$ is bounded and $I_e$ contains at most two points of $ \hat{\partial}K$ for all $e\in{E\cup G}.$ Consider now any sets $K,\hat{K},A\subset \tilde{\G}$ such that
\begin{equation}
\label{defKhatA}
\text{$K$ is compact, $\hat{K}$ finite, $A$ has no accumulation point and }\hat{\partial} K\subset \hat{K}\subset (K\cap G^A).
\end{equation}
 For all $x,y\in{A},$ by \eqref{lawGA} as well as (1.56) in \cite{MR2932978} (and its straightforward adaptation to infinite transient weighted graphs; this also applies to subsequent references to \cite{MR2932978}) applied to the graph $\G^{A},$  noting that $L_{\hat{K}}=L_K$ a.s.\ and $\{L_{\hat{K}} >0 , X_{L_{\hat{K}}}=x\}=\{\overline{L}_{\hat{K},A} >0 , X^{G^A}_{\overline{L}_{\hat{K},A}^-}=x\}$ where $\overline{L}_{\hat{K},A}$ is the last exit time of $\hat{K}$ for $X^{G^A},$ the trace of $X$ (under $P_x^{\tilde{\G}}$) on $G^A,$ and $X^{G^A}_{\overline{L}_{\hat{K},A}^-}$ is the last vertex of $\hat{K}$ visited by $X^{G^A}$ before time $\overline{L}_{\hat{K},A},$
\begin{equation}
\label{exitequi1}
    P^{\tilde{\G}}_y(  L_K >0 , X_{L_{K}}=x)=g(y,x)e_{\hat{K},\G^{A}}(x).
\end{equation}
We now define the equilibrium measure of $K$ in $\tilde{\G}$ by 
\begin{equation}
\label{defequilibriumcable}
e_{K,\tilde{\G}}(x)\stackrel{\text{def.}}=e_{ \hat{\partial} K,\G^{ \hat{\partial} K}}(x)  1_{\{x\in{ \hat{\partial} K}\}},
\end{equation}
with $\G^{ \hat{\partial} K}$ as supplied by Lemma~\ref{GA} and the (discrete) equilibrium measure on the right-hand side as defined in \eqref{defeAcap}. For $K,\hat{K}$ and $A$ as in \eqref{defKhatA}, we then have that
\begin{equation}
\label{consistencyequilibrium}
e_{\hat{K},\G^{A}}(x)=e_{K,\tilde{\G}}(x)\text{ for all }x\in{A}.
\end{equation}
Indeed, \eqref{consistencyequilibrium} follows from \eqref{exitequi1} when $x\in{\hat{\partial}K},$ and both terms of \eqref{consistencyequilibrium} are equal to $0$ when $x\in{A\setminus\hat{\partial}K}$ by \eqref{exitequi1} and \eqref{defequilibriumcable}. In particular if $K\subset G,$ by \eqref{consistencyequilibrium} with $\hat{K}=K$ and $A=\varnothing,$ the definition \eqref{defequilibriumcable} of the equilibrium measure on the cable system coincides with the definition of the equilibrium measure from \eqref{defeAcap}. Moreover, \eqref{consistencyequilibrium} can be used to obtain a description of the equilibrium measure purely in terms of the diffusion $X,$ instead of using the equilibrium measure on the discrete graph $\G^{\hat{\partial}K}$ as in \eqref{defequilibriumcable}. Indeed, denoting by $B_{\rho}(x,\eps)$ the ball centered at $x\in{\tilde{\G}}$ with radius $\eps\geq0$ for the distance $\rho$ introduced above Section \ref{subsec:diff}, which is well defined for small enough $\eps,$ one has
\begin{equation}
\label{eq:equiviaX}
e_{K,\tilde{\G}}(x)=\lim_{\eps\rightarrow0}\frac{d_x}{2\eps}P_x(L_K<H_{\partial B_{\rho}(x,\eps)})\text{ for all }x\in{\hat{\partial}K},
\end{equation}
where $d_x$ is the degree of $x$ if $x\in{G},$ and $d_x=2$ otherwise. In order to prove \eqref{eq:equiviaX}, one uses \eqref{consistencyequilibrium} with $A=\partial B_{\rho}(x,\eps)\cup \hat{\partial}K$ and $\hat{K}=A\cap K,$ and \eqref{defeAcap}, noting that $\lambda_x^A= d_x/(2\eps)$ by \eqref{eq:enh1} and that $\tilde{H}_K(X^{G^A})=\infty$ if and only if $L_K<H_{\partial B_{\rho}(x,\eps)}$ for $\eps$ small enough. Actually, the equality \eqref{eq:equiviaX} thus still holds when removing the limit as $\eps\rightarrow0,$ for small enough $\eps.$ Moreover, we obtain from \eqref{exitequi1} and \eqref{consistencyequilibrium} that
\begin{equation}
\label{exitequi}
    P^{\tilde{\G}}_y(  L_K >0 , X_{L_{K}}=x)=g(y,x)e_{K,\tilde{\G}}(x), \text{ for all $x,y\in{\tilde{\G}}$.}
\end{equation}
The identity \eqref{exitequi} is reminiscent of the equilibrium measure for the usual Brownian motion (on $\R^d$, with suitable killing when $d=1,2$), see for instance Proposition 3.3 in \cite{MR1717054}. In fact, \eqref{exitequi} (or \eqref{eq:equiviaX}) could be used instead of \eqref{consistencyequilibrium} as defining $e_{K,\tilde{\G}}(\cdot)$.

The capacity of a compact set $K\subset \tilde{\G}$ is defined as the total mass of the equilibrium measure, 
\begin{equation}
\label{defcap}
\mathrm{{\rm cap}}_{\tilde{\G}}(K)\stackrel{\mathrm{def.}}{=}\sum_{x\in{ \hat{\partial} K}}e_{K,\tilde{\G}}(x).
\end{equation}
When there is no risk of ambiguity, we will simply write $e_K$, $\mathrm{{\rm cap}}(K)$ instead of $e_{K,\tilde{\G}}$, $\mathrm{{\rm cap}}_{\tilde{\G}}(K)$.

 Using  \eqref{lawGA}, \eqref{defequilibriumcable}, and \eqref{consistencyequilibrium}, we can now extend a variety of useful results on equilibrium measures from the discrete case to $\tilde{\G}.$  
By (an adaptation of) \cite[(1.57)]{MR2932978}, one easily shows the following characterization of the capacity in terms of a variational problem as 
\begin{align}\label{variational}
\begin{split}
\mathrm{{\rm cap}}(K)=\Big(\inf_{\mu}\sum_{x,y\in{\hat{K}}}g(x,y)\mu(x)\mu(y)\Big)^{-1},
\end{split}
\end{align}
for $K,\hat{K}\subset  \tilde{\G}$ as in \eqref{defKhatA} with $A=\hat{K},$
where the infimum is over all probability measures $\mu$ on $\hat{K},$ see e.g.\ Proposition 1.9 in \cite{MR2932978}.
In view of \eqref{consistencyequilibrium}, when $K\subset K'$ are two compacts of $\tilde{\G},$  using (1.59) in \cite{MR2932978}, one obtains the `sweeping identity' 
\begin{equation}
\label{swappinglemma}
P_{e_{K'}}(X_{H_K}=x,H_K<\zeta)=e_K(x)\text{ for all }x\in\tilde{\G},
\end{equation}
where $H_K=\inf\{t\geq0:\,X_t\in{K}\},$ with the convention $\inf\varnothing=\zeta.$
In particular, summing \eqref{swappinglemma} over $x\in{{\partial K}}$ yields the monotonicity property
\begin{equation}
\label{capincreasing}
\mathrm{{\rm cap}}(K)\leq\mathrm{{\rm cap}}(K'), \text{ for $K\subset K'$ compacts of $\tilde{\G}.$}
\end{equation} 

 We now proceed to extend the notion of capacity to closed (not necessarily bounded) sets with finitely many components, cf.\ \eqref{limitcap} below, which will turn out helpful in the proof of Lemma \ref{LemmaapproGFF} below. For any measurable function $f:\tilde{\G}\rightarrow\R$ and $K$ a compact subset of $\tilde{\G},$ the harmonic extension $\eta^f_K$ of $f$ on $K$ is defined as
\begin{equation}
\label{harmoK}
    \eta_K^f(x)\stackrel{\mathrm{def.}}{=}\sum_{y\in{\partial K}}P_{x}(X_{{H}_{K}}=y,H_{K}<\zeta)f(y) \quad \text{ for all }x\in{\tilde{\G}}.
\end{equation}
Note that the sum in \eqref{harmoK} is well defined since for each $x\in{\tilde{\G}}$ the set $\partial_{x}K\stackrel{\mathrm{def.}}{=}\{y\in{\partial K}:\,P_{x}(X_{{H}_{K}}=y,H_{K}<\zeta)>0\}$ contains at most two points per edge of $\tilde{\G}$ intersecting $K,$ and hence is finite. In the sequel, a decreasing sequence of compacts $(K_n)_{n\in\N}$ is said to decrease to a compact $K$ if $K=\bigcap_{n\in \mathbb{N}} K_n$.
Moreover, in a slight abuse of notation, we say that an increasing sequence of compacts $(K_n)_{n\in\N}$ increases to a compact $K$ if $K$ is the closure of $\bigcup_{n\in \mathbb{N}} K_n$ (later on, this notion permits to assert for instance that if ${E}^{\geq h}(x_0)$ is compact, cf.\ \eqref{Ehx0}, the clusters ${E}^{\geq h'}(x_0)$ increase to ${E}^{\geq h}(x_0)$ as $h' \searrow h$). The following convergence result for harmonic extensions will be useful.

\begin{Lemme}
\label{approxharmo}
Let $f:\tilde{\G}\rightarrow\R$ be a continuous function 
and $K_n$, $n\in\N$, as well as $K$ be compact subsets of $\tilde{\G}$ such that $(K_n)_{n\in\N}$ increases or decreases to $K.$ Then for all $x\in{\tilde{\G}}$,
\begin{equation}
\label{eq:harmextconv}
    \eta_{K_n}^f(x)\tend{n}{\infty}\eta_{K}^f(x).
\end{equation}
\end{Lemme}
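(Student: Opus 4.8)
The plan is to prove the convergence \eqref{eq:harmextconv} by analyzing the behavior of the diffusion $X$ started at a fixed $x\in\tilde{\G}$, distinguishing the two monotonicity regimes. The key observation is that the harmonic extension $\eta^f_K(x)$ in \eqref{harmoK} is the expectation $E_x[f(X_{H_K}) 1_{H_K<\zeta}]$ of $f$ evaluated at the first hitting location of $K$, on the event that $K$ is reached before the process dies. So \eqref{eq:harmextconv} is really a statement about the $P_x$-a.s.\ convergence of the random variables $f(X_{H_{K_n}}) 1_{H_{K_n}<\zeta}$ together with a domination argument, since $f$ is continuous and the $X_t$ live in a (locally compact) metric space where $f$ is bounded on the relevant compacts.

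First I would treat the decreasing case $K_n \searrow K = \bigcap_n K_n$. Here $H_{K_n} \leq H_{K_{n+1}} \leq H_K$, so $H_{K_n}$ increases to some limit $H_\infty \leq H_K$. On the event $\{H_K < \zeta\}$ I would argue by continuity of paths that $X_{H_{K_n}}$ converges to a point of $K$ and that in fact $H_\infty = H_K$ with $X_{H_\infty} = X_{H_K}$: the hitting points $X_{H_{K_n}}$ lie in $K_n$, accumulate on $\bigcap_n K_n = K$ by closedness, and the first time the path enters $K$ cannot be strictly later than $H_\infty$. The continuity of $f$ then gives $f(X_{H_{K_n}}) \to f(X_{H_K})$ pointwise on this event, and one controls the indicators $1_{H_{K_n}<\zeta}$ (which dominate $1_{H_K<\zeta}$) using that the extra contribution from paths hitting some $K_n$ but not $K$ before $\zeta$ vanishes in the limit. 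Dominated convergence, with dominating function $\|f\|_{\infty,\,\cup_n K_n}$, finishes this case.

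For the increasing case $K_n \nearrow K$ (with $K$ the closure of $\bigcup_n K_n$), the hitting times now satisfy $H_{K_n} \geq H_{K_{n+1}} \geq H_K$ and decrease to a limit $H_\infty \geq H_K$. The plan is again to show $H_\infty = H_K$ and $X_{H_\infty} = X_{H_K}$ on $\{H_K<\zeta\}$: by right-continuity of paths, $X_{H_{K_n}} \to X_{H_\infty}$, and since each $X_{H_{K_n}} \in K_n \subset K$ the limit lies in $K$; a path that hits $K$ at time $H_K$ must approach $\bigcup_n K_n$, so for large $n$ the path is within any prescribed neighborhood of some $K_n$, forcing $H_\infty = H_K$. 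Continuity of $f$ and boundedness on the compact $K$ again yield the conclusion via dominated convergence.

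The main obstacle I anticipate is the careful handling of the hitting locations and the killing event near the boundary: because $X$ is a diffusion on the cable system with local structure on each interval $I_e$, I must ensure that the limiting hitting point genuinely lies on $\partial K$ (so that $f$ is evaluated where \eqref{harmoK} expects) and that there is no positive-probability discrepancy between $\{H_\infty<\zeta\}$ and $\{H_K<\zeta\}$. This is delicate precisely at points where $K_n$ approaches $K$ tangentially or where the process could be killed (exit through some $I_x$ with $\kappa_x>0$) in the limiting window $(H_K, H_\infty]$ or $[H_\infty, H_K)$. I would address this using the a.s.\ continuity of the paths $X_{|[0,\zeta)}$ built into the definition of $W^+_{\tilde{\G}}$, the fact that $\partial_x K$ contains at most two points per edge (so hitting points cannot oscillate pathologically), and the strong Markov property to rule out the process lingering on the boundary; the quasi-left-continuity of the diffusion associated to the Dirichlet form \eqref{Dirichlet} provides the cleanest justification that hitting times of increasing sets converge to the hitting time of the limit without a jump in the hitting location.
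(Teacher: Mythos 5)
Your proposal is correct in substance and reaches the same underlying probabilistic facts as the paper, but it organizes the argument differently. The paper works with the finite sets $\partial_x K$ and $\partial_x K_n$ directly: it groups the points of $\partial_x K_n$ according to their nearest point in $\partial_x K$, uses continuity of $f$ to replace $f(z)$ by $f(y)$ up to $\eps$ on each group, and then reduces the problem to showing that four hitting-probability discrepancies (e.g.\ $P_x(H_K<\zeta, H_{K_n}=\zeta)$ and $P_x(X_{H_K}=y, X_{H_{K_n}}\notin A_n^y,\dots)$) vanish. You instead rewrite $\eta^f_K(x)$ as $E_x[f(X_{H_K})1_{H_K<\zeta}]$ and run a pathwise argument ($H_{K_n}$ monotone, hitting locations converge by continuity of paths) followed by dominated convergence. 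The two routes are equivalent in difficulty: your version buys a cleaner statement of what must be proved (a.s.\ convergence of $f(X_{H_{K_n}})1_{H_{K_n}<\zeta}$), while the paper's version localizes the continuity of $f$ to the finite set $\partial_x K$ and avoids having to identify the limit of $X_{H_{K_n}}$ as a single random variable.

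Two points in your sketch must be made precise, and they are exactly the same points the paper leaves implicit in the assertion that ``each of these terms tends to $0$''. In the decreasing case you must rule out the event that $H_{K_n}<\zeta$ for every $n$ while $H_{K_n}\uparrow\zeta$ (so that the limit of the indicators would exceed $1_{H_K<\zeta}$); this follows because the hitting points $X_{H_{K_n}}$ all lie in the compact $K_1$, and by transience the last exit time of $K_1$ is a.s.\ finite while killed or exploding paths leave every compact before $\zeta$, forcing $\sup_n H_{K_n}<\zeta$ and hence, by continuity of paths and closedness of the $K_n$, $X_{\sup_n H_{K_n}}\in\bigcap_n K_n=K$. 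In the increasing case you must show that on $\{H_K<\zeta\}$ the path hits $\bigcup_n K_n$ immediately after $H_K$; this uses the strong Markov property at $H_K$ together with the fact that the diffusion started at a point of $\overline{\bigcup_n K_n}$ immediately enters $\bigcup_n K_n$ (a one-dimensional Brownian argument on the at most finitely many half-edges incident to $X_{H_K}$, along at least one of which $\bigcup_n K_n$ accumulates). You flag both issues and name appropriate tools, so I regard the proposal as a valid alternative write-up rather than as containing a gap; note only that quasi-left-continuity is the relevant tool for the \emph{decreasing}-sets case (where the stopping times increase), whereas in the increasing-sets case right-continuity of paths already handles the decreasing stopping times.
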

\begin{proof}
Fix some $x\in{\tilde{\G}}.$ For all $y\in{\partial_x K},$ let  $A_{n}^y=\{z\in{\partial_x{K_n}}:\,d(z,y)\leq d(z,y')\text{ for all }y'\in{\partial_x K}\}.$ Then $\max_{z\in{A_n^y}}d(z,y)\tend{n}{\infty}0$ for all $y\in{\partial_x K},$ and there exists an integer $N$ such that for all $n\geq N,$ the set $(A_{n}^y)_{y\in\partial_x K}$ is a partition of $\partial_x K_n.$ By \eqref{harmoK}, for all $x\in{\tilde{\G}}$ and $n\geq N$,
\begin{equation*}
    \eta_{K}^f(x)-\eta_{K_n}^f(x)=\sum_{y\in{\partial_x K}}\Big(P_x(X_{H_{K}}=y,H_{K}<\zeta)f(y)-\sum_{z\in{A_{n}^y}}P_x(X_{H_{K_n}}=z,H_{K_n}<\zeta)f(z)\Big).
\end{equation*}
By continuity, for any $\eps>0$ there exists $N'\geq N$ such that for all $n\geq N',$ $y\in{\partial_x K}$ and $z\in{A_n^y}$ we have $|f(y)-f(z)|\leq\eps.$ Therefore, for all $x\in{\tilde{\G}}$ and $n\geq N',$
\begin{equation*}
    |\eta_{K}^f(x)-\eta_{K_n}^f(x)|\leq \eps+\sum_{y\in \partial_x K}|f(y)| \cdot \big|P_x(X_{H_{K}}=y,H_{K}<\zeta)-P_x(X_{H_{K_n}}\in{{A_{n}^y}},H_{K_n}<\zeta)\big|.
\end{equation*}
Since for all $x\in{\tilde{\G}}$ and $y\in{\partial_x K}$ the absolute value of the difference on the right-hand side is bounded by
\begin{align*}
 &P_x(X_{H_{K}}=y,X_{H_{K_n}}\notin A_{n}^y,H_{K}<\zeta,H_{K_n}<\zeta)+P_x(H_{K}<\zeta,H_{K_n}=\zeta)
    \\&\qquad +P_x(X_{H_{K}}\neq y,X_{H_{K_n}}\in A_{n}^y,H_{K}<\zeta,H_{K_n}<\zeta)+P_x(H_{K}=\zeta,H_{K_n}<\zeta)
\end{align*}
and each of these terms tends to $0$ as $n \to \infty$, \eqref{eq:harmextconv} follows as $f$ is uniformly bounded on compacts.
\end{proof}
An interesting and immediate consequence of Lemma \ref{approxharmo} and \eqref{swappinglemma} is the following: if 
 $K_n$, $n\in\N$, and $K$ are compacts of $\tilde{\G}$ such that $(K_n)_{n\in\N}$ increases or decreases to $K,$ consider the quantity $\sum_{x\in \partial K}e_K(x)\eta_{K_n}^1(x)$ in case the $K_n$ are increasing and $\sum_{x\in \partial K}e_{K_1}(x)\eta_{K_n}^1(x)$ in case the $K_n$ are decreasing, respectively (which both equal $\text{cap}(K_n)$ by virtue of \eqref{swappinglemma}). We can then take $n \to \infty$ while applying \eqref{eq:harmextconv} with $f=1$ to obtain that
\begin{equation}
    \label{limitcap}
    \lim\limits_{n\rightarrow\infty}\mathrm{{\rm cap}}(K_n)=\mathrm{{\rm cap}}(K).
\end{equation}
Hence, we can extend the definition of the capacity to any closed set $A\subset\tilde{\G}$ by setting
\begin{equation}
\label{defcapinfinity}
\mathrm{{\rm cap}}(A)=\lim\limits_{n\rightarrow\infty}\mathrm{{\rm cap}}(A\cap K_n),
\end{equation}
where $(K_n)_{n\in\N}$ is any increasing sequence of compacts of $\tilde{\G}$ exhausting $\tilde{\G}.$ This limit exists and does not depend on the choice of the sequence $(K_n)_{n\in\N}$ by \eqref{capincreasing}, and it is consistent with the existing definition of capacity for compacts, cf.\ \eqref{defcap}, by means of \eqref{limitcap}.

\subsection{Varying killing measure and the cables $I_x$}
\label{S:I_x}

In the sequel, it will repeatedly be useful to compare the diffusion $X$ on $\tilde{\G}$ for varying killing measure. In particular, this comprises `infinite-volume' limits, in which all but finitely many $x\in \overline{G}$ initially satisfy $\bar{\kappa}_{x}=\infty$, and $\bar{\kappa}$ is sequentially reduced, see \eqref{eq:def_approx} below. Consider the family of graphs $( {\G}_{\bar{\kappa}})_{\bar{\kappa}}$, where ${\G}_{\bar{\kappa}}=(\overline{G}, \bar{\lambda}, \bar{\kappa})$, for fixed $\overline{G}$ and $\bar{\lambda}$ and varying killing measure $\bar{\kappa} \in [0,\infty]^{\overline{G}}$. Let $\tilde{\G}_{\bar{\kappa}}$ be the cable system associated to ${\G}_{\bar{\kappa}}$ (cf.\ below \eqref{deflambdarho}). In view of \eqref{eq:0bcreduction1new}, \eqref{eq:defGfinite}, one can interpret
\begin{equation}
\label{eq:graphinclusion}
\tilde{\G}_{\bar{\kappa}'}\subset \tilde{\G}_{\bar{\kappa}} \quad \text{ if } \bar{\kappa}' \geq \bar{\kappa},
\end{equation}
where $\bar{\kappa}' \geq \bar{\kappa}$ means $\bar{\kappa}'_x \geq \bar{\kappa}_x$ for all $x \in \overline{G}.$ We then set, under $P^{\tilde{\G}_{\bar{\kappa}}}_x,$ $x\in{\tilde{\G}_{\bar{\kappa}'}}(\subset\tilde{\G}_{\bar{\kappa}}),$
\begin{equation}
\label{eq:zeta}
X_t^{\bar{\kappa}'}=
\begin{cases}
X_t, \text{ if } t< \zeta_{\bar{\kappa}'}\\
\Delta, \text{ if } t\geq \zeta_{\bar{\kappa}'} 
\end{cases}
\text{ where } \zeta_{\bar{\kappa}'} = \inf \{ t \geq 0 : X_t \notin  \tilde{\G}_{\bar{\kappa}'}  \}.
\end{equation}
By Theorem 4.4.2.\ in \cite{MR2778606}, the Dirichlet form associated to $X_t^{\bar{\kappa}'}$ is $\mathcal{E}_{\tilde{\G}_{\bar{\kappa}'}},$ and so
\begin{equation}
\label{lawkappa'vskappa}
    \text{the law of $X_t^{\bar{\kappa}'}$ under $P^{\tilde{\G}_{\bar{\kappa}}}_x$ is $P^{\tilde{\G}_{\bar{\kappa}'}}_x$ for all $x\in{\tilde{\G}_{\bar{\kappa}^{'}}}.$}
\end{equation}

We now briefly compare the above setup to existing definitions of the metric graph $\tilde{\G}$ and its associated diffusion $X$, which do not usually involve attaching cables $I_x$ to the vertices $x \in G$ (see e.g.\ Section~5 of \cite{MR2570665}, Section~2 of \cite{MR3152724} or Section~2 of \cite{MR3502602}).
Upon considering a suitable trace process in the present context, see \eqref{dirge} below, these two descriptions are essentially equivalent and in particular, they lead to the same notion of capacity for most sets of interest. Most important to our investigations is the feature that the cables $I_x$ provide natural embeddings as $\kappa$ varies, see \eqref{eq:graphinclusion}--\eqref{eq:zeta} above. This will be useful for approximation purposes, see \eqref{eq:def_approx} and Lemmas \ref{LemmaapproGFF} and \ref{approximationprop} below, as well as to derive \eqref{eq:laplacecap} and \eqref{eqcouplingintergffszn} in the case $\kappa\neq 0$.
We define $\tilde{\G}^{\Ed}$ as the closed subset of $\tilde{\G}$ consisting of the closure of the union of the intervals $I_e,$ $e\in{E}$, (or, in other words, the subset of $\tilde{\G}$ obtained upon removing the intervals $I_x$, $x \in G$) and denote by $X^{\tilde{\G}^{\Ed}}$ the trace on $\tilde{\G}^{\Ed}$ of $X.$  
 One can prove by Theorem 6.2.1.\ in \cite{MR2778606} that the Dirichlet form on $L^2_{\Ed}(\tilde{\G}^{\Ed},m_{|\tilde{\G}^{\Ed}})= \{f\in{L^2(\tilde{\G}^{\Ed},m_{|\tilde{\G}^{\Ed}})}:\sum_{x\in{{G}}}\kappa_xf(x)^2<\infty\}$ associated to $X^{\tilde{\G}^{\Ed}}$ is
\begin{equation}
\label{dirge}
    \mathcal{E}_{\tilde{\G}^{\Ed}}(f,g)\stackrel{\text{def.}}{=}\frac12(f',g')_{m_{|\tilde{\G}^{\Ed}}}+\sum_{x\in{G}}\kappa_xf(x)g(x) \text{ for }f,g\in{D({\tilde{\G}^{\Ed}},m_{|\tilde{\G}^{\Ed}})\cap L^2_{\Ed}({\tilde{\G}^{\Ed}},m_{|\tilde{\G}^{\Ed}})},
\end{equation}
where we recall that the space $D$ had been introduced below \eqref{eq:quadraticform}. If $\kappa\equiv0$ on $\mathcal{G}$, the process $X^{\tilde{\G}^{\Ed}}$ thus corresponds to the usual diffusion on the cable system $\tilde{\G}^{\Ed}.$ If $\kappa\geq0$  on $\mathcal{G}$ (i.e.\ $\tilde{\G}=\tilde{\G}_{\kappa}$), it follows from Theorems 6.1.1.\ and A.2.11.\ in \cite{MR2778606} that $X^{\tilde{\G}^{\Ed}}$ has the same law under $P^{\tilde{\G}}_x$ as the diffusion $X^{\tilde{\G}^{\Ed}_0}$ under $P^{\tilde{\G}_{0}}_x$ (where $\tilde{\G}_{0}= \tilde{\G}_{\kappa \equiv 0}$) killed at time
$    \zeta_{\kappa}^{\Ed}=\inf\{t<\zeta_0:\,\sum_{x\in{{G}}}\ell_x(t)\kappa_x\geq\xi\}$,
where $\xi$ is an independent exponential variable with parameter $1$ (with the convention $\inf\varnothing=\zeta_0$). The latter is the process studied e.g.\ in Section~2 of~\cite{MR3502602}. Moreover, the trace of $X^{\tilde{\G}^{\Ed}}$ (under $P^{\tilde{\G}}_x$) on $G$ has the same law as $Z$, hence the local times $(\ell_y(t))_{y\in{\tilde{\G}^{\Ed}},t\geq0}$ have the same law under $P^{\tilde{\G}}_x$ as those of the process $X^{\tilde{\G}_0^{\Ed}}$ (killed at time $\zeta_{\kappa}^{\Ed}$) under $P^{\tilde{\G}_0}_x,$ i.e. the local times of the process introduced in \cite{MR3502602}. 

Consequently, for compact $K\subset\tilde{\G}^{\Ed}$ one could have defined a notion $\mathrm{{\rm cap}}_{\tilde{\G}^{\Ed}}(K)$ similarly as in \eqref{defequilibriumcable} and \eqref{defcap}, but starting from the process $X^{\tilde{\G}^{\Ed}}$ and considering suitable enhancements of $\tilde{\G}^{\Ed}$, resulting in $\mathrm{{\rm cap}}_{\tilde{\G}^{\Ed}}(K)= \mathrm{{\rm cap}}_{\tilde{\G}}(K)$ for all $K\subset\tilde{\G}^{\Ed}$. This can be further strengthened when $\kappa\equiv0$, as asserted in the following lemma, which records the capacity of the cables $I_x$ for later purposes.

\begin{Lemme}
\label{capged}
   For all $x\in{{G}},$ the following dichotomy holds:
\begin{equation}
    \label{capIx}
    \text{ if }\kappa_x>0,\text{ then }\mathrm{{\rm cap}}(\overline{I}_x)=\infty,\text{ and if }\kappa_x=0,\text{ then }\mathrm{{\rm cap}}(\overline{I}_x)=\mathrm{{\rm cap}}(\{x\}).
\end{equation}
Moreover, if $\kappa\equiv0,$ then for all connected and closed sets $A\subset\tilde{\G}$ such that $A\cap\tilde{\G}^{\Ed}\neq\varnothing,$ one has $\mathrm{{\rm cap}}_{\tilde{\G}}(A)=\mathrm{{\rm cap}}_{\tilde{\G}^{\Ed}}(A\cap\tilde{\G}^{\Ed}).$ 
\end{Lemme}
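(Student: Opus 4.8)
The plan is to treat the three assertions by a single mechanism, reducing each to the discrete equilibrium measure \eqref{defeAcap} via the enhancement of Lemma~\ref{GA} and the consistency relation \eqref{consistencyequilibrium}, combined with the sweeping identity \eqref{swappinglemma}. The guiding picture is that a pendant cable $I_x$ is a dead end: when $\kappa_x=0$ the diffusion entering it returns to $x$ almost surely and it carries no equilibrium mass, whereas when $\kappa_x>0$ the killing at its finite end makes the equilibrium mass, hence the capacity, blow up. \emph{For $\kappa_x>0$}, I would fix $r\in(0,\rho_x)$, let $w$ be the point of $I_x$ at $\rho_{\tilde{\G}}$-distance $r$ from $x$ and $K_r=\{x+t\cdot I_x:\,t\in[0,r]\}$, a compact interval with $w\in\widehat\partial K_r$ (from $w$ the diffusion may reach the killing end and die before returning). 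Applying \eqref{consistencyequilibrium} with $A=\{w\}$, $\hat K=\{x,w\}=K_r\cap G^A$ gives $e_{K_r,\tilde{\G}}(w)=e_{\{x,w\},\G^{\{w\}}}(w)$, and \eqref{eq:enh1}--\eqref{eq:enh1.1} give the enhanced killing rate $\kappa^{\{w\}}_w=\kappa_x/(1-2\kappa_x r)=1/(2(\rho_x-r))$. Using the elementary bound $e_{A',\G'}(z)\geq\kappa_z^{\G'}$ for $z\in A'$ (killing at the first jump precludes any return to $A'$, so $P_z(\tilde{H}_{A'}(\hat{Z})=\infty)\geq\kappa_z^{\G'}/\lambda_z^{\G'}$ in \eqref{defeAcap}), I get $\mathrm{cap}(K_r)\geq 1/(2(\rho_x-r))\to\infty$ as $r\uparrow\rho_x$; since $K_r\uparrow\overline{I}_x$, \eqref{limitcap}/\eqref{defcapinfinity} then yield $\mathrm{cap}(\overline{I}_x)=\infty$.

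\emph{For $\kappa_x=0$} (so $\rho_x=\infty$ and $I_x$ is killing-free), I would run the same enhancement with a point $w$ at distance $R$ from $x$: now \eqref{eq:enh1} gives $\kappa^{\{w\}}_w=0$, and $w$ has the single discrete neighbour $x$ (the remaining infinite cable is a killing-free dead end, contributing no discrete edge), so the skeleton from $w$ returns immediately and $e_{K_R,\tilde{\G}}(w)=0$. Hence $e_{K_R,\tilde{\G}}$ is carried by $\{x\}$, and \eqref{swappinglemma} with $\{x\}\subset K_R$ reads $e_{\{x\}}(x)=P_{e_{K_R}}(X_{H_{\{x\}}}=x,\,H_{\{x\}}<\zeta)=e_{K_R,\tilde{\G}}(x)$ (started from $x$, $H_{\{x\}}=0<\zeta$). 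This gives $\mathrm{cap}(K_R)=\mathrm{cap}(\{x\})$ for every $R$, and letting $R\to\infty$ in \eqref{defcapinfinity} yields $\mathrm{cap}(\overline{I}_x)=\mathrm{cap}(\{x\})$.

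\emph{For $\kappa\equiv0$ and general $A$}, writing $A_0=A\cap\tilde{\G}^{\Ed}$, I would first use connectedness to see that each nonempty $A\cap\overline{I}_x$ is a subinterval $[x,b_x]$ containing $x$ (a piece of $A$ inside $I_x$ not reaching $x$ could connect neither to $A_0\neq\varnothing$ nor, by recurrence of the killing-free cable, to infinity). As in the previous case, the diffusion can leave $A$ permanently only by escaping to infinity through $\tilde{\G}^{\Ed}$, hence only through $A_0$, so $X_{L_A}\in A_0$ and $e_{A,\tilde{\G}}$ is supported on $A_0$ (the tips $b_x$ carrying no mass). Feeding this into \eqref{swappinglemma} with $A_0\subset A$ and using $H_{A_0}=0$ on $\mathrm{supp}(e_{A,\tilde{\G}})$ would give $e_{A_0,\tilde{\G}}=e_{A,\tilde{\G}}$, hence $\mathrm{cap}_{\tilde{\G}}(A)=\mathrm{cap}_{\tilde{\G}}(A_0)$; combined with the equality $\mathrm{cap}_{\tilde{\G}}(K)=\mathrm{cap}_{\tilde{\G}^{\Ed}}(K)$ for $K\subset\tilde{\G}^{\Ed}$ recorded just before \eqref{capIx}, this gives the claim. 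Unbounded $A$ would be handled by exhausting $\tilde{\G}$ with compacts $K_n$ whose intersection with each cable is a subinterval $[x,\cdot]$ and passing to the limit via \eqref{defcapinfinity}.

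The main obstacle I anticipate is the precise identification of the exterior boundary $\widehat\partial$ and of the support of the equilibrium measure on the pendant cables: one must argue rigorously that a killing-free dead-end cable carries no equilibrium mass — the point being that one-dimensional Brownian motion returns to $x$ almost surely and never escapes to infinity along such a cable — while a cable ending in killing forces $e\to\infty$ at its tip. The surrounding bookkeeping (the passage to non-compact sets via \eqref{defcapinfinity}, and checking that connectedness makes each $A\cap\overline{I}_x$ an interval) should be routine by comparison.
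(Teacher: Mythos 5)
Your proposal is correct and follows essentially the same route as the paper's proof: enhancement at a single point of $I_x$ via Lemma~\ref{GA}, the blow-up of the enhanced killing rate $\kappa^{\{w\}}_w=1/(2(\rho_x-r))$ when $\kappa_x>0$, recurrence of the killing-free dead-end cable when $\kappa_x=0$, and passage to the limit via \eqref{defcapinfinity}. The only cosmetic difference is that you route the second and third parts through the sweeping identity \eqref{swappinglemma}, where the paper computes the non-return probabilities of the discrete skeleton directly from \eqref{defeAcap} and \eqref{defequilibriumcable}; both are valid.
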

\begin{proof}
   We first show \eqref{capIx}. If $\kappa_x>0,$ then for all $t\in{(0,\rho_x)},$ writing $y_t=x+(\rho_x-t)\cdot I_x$ (see the beginning of Section \ref{s:usefulresults} for notation), 
    we see by \eqref{eq:enh1} that $\kappa_{y_t}^{\{{y_t}\}}=\frac{1}{2t}.$ Let $I_x^t=\{ x+s\cdot I_x : 0\leq s \leq \rho_x -t\}$. 
    Then   by \eqref{defequilibriumcable}      $e_{I_x^t}(y_t)=\lambda_{y_t}^{\{y_t\}}P_{y_t}^{\G^{\{{y_t}\}}}(\tilde{H}_{\{x,y_t\}}=\infty)=\kappa^{\{y_t\}}_{y_t}
    ,$ and so we see that $\mathrm{{\rm cap}}(I_x^t)\geq e_{I_x^t}(y_t)=\frac{1}{2t}.$ Hence, by \eqref{defcapinfinity}, we obtain $\mathrm{{\rm cap}}(\overline{I}_x)=\infty$  as $t \downarrow 0$. 
    
    If $\kappa_x=0,$ then keeping the same notation, we have for all $t\in{(0,\infty)}$ that $P_{y_t}^{\G^{\{{y_t}\}}}(\tilde{H}_{I_x^t}=\infty)=0,$ since $X$ behaves like a Brownian motion on $I_x$ and hence always return to $I_x^t$ in finite time. Moreover $P_{x}^{\G^{\{{y_t}\}}}(\tilde{H}_{I_x^t}=\infty)=P_{x}^{\G^{\{{y_t}\}}}(\tilde{H}_{\{x\}}=\infty).$ Therefore by \eqref{defequilibriumcable}, we get $\mathrm{{\rm cap}}(I_x^t)=e_{I_x^t}(x)+0=e_{\{x\}}(x)=\mathrm{{\rm cap}}(\{x\}),$ and by \eqref{defcapinfinity} we obtain that $\mathrm{{\rm cap}}(\overline{I}_x)=\mathrm{{\rm cap}}(\{x\})$.

    Suppose now that $\kappa\equiv0,$ and let $K\subset\tilde{\G}$ be a connected and compact set such that $K\cap\tilde{\G}^{\Ed}\neq\varnothing.$ Then since $X$ cannot be killed via $I_x$ for all $x\in{G},$ we have $\hat{\partial} (K\cap\tilde{\G}^{\Ed})=\hat{\partial}K$ and  for all $x\in{\hat{\partial} K}$
    \begin{equation*}
        e_{K\cap\tilde{\G}^{\Ed}}(x)=\lambda_x^{\hat{\partial} K}P^{\tilde{\G}^{\hat{\partial} K}}_x(\tilde{H}_{K\cap\tilde{\G}^{\Ed}}=\infty)=\lambda_x^{\hat{\partial} K}P^{\tilde{\G}^{\hat{\partial} K}}_x(\tilde{H}_K=\infty)=e_{K}(x),
    \end{equation*}
    from which the claim follows for such $K$, and for arbitrary closed connected sets by means of \eqref{defcapinfinity}.
\end{proof}

\begin{Rk} \label{R:removecables1}
The second part of Lemma \ref{R:removecables1} implies that, when $\kappa\equiv0,$ one can consider $\tilde{\G}^-$ instead of $\tilde{\G}$ and all our results, for instance \eqref{eqcouplingintergffszn} or \eqref{eq:laplacecap} for $\mathrm{cap}_{\tilde{\G}^-}(E^{\geq h}_-(x_0))$ instead of $\mathrm{cap}_{\tilde{\G}}(E^{\geq h}(x_0)),$ hold under the same conditions, where $E^{\geq h}_-(x_0)=E^{\geq h}(x_0)\cap\tilde{\G}^-$ is the connected component of $x_0$ in $\{x\in{\tilde{\G}^-}:\,\phi_x\geq h\}.$  Note that this is not true anymore when $\kappa \not\equiv 0$. Indeed for instance one has by \eqref{capIx} that ${\rm{cap}}_{\tilde{\mathcal{G}}}(\overline{I}_x)=\infty,$ yet, $\mathrm{{\rm cap}}_{\tilde{\G}^{\Ed}}(\overline{I}_x\cap\tilde{\G}^{\Ed})= {\rm{cap}}_{\mathcal{G}}(\{x\}) \leq \lambda_x < \infty.$ Therefore, one cannot simply replace $\mathrm{cap}_{\tilde{\G}}(E^{\geq h}(x_0))$ by $\mathrm{cap}_{\tilde{\G}^-}(E^{\geq h}_-(x_0))$ in \eqref{eq:laplacecap}, and, when considering $\tilde{\G}^-$ instead of $\tilde{\G},$ one has to change the isomorphism \eqref{eqcouplingintergffszn} to take into account the influence of the trajectories in the random interlacement process entirely included in one of the cables $I_x,$ $x\in{G}$ with $\kappa_x>0,$ possibly hitting the sign clusters, see Remark \ref{R:isomisom},\ref{eqcouplingintergffG-} for details.
\end{Rk}


\subsection{The Gaussian free field}
\label{subsec:GFF}
We now collect a few important properties of the Gaussian free field $(\phi_x)_{x\in{\tilde{\G}}}$ on the cable system~$\tilde\G$ defined in \eqref{defGFF}.  
We first recall its strong spatial Markov property and refer to Section~1 of \cite{MR3492939} for details. For any open set $O\subset\tilde{\G},$ we consider the $\sigma$-algebra $\mathcal{A}_O=\sigma({\phi}_x,\,x\in{O}),$ and for any compact $K\subset\tilde{\G}$ we define $\mathcal{A}_K^+=\bigcap_{\eps>0}\A_{K^\eps}$, where $K^\eps$ is the open $\eps$-ball around $K$ for the distance $d.$ We say that $\K$ is a compatible random compact subset of $\tilde{\G}$ if $\K$ is a compact subset of $\tilde{\G}$ with finitely many connected components and $\{\K\subset O\}\in{\A_O}$ for any open set $O\subset\tilde{\G}.$ We then define
\begin{equation}
\label{eq:defAK^+}
    \begin{split}
    \mathcal{A}_{\K}^+=\big\{A\in{\A_{\tilde{\G}}}:\,A\cap\{\K\subset K\}\in{\A_K^+}\ &\text{for all }K\subset\tilde{\G} \text{ which are compact}\\
    &\text{and the closure of their respective interiors}\big \}.
    \end{split}
\end{equation}
The Markov property now states that for any compatible random compact $\K,$
\begin{equation}
\label{Markov}
    \text{conditionally on }\mathcal{A}_{\K}^+,\ (\phi_x)_{x\in{\tilde{\G}}}\text{ is a Gaussian field with mean }\eta_{\K}^{\phi}\text{ and covariance }g_{\K^c},
\end{equation}
where $\eta_{\K}^{\phi}$ was defined in \eqref{harmoK} and $g_{\K^c}$ in \eqref{Greendef}. An application of the Markov property is that, conditionally on $(\phi_x)_{x\in{{G}}},$ if $e=\{y,z\}\in{E},$ the law of $(\phi_x)_{x\in{I_e}},$ is that of a Brownian bridge of length $\rho_e$ between $\phi_y$ and $\phi_z$ of a Brownian motion with variance $2$ at time $1,$ and these Brownian bridges are independent as $e$ varies. Similarly, conditionally on $(\phi_x)_{x\in{{G}}},$ one can describe the law of $(\phi_x)_{x\in{I_y}},$ as that of a Brownian bridge of length $\rho_y$ between $\phi_y$ and $0$ of a Brownian motion with variance $2$ at time $1$ if $\kappa_y>0,$ and as that of a Brownian motion starting in $\phi_y$ with variance $2$ at time $1$ if $\kappa_y=0,$ and all these Brownian bridges and Brownian motions are independent.  We refer to Section 2 of \cite{DrePreRod} for a proof of this result on $\Z^d,$ $d\geq3,$ which can easily be adapted to any transient graph. In particular, we have that
\begin{equation}
\label{phiedgeind}
\begin{array}{l}
    \text{conditionally on }(\phi_x)_{x\in{{G}}},\text{ the random fields }(\phi_x)_{x\in{I_e}}, e\in{E\cup{G},\text{ are}} 
    \\\text{independent, and for all }e\in{E\cup{G}}, \text{ the field }(\phi_{x})_{x\in{I_e}}\text{ only depends on }\phi_{|e},
    \end{array}
\end{equation}
where $\phi_{|e}=(\phi_x,\phi_y)$ if $e=\{x,y\}\in{E}$ and $\phi_{|e}=\phi_x$ if $e=x\in{{G}}.$ Moreover, using the exact formula for the distribution of the maximum of a Brownian bridge, see e.g.~\cite{MR1912205}, Chapter~IV.26, one knows that for all $e\in{E\cup{G}}$
\begin{equation}
\label{phionIx}
  \P^G(|\phi_z|>0\text{ for all }z\in{I_e}\,|\,\phi_{|e})=\big(1- p_e^{\G}(\phi)\big) 1_{e\in{E}},
\end{equation}
where for all $e=\{x,y\}\in{E}$ and $f:{G}\rightarrow\R,$
\begin{equation}
\label{defpef}
     p_e^{\G}(f)\stackrel{\mathrm{def.}}{=}p_e^{\G}(f,0)=\begin{cases}
    \exp\big(-2\lambda_{x,y}f(x)f(y)\big),&\text{ if }f(x)f(y)\geq0,
    \\1,&\text{ otherwise.}
    \end{cases}
\end{equation} 
A useful notation $ p_e^{\G}(f,g)$ will later be introduced and include \eqref{defpef} as a special case when $g=0$, see \eqref{defpe} below. 

\subsection{Random interlacements}
\label{subsec:RI}
We now briefly introduce random interlacements on the cable system $\tilde{\G}.$ We define the set of doubly infinite trajectories $W_{\tilde{\G}}$ as the set of functions $w:\R\rightarrow\tilde{\G}\cup{\Delta},$ for which there exist $-\infty\leq\zeta^-<\zeta^+\leq\infty$ such that $w_{|(\zeta^-,\zeta^+)}\in{C((\zeta^-,\zeta^+),\tilde{\G})}$ and $w(t)=\Delta$ for all $t\notin{(\zeta^-,\zeta^+)}.$ For each $w\in{W_{\tilde{\G}}},$ we also define $ p_{\tilde \G}^*(w)= p^*(w)$ as the equivalence class of $w$ modulo time shift; here, $w$ and $w'$ are equal modulo time shift if there exists $t_0\in\R$ such that $w(t+t_0)=w(t)$ for all $t\in\R,$ and $W_{\tilde{\G}}^*=\{p^*(w):\,w\in{W_{\tilde{\G}}}\}.$ Let $\mathcal{W}_{\tilde{\G}}$ be the $\sigma$-algebra on $W_{\tilde{\G}}$ generated by the coordinate functions, and $\mathcal{W}_{\tilde{\G}}^*=\{A\subset W_{\tilde{\G}}^*:(p^*)^{-1}(A)\in{\mathcal{W}_{\tilde{\G}}}\}.$ For each compact $K$ of $\tilde{\G},$ we denote by $W_{K,\tilde{\G}}^0$ the set of trajectories $w\in{W_{\tilde{\G}}}$ with $H_K(w)=0,$ where $H_K(w)=\inf\{t\in\R:\,w(t)\in{K}\},$ with the convention $\inf\varnothing=\zeta^+,$ and $W_{K,\tilde{\G}}^*=\{ w^*\in W_{\tilde{\G}}^* : (p^*)^{-1}(w^*)\cap W_{K,\tilde{\G}}^0\neq\varnothing\}$. For $w\in{W_{\tilde{\G}}},$ we define the forward part of $w$ as $(w(t))_{t\geq0}$ and the backward part of $w$ as $(w(-t))_{t\geq0},$ which are both elements of $W_{\tilde{\G}}^+,$ see above \eqref{eq:quadraticform}. For $w^*\in{W_{K,\tilde{\G}}^*}$ we define the forward (resp.~backward) part of $w^*$ on hitting $K$ as the forward (resp.~backward) part of the unique trajectory in $(p^*)^{-1}(\{w^*\})\cap W_{K,\tilde{\G}}^0.$ 

The intensity measure underlying random interlacements on $\tilde{\G}$ is defined as follows.
For a set  $A\in{\mathcal{W}_{\tilde{\G}}}$ we write $A^{\pm}\stackrel{\text{def.}}{=}\{(w(\pm t))_{t\geq0}:\,w\in{A}\}$, 
whence $A^+,A^-\in{\mathcal{W}^+_{\tilde{\G}}}.$ The set of all $A\in{\W_{\tilde{\G}}}$ with $A\subset{{W}_{K,\tilde{\G}}^0},$ such that $A$ is equal to the set of $w\in{W_{K,\tilde{\G}}^0}$ whose forward part is in $A^+$ and whose backward part is in $A^-,$ is denoted by $\mathcal{W}_{K,\tilde{\G}}^0.$ We then observe that $\mathcal{W}_{K,\tilde{\G}}^0$ and $\{A\in{\mathcal{W}_{\tilde{\G}}}:\,W_{K,\tilde{\G}}^0\cap A=\varnothing\}$ generate $\mathcal{W}_{\tilde{\G}}.$ 
Recalling the definition of the last exit time $L_K$ and the exterior boundary $\hat{\partial}K$ from \eqref{defpartialext} and below, for all $x\in{ \hat{\partial} K}$  let
\begin{equation} \label{eq:PKG}
P^{ K,\tilde{\G}}_x \equiv P^{K}_x \text{ be the law of }(X_{t+L_K})_{t\geq0} \text{ under }P_x(\cdot\,|\, L_K >0 , X_{L_{K}}=x).
\end{equation}
We now define a measure $Q_{K,\tilde{\G}}$ on $\mathcal{W}_{\tilde{\G}},$ whose restriction to $\mathcal{W}_{K,\tilde{\G}}^0$ is given by
\begin{equation}
\label{defQK}
Q_{K,\tilde{\G}}(A)=\sum_{x\in{ \hat{\partial} K}}e_K(x)P^{\tilde{\G}}_x(X \in {A^+})P^{K,\tilde{\G}}_x(X\in {A^-}), \quad A \in \mathcal{W}_{K,\tilde{\G}}^0,
\end{equation}
and such that $Q_{K,\tilde{\G}}(A)=0$ for all $A\in{\mathcal{W}_{\tilde{\G}}}$ with $A\cap W_{K,\tilde{\G}}^0=\varnothing.$  It is essentially folklore by now that there exists a unique measure $\nu_{\tilde{\G}}$ on $W^*_{\tilde{\G}},$ such that for all compacts $K\subset\tilde{\G},$
\begin{equation}
    \label{definter}
    \nu_{\tilde{\G}}(A^*)=Q_{K,\tilde{\G}}\big((p^*)^{-1}(A^*)\big)\text{ for all }A^*\in{\mathcal{W}_{\tilde{\G}}^*},\, A^*\subset W_{K,\tilde{\G}}^*.
\end{equation}
We will not give a proof of the existence of the measure $\nu_{\tilde{\G}};$ instead, we refer to \cite{MR2525105} for a proof of the existence of such a measure on the discrete graph $\G$ when $\kappa\equiv0,$ and to \cite{MR3502602} for the setting of the cable system associated to ${\Z}^d,$ $d\geq3.$ Indeed, one can easily adapt these proofs to obtain a measure $\nu_{\tilde{\G}}$ such that \eqref{definter} holds for all compacts $K$ of $\tilde{\G}$ with $ \hat{\partial} K\subset{G},$
also in the case $\kappa \not\equiv 0$ (see also Remark~\ref{R:nomassconversion}). Considering now the case of arbitrary compact subsets $K$ of $\tilde{\G},$ one can thus construct a measure $\nu_{\tilde{\G}^{ \hat{\partial} K}}$ such that \eqref{definter} holds for $\nu_{\tilde{\G}^{ \hat{\partial} K}}$ and $K.$ Using the fact that $P^{\tilde{\G}}_x$ is the law of the trace of $X$ on $\tilde{\G}$ under $P_x^{\tilde{\G}^{ \hat{\partial} K}},$  one easily deduces that $\nu_{\tilde{\G}}$ is the `trace on $\tilde{\G}$' of $\nu_{\tilde{\G}^{ \hat{\partial} K}},$ so that \eqref{definter} also holds for $\nu_{\tilde{\G}}$ and $K.$ Alternatively, a direct proof of \eqref{definter} on the cable system is also presented in Theorem \ref*{Pre1:nuexists} of \cite{Pre1}.

The random interlacement process ${\omega}$ is a Poisson point process on $W^*_{\tilde{\G}}\times(0,\infty)$ under the probability $\P^{I}_{\tilde{\G}}$ with intensity measure $\nu_{\tilde{\G}}\otimes\lambda,$ where $\lambda$ is the Lebesgue measure on $(0,\infty).$ When $\kappa \not\equiv 0,$ the forward and backward parts of the trajectories can be killed before blowing up; in our setup this is realized by either part of the trajectory exiting $\tilde{\G}$ to $\Delta$ via $I_x$ for some $x\in{G}$ with $\kappa_x>0$. We also denote by ${\omega}_u$ the point process which consist of the trajectories in $\omega$ with label less than $u,$ by $(\ell_{x,u})_{x\in{\tilde{\G}}}$ the continuous field of local times relative to $m$ on $\tilde{\G}$ of $\omega_u$ and by $\I^u=\{x\in{\tilde{\G}}:\ \ell_{x,u}>0\}$ the interlacement set at level $u.$ The set $\I^u$ is characterized by the following identity: for any measurable set $A\subset{\tilde{\G}},$ 
\begin{equation}
\label{defIu}
\P^I_{\tilde{\G}}(\I^u\cap A=\varnothing)=\exp\left(-u\, \mathrm{{\rm cap}}(\overline{A})\right)
\end{equation} 
(note that the set $\I^u$ is open, so it intersects $A$ if and only if it intersects $\overline{A}$). The trace $\hat{\omega}_u$ of $\omega_u$ on $G$ has the same law under $\P^I_{\tilde{\G}}$ as the usual discrete random interlacement process, see \cite{MR2525105} in the case $\kappa\equiv0.$  If $\kappa \not\equiv 0,$ a trajectory in $\hat{\omega}_u$ can start or end at a fixed point $x\in{G},$ and in this case we say that this trajectory is \emph{killed} at $x.$ We also define $\I_E^u\subset E\cup G$ to be the set of edges in $E$ crossed by at least one single trajectory in $\hat{\omega}_u,$ union with the set of vertices at which a trajectory in $\hat{\omega}_u$ is killed. 
In the case $\lambda_{x,y}=\frac{T}{T+1}$ for all $x,y\in{E}$ and $\kappa_x=\frac{\text{deg}(x)}{T+1}$ for all $x\in{G},$ $T>0,$ the discrete random interlacement process $\hat{\omega}_u$ corresponds to the model of `finitary random interlacements' studied in \cite{MR3962876}. 
	  In view of Remark~\ref{R:nomassconversion}, this actually fits within the framework of \cite{MR2525105} upon suitable enhancement of $\G$.

 The law of $\omega_u$ can also be described as follows: for any compact $K$ of $\tilde{\G}$, the law of the forward trajectories in $\omega_u$ hitting $K$ is a Poisson point process with intensity $uP_{e_K}^{\tilde{\G}}$ which can be constructed from a Poisson point process of discrete trajectories with intensity $uP_{e_K}^{\tilde{\G}^{ \hat{\partial} K}}(\widehat{Z} \in \cdot)$ by adding Brownian excursions on the edges. Hence, $\omega_u$ can be constructed from $\hat{\omega}_u$ by adding independent Brownian excursion on the edges, see \cite{MR3502602} for details. In particular,
\begin{equation}
\label{omegaedgeind}
    \begin{array}{l}
    \text{conditionally on }\hat{\omega}_u,\text{ the random variables }(\ell_{x,u})_{x\in{I_e}},e\in{E\cup{G}}, \text{ are}
    \\\text{independent, and for all }e\in{E\cup{G}},(\ell_{x,u})_{x\in{I_e}}\text{ only depends on }\hat{\omega}_{u,e},
    \end{array}
\end{equation}
where $\hat{\omega}_{u,e}$ is the set of trajectories in $\hat{\omega}_u$ hitting $e.$  When there is no risk of ambiguity, we  abbreviate $\P^I=\P^I_{\tilde{\G}},$ and $\nu=\nu_{\tilde{\G}}.$

\section{Main results}
\label{sec:results}
In this section, we state our main results, Theorems \ref{mainresult}, \ref{mainresultcap} and \ref{couplingintergff}, and explore their consequences. Put together, these results in particular imply Theorem \ref{T:main}, see the end of this section for the short proof, but in fact they provide more detailed results. Theorem \ref{mainresult}, together with its Corollary \ref{maincor}, roughly corresponds to \ref{maint1} in Theorem \ref{T:main}. Theorem \ref{mainresultcap} investigates the properties of the cluster capacity observable. In particular, it establishes that, when bounded almost surely,  the cluster $E^{\geq h}(x_0)$ has a capacity described by \eqref{eq:laplacecap}. Theorem \ref{couplingintergff} then broadly speaking relates 
 $\text{\eqref{eq:laplacecap}}_{h\geq 0}$ and the identity \eqref{eqcouplingintergffszn} between random interlacements and the Gaussian free field on $\tilde{\G}$. In doing so, it also supplies new instances of \eqref{eqcouplingintergffszn}, see Remark \ref{R:isomisom},1), along with a version on the discrete base graph $\G$, see \eqref{eqcouplingintergffdis}. Finally, some further interesting consequences are put together in Corollaries \ref{dichotomy} and \ref{percoath*}.
 
We now lay the ground for our first main result, Theorem \ref{mainresult}. Its true meaning becomes transparent upon defining, next to $\tilde{h}_*$ (see \eqref{defh*bou}) two further critical parameters. As will soon become clear, the conditions $\kappa\equiv0$ or \eqref{capcondition} appearing in Theorem \ref{mainresult} will cause various of these parameters to coincide, leading to streamlined results. We first introduce
\begin{equation}
\label{defh*com}
\tilde{h}_*^{{\rm com}} =\inf \big\{ h \in \R :  \text{ for all $x_0\in{\tilde{\G}}$, }\, \P^G(E^{\geq h}(x_0) \text{ is non-compact})=0 \big \}
\end{equation}
(recall that compactness is with respect to the graph distance $d$).
Every compact set is ($d$-)bounded, so we always have $\tilde{h}_*^{{\rm com}} \geq \tilde{h}_*.$ The third critical parameter, involving the capacity of clusters in $E^{\geq h},$ is
\begin{equation}
\label{defh*cap}
\tilde{h}_*^{{\rm cap}} =\inf \big \{ h \in \R :  \text{ for all $x_0\in{\tilde{\G}}$, }\, \P^G(\mathrm{cap}(E^{\geq h}(x_0))=\infty)=0 \big \},
\end{equation}
see \eqref{defcapinfinity} for the definition of capacity in this context. Note that \eqref{defh*cap} is well-defined due to the monotonicity of $\text{cap}(\cdot)$, see \eqref{capincreasing}, which extends to arbitrary closed sets on account of \eqref{defcapinfinity}. Every compact set has finite capacity, so $\tilde{h}_*^{{\rm com}} \geq \tilde{h}_*^{{\rm cap}} ,$ and we therefore have that
\begin{equation}
\label{capcombougen}
\text{on any transient graph, }\tilde{h}_*^{{\rm com}} \geq \tilde{h}_*^{{\rm cap}} \text{ and }\tilde{h}_*^{{\rm com}} \geq \tilde{h}_*.
\end{equation}
On any graph such that $\kappa\equiv0$ or \eqref{capcondition} is verified, the situation becomes simpler, due to the following basic result. Its proof can be omitted at first reading.

\begin{Lemme}
\label{1stpartofmaincor} $(h\in\R$, $x_0\in{\tilde{\G}})$.

\medskip
\noindent $\P^G$-a.s., if either $h\geq0,$ $\mathrm{{\rm cap}}(E^{\geq h}(x_0))<\infty$ or $\kappa\equiv0$ on ${G},$ then $E^{\geq h}(x_0)$ is compact if and only if it is bounded.
\end{Lemme}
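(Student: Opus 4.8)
The statement comprises two implications, of which only one carries content: every compact subset of the metric space $(\tilde{\G},d)$ is bounded, so the direction ``compact $\Rightarrow$ bounded'' is immediate and always holds. Thus the plan is to show that, under any one of the three listed hypotheses, a \emph{bounded} cluster $E^{\geq h}(x_0)$ (see \eqref{Ehx0}) is $\P^G$-a.s.\ compact. At the outset I would record that $E^{\geq h}(x_0)$ is closed, being a connected component of the closed set $E^{\geq h}=\{\phi\geq h\}$.

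The first step is a purely topological reduction isolating how a bounded, closed, \emph{connected} set $A\subseteq\tilde{\G}$ can fail to be compact. Using local finiteness of $(G,E)$ together with the convention (fixed above \eqref{Ehx0}) that every $I_e$, $e\in E$, and every $I_x$, $x\in G$, has $d$-length one, a $d$-bounded set meets only finitely many edges and cables, hence is contained in a finite union of closures $\overline{I}_e$ and $\overline{I}_x$. Each $\overline{I}_e$ ($e\in E$) is $d$-isometric to $[0,1]$, hence compact, whereas each $\overline{I}_x$ is $d$-isometric to $[0,1)$, its missing ``open end'' lying outside $\tilde{\G}$, and is therefore bounded but not compact. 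Since $I_x$ is glued to $G$ only at $x$, connectedness forces $A\cap I_x$ to be an initial segment of the cable; consequently $A$ fails to be compact if and only if $A\supseteq\overline{I}_x$ for some $x\in G$, i.e.\ $A$ swallows an entire cable up to its open end. It thus suffices to exclude the events $\{\overline{I}_x\subseteq E^{\geq h}(x_0)\}$, each contained in $D_x\stackrel{\text{def.}}{=}\{\phi_z\geq h \text{ for all }z\in\overline{I}_x\}$.

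I would then dispose of the two cable types. When $\kappa_x=0$ (so $\rho_x=\infty$), the Markovian description of $\varphi$ from Section~\ref{subsec:GFF} (cf.\ around \eqref{phiedgeind}) gives that, conditionally on $\phi_x$, the field $(\phi_z)_{z\in I_x}$ is a variance-$2$ Brownian motion started at $\phi_x$; since such a motion a.s.\ eventually goes below $h$, one gets $\P^G(D_x)=0$ for every such $x$, \emph{regardless of $h$}. A union bound over the countably many $x\in G$ already settles the hypothesis $\kappa\equiv0$, in which every cable is of this type. When $\kappa_x>0$ I would argue in two complementary ways. If $h\geq0$: conditionally on $\phi_x$, $(\phi_z)_{z\in I_x}$ is a variance-$2$ Brownian bridge from $\phi_x$ to $0$ over length $\rho_x<\infty$; it converges to $0$ and, for $h>0$ since $0<h$, for $h=0$ by the a.s.\ oscillation of the bridge near its pinned endpoint, it a.s.\ drops below $h$, whence $\P^G(D_x)=0$. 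If instead one works on the event $\{\mathrm{cap}(E^{\geq h}(x_0))<\infty\}$: Lemma~\ref{capged} gives $\mathrm{cap}(\overline{I}_x)=\infty$ for $\kappa_x>0$, so by monotonicity (\eqref{capincreasing} extended to closed sets via \eqref{defcapinfinity}) the inclusion $\overline{I}_x\subseteq E^{\geq h}(x_0)$ would force $\mathrm{cap}(E^{\geq h}(x_0))=\infty$, contradicting the event. Combining these with the $\kappa_x=0$ bound, under each of the three hypotheses a.s.\ no cable of either type is swallowed, and the reduction of the previous paragraph yields that a bounded cluster is compact.

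I expect the genuine obstacle to be the topological reduction itself: correctly pinning down that the \emph{only} mechanism by which a bounded closed connected set can be non-compact is accumulation at an open cable-end. This rests on the (easily overlooked) convention that $I_x$ is assigned $d$-length one even when $\rho_x=\infty$, and on the observation that connectedness forces $A\cap I_x$ to be an initial interval, so that accumulation at the end is equivalent to containing the whole cable. Once that characterization and the identification of the relevant events $D_x$ are in place, the probabilistic inputs are routine, being either elementary one-dimensional bridge/Brownian-motion facts or a direct appeal to Lemma~\ref{capged}.
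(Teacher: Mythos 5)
Your overall strategy coincides with the paper's: reduce to the behaviour of the cluster on the half-open cables $I_x$, then treat $\kappa_x=0$ via the Brownian-motion description of $\varphi$ on $I_x$, the case $h\geq 0$ with $\kappa_x>0$ via the Brownian bridge pinned at $0$, and the finite-capacity case via the infinite capacity of the cable. The probabilistic inputs are all correct and are exactly those used in the paper.

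There is, however, one concrete flaw in your topological reduction. You claim that a bounded, closed, connected $A\subset\tilde{\G}$ fails to be compact if and only if $A\supseteq\overline{I}_x$ for some $x\in G$, on the grounds that connectedness forces $A\cap I_x$ to be an initial segment of the cable. This is false when $x_0$ itself lies on $I_x$: the set $A=E^{\geq h}(x_0)$ may then be entirely contained in $I_x$ and equal to a terminal segment $I_x^a=\{x+s\cdot I_x: a\leq s<\rho_x\}$ with $a>0$ (this happens whenever $\varphi$ dips below $h$ between $x$ and $x_0$ but stays above $h$ from $x_0$ to the open end). Such a set is bounded, closed, connected and non-compact, yet contains no full cable $\overline{I}_x$, so your events $\{\overline{I}_x\subseteq E^{\geq h}(x_0)\}$ do not capture it. The Brownian-motion and Brownian-bridge branches survive unchanged, since a tail of the motion/bridge still a.s.\ drops below $h$. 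But your capacity branch genuinely breaks: $\mathrm{cap}(\overline{I}_x)=\infty$ together with monotonicity says nothing about a cluster containing only $I_x^a$. The correct statement, which is the one the paper uses, is that a connected set is compact iff it is closed, bounded, and $I_x\cap K$ is a \emph{compact} sub-interval of $I_x$ for every $x$; and for the capacity branch one needs $\mathrm{cap}(I_x^t)=\infty$ for every $t\in(0,\rho_x)$, which the paper obtains by applying \eqref{capIx} to the enhanced graph $\G^{\{x+t\cdot I_x\}}$ of Lemma~\ref{GA} (the point $x+t\cdot I_x$ becomes a vertex with positive killing, and $I_x^t$ its attached cable). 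With the reduction restated in terms of the tails $I_x^t$ and this one extra enhancement step, your argument closes the gap and coincides with the paper's proof.
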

\begin{proof}
Observe that by definition, a connected set $K$ is compact if and only if it is a closed and bounded subset of $\tilde{\G}$ such that $I_x\cap K$ is a connected compact subset of $I_x$ for all $x\in{{G}}.$ Therefore, if the level set $E^{\geq h}(x_0)$ of $x_0$ is compact, then it is bounded. Hence, we only have to show the reverse implication, and we assume from now on that $E^{\geq h}(x_0)$ is bounded. First note that, as explained below \eqref{Markov}, if $\kappa_x=0,$ since $\phi$ on $I_x$ conditioned on $\phi_x$ has the same law as a Brownian motion starting in $\phi_x$ with variance $2$ at time $1,$ we have that $I_x\cap E^{\geq h}(x_0)$ is $\P^G$-a.s.\ a connected compact of $I_x.$ Therefore $E^{\geq h}(x_0)$ is a.s.\ compact if $\kappa\equiv0.$ If $\kappa_x>0$ we have by \eqref{capIx} applied to the graph $\G^{\{x+t\cdot I_x\}}$ (cf.~Lemma \ref{GA} for notation) that $\mathrm{{\rm cap}}(I_x^t)=\infty$, where $I_x^t=\{ x+s\cdot I_x : t \leq s < \rho_x\}$. If $\mathrm{{\rm cap}}(E^{\geq h}(x_0))<\infty,$ by \eqref{capincreasing} we obtain $ I_x^t\not\subset E^{\geq h}(x_0),$ that is $I_x\cap E^{\geq h}(x_0)$ is a connected compact of $I_x,$ and so $E^{\geq h}(x_0)$ is compact. Finally, if $\kappa_x>0$ and $h\geq0,$ as explained below \eqref{Markov}, since $\phi$ on $I_x$ conditioned on $\phi_x$ has the same law as a Brownian bridge of finite length between $\phi_x$ and $0$ of a Brownian motion with variance $2$ at time $1,$ $I_x\cap E^{\geq h}(x_0)$ is a.s.\ a connected compact of $I_x,$ and so $E^{\geq h}(x_0)$ is a.s.\ compact.
\end{proof}

Lemma \ref{1stpartofmaincor} has two immediate consequences. On the one hand, 
in view of \eqref{defh*bou}, \eqref{defh*com} and by \eqref{capcombougen}, Lemma \ref{1stpartofmaincor} (applied in the case $\kappa\equiv 0$) yields that
\begin{equation}
\label{capcomboukappa}
\text{ if }\G\text{ is a transient graph with }\kappa\equiv 0\text{, then }\tilde{h}_*^{{\rm com}} = \tilde{h}_*\geq \tilde{h}_*^{{\rm cap}} .
\end{equation}
We refer to Remark \ref*{Pre1:endremark},\ref*{Pre1:counterexampleh*cap<h*}) in 
\cite{Pre1} for an example of a graph for which the inequality in \eqref{capcomboukappa} is strict. On the other hand, if condition \eqref{capcondition} is fulfilled, then every connected closed set with finite capacity is bounded, and so $\tilde{h}_*^{{\rm cap}} \geq \tilde{h}_*$ by \eqref{defh*bou} and \eqref{defh*cap}. But by Lemma \ref{1stpartofmaincor}, for all $x_0\in{\tilde{\G}},$ if $\mathrm{{\rm cap}}(E^{\geq h}(x_0))<\infty,$ then $E^{\geq h}(x_0)$ is also compact, and so $\tilde{h}_*^{{\rm cap}} \geq \tilde{h}_*^{{\rm com}} .$ Thus, we obtain that
\begin{equation}
\label{capcomboucap}
\text{ if }\G\text{ is a transient graph verifying  \eqref{capcondition} then, }\tilde{h}_*^{{\rm com}} =\tilde{h}_*^{{\rm cap}} \geq \tilde{h}_*.
\end{equation}
In particular, if $\G$ satisfies \eqref{capcondition} and $\kappa\equiv 0,$ then from \eqref{capcomboucap} and \eqref{capcomboukappa} it is clear that the three critical parameters $\tilde{h}_*^{{\rm com}} ,$ $\tilde{h}_*$ and $\tilde{h}_*^{{\rm cap}} $ coincide; hence, in this case, in order to 
prove that they are equal to zero,
it is sufficient to show that one of them is non-negative while another one is non-positive. Our first main result provides such a statement, without any further assumption on $\mathcal{G}$ (recall our setup from above \eqref{deflambdarho}).


\begin{The}
	\label{mainresult}
	Let $\G$ be a transient weighted graph. For each $x_0\in\tilde{\G}$ and $h\geq0,$ the random variable $\mathrm{{\rm cap}}({E}^{\geq h}(x_0))$ is $\P^G$-a.s.\ finite, and for each $h<0$ the level set $E^{\geq h}(x_0)$ of $x_0$ is non-compact with positive probability. 
\end{The}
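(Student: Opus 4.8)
The plan is to prove the two assertions by different routes: the finiteness at $h\ge0$ by transporting the exact law \eqref{eq:laplacecap} from finite graphs to $\G$ while keeping only one inequality, and the non-compactness at $h<0$ by a direct construction. For the first, recall that on any finite transient graph the isomorphism \eqref{eqcouplingintergffszn} holds (Lemma~\ref{Theforfinite}, since \eqref{eq:0bounded} is automatic there), whence \eqref{eq:laplacecap} for $h\ge0$ follows by Proposition~\ref{couplingimplytheorem}. Let $\G_n$ be the finite graphs obtained by setting $\bar\kappa\equiv\infty$ off an increasing exhaustion of $\overline G$, so that $\tilde\G_n\subset\tilde\G$ by \eqref{eq:graphinclusion} and the fields $\phi_n$ (of law $\P^{\G_n}$) can be coupled to $\phi$ so as to converge locally uniformly, by Lemma~\ref{LemmaapproGFF}. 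Writing $E^{\ge h}_n(x_0)$ for the cluster of $x_0$ in $\{\phi_n\ge h\}$, we then have, for all $u\ge0$, $h\ge0$ and $x_0$,
\begin{equation*}
\E^{\G_n}\big[\exp\big(-u\,\mathrm{cap}_{\tilde\G_n}(E^{\ge h}_n(x_0))\big)1_{\phi_n(x_0)\ge h}\big]=\P^{\G_n}\big(\phi_n(x_0)\ge\sqrt{2u+h^2}\big),
\end{equation*}
and the point is to preserve a usable amount of this identity as $n\to\infty$.

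Working on the coupling space, fix a compact $K\subset\tilde\G$ and take $n$ so large that $K\subset\tilde\G_n$. The killed Green function $g_{\tilde\G_n}$ is dominated by $g$, so the variational characterisation \eqref{variational} gives $\mathrm{cap}_{\tilde\G_n}(\cdot)\ge\mathrm{cap}_{\tilde\G}(\cdot)$ on subsets of $\tilde\G_n$; together with \eqref{capincreasing} this yields $\mathrm{cap}_{\tilde\G_n}(E^{\ge h}_n(x_0))\ge\mathrm{cap}_{\tilde\G}(E^{\ge h}_n(x_0)\cap K)$. Substituting into the identity above, letting $n\to\infty$ (by reverse Fatou, using $\phi_n(x_0)\to\phi_{x_0}$ with $\P^\G(\phi_{x_0}=h)=0$ and the semicontinuity discussed next), and then letting $K\uparrow\tilde\G$ via \eqref{defcapinfinity}, I expect to reach
\begin{equation*}
\E^{\G}\big[\exp\big(-u\,\mathrm{cap}_{\tilde\G}(E^{\ge h}(x_0))\big)1_{\phi_{x_0}\ge h}\big]\ge\P^{\G}\big(\phi_{x_0}\ge\sqrt{2u+h^2}\big),\qquad u\ge0.
\end{equation*}
Letting $u\downarrow0$, monotone convergence turns the left side into $\P^\G(\mathrm{cap}(E^{\ge h}(x_0))<\infty,\,\phi_{x_0}\ge h)$ and the right side into $\P^\G(\phi_{x_0}\ge h)$ (here $h\ge0$ is used); equality of the two forces $\mathrm{cap}(E^{\ge h}(x_0))<\infty$ $\P^\G$-a.s.\ on $\{\phi_{x_0}\ge h\}$, and off this event the cluster is empty. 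This is the first assertion.

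The step I expect to be the main obstacle is the semicontinuity $\liminf_n\mathrm{cap}_{\tilde\G}(E^{\ge h}_n(x_0)\cap K)\ge\mathrm{cap}_{\tilde\G}(E^{\ge h}(x_0)\cap K)$ used above. Because $\phi_n$ is not monotonically coupled to $\phi$, a connection in $\{\phi\ge h\}$ realised only through points where $\phi=h$ may be severed for $\phi_n$, detaching part of the cluster and lowering its capacity, so naive set convergence fails. The remedy I would pursue is to show that, up to capacity-negligible sets, $x_0$ is already joined to $E^{\ge h}(x_0)\cap K$ through the open set $\{\phi>h\}$, which is stable under the uniform convergence $\phi_n\to\phi$ on $K$; establishing this robustness is where the fine one-dimensional structure of $\phi$ along the cables enters.

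For the non-compactness at $h<0$ I would argue directly, distinguishing two cases. If $\kappa\not\equiv0$, choose $x$ with $\kappa_x>0$. Conditionally on $\phi_x$, the restriction of $\phi$ to the cable $I_x$ is a Brownian bridge from $\phi_x$ to $0$, independent of the rest by \eqref{phiedgeind} (see the description below \eqref{Markov}); since $0>h$, this bridge stays $\ge h$ on all of $I_x$ with positive probability. Intersecting with the positive-probability event that $x_0$ is joined to $x$ by a finite path in $\{\phi\ge h\}$, we get $I_x\subset E^{\ge h}(x_0)$ with positive probability, and as $I_x\cong[0,\rho_x)$ is closed but not compact in $\tilde\G$, the cluster $E^{\ge h}(x_0)$ is then non-compact. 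If instead $\kappa\equiv0$, then $\h_{\text{kill}}\equiv0<1$, so \eqref{ifhkill<1thenh_*>0} gives $\tilde h_*\ge0$; hence for every $h<0$ the set $E^{\ge h}$ has an unbounded cluster with positive probability, which (for any prescribed $x_0$, by connectedness of $\G$) can be joined to $x_0$, and by Lemma~\ref{1stpartofmaincor}, cf.\ \eqref{capcomboukappa}, unboundedness coincides with non-compactness in this case. Together these cases cover every transient $\G$.
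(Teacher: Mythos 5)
Your overall architecture for the first assertion coincides with the paper's (finite graphs via Lemma~\ref{Theforfinite} and Proposition~\ref{couplingimplytheorem}, restriction to a compact $K$ plus monotonicity of capacity, pass to the limit, let $u\downarrow 0$), but there is a genuine gap at exactly the step you flag: the lower semicontinuity $\liminf_n\mathrm{cap}_{\tilde\G}(E^{\ge h}_n(x_0)\cap K)\ge\mathrm{cap}_{\tilde\G}(E^{\ge h}(x_0)\cap K)$ is asserted to be repairable but is not proved. You only invoke locally uniform convergence of the coupled fields, under which connections realised through points where $\phi=h$ can indeed be severed, and your proposed remedy (that $x_0$ is joined to the cluster through the open set $\{\phi>h\}$ up to capacity-negligible sets) is left as a programme. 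This is not a triviality: it is essentially the content of Lemma~\ref{E>hbar=E>=h} ($\overline{E^{>h}(x_0)}=E^{\geq h}(x_0)$ a.s.), whose proof uses the strong Markov property and the fine behaviour of the Brownian bridges on the cables, and even granting it one would still need to exhaust $E^{>h}(x_0)\cap K$ by compacts on which $\phi>h+\eps$ and reconcile $\overline{E^{>h}(x_0)}\cap K$ with the cluster of $x_0$ in $E^{\geq h}\cap K$. The paper sidesteps all of this: the coupling of Lemma~\ref{LemmaapproGFF} delivers not merely local uniform convergence but the stronger property \eqref{eq:equalityKapprox}, namely that $\P$-a.s.\ $\phi^{(n)}=\phi^{(\infty)}$ \emph{identically} on any compact $K$ for all large $n$. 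Hence the clusters restricted to $K$ literally coincide for large $n$, and the limit \eqref{capGngeqcapG} of Corollary~\ref{Cor:limcap} is an equality with no semicontinuity issue. If you upgrade your use of Lemma~\ref{LemmaapproGFF} to this eventual-equality statement, your argument closes and matches the paper's.

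For the second assertion ($h<0$) your route differs from the paper's and is essentially sound: the case $\kappa\not\equiv 0$ via a Brownian bridge staying above $h$ on some $I_x$ is the observation made in Remark~\ref{R:mainresults1},\ref{finitegraphs}, and the case $\kappa\equiv0$ via \eqref{ifhkill<1thenh_*>0} and Lemma~\ref{1stpartofmaincor} is explicitly acknowledged below Theorem~\ref{mainresult} as an alternative. Be aware, though, that \eqref{ifhkill<1thenh_*>0} rests on the infinite-volume isomorphism \eqref{usualiso}, whereas the paper deliberately derives non-compactness from its finite-volume scheme (using \eqref{eq:capinfinity} on the graphs $\G_n$ and a $\liminf$ argument) so as to stay self-contained; also, the step upgrading ``some $x_1$ has an unbounded cluster with positive probability'' to ``the prescribed $x_0$ does'' deserves a line (condition on a finite path from $x_0$ to $x_1$ lying in $E^{\geq h}$ and use \eqref{phiedgeind} or the Markov property).
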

The proof of Theorem \ref{mainresult} appears over the next two sections. Note that the fact that $E^{\geq h}(x_0)$ is non-compact with positive probability for all $h<0$ could alternatively be obtained from the Markov property \eqref{Markov} similarly as in \cite{MR914444}, see also the Appendix of \cite{MR3765885} for details, or from the isomorphism \eqref{usualiso}, see \eqref{ifhkill<1thenh_*>0} and above. Here, we will obtain it as a direct consequence of our methods. In particular, Theorem \ref{mainresult} implies $\tilde{h}_*^{{\rm cap}} \leq0$ and $\tilde{h}_*^{{\rm com}} \geq0.$ Thus, together with Lemma \ref{1stpartofmaincor}, \eqref{capcomboukappa} and \eqref{capcomboucap}, Theorem \ref{mainresult} has the following immediate

\begin{Cor}
	\label{maincor} Let $\G$ be a transient weighted graph. 
	\begin{enumerate}[label={\arabic*)}]
	\item
	If $\G$ satisfies \eqref{capcondition},
	then \eqref{eq:0bounded} holds and $\tilde{h}_*^{{\rm com}} =\tilde{h}_*^{{\rm cap}} =0 \, ( \geq \tilde{h}_*).$ 
	\item
	If $\kappa\equiv0,$ then for each $h<0,$ the level set $E^{\geq h}(x_0)$ of $x_0$ is unbounded with positive probability; hence $(\tilde{h}_*^{{\rm com}} =)\, \tilde{h}_*\geq0.$ 
	\end{enumerate}
Therefore, if $\G$ satisfies \eqref{capcondition} and $\kappa\equiv 0,$ then $\tilde{h}_*=\tilde{h}_*^{{\rm com}} =\tilde{h}_*^{{\rm cap}} =0.$  
\end{Cor}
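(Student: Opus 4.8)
The plan is to read off the statement directly from the finiteness/non-compactness dichotomy of Theorem~\ref{mainresult}, combined with the parameter comparisons \eqref{capcomboukappa} and \eqref{capcomboucap}. No new probabilistic input is needed beyond Theorem~\ref{mainresult} itself; the only genuine (if mild) point is the passage from a single cluster to all clusters when verifying \eqref{eq:0bounded}.

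For part~1), I would assume \eqref{capcondition}. Theorem~\ref{mainresult} gives, for each fixed $x_0$, that $\mathrm{{\rm cap}}(E^{\geq 0}(x_0))<\infty$ holds $\P^G$-a.s. To upgrade this to \eqref{eq:0bounded}, I would fix a countable $d$-dense set $D\subset\tilde{\G}$ (say the vertices of $G$ together with a countable dense subset of each cable) and intersect the corresponding almost sure events over $x_0\in D$. On the resulting almost sure event, suppose some connected component $C$ of $E^{\geq0}$ were unbounded; being unbounded, $C$ is not a singleton and hence meets $D$ in some point $x_0$, so that $C=E^{\geq0}(x_0)$ and thus $\mathrm{{\rm cap}}(C)<\infty$. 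Under \eqref{capcondition} a closed, connected, unbounded set has infinite capacity, a contradiction. Hence every component of $E^{\geq0}$ is bounded, which is \eqref{eq:0bounded}. For the values of the critical parameters I would then sandwich: Theorem~\ref{mainresult} yields $\tilde{h}_*^{{\rm cap}}\le0$ (capacity finite at every $h\ge0$) and $\tilde{h}_*^{{\rm com}}\ge0$ (non-compactness with positive probability at every $h<0$), while \eqref{capcomboucap} gives $\tilde{h}_*^{{\rm com}}=\tilde{h}_*^{{\rm cap}}\ge\tilde{h}_*$; combining these forces $0\le\tilde{h}_*^{{\rm com}}=\tilde{h}_*^{{\rm cap}}\le0$, i.e.\ $\tilde{h}_*^{{\rm com}}=\tilde{h}_*^{{\rm cap}}=0\,(\ge\tilde{h}_*)$.

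For part~2), I would assume $\kappa\equiv0$ and fix $h<0$ and $x_0$. Theorem~\ref{mainresult} provides that $E^{\geq h}(x_0)$ is non-compact with positive probability, and Lemma~\ref{1stpartofmaincor} (in the case $\kappa\equiv0$) asserts that $E^{\geq h}(x_0)$ is compact if and only if it is bounded; hence $E^{\geq h}(x_0)$ is unbounded with positive probability. Consequently no $h<0$ belongs to the defining set in \eqref{defh*bou}, so $\tilde{h}_*\ge0$, and \eqref{capcomboukappa} identifies $\tilde{h}_*^{{\rm com}}=\tilde{h}_*$, giving $\tilde{h}_*^{{\rm com}}=\tilde{h}_*\ge0$.

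Finally, under both \eqref{capcondition} and $\kappa\equiv0$ I would simply overlay the two chains: \eqref{capcomboukappa} gives $\tilde{h}_*^{{\rm com}}=\tilde{h}_*\ge\tilde{h}_*^{{\rm cap}}$ and \eqref{capcomboucap} gives $\tilde{h}_*^{{\rm com}}=\tilde{h}_*^{{\rm cap}}\ge\tilde{h}_*$, so the three parameters coincide, and by part~1) their common value is $0$. The main obstacle is entirely contained in Theorem~\ref{mainresult} itself (whose proof occupies the next two sections); granting it, the corollary reduces to bookkeeping with the parameter comparisons, the only non-mechanical step being the countable-density argument used to promote the single-cluster statement to \eqref{eq:0bounded}.
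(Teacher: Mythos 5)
Your proof is correct and follows essentially the same route as the paper: the corollary is read off from Theorem~\ref{mainresult} (which gives $\tilde{h}_*^{{\rm cap}}\leq 0$ and $\tilde{h}_*^{{\rm com}}\geq 0$) combined with Lemma~\ref{1stpartofmaincor} and the comparisons \eqref{capcomboukappa} and \eqref{capcomboucap}. The only addition is that you spell out the countable-dense-set argument promoting the single-cluster capacity bound to \eqref{eq:0bounded}, a step the paper leaves implicit in calling the corollary ``immediate''; that argument is sound.
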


Notice that Theorem \ref{mainresult} and Corollary \ref{maincor} immediately imply item \ref{maint1} of Theorem~\ref{T:main}. We now comment on Theorem \ref{mainresult} and Corollary \ref{maincor}, and first elaborate on the condition \eqref{capcondition}, which is central 
 in obtaining $\tilde{h}_*=0$. Further comments on Theorem \ref{mainresult} and Corollary~\ref{maincor} are collected below in Remark \ref{R:mainresults1}. 

 The following lemma supplies a large class of graphs for which \eqref{capcondition} holds. In particular, by means of this lemma, Corollary \ref{maincor} generalizes all previously known results about $\tilde{h}_*=0$ (see below Theorem \ref{T:main} for a list). We highlight item~$2)$ of Lemma \ref{equivcapcondition}, comprising the condition \eqref{gbounded} which is sufficient for \eqref{capcondition} but stated only in terms of the Green function on $\G,$ and thus can be easier to verify. It implies for instance that any vertex-transitive graph verifies \eqref{capcondition}. 
Part $3)$ below accounts for the trees studied in \cite{MR3765885} and shows that Proposition 2.2 in \cite{MR3765885} can be seen as direct consequence of Corollary \ref{maincor},1); see also the discussion following Theorem \ref{T:main}.

\begin{Lemme}[Criteria for \eqref{capcondition}]
\label{equivcapcondition}
$\quad$

\begin{enumerate}[label={\arabic*)}]
\itemsep-0.5em 
\item \label{equivcapcondition_a}
Condition \eqref{capcondition} holds true if and only if
\begin{equation}
    \label{capconditiondis}
        \mathrm{{\rm cap}}(A)=\infty\text{ for all infinite and connected sets }A\subset {G}.
\end{equation}
\item \label{equivcapcondition_b} If
\vspace{-0.5em}
\begin{align}
&   \label{gbounded}  \begin{array}{l}
    \text{there exists }g_0<\infty\text{ such that }\{x\in{{G}}:\,g(x,x)>g_0\}
    \\\text{has no unbounded connected component}
    \end{array}
\end{align}
then condition \eqref{capcondition} is verified for $\G.$ In particular, if $\G$ is vertex-transitive, \eqref{capcondition} holds.\\[-0.5em]
\item \label{capconditionontrees}
Let $\mathbb{T}$ be a transient tree with zero killing measure and unit weights and denote by $R_x^{\infty}$ the effective resistance between $x$ and $\infty$ in $\mathbb{T}_x$, the sub-tree of $\mathbb{T}$ consisting only of $x$ and its descendents (relative to a base point $x_0 \in \mathbb{T}$). If $\{x\in{\mathbb{T}}:\,R_x^{\infty}>A\}$ only has bounded connected components for some $A>0,$ then \eqref{capcondition} is verified.
\end{enumerate}
\end{Lemme}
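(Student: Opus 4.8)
The plan is to prove the three items in turn, reducing \ref{equivcapcondition_b} and \ref{capconditionontrees} to \ref{equivcapcondition_a}. Throughout I would use freely that for a discrete set $A\subset G$, viewed as a subset of $\tilde\G$, the cable capacity $\mathrm{cap}_{\tilde\G}(A)$ coincides with the discrete capacity $\mathrm{cap}_\G(A)$ from \eqref{defeAcap}: this follows from \eqref{defcapinfinity} upon exhausting $A$ by finite subsets, on which the two notions agree by the discussion below \eqref{defeAcap}.

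For the equivalence in item~\ref{equivcapcondition_a}, the direction \eqref{capconditiondis}$\Rightarrow$\eqref{capcondition} is immediate: if $A\subset\tilde\G$ is unbounded, closed and connected, then local finiteness forces $A\cap G$ to be infinite and connected (any continuous path in $A$ joining two vertices passes through vertices of $A\cap G$), whence $\mathrm{cap}(A)\ge\mathrm{cap}(A\cap G)=\infty$ by monotonicity \eqref{capincreasing} and \eqref{defcapinfinity}. For the converse, given infinite connected $A\subset G$ I would form the closed, connected, unbounded set $\tilde A\subset\tilde\G$ consisting of $A$ together with all cables $I_e$ for $e=\{x,y\}$ with $x,y\in A$. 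The key structural observation is that $\hat\partial\tilde A\subset G$ (the only last-exit points of $\tilde A$ are vertices, since a vertex with all neighbours in $A$ is interior and edges with an endpoint outside $A$ contribute only their endpoint vertex), so that \eqref{defequilibriumcable} and \eqref{consistencyequilibrium} give $\mathrm{cap}_{\tilde\G}(\tilde A)=\mathrm{cap}_\G(\hat\partial\tilde A)$. Combining this with $\hat\partial\tilde A\subset A\subset\tilde A$ and monotonicity yields the sandwich $\mathrm{cap}_\G(A)=\mathrm{cap}_{\tilde\G}(A)\le\mathrm{cap}_{\tilde\G}(\tilde A)=\mathrm{cap}_\G(\hat\partial\tilde A)\le\mathrm{cap}_\G(A)$, so $\mathrm{cap}(\tilde A)=\mathrm{cap}(A)$, and \eqref{capcondition} applied to $\tilde A$ forces $\mathrm{cap}(A)=\infty$.

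For item~\ref{equivcapcondition_b}, by item~\ref{equivcapcondition_a} it suffices to verify \eqref{capconditiondis}. Fix an infinite connected $A\subset G$. I would first argue that $A'\stackrel{\mathrm{def.}}{=}A\cap\{x:g(x,x)\le g_0\}$ is infinite: otherwise $A$ minus a finite set lies in $\{g>g_0\}$ and still has an unbounded connected component (deleting finitely many vertices from an infinite connected locally finite graph leaves an unbounded component), contradicting \eqref{gbounded}. Since $A'$ is then unbounded, I would select greedily a sequence $x_1,x_2,\dots\in A'$ going to infinity with $\sum_{j<k}g(x_j,x_k)\le 2^{-k}$ for every $k$; this is possible because, by transience, $g(x_j,x)=P_x(H_{x_j}<\infty)\,g(x_j,x_j)\to0$ as $x\to\infty$ for each fixed $j$, so only finitely many constraints must be met at each step. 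For $S_m=\{x_1,\dots,x_m\}$ one gets $\sum_{x,y\in S_m}g(x,y)\le m g_0+2$, and the variational bound from \eqref{variational} with the uniform measure gives $\mathrm{cap}(S_m)\ge |S_m|^2/\sum_{x,y\in S_m}g(x,y)\ge m^2/(mg_0+2)\to\infty$, whence $\mathrm{cap}(A)=\infty$ by monotonicity. The vertex-transitive case is the special case in which $g(x,x)$ is constant, so that \eqref{gbounded} holds with $\{g>g_0\}=\varnothing$.

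For item~\ref{capconditionontrees} I would reduce to item~\ref{equivcapcondition_b} via a resistance comparison. On a tree with unit weights and $\kappa\equiv0$, $g(x,x)$ is a fixed multiple of the effective resistance $R_{\mathrm{eff}}(x\leftrightarrow\infty)$ in $\mathbb T$; since the descendant subtree $\mathbb T_x$ uses fewer edges, Rayleigh monotonicity gives $R_{\mathrm{eff}}(x\leftrightarrow\infty\text{ in }\mathbb T)\le R_x^\infty$, hence $g(x,x)\le c\,R_x^\infty$. Thus $\{x:g(x,x)>cA\}\subset\{x:R_x^\infty>A\}$, and as any connected subset lies in a single connected component, the components of $\{g>cA\}$ are contained in (bounded) components of $\{R_x^\infty>A\}$ and are therefore bounded; so \eqref{gbounded} holds and item~\ref{equivcapcondition_b} applies. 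The main obstacle of the whole lemma is the quantitative capacity estimate in item~\ref{equivcapcondition_b} — specifically, arranging that the off-diagonal Green sums along the selected sequence remain summable, which is exactly where transience enters; the identity $\hat\partial\tilde A\subset G$ in item~\ref{equivcapcondition_a} and the Rayleigh comparison in item~\ref{capconditionontrees} are then comparatively routine.
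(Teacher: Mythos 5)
Your item \ref{equivcapcondition_a} is correct and essentially identical to the paper's argument (the paper is equally terse about the identity $\mathrm{cap}(A)=\mathrm{cap}(\tilde A)$), and your item \ref{capconditionontrees} is correct but takes a genuinely different and shorter route: instead of the paper's direct computation (one vertex per generation, cut-sets $C_i$ with $R_y^{\infty}\le A$, and lower bounds on equilibrium measures), you observe that $g(x,x)=R_{\mathrm{eff}}(x\leftrightarrow\infty;\mathbb{T})\le R_x^{\infty}$ by Rayleigh monotonicity, so the tree hypothesis implies \eqref{gbounded} and item \ref{equivcapcondition_b} applies. That reduction is sound; the paper never notices that its hypothesis in item \ref{capconditionontrees} actually implies \eqref{gbounded}.

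The genuine gap is in item \ref{equivcapcondition_b}, at exactly the step you yourself single out as the crux. You justify the greedy selection by asserting that, for fixed $x_j$, ``by transience, $g(x_j,x)=P_x(H_{x_j}<\infty)\,g(x_j,x_j)\to 0$ as $x\to\infty$.'' The claim that hitting probabilities of a fixed vertex from far away vanish on every transient graph is false. Take $\G$ to be $\Z^3$ with unit weights and an infinite unit-weight ray attached at $x_j$ (no killing): the graph is transient, yet every point $x$ on the ray satisfies $P_x(H_{x_j}<\infty)=1$, since the ray alone is recurrent and the walk a.s.\ exits it at $x_j$; hence $g(x_j,x)=g(x_j,x_j)$ does not decay at all. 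What your argument actually needs is decay of $g(x_j,\cdot)$ along points of $A'$, i.e.\ along points whose diagonal Green function is bounded by $g_0$ (in the counterexample the ray points have $g(x,x)\to\infty$, so they are excluded from $A'$). That restricted decay statement is true, but it is precisely Lemma \ref{L:Acap1} of the paper, and it is not a one-liner: its proof uses the symmetry of $g$ to bound \emph{both} hitting probabilities from below, $P_{x_0}(\hat H_{x_n}<\infty)\ge \eps/g_0$ and $P_{x_n}(\hat H_{x_0}<\infty)\ge \eps/g_0$, and then the strong Markov property to force infinitely many returns to $x_0$ with probability bounded below, contradicting $g(x_0,x_0)<\infty$. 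Transience alone cannot close this: neither $P_x(H_{x_j}<\infty)\to 0$ (ray example) nor $P_{x_j}(H_x<\infty)\to 0$ (half-line with conductances $2^n$) holds in general; only the off-diagonal Green function decays, and only under the bounded-diagonal hypothesis. Once this decay lemma is supplied, the rest of your item \ref{equivcapcondition_b} (that $A'$ is infinite, the greedy choice, the uniform measure in \eqref{variational}, monotonicity) coincides with the paper's Lemma \ref{L:Acap2} and is fine; but as stated the key analytic input is missing, and since your item \ref{capconditionontrees} rests on item \ref{equivcapcondition_b}, the gap propagates there as well.
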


Lemma \ref{equivcapcondition} is proved in Appendix \ref{subsec:capandkappa=0}. We proceed to make further comments around Theorem~\ref{mainresult}, Corollary~\ref{maincor} and Lemma \ref{equivcapcondition}.

\begin{Rk}\label{R:mainresults1}
\begin{enumerate}[label={\arabic*)}]
\item\label{finitegraphs} In order to develop an intuition for the results of Theorem \ref{mainresult} and Corollary~\ref{maincor}, consider the case where $\G$ is a finite transient graph. 
Recall that for $x\in{G}$ such that $\kappa_x>0$ (such $x$ necessarily exists when $\G$ is finite and transient) the field $\phi$ on $I_x,$ conditionally on $\phi_x,$ has the same law as a Brownian bridge of length $\rho_x<\infty$ between $\phi_x$ and $0$ of a Brownian motion with variance $2$ at time $1,$ see the discussion below \eqref{Markov}. Therefore, for all $h<0,$  we have that $\P^G(\phi_y\geq h\text{ for all }y\in{I_x})>0,$ and since $I_x$ is non-compact, we obtain $\tilde{h}_*^{{\rm com}} \geq0.$ Now similarly if $h\geq0,$ then $\P^G(\phi_y\geq h\text{ for all }y\in{I_x})=0$ for all $x\in{G},$ and since $G$ is finite, it follows that $\tilde{h}_*^{{\rm com}} \leq0.$ Since \eqref{capcondition} is trivially verified on finite graphs, we thus have by \eqref{capcomboucap} that $\tilde{h}_*^{{\rm com}} =\tilde{h}_*^{{\rm cap}} =0.$ Note, however, that trivially $\tilde{h}_*=-\infty$ since there are no unbounded sets on finite graphs, and so the inequality in \eqref{capcomboucap} can be strict. In fact, the situation $0= \tilde{h}_*^{{\rm com}} =\tilde{h}_*^{{\rm cap}} > \tilde{h}_* \geq -\infty $ is emblematic of graphs with sub-exponential volume growth and (say) a uniform killing measure, and one typically has both strict inequalities $0> \tilde{h}_* > -\infty $ when $\mathcal{G}$ is infinite, see Corollary \ref*{Pre1:Cor:h_*<0} and Remark \ref*{Pre1:linkwithfinitary},\ref*{Pre1:h_*u_*finite}) in
\cite{Pre1}.  

\item We refer to Proposition \ref*{Pre1:h*infinity} in 
\cite{Pre1} for an example of a graph for which \eqref{capcondition} is not satisfied, and $\tilde{h}_*^{{\rm cap}} \le 0$ (necessarily by Theorem \ref{mainresult}) yet $\tilde{h}_*^{{\rm com}} =\tilde{h}_*=\infty$ -- in particular, this is a further example where the critical parameters do not coincide.
\item \label{signwithoutcap} We now construct an example of a graph not fulfilling \eqref{capcondition}, but for which we still have $\tilde{h}_*=\tilde{h}_*^{{\rm cap}} =\tilde{h}_*^{{\rm com}} =0$ (and therefore, as will turn out, \eqref{eq:0bounded} holds, cf.\ Corollary \ref{percoath*} below, or the first equivalence in Theorem \ref{T:main},\ref{maint3}). Consider a graph $\G$ with $\kappa\equiv0$ except possibly at $x \in G$, where $\kappa_x \in [0,\infty)$. Let $A \subset I_x $ be an infinite sequence converging towards the open end of $I_x$, and, simultaneously interpreting $A$ as the set given by the values of $A$, consider  $\mathcal{G}^A$ the graph given by Lemma \ref{GA}. If $\mathcal{G}\stackrel{\text{def.}}{=}\Z^3$ with unit weights and $\kappa \equiv 0$, then noting that $(\tilde{\mathcal{G}}^A)\setminus\bigcup_{x\in{A}}I_x$ can be identified with $\tilde{\mathcal{G}}$ (see \eqref{eq:GAsubsetG} and below \eqref{eq:enh1.1}), it readily follows that $\tilde{h}_*=\tilde{h}_*^{{\rm cap}} =\tilde{h}_*^{{\rm com}} =0$ on $\widetilde{\mathcal{G}}^A$. This chain of equalities follows (with a moment's thought) from the corresponding one on $\tilde{\mathcal{G}}$, where it holds by Corollary \ref{maincor}, for instance using Lemma \ref{equivcapcondition},ii) to argue that \eqref{capcondition} holds on $\tilde{\mathcal{G}}$. But for $A_n$ finite with $A_n \nearrow A$, the capacity of $A_n$ is supported on at most two points, whence $\text{cap}(A)< \infty$, by \eqref{defcapinfinity}. In particular, $\G^A$ does not fulfill \eqref{capcondition}. 

The previous example remains instructive if one considers instead $\mathcal{G}$ a finite graph and $\kappa_x>0$, in order to appreciate the difference between $\tilde{h}_*$ and $\tilde{h}_*^{{\rm com}}$. With $A$ as above, one has $\tilde{h}_*^{{\rm com}}({\mathcal{G}}), \tilde{h}_*^{{\rm com}}({\mathcal{G}}^A)\geq 0 $ by Theorem \ref{mainresult}. On the other hand, $\tilde{h}_*({\mathcal{G}})=-\infty$ since $\mathcal{G}$ is finite, but $\tilde{h}_*({\mathcal{G}}^A)\geq 0$ by Corollary \ref{maincor},ii) since $\kappa^A\equiv 0$. This shows that $\tilde{h}_*$ really depends on the choice of base graph $\mathcal{G}$ and not only on $\widetilde{\mathcal{G}}.$ We refer to Proposition \ref*{Pre1:Z20counterexample} in 
\cite{Pre1} for a less trivial example of a graph verifying \eqref{eq:0bounded} but not \eqref{capcondition}.

\item 
An interesting direct consequence of Corollary \ref{maincor} concerns ${\mathcal{L}}_{\alpha}$, the discrete (Poissonian) loop soup at intensity parameter $\alpha > 0$ (we refer to \cite{MR3502602} for precise definitions). 

\begin{Cor}\label{Cor:loopsoups}
Let $\G$ be a transient weighted graph such that \eqref{capcondition} holds. Then $\mathcal{L}_{1/2}$ a.s.~consists of finite clusters only.
\end{Cor}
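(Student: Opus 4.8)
The plan is to transfer the boundedness of level-set clusters of $\varphi$ to the cluster structure of $\mathcal{L}_{1/2}$ by means of the loop-soup/free-field isomorphism. First I would recall the coupling \eqref{eqcouplingloopsgff} between the loop soup and $\varphi$ on the cable system $\tilde{\G}$: under this coupling the occupation field of the cable loop soup equals $\frac12\varphi^2$, and, crucially, the clusters of the cable loop soup coincide with the \emph{sign clusters} of $\varphi$, i.e.\ the connected components of $\{x\in\tilde{\G}:|\varphi_x|>0\}$ (this is Lupu's identification, see \cite{MR3502602}). Since the discrete loop soup $\mathcal{L}_{1/2}$ on $\G$ is the trace on $G$ of the cable loop soup, any chain of discrete loops connecting two vertices $x,y\in G$ lifts to a chain of cable loops, so the discrete loop cluster of $x$ is contained in the intersection with $G$ of the sign cluster of $\varphi$ through $x$. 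Consequently it suffices to show that $\P^G$-a.s.\ every sign cluster of $\varphi$ is bounded.

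Next I would invoke Corollary~\ref{maincor},1): since $\G$ satisfies \eqref{capcondition}, condition \eqref{eq:0bounded} holds, so $\P^G$-a.s.\ every connected component of $E^{\geq 0}$ is bounded. By the symmetry $\varphi\leftrightarrow-\varphi$ of $\P^G$, the same conclusion holds $\P^G$-a.s.\ for $E^{\leq 0}=\{x\in\tilde{\G}:\varphi_x\leq 0\}$. Now any sign cluster, being a connected subset of $\{|\varphi|>0\}$ on which the continuous field $\varphi$ cannot change sign, is one-signed, and is therefore contained in a single connected component of $E^{\geq 0}$ (in the positive case) or of $E^{\leq 0}$ (in the negative case); in either case it is bounded.

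Finally, a bounded connected subset of $G$ with respect to $d$ (equivalently, the graph distance $d_{\G}$, cf.\ above \eqref{subsec:diff}) is contained in a $d_{\G}$-ball of finite radius, which is finite because $(G,E)$ is locally finite. Combining the three steps, the clusters of $\mathcal{L}_{1/2}$ are $\P^G$-a.s.\ finite, which is the claim.

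The main obstacle is the first step: one must pin down the precise form of the loop-soup/free-field coupling \eqref{eqcouplingloopsgff} on $\tilde{\G}$ and the identification of loop clusters with sign clusters, then carefully justify that passing to the trace on $G$ only shrinks clusters, so that boundedness of the sign clusters is inherited by the discrete loop clusters. The remaining ingredients — the symmetrization under $\varphi\leftrightarrow-\varphi$ and the reduction of a sign cluster to a single one-signed excursion-set component — are routine once \eqref{eq:0bounded} is available via Corollary~\ref{maincor}.
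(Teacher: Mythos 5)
Your proposal is correct and follows essentially the same route as the paper: \eqref{capcondition} yields \eqref{eq:0bounded} via Corollary~\ref{maincor},1), symmetry and continuity of $\varphi$ then give that all sign clusters are bounded, and Lupu's identification of the cable loop-soup clusters with the sign clusters of $\varphi$ (the coupling \eqref{eqcouplingloopsgff}, i.e.\ Theorem~1 of \cite{MR3502602}) transfers this to $\tilde{\mathcal{L}}_{1/2}$ and hence, by taking traces on the locally finite graph $G$, to $\mathcal{L}_{1/2}$. The paper's proof is just a more condensed version of the same three steps.
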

\begin{proof}
If $\G$ satisfies \eqref{capcondition}, then by Corollary \ref{maincor}, i) and the symmetry and continuity of $\varphi$, the set $\{x\in{\tilde{\G}}:|\phi_x|>0\}$ only contains compact connected components. Hence, by Theorem 1 in \cite{MR3502602}, the loop soup $\tilde{\mathcal{L}}_{1/2}$ on $\tilde{\G}$ only contains compact connected components on which its field of local times is positive. A fortiori, $\mathcal{L}_{1/2}$ only consists of finite clusters. 
\end{proof}
\item 
The condition \eqref{gbounded} is strictly stronger than the condition \eqref{capcondition}. Indeed, consider $\G$ a rooted $(d+1)$-regular tree, with weights $1/(n+1)$ for each edge between a vertex at generation $n$ and one of its children at generation $n+1,$ and zero killing measure. Then $g(x,x)\geq n+1$ for each $x$ in generation $n,$ and so \eqref{gbounded} does not hold. On the other hand, for each infinite connected subset $K$ of the tree having at most one vertex per generation, denoting by $K_n\subset K$ the subset of all points in $K$ having generation at most $n$, one sees that for $x\in{K}$ at generation $k$ and all $n \geq k$, the equilibrium measure of $K_n$ at $x$ is at least $c(k+1)^{-1}$ for some absolute constant $c=c(d),$ and so $\text{cap}(K)= \infty$ on account of \eqref{defcapinfinity}. Since any infinite connected set $A$ contains such $K$, \eqref{capcondition} follows using Lemma~\ref{equivcapcondition},\ref{equivcapcondition_a} and \eqref{capincreasing}. All in all, $\G$ verifies \eqref{capcondition} but not \eqref{gbounded}.

\end{enumerate}
\end{Rk}

Next, we investigate the random variable
$\text{cap}\big({E}^{\geq h}(x_0)\big), \text{ for }x_0 \in \tilde{\mathcal {G}}, \, h \in \mathbb R$ (see \eqref{defcapinfinity} for the definition of $\text{cap}(\cdot)$ in this context), which will play a central role throughout the remainder of this article.
\begin{The}
	\label{mainresultcap}
	Let $\G$ be a transient weighted graph. For all $x_0\in \tilde{\G}$ and $h\geq0,$ if $E^{\geq h}(x_0)$ is $\P^G$-a.s.\ bounded, then the random variable $\text{{\rm cap}}\big({E}^{\geq h}(x_0)\big)$ has moment generating function given by \eqref{eq:laplacecap} and density given by
	\begin{equation}
	\label{eq:hdensity}
	\rho_h(t)=\frac{1}{2\pi t\sqrt{g(x_0,x_0)(t-g(x_0,x_0)^{-1})}}\exp\Big(-\frac{h^2t}{2}\Big)1_{t\geq g(x_0,x_0)^{-1}}.
	\end{equation} 
	Furthermore, assuming only that $\G$ satisfies \eqref{capcondition}, one has for each $h\geq0$ and $x_0 \in \tilde{\mathcal{G}}$ that
		\begin{align}
		& \label{item:LawhTrue}
		 \eqref{eq:laplacecap} \text{ holds, and}\\ 
		&\label{lawforhnegative}
	\text{{\rm cap}}\big({E}^{\geq -h}(x_0)\big)1_{\text{{\rm cap}}({E}^{\geq -h}(x_0))\in{(0,\infty)}}\text{ has the same law as }\text{{\rm cap}}\big({E}^{\geq h}(x_0)\big) 1_{\phi_{x_0}{\geq h}}.
\end{align}
In particular,
	\begin{equation}
	\label{eq:capinfinity}
	\P^G\big(\text{{\rm cap}}\big({E}^{\geq -h}(x_0)\big)=\infty\big)=\P^G(\phi_{x_0}\in{(-h,h)}).
	\end{equation}
\end{The}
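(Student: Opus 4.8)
The plan is to first establish the moment generating function \eqref{eq:laplacecap} under the stated boundedness hypothesis, and then to read off the density \eqref{eq:hdensity}, the symmetry \eqref{lawforhnegative} and its consequence \eqref{eq:capinfinity}. The engine is the identity \eqref{eqcouplingintergffszn}, which on a \emph{finite} transient graph is unconditionally available through the loop-soup isomorphism (Lemma \ref{Theforfinite}) and which yields $\eqref{eq:laplacecap}_{h\geq 0}$ via Proposition \ref{couplingimplytheorem}. Thus for finite $\G$ the full law of $\mathrm{cap}(E^{\geq h}(x_0))$ is known for every $h\geq 0$, and the task is to transport this information to an arbitrary transient graph.

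To do so, I would approximate $\tilde\G$ from inside by the cable systems $\tilde\G_n\subset\tilde\G$ of finite graphs $\G_n\nearrow\G$ obtained by sending the killing measure to $+\infty$ off a finite set, coupling the fields to $\phi$ as in Lemma \ref{LemmaapproGFF}. Writing $C_h^{(n)}=\mathrm{cap}_{\tilde\G_n}(E^{\geq h}_{\tilde\G_n}(x_0))$ and $C_h=\mathrm{cap}_{\tilde\G}(E^{\geq h}(x_0))$, the finite-volume identity reads $\E^{G_n}\big[e^{-uC_h^{(n)}}1_{\phi_{x_0}\geq h}\big]=\P^{G_n}(\phi_{x_0}\geq\sqrt{2u+h^2})$. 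The right-hand side converges to $\P^{G}(\phi_{x_0}\geq\sqrt{2u+h^2})$ because $g_{\G_n}(x_0,x_0)\nearrow g(x_0,x_0)$. On the left, the monotonicity of capacity under this approximation, cf.\ \eqref{capGngeqcapG}, gives $C_h^{(n)}\geq C_h$, which already forces $C_h<\infty$ a.s.\ by Fatou (this is the route to Theorem \ref{mainresult}) but provides only one inequality in \eqref{eq:laplacecap}. The decisive extra input is the boundedness hypothesis: for $h\geq 0$, a.s.\ boundedness coincides with compactness by Lemma \ref{1stpartofmaincor}, so $E^{\geq h}(x_0)$ is a.s.\ contained in a fixed compact set, whence for all large $n$ it lies inside $\tilde\G_n$ and equals $E^{\geq h}_{\tilde\G_n}(x_0)$; the continuity \eqref{limitcap} then gives $C_h^{(n)}\to C_h$ a.s., and dominated convergence upgrades the one-sided bound to the equality \eqref{eq:laplacecap}. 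The main obstacle is precisely this interchange of limits: controlling the boundary effects of the finite approximations and showing that, under compactness, the \emph{entire} Laplace transform (not merely a tail bound) survives as $n\to\infty$. Once \eqref{eq:laplacecap} is in hand, the density \eqref{eq:hdensity} follows by inverting the Laplace transform through Lemma \ref{equivalenceforthelaw}.

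Under \eqref{capcondition} and $h\geq0$, Corollary \ref{maincor} guarantees that $E^{\geq h}(x_0)$ is a.s.\ bounded (indeed $\tilde h_*^{\mathrm{com}}=0$), so the previous paragraph directly yields \eqref{item:LawhTrue}. For the reflected statement \eqref{lawforhnegative} I would argue that, still under \eqref{capcondition}, the event $\{\mathrm{cap}(E^{\geq -h}(x_0))\in(0,\infty)\}$ coincides up to null sets with $\{\phi_{x_0}\geq -h,\ E^{\geq -h}(x_0)\text{ bounded}\}$, since by \eqref{capcondition} finite capacity is equivalent to boundedness, while positivity of the capacity is equivalent (through \eqref{capincreasing} and $\mathrm{cap}(\{x_0\})>0$) to $x_0$ belonging to the cluster, i.e.\ to $\phi_{x_0}\geq -h$; on this event the cluster is compact by Lemma \ref{1stpartofmaincor}, its capacity being finite. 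Identifying the law of its capacity with that of the compact cluster at the opposite level $+h$ is then the content of the symmetry Lemma \ref{h-hsamelaw}, which I would prove by peeling the cluster from its boundary level set via the strong Markov property \eqref{Markov} of the cable field, combined with the $\phi\mapsto-\phi$ invariance and the Brownian description of $\phi$ along the edges; this matches $\mathrm{cap}(E^{\geq -h}(x_0))1_{\mathrm{cap}(E^{\geq -h}(x_0))\in(0,\infty)}$ in law with $\mathrm{cap}(E^{\geq h}(x_0))1_{\phi_{x_0}\geq h}$.

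Finally, \eqref{eq:capinfinity} is a bookkeeping consequence of \eqref{lawforhnegative}. Setting $u=0$ in the established $\eqref{eq:laplacecap}_{h\geq0}$ shows that the distribution of $\mathrm{cap}(E^{\geq h}(x_0))1_{\phi_{x_0}\geq h}$ has total mass $\P^G(\phi_{x_0}\geq h)$ on $(0,\infty)$, so \eqref{lawforhnegative} gives $\P^G(\mathrm{cap}(E^{\geq -h}(x_0))\in(0,\infty))=\P^G(\phi_{x_0}\geq h)$. Since $\mathrm{cap}(E^{\geq -h}(x_0))=0$ exactly when $\phi_{x_0}<-h$, decomposing according to the three values $0$, $(0,\infty)$ and $\infty$ and using the continuity of the law of the Gaussian variable $\phi_{x_0}$ yields $\P^G(\mathrm{cap}(E^{\geq -h}(x_0))=\infty)=1-\P^G(\phi_{x_0}<-h)-\P^G(\phi_{x_0}\geq h)=\P^G(\phi_{x_0}\in(-h,h))$, as claimed.
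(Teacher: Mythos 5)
Your treatment of the first part of the theorem is essentially the paper's own proof: the finite-graph isomorphism (Lemma \ref{Theforfinite}) feeding Proposition \ref{couplingimplytheorem} to get $\eqref{eq:laplacecap}_{h\geq0}$ on the approximations $\G_n$, the coupling of Lemma \ref{LemmaapproGFF}, almost sure convergence of the cluster capacities once boundedness is upgraded to compactness via Lemma \ref{1stpartofmaincor}, dominated convergence, Lemma \ref{equivalenceforthelaw} for \eqref{eq:hdensity}, and Corollary \ref{maincor} for \eqref{item:LawhTrue}; your bookkeeping for \eqref{eq:capinfinity} matches the argument in Corollary \ref{C:thm2}. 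Two small inaccuracies in this part: the pathwise inequality $C_h^{(n)}\geq C_h$ you assert does not hold under the coupling (the fields only agree on compacts eventually; the correct one-sided bound restricts the cluster to a compact $K$ and uses \eqref{capincreasing}, as in the proof of Theorem \ref{mainresult}), and the convergence $C_h^{(n)}\to C_h$ is not \eqref{limitcap} (fixed graph, varying compacts) but the statement \eqref{capGngeqcapG'} of Corollary \ref{Cor:limcap}, which rests on $\mathrm{cap}_{\tilde\G_n}(A)\downarrow\mathrm{cap}_{\tilde\G}(A)$ for a fixed compact $A$ as the graph grows. These are citation-level slips; the argument itself is sound.

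The genuine gap is in your route to \eqref{lawforhnegative}. You apply the $\pm h$ symmetry of compact clusters, Lemma \ref{h-hsamelaw}, on $\tilde\G$ itself, and propose to prove it ``by peeling the cluster from its boundary level set via the strong Markov property \eqref{Markov}, combined with $\phi\mapsto-\phi$ invariance.'' This cannot work as described. The symmetry is \emph{false} without the compactness restriction: for $h>0$ and, say, $\G=\Z^3$, the set $E^{\geq-h}$ contains an unbounded cluster a.s.\ while $E^{\geq h}$ does not, so the level sets at $\pm h$ have manifestly different laws. An argument built only from \eqref{Markov}, sign-flip invariance and the Brownian description along cables is blind to the restriction to compact clusters, which is exactly where the nontrivial input enters: in the paper's proof of Lemma \ref{h-hsamelaw}, the compact clusters of $E^{\geq-\sqrt{2u}}$ are realized, through \eqref{eqcouplingintergff} at $u=h^2/2$, as the compact sign clusters avoiding $\I^u$ with $\sigma^u=1$, and it is the resampling of $\sigma^u$ on clusters disjoint from $\I^u$ that produces the matching with level $+\sqrt{2u}$ (even the fixed-compact-window version \eqref{eq:symclustersinK} of Remark \ref{approimplylawcap},2) is obtained by reduction to finite graphs and then Lemmas \ref{Theforfinite} and \ref{h-hsamelaw}, not by a bare Markov argument). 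Note also that invoking Lemma \ref{h-hsamelaw} directly on $\tilde\G$ requires \eqref{eqcouplingintergff} on $\tilde\G$, which at this stage is only known on finite graphs; it does hold under \eqref{capcondition} via Lemma \ref{couplingisalwaystrue} without circularity (cf.\ Remark \ref{remarkend},1)), but that is a substantial later result. The paper's actual proof avoids all of this: it establishes \eqref{lawforhnegative} on each finite $\G_n$ (Lemma \ref{Theforfinite} plus Corollary \ref{C:thm2}, \eqref{capcondition} being trivial there), observes from \eqref{eq:convlaw} that $\mathrm{cap}(E_n^{\geq h}(x_0))1_{\phi^{(n)}_{x_0}\geq h}$ converges in law, hence so does $\mathrm{cap}(E_n^{\geq-h}(x_0))1_{\mathrm{cap}(E_n^{\geq-h}(x_0))\in(0,\infty)}$, and identifies the limit with $\mathrm{cap}(E^{\geq-h}(x_0))1_{\mathrm{cap}(E^{\geq-h}(x_0))\in(0,\infty)}$ using \eqref{capGngeqcapG'} together with the equivalence, under \eqref{capcondition} and Lemma \ref{1stpartofmaincor}, of finite capacity and compactness. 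Replace your peeling argument by this finite-volume symmetry followed by a passage to the limit -- every tool for it is already in your first paragraph -- and the proof is complete.
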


\begin{Rk}
\label{R:lawh}
\begin{enumerate}[label={\arabic*)}]
\item \label{R:removecables2}
In case $\kappa\equiv0$ one can replace $\tilde{\G}$  in the statements of Theorems \ref{mainresult} and \ref{mainresultcap} by $\tilde{\G}^{\Ed}$, which corresponds to removing the edges $I_x,$ $x\in{G},$ from $\tilde{\G},$ see above \eqref{dirge} for notation, but not when $\kappa \not\equiv 0,$ see Remark \ref{R:removecables1}.
\item When $\G$ is a finite graph, one can deduce \eqref{eq:capinfinity} directly from Corollary 1, (ii) in \cite{LW18} with constant boundary condition $h\geq0,$ since saying that the random pseudo-metric between $x_0$ and the boundary of $\G$ introduced therein is equal to $0,$ is equivalent to saying that $E^{\geq -h}(x_0)$ is non-compact, or equivalently has infinite capacity. The statement \eqref{eq:capinfinity} then follows by using the reflection principle and that the effective resistance between $x_0$ and the boundary of $\G$ is equal to $g(x_0,x_0).$ When $\G=\Z^d,$ $d\geq3,$  \eqref{eq:capinfinity} is equivalent to the statement in Theorem 3 of \cite{DiWi}.
\end{enumerate}
\end{Rk}

\medskip
The proof of Theorem \ref{mainresultcap} (along with that of Theorem \ref{mainresult}) is given in the next two sections. Our starting point for both proofs is the observation (see Proposition \ref{couplingimplytheorem} below) that, if true, the isomorphism \eqref{eqcouplingintergffszn} entails a great deal of information about the observables $\text{cap}\big({E}^{\geq h}(x_0)\big),$ $h \in \R$. We use this observation on suitable finite-volume approximations of the free field on $\mathcal{G}$, which our setup naturally allows for (essentially obtained by iteratively reducing $\kappa$ starting from $\kappa=\infty$ outside a finite set). This is possible because \eqref{eqcouplingintergffszn} can be shown to hold without further assumptions on finite graphs. The condition \eqref{capcondition} then provides a very efficient criterion in order to avoid losing too much information when passing to the limit (in particular, one retains \eqref{eq:laplacecap}), thus yielding \eqref{item:LawhTrue}--\eqref{eq:capinfinity}. In a sense, the first part of Theorem \ref{mainresult} describes the information that survives in the limit \textit{without} any further assumptions on $\mathcal{G}$.

As $\text{ \eqref{eq:laplacecap}}_{h \geq 0}$ is essentially derived from \eqref{eqcouplingintergffszn} on finite-volume approximations of $\tilde{\mathcal{G}}$, one naturally wonders how the validity of $\text{ \eqref{eq:laplacecap}}_{h \geq 0}$ compares to that of \eqref{eqcouplingintergffszn} on $\tilde{\mathcal{G}}$ itself. This is the object of our next main result, Theorem \ref{couplingintergff} below; see in particular \eqref{equivisom}. Addressing this question will require us proving that the full strength of \eqref{eqcouplingintergffszn} can be passed to the limit (which is rather more involved than what is required for the proof of Theorem \ref{mainresultcap}), and thereby obtain an isomorphism on $\tilde{\mathcal{G}}$, under suitable assumptions (namely \eqref{eq:0bounded} or \hyperref[eq:laplacecap]{(\emph{$\textnormal{Law}_0$})}). 

In order to state Theorem \ref{couplingintergff}, we introduce a variation \eqref{eqcouplingintergff} of the identity \eqref{eqcouplingintergffszn}, which will sometimes be more convenient to work with. The two are in fact equivalent, see \eqref{equivisom} and Corollary~\ref{isomequivisom'} below. The appeal of \eqref{eqcouplingintergff} is that it makes certain symmetries more apparent (see for instance Lemma~\ref{h-hsamelaw}). It will also naturally imply a certain discrete isomorphism on the base graph $\mathcal{G}$, see \eqref{eqcouplingintergffdis} below, interesting in its own right. 

The identity \eqref{eqcouplingintergff} involves additional randomness. We henceforth assume that, on a suitable extension $\tilde{\P}_{\tilde{\G}}$ of $\P^G_{\tilde{\G}}\otimes \P^{I}_{\tilde{\G}}$ (which we simply denote by $\tilde{\P}$ when there is no risk of ambiguity) there exists for each $u>0$ an additional process $(\sigma_x^u)_{x\in{\tilde{\G}}}\in{\{-1,1\}^{\tilde{\G}}},$ such that, conditionally on $(|\phi_x|)_{x\in{\tilde{\G}}}$ and $\omega_u,$
$\sigma^u$ is constant on each of the connected components of $\{x\in{\tilde{\G}}:\ 2\ell_{x,u}+\phi_x^2>0\},$ $\sigma^u_x=1$ for all $x\in{\I^u},$ and the values of $\sigma^u$ on each other cluster of $\{x\in{\tilde{\G}}:\ 2\ell_{x,u}+\phi_x^2>0\}$ are independent and uniformly distributed. For $x$ such that  $2\ell_{x,u}+\phi_x^2=0,$ the value of $\sigma_x^u$ will not play any role in what follows, and one can fix it arbitrarily (e.g.~to have the value $+1$). Recalling the definition of $\mathcal{C}_u$ from below \eqref{eqcouplingintergffszn}, it is clear that the clusters of $\{x\in{\tilde{\G}}:\ 2\ell_{x,u}+\phi_x^2>0\}$ are the union of the clusters of the interior of $\mathcal{C}_u$ and the clusters of $\{x\in{\tilde{\G}:|\phi_x|>0}\}\cap (\mathcal{C}_u)^c,$ and so one can equivalently define $\sigma^u$ as follows: $\sigma^u_x=1$ for all $x\in{\mathcal{C}_u},$ $\sigma^u$ is constant on each of the clusters of $\{x\in{\tilde{\G}}:\ |\phi_x|>0\}\cap(\mathcal{C}_u)^c,$ and its values on each cluster are independent and uniformly distributed. 
We will investigate the validity of the relation
\begin{equation}
\label{eqcouplingintergff} \tag{Isom'}
\begin{array}{l}
\text{for each }u>0, \text{ the field }
\displaystyle \big(\sigma_x^u\sqrt{2\ell_{x,u}+\phi_x^2}\big)_{x\in{\tilde{\G}}}\text{ has the same}\\
\text{law under }\tilde{\P}
\text{ as the field }\big(\phi_x+\sqrt{2u}\big)_{x\in{\tilde{\G}}}\text{ under }\P^G.
\end{array}
\end{equation}
It is then an easy matter to see that \eqref{eqcouplingintergffszn} and \eqref{eqcouplingintergff} are equivalent, see Lemma \ref{isomequivisom'} below. Let
$ p_e^{\G}:\R^{{G}}\times[0,\infty)^{{G}}\rightarrow[0,1]$ for $e = \{x,y\} \in{E}$, and similarly $ p_x^{u,\G}$, $x\in G$, be defined by
	\begin{align}
	&\label{defpe}
	p_e (f,g) \equiv p_e^{\G}(f,g)=\exp\Big(-\lambda_{x,y}\big(f(x)f(y)+\sqrt{(f(x)^2+2g(x))(f(y)^2+2g(y)})\big)\Big), \\
	&	\label{defpx}
	p_x(f,g) \equiv p_x^{u,\G}(f,g)=\exp\Big(-\kappa_x\sqrt{2u(f(x)^2+2g(x))}\Big).
	\end{align} 
Our last main result is the following theorem, which is proved in Section \ref{sec:iso}.

\begin{The}
	\label{couplingintergff}
	Let $\G$ be a transient weighted graph. Then
	\begin{equation}
	\label{equivisom}
	\hyperref[eq:laplacecap]{(\emph{\text{Law}}_0)}\Longleftrightarrow \eqref{eq:laplacecap}_{h>0} \Longleftrightarrow \eqref{eqcouplingintergffszn} \Longleftrightarrow \eqref{eqcouplingintergff}.
	\end{equation}
	Moreover, defining for any $u>0$ on a suitable extension $\widehat{\P}$ of $\P^G\otimes\P^I$ a random set $\hat{\mathcal{E}}_u\subset E\cup G$ such that, conditionally on $(\phi_x)_{x\in{G}}$ and $\hat{\omega}_u,$ the set $\hat{\mathcal{E}}_u$ contains each edge and vertex that is contained in $\I_E^u$ (see below \eqref{defIu} for notation), and it contains each additional edge and vertex $e\in{E\cup G}$ conditionally independently with probability $1-p_e(\phi,\ell_{.,u}),$ the following holds: If  any of the conditions in \eqref{equivisom} is fulfilled, with $\mathcal{E}_u\stackrel{\text{def.}}{=}\{e\in{E\cup{G}}:\,2\ell_{x,u}+\phi_x^2>0\text{ for all }x\in{I_e}\}$,
	\begin{equation}
	\label{eq:isomisomdis}
	\text{$\hat{\mathcal{E}}_u$ has the same law under $\hat{\P}$ as $\mathcal{E}_u$ under $\tilde{\P}.$}
	\end{equation}
	 In particular, if one defines (under $\widehat{\P}$) a process $(\hat{\sigma}_x^u)_{x\in{{G}}}\in\{-1,1\}^{G},$ such that, conditionally on $(\phi_x)_{x\in{G}},$ $\hat{\omega}_u$ and $\hat{\mathcal{E}}_u,$
	\begin{itemize}
	\item the process $\hat{\sigma}^u$ is constant on each of the clusters (of edges) induced by $\hat{\mathcal{E}}_u\cap E,$
	\item $\hat{\sigma}_x^u=1$ for all $x\in{(\I^u\cup\hat{\mathcal{E}}_u)\cap G},$ and
	\item
	 the values of $\hat{\sigma}^u$ on all other clusters are independent and uniformly distributed, 
	 \end{itemize}
	 then
	\begin{equation}
	\label{eqcouplingintergffdis}
	\big(\hat{\sigma}_x^u\sqrt{2\ell_{x,u}+\phi_x^2}\big)_{x\in{{G}}}\text{ has the same law under }\hat{\P} \text{ as }\big(\phi_x+\sqrt{2u}\big)_{x\in{{G}}}\text{ under }\P^G.
	\end{equation}
\end{The}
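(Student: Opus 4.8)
The plan is to establish the ring of equivalences in \eqref{equivisom} as a cycle of one-directional implications, and then to extract the discrete statements \eqref{eq:isomisomdis} and \eqref{eqcouplingintergffdis} as by-products of the limiting scheme that produces \eqref{eqcouplingintergffszn}. Concretely, I would prove
\begin{equation*}
(\text{Law}_0) \Longrightarrow \eqref{eqcouplingintergffszn} \Longrightarrow \eqref{eq:laplacecap}_{h>0} \Longrightarrow (\text{Law}_0), \qquad \eqref{eqcouplingintergffszn} \Longleftrightarrow \eqref{eqcouplingintergff}.
\end{equation*}
The last link is algebraic (Lemma \ref{isomequivisom'}): conditionally on $(|\phi_x|)_{x\in\tilde{\G}}$ and $\omega_u$, the field in \eqref{eqcouplingintergff} differs from the one in \eqref{eqcouplingintergffszn} only by a global resampling of the sign on each sign cluster disjoint from $\I^u$, and the symmetry of $\phi$ on such clusters renders the two laws equal. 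The middle implication \eqref{eqcouplingintergffszn} $\Rightarrow \eqref{eq:laplacecap}_{h\geq0}$ (hence both $(\text{Law}_0)$ and $\eqref{eq:laplacecap}_{h>0}$) is Proposition \ref{couplingimplytheorem}: one restricts the isomorphism to $\{\phi_{x_0}\geq h\}$, and integrating out $\omega_u$ via \eqref{defIu} converts the probability that $\I^u$ avoids $E^{\geq h}(x_0)$ into $\exp(-u\,\mathrm{cap}(E^{\geq h}(x_0)))$. Finally, $\eqref{eq:laplacecap}_{h>0} \Rightarrow (\text{Law}_0)$ by letting $h\searrow0$: the clusters $E^{\geq h}(x_0)$ increase to $E^{\geq0}(x_0)$ (cf.\ the remark above Lemma \ref{approxharmo}), so $\mathrm{cap}(E^{\geq h}(x_0))\nearrow\mathrm{cap}(E^{\geq0}(x_0))$ by \eqref{limitcap}--\eqref{defcapinfinity} while $\sqrt{2u+h^2}\to\sqrt{2u}$, and bounded convergence passes the identity to the limit.

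The substantive step is $(\text{Law}_0) \Rightarrow \eqref{eqcouplingintergffszn}$ (Lemma \ref{couplingisalwaystrue}), for which I would set up a two-layered finite-volume approximation. The base case is Lemma \ref{Theforfinite}: on any \emph{finite} transient graph \eqref{eqcouplingintergffszn} holds unconditionally, since it descends from the loop-soup isomorphism \eqref{eqcouplingloopsgffdis} in the spirit of Theorem~8 of \cite{LuSaTa}. I would then exhaust $\G$ by finite graphs $\G_n$ obtained by progressively lowering $\bar\kappa$ from $+\infty$ off a growing finite set, so that $\tilde{\G}_n\subset\tilde{\G}$ by \eqref{eq:graphinclusion}; couple the free fields along this exhaustion through Lemma \ref{LemmaapproGFF}; and, simultaneously, couple the interlacements through Lemmas \ref{limitKn} and \ref{approximationprop}. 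Writing \eqref{eqcouplingintergff} on each $\G_n$ with its own sign field and clusters $\mathcal{C}_u^n$, the right-hand side $(\phi+\sqrt{2u})$ converges trivially, and the task is to show the left-hand field converges in law to its infinite-volume analogue. Since the sign carried by a cluster is determined by whether it meets $\I^u$, everything hinges on the stability of the cluster structure in the limit, and it is precisely here that $(\text{Law}_0)$ is used: through the already-proven $\eqref{eq:laplacecap}_{h\geq0}$ on the $\G_n$ it fixes the law of $\mathrm{cap}(E^{\geq0}(x_0))$ and thereby forbids capacity (hence clusters) from leaking to infinity, so the finite sign clusters and their resampled signs are stable and match up across the limit.

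The discrete conclusions then come essentially for free, because the whole scheme is funneled through the discrete loop-soup isomorphism on each $\G_n$. At the level of the base graph, an edge or vertex $e$ belongs to $\mathcal{E}_u$ either because a trajectory of $\hat{\omega}_u$ meets it (the contribution of $\I_E^u$) or because, conditionally on $((\phi_x)_{x\in G},\hat{\omega}_u)$, the independent bridge/excursion filling $I_e$ keeps $2\ell_{\cdot,u}+\phi^2$ strictly positive throughout $I_e$. By the conditional independence \eqref{phiedgeind}, \eqref{omegaedgeind} and the Brownian-bridge computation behind \eqref{phionIx}--\eqref{defpef}, extended to incorporate the local time, this latter conditional probability is exactly $1-p_e((\phi_x)_{x\in G},\ell_{\cdot,u})$ with $p_e$, $p_x$ as in \eqref{defpe}--\eqref{defpx}. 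This identifies $\hat{\mathcal{E}}_u$ with $\mathcal{E}_u$ in law, giving \eqref{eq:isomisomdis}, and transporting the sign field from the continuous clusters to $G$ yields \eqref{eqcouplingintergffdis}.

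I expect the main obstacle to be the interlacement approximation together with the stability of the cluster-and-sign structure under the limit. Unlike the free field, random interlacements on $\G$ are not a literal restriction of those on $\G_n$, so Lemmas \ref{limitKn} and \ref{approximationprop} must furnish a genuine coupling along which both the local times and the sets $\mathcal{C}_u^n$ converge; and the crux is to rule out clusters escaping to infinity, which is controlled by $(\text{Law}_0)$ alone. This is markedly harder than the corresponding step for Theorem~\ref{mainresultcap}, where only the one-dimensional marginal law of $\mathrm{cap}(E^{\geq h}(x_0))$ had to survive the limit, rather than the full joint field together with its sign structure.
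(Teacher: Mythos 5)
Your proposal is correct and follows essentially the same route as the paper: the cycle of implications through Proposition \ref{couplingimplytheorem}, the $h\searrow0$ limit via \eqref{capincreasing}, Lemma \ref{isomequivisom'}, and the substantive step $\hyperref[eq:laplacecap]{(\text{Law}_0)}\Rightarrow\eqref{eqcouplingintergffszn}$ via the two-layered finite-volume approximation (Lemmas \ref{Theforfinite}, \ref{LemmaapproGFF}, \ref{limitKn}, \ref{approximationprop}) with $\hyperref[eq:laplacecap]{(\text{Law}_0)}$ controlling the stability of the clusters $\mathcal{C}_{u,n}$, exactly as in Lemma \ref{couplingisalwaystrue}. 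The discrete statements are likewise obtained from \eqref{lawofEu} together with \eqref{phiedgeind} and \eqref{omegaedgeind}, as in the paper.
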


 \begin{Rk}\label{R:isomisom}
 \begin{enumerate}[label={\arabic*)}]
\item The conclusions of Theorem \ref{mainresultcap} in combination with \eqref{equivisom} yield the validity of \eqref{eqcouplingintergffszn} assuming  either \eqref{eq:0bounded} or \eqref{capcondition} only. 
\item The discrete isomorphism \eqref{eqcouplingintergffdis} bears similarities to the coupling derived in Theorem 1.bis of \cite{MR3502602} (see also \eqref{eqcouplingloopsgffdis} below) in the context of loop soups, as well as with the coupling derived in Theorem 8 of \cite{LuSaTa} in the context of Markov jump processes. Notice that by construction, see the definition of $\widehat{\mathcal{E}}_u$ and \eqref{defpe}, \eqref{defpx}, the coupling $\widehat{\P}$ yielding $(\hat{\sigma}_{x})_{x\in G}$ only requires information on $\G$, i.e., the reference to $\widetilde{\G}$ can be completely bypassed.
\item If $\h$ is a harmonic function on $\tilde{\G},$ one can define the notion of $\h$-transform of random interlacements, and an isomorphism between the $\h$-transform of random interlacements and the Gaussian free field on $\tilde{\G}$ similar to \eqref{eqcouplingintergffszn} holds, under the same conditions, see Theorem \ref*{Pre1:couplingintergffh} in \cite{Pre1} for details.
\item \label{eqcouplingintergffG-}One can also deduce from Theorem \ref{couplingintergff} another isomorphism on $\tilde{\G}^{\Ed},$ see Section \ref{S:I_x}. Let $\mathcal{E}_u^{\Ed}\subset\tilde{\G}^{\Ed}$ be a random set such that, conditionally on $(\phi_x)_{x\in{\tilde{\G}^{\Ed}}}$ and $\omega_u^{\tilde{\G}^{\Ed}},$ the trace of the random interlacement process $\omega_u$ on $\tilde{\G}^{\Ed},$ the set $\mathcal{E}_u^{\Ed}$ contains $\I^u\cap\tilde{\G}^{\Ed}$ and each additional vertex $x\in{G}$ conditionally independently with probability $1-p_x^{u,\G}(\phi,\ell_{\cdot,u})$ (or equivalently $1-p_x^{u,\G}(\phi,0)$). Let also $\mathcal{C}_u^{\Ed}$ be the closure of the union of the connected components of the sign clusters $\{x\in{\tilde{\G}^{\Ed}}:\,|\phi_x|>0\}$ intersecting $\mathcal{E}_u^{\Ed}.$ Then the isomorphism obtained by replacing $\tilde{\G}$ by $\tilde{\G}^{\Ed}$ and $\mathcal{C}_u$ by $\mathcal{C}_u^{\Ed}$ in \eqref{eqcouplingintergffszn} is also equivalent to any of the conditions in \eqref{equivisom}. In particular, if $\kappa\equiv0,$ then $\mathcal{C}_u^{\Ed}=\mathcal{C}_u\cap\tilde{\G}^-,$ and so the isomorphism \eqref{eqcouplingintergffszn} (or also \eqref{eq:laplacecap} in view of Lemma \ref{capged}) can be equivalently stated on $\tilde{\G}$ or $\tilde{\G}^{\Ed}.$

\item\label{R:symmetry} The conclusion \eqref{lawforhnegative} can a-posteriori be strengthened. Indeed, knowing that \eqref{eqcouplingintergff} holds (which follows from \eqref{item:LawhTrue} and \eqref{equivisom}), one easily shows that compact clusters in $E^{\geq h}$ and $E^{\geq -h}$ have the same law, for all $h >0$, see Lemma \ref{h-hsamelaw} below. In particular under~\eqref{eq:0bounded}, the clusters of $E^{\geq h}$ have the same law as the compact clusters of $E^{\geq -h},$ and so for all $x_0\in{\tilde{\G}}$
\begin{align}
\label{lawforhnegativecompact}
	\text{{\rm cap}}\big({E}^{\geq -h}(x_0)\big)1_{{E}^{\geq -h}(x_0)\text{ is compact},\phi_{x_0}\geq-h}\text{ has the same law as }\text{{\rm cap}}\big({E}^{\geq h}(x_0)\big) 1_{\phi_{x_0}{\geq h}},
\end{align}
whose law is described by \eqref{eq:laplacecap} in view of Theorem \ref{mainresultcap}. Contrary to \eqref{lawforhnegative}, the conclusion \eqref{lawforhnegativecompact} is however not sufficient to entirely describe the law of our variable of interest $\text{cap}({E}^{\geq -h}(x_0)).$ But if condition \eqref{capcondition} holds, then on account of Lemma \ref{1stpartofmaincor} ${E}^{\geq -h}(x_0)$ is compact if and only if $\text{cap}({E}^{\geq -h}(x_0))<\infty,$ and so \eqref{lawforhnegativecompact} is then equivalent to \eqref{lawforhnegative}. 

Similarly, with regards to~\eqref{eq:capinfinity}, using Lemma \ref{h-hsamelaw} (which applies under \eqref{eq:0bounded} by means of Theorems~\ref{mainresultcap} and~\ref{couplingintergff}), one finds that, under \eqref{eq:0bounded}, for all $h \geq 0$,
\begin{equation}
\label{eq:theta_short}
\begin{split}
\P^G({E}^{\geq -h}(x_0) \text{ is compact})&= \P^G(\varphi_0 \leq -h)+ \P^G( \emptyset \neq {E}^{\geq -h}(x_0) \text{ is compact})\\
&= \P^G(\varphi_0 \leq -h)+ \P^G( \emptyset \neq {E}^{\geq h}(x_0) \text{ is compact})\\
&= \P^G(\varphi_0 \leq -h)+\P^G(\varphi_0 \geq h),
\end{split}
\end{equation}
using \eqref{eq:0bounded} and Lemma~\ref{1stpartofmaincor} in the last step. In particular, one recovers ~\eqref{eq:capinfinity} from \eqref{eq:theta_short} in case \eqref{capcondition} holds. We further refer to Remark~\ref{approimplylawcap},\ref{stronglawnegative} regarding the symmetry of clusters in $E^{\geq h}$ and $E^{\geq -h}$ contained in a given compact set $K \subset \tilde{\G}$, which does not require \eqref{eqcouplingintergff} to hold.

\item Let us explain how to explicitly construct the process $\sigma$ on $\tilde{\G}$ in \eqref{eqcouplingintergff}. Let $(x_n)_{n\in\N}$ be a dense sequence in $\tilde{\G}$ and $(\sigma'_n)_{n\in\N}\in{\{-1,1\}^\N}$ be a sequence of independent and uniformly distributed random variables under  $\tilde{\P}.$  Let $m(x)$ be the smallest $n\in\N$ such that $x_n$ and $x$ are in the same cluster of $\{y\in{\tilde{\G}}:\ 2\ell_{y,u}+\phi_y^2>0\};$ since $(x_n)_{n\in\N}$ is dense and $y\mapsto 2\ell_{y,u}+\phi_y^2$ is continuous, we have that $m(x)<\infty$ once $2\ell_{x,u}+\phi_x^2>0.$ We then define $\sigma_x=\sigma'_{{m(x)}}$ if $\phi_x^2>0$ and $x\notin{\mathcal{C}_u},$ and $\sigma_x=1$ otherwise, which has the desired properties. As an aside, note that in the isomorphism \eqref{eqcouplingloopsgff} between loop soups and the Gaussian free field, one could also construct explicitly the law of the signs $\sigma$ by a similar procedure.
\end{enumerate}
 \end{Rk}
 
Let us now give several interesting consequences of Theorem \ref{couplingintergff}, as well as the usual isomorphism \eqref{usualiso}. By continuity of the Gaussian free field, as already noted in (5.3) and below in \cite{DrePreRod2}, one can easily deduce from \eqref{usualiso} that 
\begin{equation}
\label{couplingusualiso}
\begin{gathered}
\text{there exists a coupling between $\I^u$ and $\phi$ such that a.s.\ each connected component}\\\text{of }\I^u\text{ is either included in }\{x\in{\tilde{\G}}:\,\phi_x>-\sqrt{2u}\}\text{ or in }\{x\in{\tilde{\G}}:\,\phi_x<-\sqrt{2u}\}.
\end{gathered}
\end{equation} 
Moreover, if $\h_{\text{kill}}<1,$ see \eqref{defh0}, then each forwards trajectory of the random interlacement process has a positive probability to not be killed, and so $\I^u$ is unbounded with positive probability for all $u>0.$ Hence, we obtain that for all $u>0$ either $\{x\in{\tilde{\G}}:\,\phi_x>-\sqrt{2u}\}$ or $\{x\in{\tilde{\G}}:\,\phi_x<-\sqrt{2u}\}$ is unbounded with positive probability, and by symmetry of the Gaussian free field, it follows that \eqref{ifhkill<1thenh_*>0} holds. 

Note that this improves the result from Corollary \ref{maincor}, ii). However, the proof of \eqref{ifhkill<1thenh_*>0} relies on the isomorphism \eqref{usualiso} between random interlacements and the Gaussian free field on infinite graphs, whereas the proof of Corollary \ref{maincor}, ii) only relies on this isomorphism on finite graphs, or equivalently the second Ray-Knight theorem (see Theorem 2 in \cite{LuSaTa}), or alternatively on an argument based on the Markov property for the Gaussian free field from \cite{MR914444}, as explained below Theorem \ref{mainresult}.

The advantage of the isomorphism \eqref{eqcouplingintergffszn} is that when it holds, or equivalently \hyperref[eq:laplacecap]{{($\text{Law}_0$)}} by Theorem \ref{couplingintergff}, one can directly improve \eqref{couplingusualiso} to prove that
\begin{equation}
\label{couplingnewiso}
\text{there exists a coupling between $\I^u$ and $\phi$ such that a.s.\ }\I^u\subset\{x\in{\tilde{\G}}:\,\phi_x>-\sqrt{2u}\}.
\end{equation} 
In particular, by symmetry of the Gaussian free field, we obtain that there exists a coupling between $\V^u$ and $\phi$ such that $E^{\geq\sqrt{2u}}\subset \V^u,$ where $\V^u=(\I^u)^c$ is the vacant set of random interlacements, thus generalizing Theorem 3 in \cite{MR3502602} from $\Z^d$ to any graph satisfying \hyperref[eq:laplacecap]{{($\text{Law}_0$)}}, or simply \eqref{capcondition} by \eqref{item:LawhTrue}. We refer to \cite{MR3492939}, \cite{MR3765885} and \cite{DrePreRod2} for other applications of couplings similar to \eqref{couplingnewiso}. Another interesting consequence of Theorem \ref{couplingintergff} is the following for the value of $\tilde{h}_*.$

\begin{Cor}
	\label{dichotomy}
	Let $\G$ be a transient weighted graph satisfying \hyperref[eq:laplacecap]{\emph{($\text{Law}_0$)}}. Then either $\P^G$-a.s.\ the sign clusters of the Gaussian free field on $\tilde{\G}$ only contain compact connected components, or $E^{\geq h}$ contains for each $h\in\R$ at least one unbounded connected component with $\P^G$-positive probability. In particular, if \hyperref[eq:laplacecap]{\emph{($\text{Law}_0$)}} holds and $\h_{\text{kill}}<1,$ then by \eqref{ifhkill<1thenh_*>0}, $\tilde{h}_*=\tilde{h}_*^{{\rm com}} \in{\{0,\infty\}}.$
\end{Cor}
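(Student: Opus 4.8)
The plan is to derive the dichotomy entirely from the isomorphism \eqref{eqcouplingintergff} (equivalently \eqref{eqcouplingintergffszn}), which is available under \hyperref[eq:laplacecap]{($\mathrm{Law}_0$)} by Theorem \ref{couplingintergff}. I work on the extension $\tilde{\P}$ carrying $\varphi$, $\omega_u$ and the signs $\sigma^u$, and abbreviate the isomorphic field by $\Psi^u_x=\sigma_x^u\sqrt{2\ell_{x,u}+\varphi_x^2}$, so that \eqref{eqcouplingintergff} reads: for each $u>0$, the field $\Psi^u$ has under $\tilde{\P}$ the same law as $\varphi_\cdot+\sqrt{2u}$ under $\P^G$. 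Passing to excursion sets this yields, for every $u>0$ and $t\in\R$, that $\{x:\Psi^u_x\geq t\}$ has the same law as $E^{\geq t-\sqrt{2u}}$. I state the dichotomy as: either (A) $\P^G$-a.s. all sign clusters of $\varphi$ are compact, or (B) $\tilde{h}_*=\infty$; and I aim to prove that if (A) fails then (B) holds. (Note that by Lemma \ref{1stpartofmaincor}, at levels $h\geq 0$ compactness and boundedness of clusters coincide, so a non-compact sign cluster is in fact unbounded; combined with the symmetry $\varphi\leftrightarrow-\varphi$, the failure of (A) produces, with positive $\P^G$-probability, an \emph{unbounded positive} sign cluster $\mathcal S\subset E^{\geq 0}$.)

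The engine is the following reduction. Since $\sigma^u\equiv 1$ on $\I^u$, one has $\Psi^u_x\geq\sqrt{2\ell_{x,u}}$ for $x\in\I^u$, whence the pointwise containment $\{x:\ell_{x,u}\geq t^2/2\}\subseteq\{x:\Psi^u_x\geq t\}$. Fixing a target height $h>0$, choosing $t=h+\sqrt{2u}$ and writing $s_u=\tfrac12(h+\sqrt{2u})^2$, the equality in law above gives
\begin{equation*}
\P^G\big(E^{\geq h}\text{ has an unbounded component}\big)=\tilde{\P}\big(\{\Psi^u\geq h+\sqrt{2u}\}\text{ has an unbounded component}\big)\geq\tilde{\P}\big(\{\ell_{\cdot,u}\geq s_u\}\text{ has an unbounded component}\big).
\end{equation*}
Thus $\tilde{h}_*=\infty$ would follow if, for each $h>0$ and some $u$, the local-time super-level set $\{\ell_{\cdot,u}\geq s_u\}$ has an unbounded component with positive probability. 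The input coming from the failure of (A) enters through the coupling \eqref{couplingnewiso} and the structure of $\mathcal{C}_u$: under $\P^G\otimes\P^I$, conditionally on $\varphi$ one has $\P^I(\I^u\cap\mathcal S=\varnothing)=e^{-u\,\mathrm{cap}(\overline{\mathcal S})}<1$ because $\mathrm{cap}(\overline{\mathcal S})\in(0,\infty]$, so with positive probability $\I^u$ meets the unbounded cluster $\mathcal S$; then $\mathcal S\subseteq\mathcal C_u$, and one must exploit the local time deposited by $\omega_u$ along the unbounded set $\mathcal S$.

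The step I expect to be the main obstacle is exactly this upward propagation: establishing that $\{\ell_{\cdot,u}\geq s_u\}$ (equivalently $\{\Psi^u\geq t\}$ with $t>\sqrt{2u}$) percolates at a threshold $s_u$ strictly above the typical scale $\E^I[\ell_{x,u}]=u$ of the local time. This cannot be read off from the isomorphism alone, since the identity in law turns it into the very level-set percolation statement one is trying to establish; the existence of the unbounded sign cluster must genuinely be fed in to break the circularity. The mechanism for doing so is the finite-volume approximation of random interlacements already developed for the proof of \eqref{eqcouplingintergffszn} (Lemmas \ref{limitKn} and \ref{approximationprop}): on a finite exhausting graph the isomorphism holds unconditionally and the relevant connectivity is a finite-volume event, so one can transport the unboundedness of $\mathcal S$ together with the high-threshold local-time connection through the limit $n\to\infty$, the delicate point being to monitor that the high-level local-time cluster inside $\mathcal S$ survives the limit. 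This is the technical heart of the argument.

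Finally, granting the dichotomy, the displayed ``In particular'' is immediate. If $\h_{\text{kill}}<1$ then \eqref{ifhkill<1thenh_*>0} gives $\tilde{h}_*\geq 0$. In alternative (A), all sign clusters are compact, hence bounded, so $E^{\geq h}$ has only bounded components for every $h\geq 0$ and $\tilde{h}_*\leq 0$; combined with $\tilde{h}_*\geq 0$ this gives $\tilde{h}_*=0$, and since boundedness and compactness agree for $E^{\geq h}$ with $h>0$ (Lemma \ref{1stpartofmaincor}) one also gets $\tilde{h}_*^{{\rm com}}=\tilde{h}_*=0$. In alternative (B), $\tilde{h}_*=\infty$, and then $\tilde{h}_*^{{\rm com}}\geq\tilde{h}_*=\infty$ forces $\tilde{h}_*^{{\rm com}}=\infty$ as well. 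In either case $\tilde{h}_*=\tilde{h}_*^{{\rm com}}\in\{0,\infty\}$, as claimed.
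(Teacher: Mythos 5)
Your argument has a genuine gap at exactly the point you flag as ``the technical heart'': the claim that the interlacement local-time super-level set $\{\ell_{\cdot,u}\geq s_u\}$ with $s_u=\tfrac12(h+\sqrt{2u})^2>u$ percolates is never established, and nothing in the paper's toolbox (nor the finite-volume approximation of Lemmas \ref{limitKn} and \ref{approximationprop}) delivers it; you correctly observe that reading it off the isomorphism is circular, but you do not actually break the circularity. The proposal therefore proves the easy reductions and leaves the whole difficulty open. Moreover, the route itself points in the wrong direction: you want $\I^u$ to \emph{hit} the unbounded sign cluster $\mathcal S$ and then to extract anomalously large local times along it, which is a much stronger (and unproven) statement than what is needed.

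The paper's proof exploits the opposite event, and the key input you are missing is Theorem \ref{mainresult}: $\mathrm{cap}(E^{\geq 0}(x_0))<\infty$ $\P^G$-a.s.\ (you only record $\mathrm{cap}(\overline{\mathcal S})\in(0,\infty]$, but the finiteness is exactly what matters). By \eqref{defIu}, finiteness of the capacity gives $\P^I(\I^u\cap E^{\geq 0}(x_0)=\varnothing)=e^{-u\,\mathrm{cap}(E^{\geq 0}(x_0))}>0$, so with positive $\P^I\otimes\P^G$-probability the cluster $E^{\geq 0}(x_0)$ is unbounded (non-compact plus Lemma \ref{1stpartofmaincor}) \emph{and} $x_0\notin\mathcal C_u$. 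On that event the whole sign cluster of $x_0$ is disjoint from $\mathcal C_u$, so the field in \eqref{eqcouplingintergffszn} equals $\phi$ there; applying this to the unbounded \emph{negative} sign cluster (available by symmetry of $\varphi$) produces an unbounded connected component of $\{\phi+\sqrt{2u}\leq 0\}$, hence, after one more reflection, an unbounded component of $E^{\geq\sqrt{2u}}$ with positive probability, for every $u>0$. No statement about percolation of high local-time level sets is required. Your concluding ``In particular'' paragraph is fine and essentially matches the paper once the dichotomy is granted.
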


The proof of Corollary \ref{dichotomy} appears at the end of Section \ref{sec:iso}. We refer to 
\cite{Pre1} for an example of a graph satisfying $\h_{\text{kill}}<1,$ but for which $\tilde{h}_*=\tilde{h}_*^{{\rm com}} =\infty.$ Note however that we still have $\tilde{h}_*^{{\rm cap}} \leq0$ by Theorem \ref{mainresult}. In view of Corollary \ref{dichotomy}, an interesting open question is then whether a transient graph with $\tilde{h}_*\in{(0,\infty)},$ or $\tilde{h}_*^{{\rm com}} \in{(0,\infty)},$ exists or not.  Another interesting consequence of Corollary \ref{dichotomy} is that if $\tilde{h}_*=0,$ then the level sets of the Gaussian free field do no percolate at the critical point $h=0,$ as implied by the following: 
\begin{Cor}
	\label{percoath*}
	If $\mathcal{G}$ is a transient graph such that $\tilde{h}_*\leq0,$ then $E^{\geq0}$ contains only bounded connected components. 
\end{Cor}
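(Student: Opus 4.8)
The reverse implication---that $\tilde{h}_* \le 0$ follows from \eqref{eq:0bounded}---is immediate from the definition \eqref{defh*bou}, so the content of the statement is the converse, and this is what the plan addresses. The strategy is to upgrade the \emph{qualitative} information $\tilde{h}_* \le 0$ into the \emph{quantitative} identity $(\mathrm{Law}_0)$, and then to feed the latter into the dichotomy of Corollary~\ref{dichotomy}.

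First I would record that, by the very definition \eqref{defh*bou} of $\tilde{h}_*$ together with the monotonicity of $h \mapsto E^{\geq h}$, the assumption $\tilde{h}_* \le 0$ forces, for every $h>0$ and every $x_0 \in \tilde{\G}$, that $E^{\geq h}(x_0)$ is $\P^G$-a.s.\ bounded. Since $h>0$, Lemma~\ref{1stpartofmaincor} then turns boundedness into compactness, so $E^{\geq h}(x_0)$ is a.s.\ compact. Applying Theorem~\ref{mainresultcap} for each fixed $h>0$ yields that the law of $\mathrm{cap}(E^{\geq h}(x_0))$ is given by \eqref{eq:laplacecap}; in other words $\eqref{eq:laplacecap}_{h>0}$ holds. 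By the chain of equivalences \eqref{equivisom} in Theorem~\ref{couplingintergff}, $\eqref{eq:laplacecap}_{h>0}$ is equivalent to $(\mathrm{Law}_0)$, so $(\mathrm{Law}_0)$ holds.

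With $(\mathrm{Law}_0)$ available, I would invoke Corollary~\ref{dichotomy}, which presents two mutually exclusive alternatives: either the sign clusters of $\varphi$ on $\tilde{\G}$ are $\P^G$-a.s.\ all compact, or for \emph{every} $h \in \R$ the set $E^{\geq h}$ has an unbounded component with positive probability. The second alternative would mean $\tilde{h}_* = \infty$ by \eqref{defh*bou}, contradicting the standing hypothesis $\tilde{h}_* \le 0$. Hence the first alternative must hold: $\P^G$-a.s.\ every sign cluster is compact. It then remains to translate `all sign clusters compact' into the assertion that $E^{\geq 0}$ has only bounded components, which is exactly \eqref{eq:0bounded}. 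Here I would use the continuity of $\varphi$ and the fact that, conditionally on its values on $G$, $\varphi$ restricted to each cable $I_e$ is a Brownian bridge (resp.\ motion); cf.\ the discussion below \eqref{Markov}. Two consequences hold $\P^G$-a.s.: no vertex of $G$ is a zero of $\varphi$ (a countable union of null events), and the interior zeros of $\varphi$ on each edge are never one-sided, i.e.\ $\varphi$ takes negative values in every neighbourhood of such a zero. Consequently every connected component of $E^{\geq 0}$ with more than one point is the closure of a single component of $\{x : \varphi_x > 0\}$, so an unbounded component of $E^{\geq 0}$ would force an unbounded, hence non-compact, positive sign cluster, which has just been excluded.

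The genuinely delicate step is the last one: controlling the interface structure of $\varphi$ at its zero set so that distinct positive excursions cannot be glued, through a zero, into a single unbounded component of $E^{\geq 0}$. This is a soft but careful use of the excursion structure of Brownian motion on the cables, of the same flavour as the passage `by the symmetry and continuity of $\varphi$' employed in the proof of Corollary~\ref{Cor:loopsoups}. The remaining steps are essentially bookkeeping built on results already established, namely Lemma~\ref{1stpartofmaincor}, Theorems~\ref{mainresultcap} and~\ref{couplingintergff}, and Corollary~\ref{dichotomy}.
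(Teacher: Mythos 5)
Your argument is correct and follows essentially the same route as the paper: use $\tilde{h}_*\leq 0$ to get that $E^{\geq h}$ is a.s.\ bounded for every $h>0$, apply Theorem \ref{mainresultcap} to obtain \eqref{eq:laplacecap}$_{h>0}$, upgrade to \hyperref[eq:laplacecap]{($\text{Law}_0$)} via \eqref{equivisom}, and conclude with the dichotomy of Corollary \ref{dichotomy}. The one step you single out as delicate --- that each component of $E^{\geq 0}$ is the closure of a single positive sign cluster, so no gluing through zeros occurs --- is precisely Lemma \ref{E>hbar=E>=h}, which is already established and can simply be cited.
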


 We refer to the end of Section \ref{sec:iso} for the proof of Corollary~\ref{percoath*}. We conclude this section with the short
 
 \begin{proof}[Proof of Theorem \ref{T:main}]
Theorem \ref{T:main},\ref{maint1} follows from the first conclusion of Theorem \ref{mainresult} and Corollary \ref{maincor},$i)$, 
The first equivalence in Theorem \ref{T:main},\ref{maint3} is a consequence of Corollary \ref{percoath*} (the reverse implication being immediate, see \eqref{defh*bou}). Finally, the implication $\hyperref[eq:0bounded]{(\emph{\textnormal{Sign}})}\Longrightarrow \hyperref[eq:laplacecap]{(\emph{\textnormal{Law}}_0)}$ is a consequence of the first conclusion of Theorem \ref{mainresultcap} and the remaining equivalences follow from Corollary \ref{percoath*} and \eqref{equivisom} in Theorem \ref{couplingintergff}. Finally, Theorem \ref{T:main},\ref{maint4} is implied by Corollary \ref{dichotomy}.
 \end{proof}

\section{Some preparation}
\label{sec:prep}
In this section, we prepare the ground for the proofs of Theorems \ref{mainresult} and \ref{mainresultcap}. Their proofs, given in the next section, combine three main ingredients, corresponding to Proposition \ref{couplingimplytheorem}, Lemma \ref{Theforfinite} and Lemma \ref{LemmaapproGFF} below. They also rely on a symmetry property implied by
\eqref{eqcouplingintergff}, stated in Lemma~\ref{h-hsamelaw}, which is of independent interest. These results will also be useful in Section~\ref{sec:iso} in the course of proving Theorem \ref{couplingintergff}, albeit in a different manner.

Our starting point, Proposition \ref{couplingimplytheorem} below, contains the key observation that \eqref{eq:laplacecap}$_{h\geq0}$ follows from the identity \eqref{eqcouplingintergff}, if assumed to hold. Lemma~\ref{Theforfinite} implies a version of the isomorphism \eqref{eqcouplingintergff}, valid on finite graphs (this result is in fact a consequence of the isomorphism theorems between loop soups and the Gaussian free field from \cite{MR3502602}, see also \eqref{eqcouplingloopsgff} below; the proof of  Lemma~\ref{Theforfinite} is given in Appendix \ref{App:isom}). Importantly, Lemma~\ref{Theforfinite} allows for Proposition \ref{couplingimplytheorem} to automatically apply in a finite setup. Finally, Lemma \ref{LemmaapproGFF} supplies a useful approximation scheme for $\varphi$ based on \eqref{eq:graphinclusion}, see \eqref{eq:def_approx} below, which entails the important limits \eqref{capGngeqcapG}, \eqref{capGngeqcapG'} from Corollary~\ref{Cor:limcap}.
With these results at hand, the proofs of Theorems \ref{mainresult} and \ref{mainresultcap} quickly follow. They appear in the next section.

Unless specified otherwise, we tacitly assume that $\mathcal{G}$ is a transient weighted graph (see above \eqref{deflambdarho} for our setup). We begin with the following technical lemma.

\begin{Lemme}
\label{E>hbar=E>=h}
For each $x_0\in{\tilde{\G}}$ and $h\in\R,$ defining $E^{>h}=\{y\in{\tilde{\G}}:\phi_y>h\}$ and $E^{>h}(x_0)=\{y\in{\tilde{\G}}:\,y\leftrightarrow x_0\text{ in }E^{>h}\},$ and denoting by $\overline{E^{>h}(x_0)}$ the closure of $E^{>h}(x_0),$ one has $$\overline{E^{>h}(x_0)}=E^{\geq h}(x_0)\ \P^G\text{-a.s.}$$
\end{Lemme}
\begin{proof}
Since $E^{\geq h}(x_0)$ is closed, it is clear that $\overline{E^{>h}(x_0)}\subset E^{\geq h}(x_0).$ Let us now fix some compact $K\subset\tilde{\G},$ let $E^{>h}_K(x_0)=\{y\in{\tilde{\G}}:\,y\leftrightarrow x_0\text{ in }E^{>h}\cap K\},$ and $\mathcal{K}$ be the set containing $\overline{E^{>h}_K(x_0)}$ as well as each $x\in{G}$ such that $\overline{I_{\{x,y\}}}\cap \overline{E^{>h}_K(x_0)}\neq\varnothing$ for some $y\sim x.$ In order to apply the Markov property \eqref{Markov} to the random compact $\mathcal{K},$ we first need to show that it is compatible. Let us thus fix some open set $O,$ and let us define $O'$ the set obtained from $O$ by removing $\overline{I_{\{x,y\}}}$ from $O$ for all $x\sim y$ such that $I_{\{x,y\}}\cap O\neq\varnothing$ and $x\notin O$ or $y\notin O.$ One then sees that $\mathcal{K}\subset O$ if and only if $\overline{E^{>h}_K(x_0)}\subset O'.$ Moreover, $\overline{E^{>h}_K(x_0)}\subset O'$ if and only if for every connected path $\pi$ from $x_0$ to $y\in{\partial O'},$ with $\pi$ closed in $x$ and open in $y,$  there exists $z\in{\pi}$ with $\phi_z\leq h.$ Therefore, the event $\overline{E^{>h}_K(x_0)}\subset O'$ is $\mathcal{A}_{O'}\subset \mathcal{A}_O$ measurable, and so $\mathcal{K}$ is compatible.

Let us now assume that $E^{\geq h}_K(x_0)\not\subset \overline{E^{>h}_K(x_0)}$. Hence, there exists a closed path $\pi\subset E^{\geq h}_K(x_0)$ starting in $x_0$ such that $\pi\not\subset \overline{E^{>h}_K(x_0)}.$ With probability one, we can moreover assume that $\phi\neq h$ on $G.$ Then by definition of $\K$ there exists an edge or vertex $e\in{E\cup G},$ $x\in{{I_e}\cap\partial E^{>h}_{K}(x_0)},$ with $x$ in the interior of $\pi,$ and, if $e\in{E},$ $y\in{\overline{I_e}\cap \partial\K}$ with $y\neq x.$ Since $\phi_x=h$ by continuity of $\varphi$, using the Markov property \eqref{Markov} and a similar reasoning as above (2.9) in \cite{DrePreRod}, one can show that when $e\in{E},$ conditionally on $\mathcal{A}_{\mathcal{\K}}^+,$ the law of $\phi$ on the edge between $x$ and $y$ is the same as the law of a Brownian bridge with variance $2$ at time $1,$ on the edge between $x$ and $y$ with value $h$ at $x$ and $\phi_y$ at $y.$ This Brownian bridge is a.s.\ strictly smaller than $h$ infinitely many times in any neighborhood of $x,$ and so a.s.\ $\phi<h$ infinitely many times in any neighborhood of $x,$ that is $x\in{\partial E^{\geq h}(x_0)}.$ If $e\in{G},$ one can prove similarly that $x\in{\partial E^{\geq h}(x_0)}$ since the law of $\phi$ on the edge between $x$ and the open end of $I_e$ is the same as the law of a Brownian bridge with variance $2$ at time $1$ between $\phi_x$ and $0.$  This is a contradiction since $x$ is in the interior of $\pi\subset E^{\geq h}_K(x_0),$ and so $E^{\geq h}_K(x_0)\subset \overline{E^{>h}_K(x_0)}\subset \overline{E^{>h}(x_0)}$ a.s. Taking a sequence of compacts $K=K_n$ increasing to $\tilde{\G},$ we conclude.
\end{proof}

\begin{Prop}
\label{couplingimplytheorem}
	Suppose \eqref{eqcouplingintergff} is verified on $\G$. Then \eqref{eq:laplacecap}$_{h\geq0}$ holds true. 
\end{Prop}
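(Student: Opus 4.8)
The plan is to move both sides of $\eqref{eq:laplacecap}_{h\ge 0}$ onto the interlacement side, where the hypothesis $\eqref{eqcouplingintergff}$ can be exploited. First I would rewrite the left-hand side using the characterisation $\eqref{defIu}$ of $\I^u$ together with the independence of $\phi$ and $\omega_u$ under $\tilde{\P}$: conditionally on $\phi$, the set $E^{\geq h}(x_0)$ is a fixed closed subset of $\tilde{\G}$, so $\P^I(\I^u\cap E^{\geq h}(x_0)=\varnothing\mid \phi)=\exp(-u\,\mathrm{cap}(E^{\geq h}(x_0)))$, where the convention $e^{-\infty}=0$ keeps this valid even when the capacity is infinite (so that no a priori finiteness of $\mathrm{cap}(E^{\geq h}(x_0))$ is needed). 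Integrating against $1_{\phi_{x_0}\ge h}$ then yields
\[ \E^G\big[e^{-u\,\mathrm{cap}(E^{\geq h}(x_0))}\,1_{\phi_{x_0}\ge h}\big]=\tilde{\P}\big(\phi_{x_0}\ge h,\ \I^u\cap E^{\geq h}(x_0)=\varnothing\big). \]
Lemma \ref{E>hbar=E>=h} lets me pass freely between $E^{>h}(x_0)$ and its closure $E^{\geq h}(x_0)$, which is convenient both here and when identifying sign clusters below.

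Next I would treat the right-hand side $\P^G(\phi_{x_0}\ge\sqrt{2u+h^2})$ through $\eqref{eqcouplingintergff}$. Since $(\sigma^u_x\sqrt{2\ell_{x,u}+\phi_x^2})_x$ has the same law as $(\phi_x+\sqrt{2u})_x$ and $\sqrt{2u+h^2}>0$, this probability equals $\tilde{\P}(\sigma^u_{x_0}=+1,\ 2\ell_{x_0,u}+\phi_{x_0}^2\ge 2u+h^2)$. I would then condition on $(|\phi_x|)_x$ and $\omega_u$, on which the sign cluster structure, the event $\{x_0\in\mathcal{C}_u\}$ and the value $2\ell_{x_0,u}+\phi_{x_0}^2$ all depend, whereas $\sigma^u_{x_0}$ is forced to $+1$ on $\mathcal{C}_u$ and is an independent fair coin off it. Using that $\ell_{x_0,u}=0$ on $\{x_0\notin\mathcal{C}_u\}$, together with the sign-flip invariance $\phi\mapsto-\phi$ of $\P^G$ (which preserves $|\phi|$, hence $\mathcal{C}_u$ and $\omega_u$), the off-$\mathcal{C}_u$ half collapses to a one-sided tail, and I expect to obtain
\[ \P^G\big(\phi_{x_0}\ge\sqrt{2u+h^2}\big)=\tilde{\P}\big(x_0\in\mathcal{C}_u,\ 2\ell_{x_0,u}+\phi_{x_0}^2\ge 2u+h^2\big)+\tilde{\P}\big(x_0\notin\mathcal{C}_u,\ \phi_{x_0}\ge\sqrt{2u+h^2}\big). \]

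It then remains to match the two expressions. Splitting the display of the first paragraph according to whether $x_0\in\mathcal{C}_u$, the off-$\mathcal{C}_u$ contribution is the easy one: for $h>0$ one has $E^{\geq h}(x_0)\subset E^{>0}(x_0)$, so $\{x_0\notin\mathcal{C}_u\}=\{\I^u\cap E^{>0}(x_0)=\varnothing\}$ already forces $\I^u\cap E^{\geq h}(x_0)=\varnothing$ (the boundary case $h=0$ being recovered by letting $h\searrow 0$ and invoking Lemma \ref{E>hbar=E>=h}). This reduces the whole claim to an identity localised on the sign cluster $S=E^{>0}(x_0)$ of $x_0$: on $\{x_0\in\mathcal{C}_u\}$, i.e.\ when $S$ is hit by $\I^u$, the avoidance event $\{\I^u\cap E^{\geq h}(x_0)=\varnothing\}$ must be matched against the threshold event $\{2\ell_{x_0,u}+\phi_{x_0}^2\ge 2u+h^2\}$. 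This within-cluster matching is the main obstacle, since the conditional law of $\ell_{x_0,u}$ given $\phi$ and given that $S$ is hit is delicate and the two events are geometric, respectively analytic, in nature. I expect to resolve it by conditioning on $S$ together with $\phi|_S$ and invoking the usual isomorphism $\eqref{usualiso}$ (equivalently, the generalised second Ray--Knight theorem) relative to $S$, which ties the occupation field $\ell_{\cdot,u}$ on $S$ to $\tfrac12\phi^2$ and thereby converts the avoidance of $E^{\geq h}(x_0)$ into the scalar threshold on $2\ell_{x_0,u}+\phi_{x_0}^2$; carrying out this computation should reproduce the explicit density $\eqref{eq:hdensity}$ and close the argument.
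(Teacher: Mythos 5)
Your two opening rewrites are both correct: $\E^G\big[e^{-u\,\mathrm{cap}(E^{\geq h}(x_0))}1_{\phi_{x_0}\ge h}\big]=\tilde{\P}\big(\phi_{x_0}\ge h,\ \I^u\cap E^{\geq h}(x_0)=\varnothing\big)$ by \eqref{defIu} and independence, and $\P^G(\phi_{x_0}\ge\sqrt{2u+h^2})=\tilde{\P}\big(x_0\in\mathcal{C}_u,\ 2\ell_{x_0,u}+\phi_{x_0}^2\ge 2u+h^2\big)+\tilde{\P}\big(x_0\notin\mathcal{C}_u,\ \phi_{x_0}\ge\sqrt{2u+h^2}\big)$ by \eqref{eqcouplingintergff} and the coin-flip/symmetry argument. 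The gap is in the claimed reduction: the two decompositions do \emph{not} match term by term on $\{x_0\in\mathcal{C}_u\}$ and $\{x_0\notin\mathcal{C}_u\}$. On $\{x_0\notin\mathcal{C}_u\}$ your left-hand side contributes $\tilde{\P}(\phi_{x_0}\ge h,\ x_0\notin\mathcal{C}_u)$ while the right-hand side contributes $\tilde{\P}(\phi_{x_0}\ge\sqrt{2u+h^2},\ x_0\notin\mathcal{C}_u)$; these differ by $\tilde{\P}(h\le\phi_{x_0}<\sqrt{2u+h^2},\ x_0\notin\mathcal{C}_u)>0$ for $u>0$. (The case $h=0$ makes this stark: there the on-$\mathcal{C}_u$ part of your left-hand side vanishes identically, since $x_0\in\mathcal{C}_u$ with $\phi_{x_0}>0$ means precisely that $\I^u$ meets the sign cluster $E^{>0}(x_0)$, whereas the on-$\mathcal{C}_u$ part of the right-hand side is positive.) So the identity does not localise to the sign cluster; there is an unavoidable cross-term, and the remaining ``within-cluster matching'' you defer to the end is not a self-contained sub-problem. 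Moreover the tool you propose for it --- applying \eqref{usualiso} ``relative to $S$'' to extract the conditional law of $\ell_{x_0,u}$ given $\phi$ and given that $\I^u$ hits $S$ --- is not something \eqref{usualiso} provides; it is an unconditional distributional identity and gives no handle on that conditional law. As written, the proof does not close.

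The paper avoids this entirely by never comparing the avoidance event with the threshold event. For $h=0$ it computes $\E^G[\mathrm{sign}(\phi_{x_0}+\sqrt{2u})]=\P^G(|\phi_{x_0}|<\sqrt{2u})$ and, via \eqref{eqcouplingintergff}, $\tilde{\E}[\sigma^u_{x_0}]=\tilde{\P}(x_0\in\mathcal{C}_u)$ (the fair coin averages to zero off $\mathcal{C}_u$, and the threshold event never enters); combined with \eqref{defIu} and Lemma \ref{E>hbar=E>=h} this gives $(\text{Law}_0)$ in two lines. For $h>0$ it sets $u_0=h^2/2$ and uses \eqref{eqcouplingintergff} at level $u_0$ to identify the law of the whole random set $E^{\geq h}(x_0)$ with that of the closed sign cluster $\overline{E^{>0}(x_0)}$ restricted to the event $\{\I^{u_0}\cap E^{>0}(x_0)=\varnothing,\ \sigma^{u_0}_{x_0}=-1\}$; by \eqref{defIu} this multiplies the Laplace functional by $\tfrac12 e^{-u_0\,\mathrm{cap}}$ and reduces $(\text{Law}_h)$ to $(\text{Law}_0)$ with parameter $u+u_0$. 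If you want to salvage your route, that identification of the law of the level-$h$ cluster as a thinned sign cluster is the missing idea.
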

\begin{proof}
Let
\begin{align} \label{Eabs}
\begin{split}
\Sigma_h\stackrel{\text{def.}}{=}\{y\in{\tilde{\G}};\,|\phi_y- h|>0\}, \, \Sigma \equiv \Sigma_0 \text{ and }\Sigma(x)\stackrel{\text{def.}}{=}\{y\in{\tilde{\G}}:y\leftrightarrow x\text{ in }\Sigma \} \text{ for $x\in{\tilde{\G}}$}
\end{split}
\end{align}
(see below \eqref{Ehx0} for notation). 
We first consider the case $h=0,$ and 
the sets $\overline{\Sigma(x)},$ $x\in{\tilde{\G}},$ which are the closures of the sign clusters ${\Sigma(x)}$.  Note that if $\Sigma(x)\cap\I^u=\varnothing,$ then the cluster of $x$ in $\{y\in{\tilde{\G}}:2\ell_{y,u}+\phi_y^2>0\}$ is equal to $\Sigma(x)$ (both $\Sigma(x)$ and $\I^u$ are open) and so $\sigma^u_x=\pm1$ with conditional probability $\frac12$ given $(|\phi_x|)_{x\in{\tilde{\G}}}$ and $\omega_u$ under $\tilde{\P}$ (recall $\sigma^u$ as defined above Theorem \ref{couplingintergff}). On the other hand, if $\Sigma(x)\cap\I^u\neq\varnothing,$ then $x\leftrightarrow\I^u$ in  $\{y\in{\tilde{\G}}:2\ell_{y,u}+\phi_y^2>0\},$ and so $\sigma^u_x=1.$ As $E[\text{sign}(X+a)]=P(|X|< a)$ for any centered Gaussian variable $X$ and $a >0$, by \eqref{eqcouplingintergff}, \eqref{defIu} and the symmetry of the Gaussian free field, we thus obtain, for all $u>0$ and $x\in{\tilde{\G}}$,
\begin{align}\label{eq:lapTraf}
\begin{split}
2\P^G(\phi_x\geq\sqrt{2u})&=1-\E^G\big[\text{sign}(\phi_x+\sqrt{2u})\big]
=1-\tilde{\E}[\sigma_x^u]
\\&=1-\tilde{\P}\big(\Sigma(x)\cap\I^u\neq\varnothing\big)
=\E^G\left[\exp\left(-u\mathrm{{\rm cap}}\big(\overline{\Sigma(x)}\big)\right)\right].
\end{split}
\end{align}
Next, we note that by Lemma \ref{E>hbar=E>=h} for $h=0,$ 
$\P^G$-a.s., $\overline{\Sigma(x)}=E^{\geq0}(x)$ on $\{ \phi_x>0\}$.
Therefore, by symmetry of the Gaussian free field in combination with \eqref{eq:lapTraf} we thus have
\begin{align}
\label{eq:lawointer}
    \E^G\Big[\exp\big(-u\mathrm{{\rm cap}}\big(E^{\geq 0}(x)\big)\big)1_{\phi_x\geq0}\Big]&=\frac12\E^G\left[\exp\left(-u\mathrm{{\rm cap}}\big(\overline{\Sigma(x)}\big)\right)\right]
=\P^G(\phi_x\geq\sqrt{2u}),
\end{align}
which is ($\text{Law}_0$).

Let us now consider some $h>0,$ and let $u_0=h^2/2.$ We will reduce this to the case $h=0$. By the symmetry of the Gaussian free field, \eqref{eqcouplingintergff} and Lemma \ref{E>hbar=E>=h}, we have that $E^{\geq h}(x)$ has the same law under $\P^G$ as the closure of the connected component of $x$ in $\{y\in{\tilde{\G}}:\,\sigma^{u_0}_y=-1\}$ under $\tilde{\P},$  which is the law of the set that equals $\overline{\Sigma(x)}$ if $\I^{u_0}\cap {\Sigma(x)}=\varnothing$ and $\sigma_x=-1,$ and equals $\varnothing$ otherwise. Therefore, by \eqref{defIu} we have for all $u>0$
\begin{equation}
\label{eq:lawointer1}
\begin{split}
    &\E^G\Big[\exp\big(-u\mathrm{{\rm cap}}(E^{\geq h}(x))\big)1_{\phi_{x}\geq h}\Big]=\tilde{\E}\left[1_{\I^{u_0}\cap {\Sigma(x)}=\varnothing,\sigma^{u_0}_x=-1}\exp\left(-u\mathrm{{\rm cap}}(\overline{\Sigma(x)})\right)\right]
    \\&\qquad \qquad =\frac12\E^G\left[\exp\left(-(u+u_0)\mathrm{{\rm cap}}(\overline{\Sigma(x)})\right)\right] =\P^G\big(\phi_x\geq\sqrt{2u+h^2}\big),
\end{split}
\end{equation}
using \eqref{eq:lapTraf} in the last step.
\end{proof}

Next, we observe a symmetry property of compact clusters implied by \eqref{eqcouplingintergff}.

\begin{Lemme}
	\label{h-hsamelaw}
	Let $\G$ be a graph such that \eqref{eqcouplingintergff} holds. Then for all $h \geq0$, the compact clusters of  $E^{\geq -h}$ have the same law as the compact clusters of $E^{\geq h}.$ 
\end{Lemme}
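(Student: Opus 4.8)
The case $h=0$ is immediate from the symmetry $\phi\overset{d}{=}-\phi$ of the free field, so fix $h>0$ and set $u=h^2/2$, so that $\sqrt{2u}=h$. Write $\Psi_x:=\sigma_x^u\sqrt{2\ell_{x,u}+\phi_x^2}$ for the field appearing in \eqref{eqcouplingintergff}, so that $(\Psi_x)_{x\in\tilde\G}$ under $\tilde\P$ has the same law as $(\phi_x+h)_{x\in\tilde\G}$ under $\P^G$. The plan is to transport both families of compact clusters into this common ``$\Psi$-picture'' and to exhibit a measure-preserving sign flip exchanging them. Applying the field identity to the (measurable) functional ``collection of compact connected components of the open set $\{\,\cdot>0\}$'', and recalling from Lemma \ref{E>hbar=E>=h} that the closed clusters $E^{\geq\cdot}(x)$ are closures of the corresponding open clusters, one obtains that the compact clusters of $E^{\geq -h}=\{\phi\geq -h\}$ have the same law as the compact components of $\{\Psi>0\}$. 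Using in addition $\phi\overset{d}{=}-\phi$ together with Lemma \ref{E>hbar=E>=h} applied to $-\phi$, the compact clusters of $E^{\geq h}$ have the same law as the compact components of $\{\phi<-h\}=\{\phi+h<0\}$, hence, by \eqref{eqcouplingintergff}, as the compact components of $\{\Psi<0\}$. It therefore suffices to prove that, under $\tilde\P$, the compact components of $\{\Psi>0\}$ and of $\{\Psi<0\}$ have the same law.

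Next I would analyse the component structure of $\{\Psi\neq 0\}$ using the description of $\sigma^u$ and $\mathcal{C}_u$ recalled above Theorem \ref{couplingintergff}. On a sign cluster $\Sigma$ of $\phi$ lying in $\mathcal{C}_u^c$ (a ``free'' cluster) one has $\ell_{\cdot,u}\equiv 0$ and a single global sign $\tau_\Sigma=\pm1$, uniform and independent across such clusters, so that $\Psi=\tau_\Sigma|\phi|$ there; on $\mathcal{C}_u$ one has $\sigma^u\equiv +1$, whence $\Psi\geq 0$. Since $\ell_{\cdot,u}\equiv 0$ on $\overline{\Sigma}$ for every free cluster (by continuity of the local times and $\overline{\Sigma}\cap\I^u=\varnothing$), every point of $\overline{\Sigma}$ at which $\Sigma$ could meet $\mathcal{C}_u$ is a common zero of $\phi$ carrying zero local time, hence lies in $\{\Psi=0\}$; consequently distinct sign clusters give rise to distinct components of $\{\Psi\neq0\}$. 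This identifies $\{\Psi<0\}$ as the disjoint union of the free clusters with $\tau_\Sigma=-1$, and shows that the components of $\{\Psi>0\}$ are exactly the free clusters with $\tau_\Sigma=+1$ together with the components meeting $\mathcal{C}_u$.

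The crux is then to show that every component $D$ of $\{\Psi>0\}$ meeting $\mathcal{C}_u$ is non-compact, so that the compact components of $\{\Psi>0\}$ are precisely the compact free clusters with $\tau_\Sigma=+1$. Such a $D$ contains a point of $\I^u$ (meeting $\mathcal{C}_u$ forces it to contain a whole sign cluster that meets $\I^u$, or a zero of $\phi$ carrying positive local time), and hence it contains the entire range $R$ of some interlacement trajectory through that point, as $R\subset\I^u\subset\{\Psi>0\}$ is connected. If this trajectory is not killed, $R$ is unbounded and $D$ is non-compact; if it is killed, it exits $\tilde\G$ through the open end of some cable $I_x$ with $\kappa_x>0$, so $D$ contains points of $I_x$ arbitrarily close to that open end and $D\cap I_x$ is not a compact subset of $I_x$, whence $D$ is non-compact by the compactness criterion in the proof of Lemma \ref{1stpartofmaincor} (cf.\ also Lemma \ref{capged}). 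Finally I would invoke the transformation $T$ flipping all free signs $\tau_\Sigma\mapsto-\tau_\Sigma$: since these signs are i.i.d.\ uniform and independent of $(\phi,\omega_u)$, $T$ preserves $\tilde\P$, and it maps the compact free clusters with $\tau_\Sigma=+1$ bijectively onto the compact free clusters with $\tau_\Sigma=-1$, the cluster shapes — and thus compactness, which depends only on $\phi$ — being unchanged. Combining the three reductions yields that the compact components of $\{\Psi>0\}$ and of $\{\Psi<0\}$ have the same law, which is the assertion. I expect the main difficulty to be this non-compactness step, together with the careful bookkeeping at the zeros of $\phi$ needed to certify that free clusters do not percolate into $\mathcal{C}_u$ within $\{\Psi\neq0\}$; the passage between open and closed clusters is handled throughout by Lemma \ref{E>hbar=E>=h}.
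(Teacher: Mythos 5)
Your proof is correct and follows essentially the same route as the paper's: transfer both families of compact clusters through \eqref{eqcouplingintergff} into the field $\sigma^u\sqrt{2\ell_{\cdot,u}+\phi^2}$, discard the components meeting $\mathcal{C}_u$ because every cluster of $\I^u$ is non-compact, and exchange the remaining free sign clusters by the uniform sign flip. Your write-up merely spells out in more detail the two facts the paper states in passing (that free sign clusters are full components of $\{2\ell_{\cdot,u}+\phi^2>0\}$, and that interlacement trajectories force non-compactness whether or not they are killed), so there is nothing to correct.
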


\begin{proof}
If \eqref{eqcouplingintergff} holds, then by Lemma \ref{E>hbar=E>=h} the compact clusters of $E^{\geq -\sqrt{2u}}$ have the same law as the closure of the clusters 
of $\{x\in{\tilde{\G}}:\sigma_x^u=1\}$ whose closure is compact. Each cluster of $\I^u$ is non-compact, and so by definition of $\sigma^u,$ the compact clusters of $E^{\geq -\sqrt{2u}}$ have the same law as the closure of the clusters of $\Sigma$ (cf.\ \eqref{Eabs}) whose closure is compact, that do not intersect $\I^u$ and for which $\sigma^u=1$. By definition of $\sigma^u$, the law of these clusters of $\Sigma$ is unchanged if one retains all the previous properties but the last one and requires $\sigma^u=-1$ instead. But by \eqref{eqcouplingintergff}, 
the resulting clusters have the same law as those of $\{x\in{\tilde{\G}}: \varphi_x < - \sqrt{2u} \}$ whose closure is compact, i.e.\ by Lemma \ref{E>hbar=E>=h} the clusters whose closures are the compact clusters of $\{x\in{\tilde{\G}}:\phi_x\leq-\sqrt{2u}\}.$  Finally by the symmetry of the Gaussian free field, these closures have the same law as the compact clusters of $E^{\geq\sqrt{2u}}$.
\end{proof}

The proofs of our next two ingredients, Lemmas \ref{Theforfinite} and \ref{LemmaapproGFF} below, rely on certain aspects of Poissonian loop soups. This requires a small amount of notation, which we now introduce. We also review certain features of loop soups, which will be used in the sequel. Following e.g.~\cite{MR3238780}, \cite{MR2815763}, one defines a measure $\mu^L$ on loops in $\tilde{\G}$ with compact closure in $\tilde{\G}$ associated with $P^{\tilde{\G}}_x,$ $x\in{\tilde{\G}},$ and, under a suitable probability measure $\P^L= \P^L_{\tilde{\G}}$, for all $\alpha>0$ the loop soup $\tilde{\mathcal{L}}_{\alpha}$ with parameter $\alpha$ as the Poisson point process on the space of (compact) loops on $\tilde{\G}$ with intensity $\alpha\mu^L.$ We denote by $(L_x^{(\alpha)})_{x\in{\tilde{\G}}}$ its field of local times relative to $m$ on $\tilde{\G}$ (cf. above \eqref{eq:quadraticform}), which can be taken to be continuous, see Lemma 2.2 in \cite{MR3502602}. Moreover, we denote by $\mathcal{L}_{\alpha}$ the Poisson point process consisting of the trace on ${G}$ of each loop in $\tilde{\mathcal{L}}_{\alpha},$ which has the same law as the loop soup associated with $P_x^\G,$ see Section 2 of \cite{MR3502602} or Section 7.3 of \cite{MR3238780} for details. 
An important property of the loop soup $\tilde{\mathcal{L}}_{\alpha}$ is the restriction property, see Section 6 of \cite{MR3238780}: for all connected and open subsets $A$ of $\tilde{\G},$ if $\tilde{\mathcal{L}}_{\alpha}^A$ stands for the set of loops in $\tilde{\mathcal{L}}_{\alpha}$ which are entirely included in $A,$ then
\begin{equation}
\label{looprestriction}
\tilde{\mathcal{L}}_{\alpha}^A\text{ has the same law under }\P^L_{\tilde{\G}} \text{ as }\tilde{\mathcal{L}}_{\alpha}\text{ under }\P^L_{\tilde{\G}^A_{\infty}};
\end{equation}
here, $\G^A_{\infty}$ is the graph with the same vertices, edges and weights as $\G^{\partial A}$ (see Lemma \ref{GA}), but with killing measure equal to $\kappa$ on ${(G\cap A)\setminus\partial A},$ and equal to infinity on $\partial A\cup (G\cap A^c).$ I.e., for all $x\in{A},$ the diffusion $X$ under $P^{\tilde{\G}^A_{\infty}}_x$ has the same law as $X$ killed on exiting $A$ under $P^{\tilde{{\G}}}_x.$

When $\alpha=\frac12,$ the loop soup $\tilde{\mathcal{L}}_{1/2}$ is linked to the Gaussian free field on $\tilde{\G}$ via the following isomorphism, due to Lupu \cite{MR3502602}; see also Le Jan, Theorem 2 of \cite{MR2815763} for a similar identity regarding the square of the Gaussian free field on the discrete base graph $G$ (not including the sign of $\varphi$).
Introducing the shorthand $L_{\cdot}=L_{\cdot}^{(1/2)}$ for the local time field of $\tilde{\mathcal{L}}_{1/2}$ to simplify notation, let $\tilde{\P}^{L}_{\tilde{\G}}$ be a suitable extension of
$\P^{L}_{\tilde{\G}}$ carrying a process $(\sigma_x)_{x\in{\tilde{\G}}}\in{\{-1,1\}^{\tilde{\G}}}$ such that, conditionally on $\tilde{\mathcal{L}}_{1/2},$ $\sigma$ is constant on each cluster of $\{x\in{\tilde{\G}}:L_x>0\}$, and its values on each cluster are independent and uniformly distributed. Then
	\begin{equation}
	\label{eqcouplingloopsgff}
	\text{ under }\tilde{\P}^L_{\tilde{\G}} \text{ the law of } \big(\sigma_x\sqrt{2L_x}\big)_{x\in{\tilde{\G}}}\text{ is }\P^G_{\tilde{\G}};
	\end{equation}
the measure $\tilde{\P}^L_{\tilde{\G}}$ is essentially the coupling constructed in Proposition~2.1 of \cite{MR3502602}, where the (explicit) law of $\sigma$ on $\tilde{\G}$ follows from a version of Lemma 3.2 in \cite{MR3502602} on $\tilde{\G}$ rather than $\tilde{\G}^{\Ed},$ cf. above \eqref{dirge}.

The identity \eqref{eqcouplingloopsgff} also comes with the following discrete version. Define (still under $\tilde{\P}^{L}_{\tilde{\G}}$) a random subset $\hat{\mathcal{E}}$ of $E$ such that, conditionally on $\mathcal{L}_{\frac12},$  $\hat{\mathcal{E}}$ contains each edge crossed by some loop in $\mathcal{L}_{\frac12},$ and each additional edge $e\in{E}$ conditionally independently with probability $1-p_e^{\G}(\sqrt{L}),$ with $p_e^{\G}$ as given by \eqref{defpef}. Then 
\begin{equation}
\label{hatEsameasE}
\text{$\hat{\mathcal{E}}$ has the same law under $\tilde{\P}^{L}_{\tilde{\G}}(\cdot\,|\,\mathcal{L}_{\frac12})$ as $\mathcal{E}\stackrel{\text{def.}}{=}\{e\in{E}:\,L_x>0\text{ for all }x\in{I_e}\}$ under ${\P}^{L}_{\tilde{\G}}(\cdot\,|\,\mathcal{L}_{\frac12}).$}
\end{equation}
 In particular, if we define a process $(\hat{\sigma}_x)_{x\in{{G}}}\in\{-1,1\}^\G,$ such that, conditionally on $\mathcal{L}_{\frac12},$ and $\hat{\mathcal{E}},$ $\hat{\sigma}$ is constant on each of the (discrete) clusters induced by $\hat{\mathcal{E}}$ and its values on each cluster are independent and uniformly distributed, then
	\begin{equation}
	\label{eqcouplingloopsgffdis}
	\big(\hat{\sigma}_x\sqrt{2L_x}\big)_{x\in{{{G}}}}\text{ has the same law under }\tilde{\P}^L_{\tilde{\G}} \text{ as }(\phi_x)_{x\in{G}}\text{ under }\P^G_{\tilde{\G}}
	\end{equation}
(Corollary 3.6 in \cite{MR3502602} provides \eqref{hatEsameasE}, and one can then directly derive \eqref{eqcouplingloopsgffdis}, see Theorem 1.bis in \cite{MR3502602}). The identity \eqref{eqcouplingloopsgff} is an analogue in the context of loop soups of the relation \eqref{eqcouplingintergff} for interlacements (a similar analogy can be drawn between \eqref{eqcouplingloopsgffdis} and \eqref{eqcouplingintergffdis}). In particular, the following holds on finite graphs, i.e.~on graphs $\G=(\overline{G},\bar\lambda,\bar\kappa)$ such that $\{x \in \overline{G} : \bar\kappa_x<\infty \}$ is finite (note that this implies that the induced graph $(G,\lambda, \kappa)$ has finite vertex set $G$, cf.~\eqref{eq:defGfinite}).
\begin{Lemme}
	\label{Theforfinite}
	If $\G$ is a finite transient weighted graph, then \eqref{eqcouplingintergff} holds. Moreover, conditionally on $\hat{\omega}_u$ and $(\phi_x)_{x\in{{G}}},$ the family $\{e\in{\mathcal{E}_u}\},$ $e\in{E\cup{G}}$ (defined above \eqref{eq:isomisomdis}) is independent, and for all $e\in{E\cup{G}}$
	\begin{equation}
	\label{lawofEu}
	\tilde{\P}(e\in{\mathcal{E}_u}\,|\,\hat{\omega}_u,(\phi_x)_{x\in G})=1_{e\in{\I_E^u}}\vee (1-p_e(\phi,\ell_{.,u})).
	\end{equation}
\end{Lemme}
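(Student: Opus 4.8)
The plan is to derive \eqref{eqcouplingintergff} on a finite transient graph from Lupu's loop-soup isomorphism \eqref{eqcouplingloopsgff}, by exploiting the classical finite-graph correspondence between random interlacements and the loops of a loop soup that visit an adjoined boundary vertex. Since $\G$ is finite and transient there is at least one $x$ with $\kappa_x>0$, so I would first adjoin a single vertex $\ast$, turning the killing of $X$ into absorption at $\ast$; then the free field on $\G$ is the free field on the enlarged graph $\G^\ast$ pinned to $\phi_\ast=0$, while $\phi+\sqrt{2u}$ is the field pinned to $\phi_\ast=\sqrt{2u}$. On $\G^\ast$ the loop soup $\tilde{\mathcal{L}}_{1/2}$ splits, by the restriction property \eqref{looprestriction}, into the loops avoiding $\ast$ — whose trace on $\tilde{\G}$ is a loop soup on $\G$ with occupation field $L=\tfrac12\phi^2$ and signs $\sigma$ recovering $\phi=\sigma\sqrt{2L}$ through \eqref{eqcouplingloopsgff} — and the loops through $\ast$, whose trace realizes the interlacement field $\ell_{\cdot,u}$ at the matching level $u=(\sqrt{2u})^2/2$. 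Their superposition has occupation field $\tilde L:=\ell_{\cdot,u}+\tfrac12\phi^2$, consistent with \eqref{usualiso}, and in particular $\sqrt{2\tilde L}=\sqrt{2\ell_{\cdot,u}+\phi^2}$.

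Second, I would transport the signs. Applying \eqref{eqcouplingloopsgff} (under $\tilde{\P}^L_{\tilde{\G}^\ast}$) with the pinning $\phi_\ast=\sqrt{2u}$, the signed loop-soup field $\hat\sigma\sqrt{2\tilde L}$ equals in law the pinned field, i.e.\ $\phi+\sqrt{2u}$ after tracing back to $\tilde{\G}$; and since $\sqrt{2u}>0$, the loop-soup sign on the cluster of $\ast$ in $\{\tilde L>0\}$ is forced to $+1$. The crucial identification is that this $\ast$-cluster, intersected with $\tilde{\G}$, is exactly $\mathcal{C}_u$: a point is loop-connected to $\ast$ precisely when its sign cluster $\{x:|\phi_x|>0\}$ is visited by a $\ast$-loop, i.e.\ by $\I^u$. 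Thus the sign is $+1$ on $\mathcal{C}_u$, while on the remaining clusters of $\{x:2\ell_{x,u}+\phi_x^2>0\}$ — which off $\mathcal{C}_u$ coincide with the sign clusters of $\phi$ — the loop-soup signs are independent and uniform. This is exactly the law of $\sigma^u$ specified above Theorem \ref{couplingintergff}, so $(\sigma_x^u\sqrt{2\ell_{x,u}+\phi_x^2})_x\stackrel{d}{=}(\phi_x+\sqrt{2u})_x$, which is \eqref{eqcouplingintergff}.

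For the \emph{moreover} part, the conditional independence of $(\phi_x)_{x\in I_e}$ and of $(\ell_{x,u})_{x\in I_e}$ across edges, given their vertex values and $\hat\omega_u$ (see \eqref{phiedgeind} and \eqref{omegaedgeind}), reduces the law of $\mathcal{E}_u$ to an edge-by-edge computation and immediately yields the asserted independence of the events $\{e\in\mathcal{E}_u\}$, $e\in E\cup G$. For a fixed $e=\{x,y\}\in E$: if $e$ is crossed by a trajectory of $\hat\omega_u$ then $e\in\I_E^u\subset\mathcal{E}_u$ deterministically, producing the term $1_{e\in\I_E^u}$; otherwise $\phi$ on $I_e$ is a Brownian bridge between $\phi_x,\phi_y$, $\ell_{\cdot,u}$ on $I_e$ is built from the independent endpoint excursions, and $e\in\mathcal{E}_u$ iff $2\ell_{\cdot,u}+\phi^2>0$ throughout $I_e$, i.e.\ iff $\sqrt{2\tilde L}=|\phi+\sqrt{2u}|$ does not vanish on $I_e$. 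A bridge computation of the type behind \eqref{phionIx}--\eqref{defpef} then gives this probability as $1-p_e(\phi,\ell_{\cdot,u})$, with $p_e$ as in \eqref{defpe}; the analogous computation on the cables $I_x$, using the bridge-to-$0$ (or free Brownian motion) description of $\phi$ on $I_x$ recalled below \eqref{Markov}, yields the vertex term with $p_x$ as in \eqref{defpx}. Combining the two cases gives \eqref{lawofEu}.

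The main obstacle I expect is the first step: pinning down rigorously the finite-graph correspondence between random interlacements at level $u$ and the $\ast$-loops of the loop soup on $\G^\ast$, in particular verifying that the level $u$ matches the pinning height $\sqrt{2u}$ and that the $\ast$-cluster equals $\mathcal{C}_u$, so that the forced $+1$ sign lands exactly where the definition of $\sigma^u$ requires. A secondary technical point is the joint bridge/excursion computation producing the precise exponent in $p_e(\phi,\ell_{\cdot,u})$ of \eqref{defpe}: one must see the cross term $\lambda_{x,y}\phi_x\phi_y$ together with the geometric-mean term $\lambda_{x,y}\sqrt{(\phi_x^2+2\ell_x)(\phi_y^2+2\ell_y)}$ emerge from averaging the bridge-hits-zero probability over the relative sign of the combined field $\sigma^u\sqrt{2\tilde L}$ along the edge.
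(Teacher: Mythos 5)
Your proposal follows essentially the same route as the paper's proof in Appendix~\ref{App:isom}: adjoin a vertex $x_*$ carrying the killing, apply \eqref{eqcouplingloopsgff} on $\tilde{\G}^*$, split the loop soup via \eqref{looprestriction} into loops avoiding $x_*$ (yielding $\tfrac12\phi^2$) and loops through $x_*$ (yielding $\ell_{\cdot,u}$ and $\hat\omega_u$ after conditioning on $L_{x_*}=u$, matching the pinning $\phi_{x_*}=\sqrt{2u}$), identify the $x_*$-cluster with $\mathcal{C}_u$ to transport the signs, and obtain \eqref{lawofEu} by the edge-by-edge bridge computation. The approach and all key identifications match the paper's argument.
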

For completeness, we have included the proof of Lemma \ref{Theforfinite} in Appendix \ref{App:isom}. We briefly sketch the proof here. To deduce \eqref{eqcouplingintergff}, one essentially considers the decomposition   $\tilde{\mathcal{L}}_{1/2}=\tilde{\mathcal{L}}_{1/2}^{\, \textnormal{in}}+\widetilde{\mathcal{L}}_{1/2}^{\, *}$ of the loop soup on the cable system $\tilde{\G}^*$ of a suitable one-point compactification ${\G}^*={\G} \cup \{ x_*\}$ of ${\G}$ (with killing at $x_*$, so $\G^{*}$ is transient), into the `interior' loops constituting $\tilde{\mathcal{L}}_{1/2}^{\, \textnormal{in}}$ which never hit $x_*,$ and the loops $\widetilde{\mathcal{L}}_{1/2}^{\, *}$ which contain $x_*$. The two processes are independent. Inserting the corresponding decomposition of the local times $L_{\cdot}$ of $\tilde{\mathcal{L}}_{1/2}$ into \eqref{eqcouplingloopsgff} (applied on $\tilde{\G}_*$), one can then generate in law the field $\sigma_{\cdot}^u\sqrt{2\ell_{\cdot,u}+\phi_{\cdot}^2}$ appearing in \eqref{eqcouplingintergff} by suitable conditioning, and witnesses that this conditioning causes a global shift by $\sqrt{2u}$ in \eqref{eqcouplingloopsgff}. Roughly speaking, the local times of $\tilde{\mathcal{L}}_{1/2}^{\, \textnormal{in}}$ generate $\phi_{\cdot}^2/2$ in this procedure by \eqref{looprestriction} and \eqref{eqcouplingloopsgff}, whereas the local times of $\widetilde{\mathcal{L}}_{1/2}^{\, *}$ give rise to $\ell_{\cdot,u}$; see also \cite{LuSaTa}, or Section 2 of \cite{MR3417508}, for similar ideas to deduce the second Ray-Knight theorem from \eqref{eqcouplingloopsgff}, which is related to the interlacement by concatenating the trajectories contributing to $\ell_{\cdot,u}$ to represent the successive excursions of a single diffusion $X_{\cdot \wedge \tau_u}$ under $P_{x_*}^{\tilde{\G}^*}$ stopped at $\tau_u =\inf \{t \geq 0: \ell_{x_*}(t) \geq u\}$. The conditional law in \eqref{lawofEu} is then obtained by following ideas of  \cite{LuSaTa}, Section 2.5. 

\begin{Rk}
The proof of Lemma \ref{Theforfinite} delineated above uses the isomorphism \eqref{eqcouplingloopsgff} relating loop soups and the Gaussian free field. Similarly to the proof of Theorem 2.4 of \cite{MR3492939}, one could alternatively use the Markov property \eqref{Markov} to prove that \eqref{eqcouplingintergffszn} (which is easily seen to be equivalent to \eqref{eqcouplingintergff}, see Lemma \ref{isomequivisom'} below) holds on any finite transient graph (or more generally on any transient graph with bounded Green function such that \eqref{eq:0bounded} holds). However, this approach does not directly provide the discrete isomorphism described by \eqref{lawofEu}. 
\end{Rk}

\medskip
We proceed to state the third ingredient, Lemma \ref{LemmaapproGFF} below, which supplies a way to approximate the Gaussian free field on any transient graph $\G$ by Gaussian free fields on finite graphs. The following definition is key. For a given graph $\G=(\overline{G}, \overline{\lambda}, \overline{\kappa})$, we say that
\begin{equation}
\label{eq:def_approx}
\begin{split}
&\text{a sequence of graphs $\G_n$ increases to $\G$ if $\G_n=(\overline{G},\overline{\lambda}, \overline{\kappa}^{(n)})$ for a sequence}\\&\text{$\overline{\kappa}^{(n)} \subset[0,\infty]^{\overline{G}}$ of killing measures such that $\overline{\kappa}^{(n)}_x \searrow \overline{\kappa}_x$ as $n \to \infty$ for all $x\in{\overline{G}}.$}
\end{split}
\end{equation}
In particular, we will be interested in finite-volume approximations of $\G$, for which $\overline{\kappa}^{(n)}=\infty$ outside of a finite set $U_n$ for every $n$, with $U_n$ exhausting $\overline{G}$ as $n \to \infty$. The graphs $\G_n$ thus considered are finite (in the sense defined above Lemma~\ref{Theforfinite}).

Due to the observations made around \eqref{eq:graphinclusion}, for $\G_n$ as in \eqref{eq:def_approx}, we can view $\tilde{\G}_n$ as a subset of $\tilde{\G}$ such that the sequence $\tilde{\G}_n$ increases to $\tilde{\G}$ and such that for each compact $K \subset \tilde{\G}$ we have $K\subset\tilde{\G}_n$ for large enough $n$.
\begin{Lemme}
	\label{LemmaapproGFF}
	Let $\G$ be a transient weighted graph, and let $\G_n,$ $n\in\N,$ be a sequence of transient weighted graphs increasing to $\G_{\infty}=\G.$ There exists a probability space $(\Omega,\mathcal{F},\P)$ on which the processes $(\phi^{(n)}_x)_{x\in{\tilde{\G}_n}},$ $n\in\N,$ and $(\phi^{(\infty)}_x)_{x\in{\tilde{\G}}}$ are defined, with the following properties: 
	\begin{align}
&\label{eq:GFFappro1}\text{for all $n\in\N\cup\{\infty\}$, $(\phi_{x}^{(n)})_{x\in{\tilde{\G}_n}}$ has law $\P_{\tilde{\G}_n}^G$;}\\
&\P\text{-a.s.\ for all compact $K\subset\tilde{\G},$ one has $\phi_x^{(n)}=\phi_x^{(\infty)}$ for $x\in{K}$ and $n$ large enough.}
\label{eq:equalityKapprox}
	\end{align}
\end{Lemme}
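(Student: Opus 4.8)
The plan is to realise all the fields simultaneously from a single loop soup on $\tilde{\G}$, exploiting the restriction property \eqref{looprestriction} and the isomorphism \eqref{eqcouplingloopsgff}. First I would record the geometric picture: since $\overline{\kappa}^{(n)}\searrow\overline{\kappa}$, \eqref{eq:graphinclusion} gives the nesting $\tilde{\G}_1\subset\tilde{\G}_2\subset\cdots\subset\tilde{\G}$, with each compact $K\subset\tilde{\G}$ contained in $\tilde{\G}_n$ for $n$ large, and by \eqref{eq:zeta}--\eqref{lawkappa'vskappa} the diffusion on $\tilde{\G}_n$ is $X$ killed on exiting $\tilde{\G}_n$. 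Consequently, applying \eqref{looprestriction} with $A=\tilde{\G}_n$, the set $\tilde{\mathcal{L}}_{1/2}^{\tilde{\G}_n}$ of loops of $\tilde{\mathcal{L}}_{1/2}$ (on $\tilde{\G}$) entirely contained in $\tilde{\G}_n$ has the law of the loop soup on $\tilde{\G}_n$, because the two defining killed diffusions coincide. These restricted soups are nested and increase to $\tilde{\mathcal{L}}_{1/2}$: every loop has compact closure and hence lies in $\tilde{\G}_n$ for $n$ large. Writing $L^{(n)}$ for the local time field of $\tilde{\mathcal{L}}_{1/2}^{\tilde{\G}_n}$ and $L^{(\infty)}$ for that of $\tilde{\mathcal{L}}_{1/2}$, one thus has $L^{(n)}\nearrow L^{(\infty)}$.

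I would then set $\phi^{(n)}=\sigma^{(n)}\sqrt{2L^{(n)}}$, where the signs are built from one shared source of randomness, as in Remark~\ref{R:isomisom}: fix a dense sequence $(x_k)$ in $\tilde{\G}$ and i.i.d.\ uniform signs $(\sigma'_k)$ independent of the soup, and put $\sigma^{(n)}_x=\sigma'_{m_n(x)}$, with $m_n(x)$ the least $k$ such that $x_k$ lies in the cluster of $x$ in $\{L^{(n)}>0\}$ (and $\sigma^{(n)}_x=1$ if $L^{(n)}_x=0$). This assignment is constant on each cluster of $\{L^{(n)}>0\}$ and independent and uniform across clusters, since distinct clusters carry distinct minimal indices; hence by \eqref{eqcouplingloopsgff}, applied on $\tilde{\G}_n$ and on $\tilde{\G}$ respectively, each $\phi^{(n)}$ has law $\P^G_{\tilde{\G}_n}$, which is \eqref{eq:GFFappro1}.

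For \eqref{eq:equalityKapprox} I treat modulus and sign separately. For the modulus, the crucial input is the standard estimate that, for a fixed compact $K'$, the $\mu^L$-mass of loops meeting $K'$ and exiting $\tilde{\G}_n$ tends to $0$ as $n\to\infty$ (such loops must reach $\partial\tilde{\G}_n$, whose $d$-distance to $K'$ diverges); by the Poissonian nature of $\tilde{\mathcal{L}}_{1/2}$ this means that $\P$-a.s.\ for $n$ large no loop meets $K'$ and leaves $\tilde{\G}_n$. On this event every loop visiting $K'$ is contained in $\tilde{\G}_n$, so $L^{(n)}=L^{(\infty)}$ on $K'$, whence $|\phi^{(n)}|=|\phi^{(\infty)}|$ on $K'$; moreover any path in $\{L^{(\infty)}>0\}\cap K'$ then also lies in $\{L^{(n)}>0\}$, so connectivity within $K'$ is identical at levels $n$ and $\infty$. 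In particular every cluster contained in $K'$ is simultaneously a cluster at both levels, and since its minimal dense index is attained inside it, it carries the same sign.

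The only clusters meeting $K$ not yet controlled are those leaving $K'$, and the key point is that there are only finitely many of them: each contains a point of $\partial K'$, which is a finite set (a compact subset of $\tilde{\G}$ has at most two boundary points per edge), so at most $|\partial K'|$ clusters meeting $K$ escape $K'$. For each such $\hat C$ I would fix a point $x_{\hat C}\in\hat C$ and invoke the per-point statement: the connecting path in $\{L^{(\infty)}>0\}$ from $x_{\hat C}$ to the minimal dense point of $\hat C$ is compact, hence lies in some ball $\overline{B(x_{\hat C},R)}$ with $R$ rational, and a.s.\ for $n$ large no loop meeting this ball exits $\tilde{\G}_n$ (a countable intersection over rational radii of probability-one events), forcing $m_n=m_\infty$ on $\hat C$. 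Taking the maximum of these finitely many thresholds and intersecting with the event of the previous paragraph gives $\phi^{(n)}=\phi^{(\infty)}$ on $K$ for all large $n$. I expect the main obstacle to be exactly this uniform-over-$K$ matching of signs: its resolution relies on the monotone (nested) cluster structure furnished by the restriction property together with the finiteness of $\partial K'$, which confines the delicate behaviour to finitely many clusters that can then be handled one at a time.
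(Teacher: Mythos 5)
Your proposal is correct and follows essentially the same route as the paper: a single loop soup on $\tilde{\G}$, the restriction property \eqref{looprestriction} to identify the law of the loops contained in $\tilde{\G}_n$, monotone local times, signs drawn once from a shared i.i.d.\ source via a minimal-index rule so that \eqref{eqcouplingloopsgff} yields the marginals, and the event that eventually no loop meeting a given compact escapes $\tilde{\G}_n$ to get \eqref{eq:equalityKapprox}, with the finitely many boundary-crossing clusters handled separately. Your dense-sequence sign assignment is a cosmetic variant of the paper's cluster enumeration (and lets you read off the law of $\phi^{(\infty)}$ directly from \eqref{eqcouplingloopsgff} rather than via a characteristic-function limit), but the argument is the same.
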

\begin{proof}
	Let $(\Omega,\mathcal{F},\P)$ be a probability space carrying a process $\tilde{\mathcal{L}}^{(\infty)}$ with the same law as $\tilde{\mathcal{L}}_{1/2}$ under $\P_{\tilde{\G}}^L$ (for instance one can choose $\P = \P_{\tilde{\G}}^L$). For each $n\in\N$ we denote by $({L}_x^{(n)})_{x\in{\tilde{\G}_n}}$ the accumulated local times of those loops in $\tilde{\mathcal{L}}^{(\infty)}$ which are entirely contained in the open set $\tilde{\G}_n\subset\tilde{\G}.$ One can clearly identify $\tilde{\G}_n$ with $\tilde{\G}_\infty^{\tilde{\G}_n},$ and by \eqref{looprestriction}, the law of  $({L}_x^{(n)})_{x\in{ \tilde{\G}_n}}$ is the same as the law of $(L_x)_{x\in{\tilde{\G}}}$ under $\P_{\tilde{\G}_n}^L.$ Moreover, for each $x\in{\tilde{\G}},$ the sequence $L_x^{(n)},$ $n\in\N,$ is increasing, and we denote by $L_x^{(\infty)}$ its limit. Since each loop of $\tilde{\mathcal{L}}^{(\infty)}$ is relatively compact, it is contained in $\tilde{\G}_n$ for $n$ large enough, and so $(L_x^{(\infty)})_{x\in{\tilde{\G}}}$ equals the total local times of the loops in $\tilde{\mathcal{L}}^{(\infty)},$ whence
\begin{equation}
\label{eq:addlaw1}
L_{\cdot}^{(\infty)} =\lim_n \uparrow L_{\cdot}^{(n)} \stackrel{\text{law}}{=} L_{\cdot}
\end{equation}	
where $L_{\cdot}$ is the occupation time field of $\tilde{\mathcal{L}}_{1/2}$ (on $\tilde{\G}$).

	For each $n\in\N,$ let $(\mathcal{A}_p^{(n)})_{p\in\N}$ be some enumeration of the countably many clusters of $\{ L^{(n)}>0 \} (=\{x\in{\tilde{\G}}:L_x^{(n)}>0\} \subset \tilde{\G}_n),$ and let $(\sigma_p)_{p\in\N}\in{\{-1,1\}^\N}$ be an independent sequence of uniformly distributed random variables. For each $n\in\N$ and $x\in{\tilde{\G}_n}$ we define $E_n^{\mathcal{L}}(x)=\{y\in{\tilde{\G}_n}:x\leftrightarrow y\text{ in } \{ L^{(n)}>0 \} \},$ and if $L_x^{(n)}\neq0,$ we denote by $k_n(x)\in\{1,\dots,n\}$ the smallest index $k$ such that $\tilde{\G}_k$ intersects the cluster of $x$ in $\{L^{(n)}>0\}$, i.e.~$E_n^{\mathcal{L}}(x)\cap \tilde{\G}_{k_n(x)}\neq\varnothing$ and $E_n^{\mathcal{L}}(x)\cap \tilde{\G}_{k_n(x)-1}=\varnothing,$ with the convention $\tilde{\G}_0=\varnothing.$ 
	
	We also define $p_n(x)=\inf\{p\in\N:\,\mathcal{A}_p^{(k_n(x))}\subset E^{\mathcal{L}}_n(x)\},$ with the convention $\inf\varnothing=+\infty.$ Note that since $L^{(n)}_x,$ $n\in\N,$ is increasing for all $x\in{\tilde{\G}}$ and $k_n(x)\leq n,$ we have that $p_n(x)<\infty$ if $L_x^{(n)}\neq0.$ For each $n\in\N$ and $x\in{\tilde{\G}_n},$ we then let $\sigma_{x}^{(n)}=\sigma_{p_n(x)}$ if $L_x^{(n)}>0$ and $\sigma_x^{(n)}=1$ otherwise, and set
\begin{equation}
\label{eq:approxphi_n}
 \phi_x^{(n)}\stackrel{\text{def.}}{=}\sigma_x^{(n)}\sqrt{2L_x^{(n)}}.
 \end{equation}
  Due to \eqref{eqcouplingloopsgff}, $(\phi_x^{(n)})_{x\in{\tilde{\G}}}$ has law $\P_{\tilde{\G}_n}^G.$ Moreover, for each $x\in{\tilde{\G}}$ with $L_x^{(\infty)}>0,$ for all $n$ large enough we have $x\in{\tilde{\G}_n}$ as well as $L_x^{(n)}>0,$ hence $k_n(x)$ is constant for $n$ large enough since $E_n^{\mathcal{L}}(x)$ increases to $E_\infty^{\mathcal{L}}(x).$ As a consequence, the sequence $p_n(x),$ $n\in\N,$ is decreasing for $n$ large enough, and we denote by $p_\infty(x)$ its limit. Note that we then have $p_n(x)=p_\infty(x)$ for $n$ large enough. We define $\sigma_x^{(\infty)}=\sigma_{p_\infty(x)}$ if $L_x^{(\infty)}>0$ and $\sigma_x^{(\infty)}=1$ otherwise, and $\phi_x^{(\infty)}=\sigma_x^{(\infty)}\sqrt{2L_x^{(\infty)}}.$ We then have  $\phi^{(n)}_x\tend{n}{\infty}\phi_x^{(\infty)}$ for all $x\in{\tilde{\G}}$ due to \eqref{eq:addlaw1}, \eqref{eq:approxphi_n} and since $\text{sign}(\phi^{(n)}_x)=\text{sign}(\phi^{(\infty)}_x) $ for all large enough $n$. Finally,  $g_{\tilde{\G}_n}(x,y)\tend{n}{\infty}g_{\tilde{\G}}(x,y)=g(x,y)$ for all $x,y\in{\tilde{\G}},$ whence
\begin{equation}
\label{eq:approxcf}
\lim_n \E\big[\exp( i \langle \mu_{\alpha},\phi^{(n)}\rangle )\big] =\exp \big(-\langle  \mu_{\alpha},G \mu_{\alpha} \rangle /2\big)= \E^G\big[\exp( i \langle \mu_{\alpha},\phi \rangle )\big]
\end{equation}
 for any finite point measure $\mu_{\alpha}=\sum_{x \in A}\alpha_x \delta_x$, $\alpha \in \R^{A}$ with $A \subset \tilde{\G}$ finite and $(G\mu)(x)=\int_{\tilde{\G}} g(x,y)d\mu(y)$. The statement that $(\phi_x^{(\infty)})_{x\in{\tilde{\G}}}$ has law $\P^G$ follows from \eqref{eq:approxcf} and convergence of  $\phi^{(n)}$ (in law). This shows \eqref{eq:GFFappro1}.
	
	With probability $1$, for each $K \subset \tilde{\G}$ connected compact, there exists a random $N\in\N,$ such that for all $n\geq N,$ one has $K\subset\tilde{\G}_n,$ and no trajectory in $\tilde{\mathcal{L}}^{(\infty)}$ hitting $K$ hits $\tilde{\G}\setminus\tilde{\G}_n$. One then has the equality $L_x^{(n)}=L_x^{(\infty)}$  for all $n\geq N$ and $x\in{K},$ and the clusters of $\{L_{\cdot}^{(n)}>0\}$ in $\tilde{\G}$ whose closure is contained in $K$ are equal to the clusters of $\{L_{\cdot}^{(\infty)}>0\}$ whose closure is contained in $K.$ As a consequence, once  $n\geq N,$ on has that $\sigma_x^{(n)}=\sigma_x^{(\infty)}$ on all these clusters. Since $\partial K$ is finite, we also have $\sigma_x^{(n)}=\sigma_x^{(\infty)} (=1)$ for all $x\in{\partial K}$ and $n$ large enough.  The claim \eqref{eq:equalityKapprox} follows.
\end{proof}

Lemma~\ref{LemmaapproGFF} yields the following important result.
\begin{Cor}[Limits of cluster capacities]\label{Cor:limcap}
Let $E_{n}^{\geq h}(x_0)= E_{n,\tilde{\G}}^{\geq h}(x_0)$, where $E_{n,K}^{\geq h}(x_0)=\{x\in{\tilde{\G}_n\cap K}:\,x_0\leftrightarrow x\text{ in } \{ \varphi^{(n)} \geq h\}\cap K\}$, for $K \subset \tilde{\G}$. Then  $\P$-a.s., for all $h \in \R$, $x_0\in{\tilde{\G}}$,
\begin{align}
&\label{capGngeqcapG}
	\lim_{n\rightarrow\infty}\mathrm{{\rm cap}}_{\tilde{\G}_n}\big(E_{n,K}^{\geq h}(x_0)\big)=\mathrm{{\rm cap}}_{\tilde{\G}}\big(E^{\geq h}_{\infty,K}(x_0)\big),  \text{ for compact $K \subset \tilde{\G}$, and} \\
&\label{capGngeqcapG'}
	\lim_{n\rightarrow\infty}\mathrm{{\rm cap}}_{\tilde{\G}_n}\big(E_{n}^{\geq h}(x_0)\big)=\mathrm{{\rm cap}}_{\tilde{\G}}\big(E^{\geq h}_\infty(x_0)\big),  \text{ if $E^{\geq h}_\infty(x_0)$ is compact.}
\end{align}
\end{Cor}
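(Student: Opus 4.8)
The plan is to leverage the coupling of Lemma~\ref{LemmaapproGFF}, under which the cluster on the left of \eqref{capGngeqcapG} becomes eventually constant as a set and coincides with the limiting cluster; this reduces both claims to the convergence of the capacity of a \emph{single fixed} compact set, computed in the varying cable systems $\tilde{\G}_n$. First I would fix a compact $K\subset\tilde{\G}$ together with $x_0$ and $h$, and set $F\stackrel{\text{def.}}{=}E_{\infty,K}^{\geq h}(x_0)$, which is a compact subset of $K$. By \eqref{eq:equalityKapprox} there is $\P$-a.s.\ a random $N$ such that $\varphi^{(n)}=\varphi^{(\infty)}$ on $K$ for all $n\geq N$; enlarging $N$ we may also assume $K\subset\tilde{\G}_n$ for $n\geq N$ (cf.\ the remark preceding Lemma~\ref{LemmaapproGFF}). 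For such $n$ the sets $\{\varphi^{(n)}\geq h\}\cap K$ and $\{\varphi^{(\infty)}\geq h\}\cap K$ agree and $\tilde{\G}_n\cap K=K$, so $E_{n,K}^{\geq h}(x_0)=F$. Thus \eqref{capGngeqcapG} reduces to the deterministic statement
\[
\lim_{n\to\infty}\mathrm{cap}_{\tilde{\G}_n}(F)=\mathrm{cap}_{\tilde{\G}}(F),
\]
the case $F=\varnothing$ being trivial.

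To establish this convergence I would use the variational characterisation \eqref{variational}. Since $F$ is a fixed compact and $U_n\nearrow\overline{G}$, for $n$ large the cable systems $\tilde{\G}_n$ and $\tilde{\G}$ agree in a neighbourhood of $F$; as $\hat{\partial}F$ is determined locally through \eqref{defpartialext}, the exterior boundary is the same in all these systems, so one may fix a single finite set $\hat{F}=\hat{\partial}_{\tilde{\G}}F$ admissible for \eqref{variational} (i.e.\ as in \eqref{defKhatA} with $A=\hat{F}$) simultaneously in each $\tilde{\G}_n$ and in $\tilde{\G}$. Writing $g_n$ for the Green function of $\tilde{\G}_n$, the process on $\tilde{\G}_n$ is that on $\tilde{\G}$ killed on exiting $\tilde{\G}_n$ (see \eqref{lawkappa'vskappa}), whence $g_n(x,y)\leq g(x,y)$ and, by the convergence of Green functions recorded in the proof of Lemma~\ref{LemmaapproGFF}, $g_n(x,y)\to g(x,y)$ for all $x,y$. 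As $\hat{F}$ is finite, the quadratic forms $\mu\mapsto\sum_{x,y\in\hat{F}}g_n(x,y)\mu(x)\mu(y)$ converge uniformly over the compact simplex of probability measures on $\hat{F}$ to the form built from $g$; hence their infima converge, and inverting via \eqref{variational} yields $\mathrm{cap}_{\tilde{\G}_n}(F)\to\mathrm{cap}_{\tilde{\G}}(F)$, proving \eqref{capGngeqcapG}.

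For \eqref{capGngeqcapG'} I would reduce to \eqref{capGngeqcapG} by choosing $K$ judiciously. Assuming $C\stackrel{\text{def.}}{=}E_\infty^{\geq h}(x_0)$ is compact, pick a compact $K$ with $C\subset\mathrm{int}(K)$; then $E_{\infty,K}^{\geq h}(x_0)=C$ and $C\cap\partial K=\varnothing$, since any point of $K$ joined to $x_0$ within $K$ is joined within $\tilde{\G}$ and hence lies in $C\subset\mathrm{int}(K)$. On the event $\{\varphi^{(n)}=\varphi^{(\infty)}\text{ on }K\}$, which holds for $n$ large by \eqref{eq:equalityKapprox}, one has $E_{n,K}^{\geq h}(x_0)=C$, and this set avoids $\partial K$; a path in $\{\varphi^{(n)}\geq h\}$ issued from $x_0$ and leaving $K$ would cross $\partial K$ at a first point that would still belong to $E_{n,K}^{\geq h}(x_0)=C$, contradicting $C\cap\partial K=\varnothing$. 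Hence $E_n^{\geq h}(x_0)=E_{n,K}^{\geq h}(x_0)=C$ for all large $n$, and \eqref{capGngeqcapG'} follows from \eqref{capGngeqcapG} applied with this $K$.

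I expect the crux of the argument to be the convergence $\mathrm{cap}_{\tilde{\G}_n}(F)\to\mathrm{cap}_{\tilde{\G}}(F)$ across the varying cable systems, and specifically the point that a single finite boundary set $\hat{F}$ serves the variational formula \eqref{variational} simultaneously for all large $n$. This rests on the local agreement of $\tilde{\G}_n$ and $\tilde{\G}$ near $F$ and the ensuing stabilisation of $\hat{\partial}F$ and of the relevant Green function values; once these are in place, the passage through \eqref{variational} is a routine uniform-convergence argument, and the remaining reductions (eventual constancy of the restricted cluster and the barrier argument ruling out escape of the full cluster) are essentially bookkeeping with the coupling and the compactness of $C$.
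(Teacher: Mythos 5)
Your reductions are exactly those of the paper: the coupling \eqref{eq:equalityKapprox} freezes the restricted cluster, so \eqref{capGngeqcapG} becomes the convergence $\mathrm{cap}_{\tilde{\G}_n}(F)\to\mathrm{cap}_{\tilde{\G}}(F)$ for a fixed compact $F$, and \eqref{capGngeqcapG'} follows by confining the compact limiting cluster inside a compact $K$ with $C\subset\mathrm{int}(K)$ (your barrier argument makes explicit what the paper leaves terse). Where you genuinely diverge is in the proof of that capacity convergence: the paper argues probabilistically that the equilibrium measure $e_{F,\tilde{\G}_n}(\cdot)$ decreases pointwise to $e_{F,\tilde{\G}}(\cdot)$ (more killing makes escape from $F$ easier, and the killing decreases monotonically), then sums over a finite set; you instead pass through the variational formula \eqref{variational} and the pointwise convergence $g_n\to g$ recorded in the proof of Lemma~\ref{LemmaapproGFF}. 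Both work; the paper's route gives monotonicity of $\mathrm{cap}_{\tilde{\G}_n}(F)$ for free, while yours avoids any monotonicity and only needs Green function convergence on a finite set.

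One justification in your version needs repair, though it does not affect the conclusion. The exterior boundary $\hat{\partial}F$ of \eqref{defpartialext} is \emph{not} determined locally and need not coincide across the graphs: since $\bar{\kappa}^{(n)}\geq\bar{\kappa}$, escape from a neighbourhood of a boundary point of $F$ without returning is easier in $\tilde{\G}_n$ than in $\tilde{\G}$ (e.g.\ a point of $F$ in the interior of a cable $I_v$ with $\kappa_v=0$ but $\kappa^{(n)}_v>0$ lies in $\hat{\partial}_{\tilde{\G}_n}F$ but not in $\hat{\partial}_{\tilde{\G}}F$), so in general one only has $\hat{\partial}_{\tilde{\G}}F\subset\hat{\partial}_{\tilde{\G}_n}F$, and taking $\hat{F}=\hat{\partial}_{\tilde{\G}}F$ may fail the admissibility requirement $\hat{\partial}_{\tilde{\G}_n}F\subset\hat{F}$ in \eqref{defKhatA} for the graph $\tilde{\G}_n$. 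The fix is immediate: for every $n$ (and for $\tilde{\G}$) the exterior boundary is contained in the set of extremal points of $F\cap\overline{I}_v$, $v\in E\cup G$, which is a finite set depending only on $F$ and not on the killing measure; taking $\hat{F}$ to be this common superset, \eqref{variational} applies simultaneously in all the graphs and your uniform-convergence argument over the simplex of probability measures on $\hat{F}$ goes through. With that adjustment the proof is complete.
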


\begin{proof}
As a consequence of \eqref{eq:equalityKapprox} one knows that for compact $K \subset \tilde{\G}$, one has $\phi^{(n)}=\phi^{(\infty)}$ on $K$ for large enough $n$, whence $\mathrm{{\rm cap}}_{\tilde{\G}_n}(E_{n,K}^{\geq h}(x_0))=\mathrm{{\rm cap}}_{\tilde{\G}_n}(E^{\geq h}_{\infty,K}(x_0))$ for such $n$. From this, \eqref{capGngeqcapG} follows using that $\mathrm{{\rm cap}}_{\tilde{\G}_n}(A) \to \mathrm{{\rm cap}}_{\tilde{\G}}(A)$ for compact $A$ as $n\to \infty$, applied with the choice $A= E^{\geq h}_{\infty,K}(x_0)$ (indeed, using \eqref{defeAcap}, \eqref{defequilibriumcable} and \eqref{defcap}, it is not hard to show that the equilibrium measure of any compact set $A$ on $\tilde{\G}_n$ converges --in fact decreases-- to the equilibrium measure of $A$ on $\tilde{\G}$). Now, if $E_{\infty}^{\geq h}(x_0)$ is compact, then $E^{\geq h}_{\infty}(x_0)=E^{\geq h}_{\infty,K}(x_0)=E^{\geq h}_{n,K}(x_0)$ for large enough $n$ and $K$ depending on $\varphi$. Together with \eqref{capGngeqcapG}, this immediately gives \eqref{capGngeqcapG'}.
\end{proof}

\section{Proofs of Theorems \ref{mainresult} and \ref{mainresultcap}} 
\label{sec:mainviainter}

With the results of the last section at hand, we are ready to give the proofs of Theorems~\ref{mainresult} and~\ref{mainresultcap}. This is the subject of the present section. Both proofs rely on Proposition \ref{couplingimplytheorem} in combination with Lemmas \ref{Theforfinite} and \ref{LemmaapproGFF} and Corollary \ref{Cor:limcap}. 

First, as a consequence of Proposition~\ref{couplingimplytheorem} and Lemma~\ref{h-hsamelaw}, we collect the following
\begin{Cor}\label{C:thm2}
If \eqref{eqcouplingintergff} and \eqref{capcondition} are satisfied on $\G$, then \eqref{lawforhnegative} and \eqref{eq:capinfinity} hold.
\end{Cor}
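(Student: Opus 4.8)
The plan is to obtain both \eqref{lawforhnegative} and \eqref{eq:capinfinity} from the compact-cluster symmetry of Lemma \ref{h-hsamelaw}---available since \eqref{eqcouplingintergff} is assumed---after using \eqref{capcondition} to make compactness, boundedness and finiteness of capacity interchangeable. First I would record that \eqref{capcondition} implies \eqref{eq:0bounded} by Corollary \ref{maincor},1) (itself a consequence of Theorem \ref{mainresult}). Hence for every $h\geq 0$ one has $E^{\geq h}\subseteq E^{\geq 0}$, so all components of $E^{\geq h}$ are bounded $\P^G$-a.s., and Lemma \ref{1stpartofmaincor} (in the regime $h\geq 0$) promotes bounded to compact. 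Thus, for $h\geq 0$, every cluster of $E^{\geq h}$ is compact; in particular $E^{\geq h}(x_0)$ is compact whenever $\phi_{x_0}\geq h$.

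Next I would apply Lemma \ref{h-hsamelaw}: under \eqref{eqcouplingintergff} the compact clusters of $E^{\geq -h}$ have the same law as those of $E^{\geq h}$, and by the previous step the latter are simply all clusters of $E^{\geq h}$. Read at the marked point $x_0$, this is precisely \eqref{lawforhnegativecompact}, i.e.\ $\mathrm{cap}(E^{\geq -h}(x_0))1_{E^{\geq -h}(x_0)\text{ compact},\,\phi_{x_0}\geq -h}$ and $\mathrm{cap}(E^{\geq h}(x_0))1_{\phi_{x_0}\geq h}$ have the same law. To pass to \eqref{lawforhnegative} I would invoke \eqref{capcondition} once more through Lemma \ref{1stpartofmaincor}: under \eqref{capcondition}, $E^{\geq -h}(x_0)$ is compact if and only if $\mathrm{cap}(E^{\geq -h}(x_0))<\infty$, while it is nonempty (equivalently $\phi_{x_0}\geq -h$) if and only if its capacity is strictly positive, since any nonempty closed set has positive capacity by transience (cf.\ \eqref{defeAcap}). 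Therefore the indicator on the left of \eqref{lawforhnegativecompact} equals $1_{\mathrm{cap}(E^{\geq -h}(x_0))\in(0,\infty)}$ $\P^G$-a.s., which is exactly \eqref{lawforhnegative}.

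For \eqref{eq:capinfinity} I would reproduce the computation \eqref{eq:theta_short}: decomposing on $\{\phi_{x_0}\leq -h\}$ (the boundary $\{\phi_{x_0}=-h\}$ being null) gives $\P^G(E^{\geq -h}(x_0)\text{ compact})=\P^G(\phi_{x_0}\leq -h)+\P^G(\varnothing\neq E^{\geq -h}(x_0)\text{ compact})$, and Lemma \ref{h-hsamelaw} together with the compactness established in the first step identifies the last term with $\P^G(\varnothing\neq E^{\geq h}(x_0))=\P^G(\phi_{x_0}\geq h)$. Since \eqref{capcondition} makes non-compactness of $E^{\geq -h}(x_0)$ equivalent to $\mathrm{cap}(E^{\geq -h}(x_0))=\infty$, taking complements yields $\P^G(\mathrm{cap}(E^{\geq -h}(x_0))=\infty)=1-\P^G(\phi_{x_0}\leq -h)-\P^G(\phi_{x_0}\geq h)=\P^G(\phi_{x_0}\in(-h,h))$, i.e.\ \eqref{eq:capinfinity}.

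The main obstacle is the careful bookkeeping needed to convert Lemma \ref{h-hsamelaw}, stated as an identity in law for the \emph{collection} of compact clusters, into the pointed functionals of $E^{\geq\pm h}(x_0)$: one must match the events empty/nonempty, compact/non-compact and capacity zero/finite/infinite consistently, and verify that \eqref{capcondition} (through Lemma \ref{1stpartofmaincor}) genuinely collapses these trichotomies to the two indicators used above. Note that \eqref{eqcouplingintergff} enters only to license Lemma \ref{h-hsamelaw}; Proposition \ref{couplingimplytheorem} would in addition pin down the common law as \eqref{eq:laplacecap}, but it is not needed for the bare equalities \eqref{lawforhnegative} and \eqref{eq:capinfinity}.
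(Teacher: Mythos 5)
Your derivation of \eqref{lawforhnegative} from Lemma \ref{h-hsamelaw} via \eqref{lawforhnegativecompact}, and of \eqref{eq:capinfinity} via the computation \eqref{eq:theta_short}, matches the paper's argument in substance (the paper gets \eqref{eq:capinfinity} even more directly from \eqref{lawforhnegative}, by observing that $\P^G(\mathrm{cap}(E^{\geq -h}(x_0))\in(\mathrm{cap}(\{x_0\}),\infty))=\P^G(\phi_{x_0}\geq h)$ while $\P^G(\mathrm{cap}(E^{\geq -h}(x_0))\leq \mathrm{cap}(\{x_0\}))=\P^G(\phi_{x_0}\leq -h)$, but your route through \eqref{eq:theta_short} is equally valid). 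The bookkeeping you flag as the main obstacle --- collapsing empty/nonempty, compact/non-compact, and zero/finite/infinite capacity under \eqref{capcondition} and Lemma \ref{1stpartofmaincor} --- is handled correctly.

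There is, however, a genuine flaw in your very first step: you obtain \eqref{eq:0bounded} from \eqref{capcondition} by citing Corollary \ref{maincor},1), which is a consequence of Theorem \ref{mainresult}. But Corollary \ref{C:thm2} is itself invoked \emph{inside} the proof of Theorem \ref{mainresult} (and of Theorem \ref{mainresultcap}), applied there to the finite approximating graphs $\G_n$ in order to establish \eqref{eq:capinfinity} on $\G_n$. As written, your argument is therefore circular: C:thm2 $\Rightarrow$ mainresult $\Rightarrow$ maincor $\Rightarrow$ C:thm2. The circle can be broken --- on the finite graphs where C:thm2 is actually needed for Theorem \ref{mainresult}, condition \eqref{eq:0bounded} is trivial --- but you neither make that case distinction nor avoid the dependency. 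The paper sidesteps this entirely: since \eqref{eqcouplingintergff} is assumed, Proposition \ref{couplingimplytheorem} gives $\hyperref[eq:laplacecap]{(\text{Law}_0)}$, and letting $u\downarrow 0$ there yields $\P^G(\mathrm{cap}(E^{\geq 0}(x_0))<\infty,\ \phi_{x_0}\geq 0)=\tfrac12$, hence $\mathrm{cap}(E^{\geq 0}(x_0))<\infty$ a.s.; condition \eqref{capcondition} then upgrades finite capacity to boundedness, i.e.\ \eqref{eq:0bounded}, with no appeal to Theorem \ref{mainresult}. You should replace your first step by this self-contained derivation; the remainder of your proof then goes through unchanged.
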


\begin{proof} 
If \eqref{eqcouplingintergff} and \eqref{capcondition} are satisfied, \eqref{eq:0bounded} follows from $\hyperref[eq:laplacecap]{(\text{Law}_0)}$ (which holds on account of Proposition~\ref{couplingimplytheorem}) by letting $u\downarrow 0$ and using \eqref{capcondition}. Therefore \eqref{lawforhnegativecompact} holds, which, together with \eqref{capcondition} and Lemma \ref{1stpartofmaincor}, yields \eqref{lawforhnegative}. Then, using  \eqref{lawforhnegative} we have	that $\P^G\big(\mathrm{{\rm cap}}({E}^{\geq-h}(x_0))\in{(\mathrm{{\rm cap}}(\{x_0\}),\infty)}\big)=\P^G(\phi_{x_0}\geq h)$. Since $\P^G\big(\mathrm{{\rm cap}}({E}^{\geq-h}(x_0))\leq\mathrm{{\rm cap}}(\{x_0\})\big)=\P^G(\phi_{x_0}\leq -h),$ we infer \eqref{eq:capinfinity}.
\end{proof}

We now give the 

\begin{proof}[Proof of Theorem \ref{mainresult}]
For a given graph $\G =(\overline{G},\overline{\lambda}, \overline{\kappa})$, consider an increasing sequence $U_n,$ $n\in\N,$ of finite connected subsets of $\overline{G}$ exhausting $\overline{G}$, i.e.~satisfying $U_n \subset U_{n+1}$ for all $n$ and $\bigcup_n U_n=\overline{G}$. Now, define $\G_n=(\overline{G},\overline{\lambda}, \overline{\kappa}^{(n)})$ with killing measure $\overline \kappa_x^{(n)}=\overline \kappa_x$ if $x\in{U_n},$ and $\overline \kappa_x^{(n)}=\infty$ otherwise. The sequence of graphs $\G_n,$ $n\in\N,$ increases to $\G$ in the sense of \eqref{eq:def_approx}, and $G_n$ is finite for each $n\in\N$ in the sense as above Lemma~\ref{Theforfinite}. Fixing a point $x_0 \in \tilde{\G}$, we may furthermore assume that $x_0 \in \tilde \G_n$ for all $n \in \N$ (for instance by choosing $U_n= B_d(z_0, n+1) $, where $z_0 \in G$ is the vertex closest to $x_0$ relative to $d$).

Considering the sequence $(\phi_x^{(n)})_{x\in{\tilde{\G}_n}},$ $n \in \N,$ from Lemma \ref{LemmaapproGFF}, which is in force, we obtain, applying Lemma~\ref{Theforfinite} and Proposition~\ref{couplingimplytheorem}, which implies \hyperref[eq:laplacecap]{($\text{Law}_h$)}, that for all $n\in\N,$
	\begin{equation}
	\label{lawcapat0n}
	\E\left[\exp\left(-u\mathrm{{\rm cap}}_{\tilde{\G}_n}({E}_n^{\geq h}(x_0))\right) 1_{\phi_{x_0}^{(n)}\geq h}\right]=\P(\phi_{x_0}^{(n)}\geq \sqrt{2u+h^2})\text{ for all }u>0, \, h \geq 0.
	\end{equation}
	Fixing $h=0$, \eqref{lawcapat0n} and the monotonicity property \eqref{capincreasing} thus yield, for any compact $K \subset \tilde{\G}$,
	\begin{equation}
	\label{lawcapat0nK}
	\E\left[\exp\left(-u\mathrm{{\rm cap}}_{\tilde{\G}_n}({E}_{n,K}^{\geq 0}(x_0))\right) 1_{\phi_{x_0}^{(n)}\geq 0}\right]\geq \P(\phi_{x_0}^{(n)}\geq \sqrt{2u})\text{ for all }u>0.
	\end{equation}
with ${E}_{n,K}^{\geq h}(x_0)$ as defined above \eqref{capGngeqcapG}. Now, applying \eqref{capGngeqcapG} and dominated convergence to take the limit $n\rightarrow\infty$ on both sides of \eqref{lawcapat0n}, and subsequently considering an increasing sequence of compacts $K$ exhausting $\tilde{\G}$, one obtains, in view of \eqref{defcapinfinity}, 
	\begin{equation}
		\label{lawcapat0nlim}
	\E^G\left[\exp\left(-u\mathrm{{\rm cap}}_{\tilde{\G}}({E}^{\geq 0}(x_0))\right) 1_{\phi_{x_0}\geq 0}\right]\geq\P^G(\phi_{x_0}\geq \sqrt{2u})\text{ for all }u>0.
	\end{equation}
	Hence, taking $u\rightarrow0$ we obtain by dominated convergence that
	\begin{equation*}
	\P^G\big(\mathrm{{\rm cap}}(E^{\geq0}(x_0))<\infty,\phi_{x_0}\geq0 \big)\geq\frac12.
	\end{equation*}
	Since $E^{\geq0}(x_0)=\varnothing$ when $\phi_{x_0}<0$ and $\P^G(\phi_{x_0}<0)=\frac12,$ we obtain that $\mathrm{{\rm cap}}(E^{\geq0}(x_0))$ is $\P^G$-a.s.\ finite, which proves the first part of the statement.
	
	Let us now fix some $h<0.$ If $E^{\geq h}_n(x_0)$ is a non-compact subset of $\tilde{\G}_n$ for infinitely many $n,$ then for all compacts $K$ of $\tilde{\G}$ we have $E_n^{\geq h}(x_0)\not\subset K$ for infinitely many $n\in\N.$ Since $\phi^{(n)}=\phi^{(\infty)}$ on a neighborhood of $K$ for $n$ large enough, we then have that $E_{\infty}^{\geq h}(x_0)\not\subset K$ for all compacts $K,$ that is $E_{\infty}^{\geq h}(x_0)$ is a non-compact subset of $\tilde{\G}.$ Since \eqref{eq:capinfinity} holds on $\G_n$ by Lemma \ref{Theforfinite} and Corollary~\ref{C:thm2}, we moreover have that
	\begin{align*}
	\P(E^{\geq h}_n(x_0)\text{ is non-compact in }\tilde{\G}_n\text{ i.o.})&\geq \liminf_{n\rightarrow\infty}\P(E^{\geq h}_n(x_0)\text{ is non-compact in }\tilde{\G}_n)\\&=\liminf_{n\rightarrow\infty}\P(\phi_{x_0}^{(n)}\in{(-h,h)}) =\P^G(\phi_{x_0}\in{(-h,h)})>0,
	\end{align*}
and so $E_{\infty}^{\geq h}(x_0)$ is non-compact with positive probability. 
\end{proof}


Prior to giving the proof of Theorem \ref{mainresultcap}, we first briefly study some properties of the law of the capacity of the level sets of the Gaussian free field, when their the Laplace transform is given by \eqref{eq:laplacecap} (see above Theorem \ref{T:main}). The next lemma computes the corresponding density (on the event $ \{ {E}^{\geq h}(x_0) \neq \emptyset \}$).

\begin{Lemme}
\label{equivalenceforthelaw}
For all $u\geq0$ and $h\in\R,$
\begin{equation}
\label{eq:laplace}
    \int_{g(x_0,x_0)^{-1}}^\infty\rho_h(t)\exp(-ut)\diff t=\P^G\big(\phi_{x_0}\geq \sqrt{2u+h^2}\big),
\end{equation}
with $\rho_h$ as defined in \eqref{eq:hdensity}.
\end{Lemme}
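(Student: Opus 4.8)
The plan is a direct computation verifying that the explicit density $\rho_h$ from \eqref{eq:hdensity} has Laplace transform equal to the right-hand side of \eqref{eq:laplace}. Write $g = g(x_0,x_0)$ for brevity, and recall that under $\P^G$ the variable $\phi_{x_0}$ is centered Gaussian with variance $g$, so that the right-hand side of \eqref{eq:laplace} equals $\frac{1}{\sqrt{2\pi g}}\int_{\sqrt{2u+h^2}}^\infty e^{-s^2/(2g)}\diff s$. The key observation is that, upon inserting \eqref{eq:hdensity}, the factor $e^{-h^2 t/2}$ in $\rho_h$ combines with $e^{-ut}$ to give $e^{-at/2}$ with $a := 2u + h^2 \geq 0$; hence both sides of \eqref{eq:laplace} depend on $(u,h)$ only through $a$. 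It therefore suffices to set
\begin{equation*}
F(a) = \int_{g^{-1}}^\infty \frac{1}{2\pi t\sqrt{g(t-g^{-1})}}\,e^{-at/2}\diff t, \qquad R(a) = \frac{1}{\sqrt{2\pi g}}\int_{\sqrt a}^\infty e^{-s^2/(2g)}\diff s,
\end{equation*}
and to prove $F(a) = R(a)$ for all $a > 0$; the case $u = 0$ then follows by continuity.

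First I would differentiate both sides in $a$, which is justified by dominated convergence since the integrand of $F$ and its $a$-derivative are dominated, locally uniformly in $a > 0$, by integrable functions. A direct computation gives $R'(a) = -\frac{1}{2\sqrt{2\pi g a}}\,e^{-a/(2g)}$. For $F'(a)$ one brings the derivative inside to obtain $F'(a) = -\frac{1}{4\pi\sqrt g}\int_{g^{-1}}^\infty (t-g^{-1})^{-1/2}e^{-at/2}\diff t$, and then the substitution $t = g^{-1} + r^2$ (so $\diff t = 2r\,\diff r$ and $\sqrt{t - g^{-1}} = r$) reduces the remaining integral to a Gaussian integral, yielding $\int_{g^{-1}}^\infty (t-g^{-1})^{-1/2}e^{-at/2}\diff t = e^{-a/(2g)}\sqrt{2\pi/a}$. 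Substituting back and simplifying the constants gives $F'(a) = -\frac{1}{2\sqrt{2\pi g a}}\,e^{-a/(2g)} = R'(a)$.

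To conclude I would match a boundary value: as $a \to \infty$, $R(a) \to 0$ because the lower limit $\sqrt a \to \infty$, and $F(a) \to 0$ by dominated convergence (the integrand is dominated by $\frac{1}{2\pi t\sqrt{g(t-g^{-1})}}\,e^{-g^{-1}a/2}$, which vanishes pointwise and is integrable over $(g^{-1},\infty)$). Since $F' \equiv R'$ on $(0,\infty)$ and both functions vanish at $+\infty$, we obtain $F = R$ on $(0,\infty)$, which is exactly \eqref{eq:laplace} for $u > 0$; letting $u \downarrow 0$ by dominated convergence extends it to $u = 0$. There is no real obstacle here: the only points requiring care are the justification of differentiating under the integral sign and of the limit as $a \to \infty$, both routine dominated-convergence arguments, while the substitution $t = g^{-1}+r^2$ is the crux that turns the Laplace integral of $\rho_h$ into the derivative of a Gaussian tail.
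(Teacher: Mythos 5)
Your proof is correct and follows essentially the same route as the paper: both differentiate the identity with respect to the Laplace parameter, reduce the resulting integral to a Gaussian one via the substitution $r=\sqrt{t-g^{-1}}$, and then pin down the constant of integration. The only (immaterial) difference is that you anchor at $a\to\infty$ with a decay estimate, whereas the paper anchors at $v=0$ via an explicit arctangent computation.
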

\begin{proof}
Taking $v=u+h^2/2$ and $a=g(x_0,x_0)^{-1},$ it is enough to show that
\begin{equation}
\label{equivlaplace}
        \int_{a}^\infty\frac{1}{t\sqrt{2\pi(t-a)}}\exp(-vt)\diff t=\int_{\sqrt{2v}}^\infty\exp\Big(-\frac{at^2}{2}\Big)\diff t\text{ for all }v,a\geq0.
\end{equation}
For $v=0$ we have, taking $s=\sqrt{t-a},$
\begin{equation*}
     \int_{a}^\infty\frac{1}{t\sqrt{2\pi(t-a)}}\diff t=\sqrt{\frac{2}{\pi}}\int_{0}^\infty\frac{1}{s^2+a}\diff s=\sqrt{\frac{2}{a\pi}}\Big[\arctan\Big(\frac{s}{\sqrt{a}}\Big)\Big]_0^\infty=\sqrt{\frac{\pi}{2a}},
\end{equation*}
and so \eqref{equivlaplace} holds for $v=0.$ Moreover, by dominated convergence, the left-hand side of \eqref{equivlaplace} viewed as a function of $v > 0$ is continuously differentiable with derivative
\begin{equation*}
    -\int_{a}^\infty\frac{1}{\sqrt{2\pi(t-a)}}\exp(-vt)\diff t=-\sqrt{\frac{2}{\pi}}\int_{0}^\infty\exp\big(-v(a+s^2)\big)\diff s=-\frac{1}{\sqrt{2v}}\exp(-va),
\end{equation*}
and so is equal to the derivative with respect to $v$ of the term on the right-hand side of \eqref{equivlaplace}. This yields \eqref{equivlaplace} and hence \eqref{eq:laplace}.
\end{proof}

We now proceed to the

\begin{proof}[Proof of Theorem \ref{mainresultcap}]
Consider the approximating sequence $\G_n$ introduced at the beginning of the proof of Theorem \ref{mainresult}. In particular, \eqref{lawcapat0n} still holds (as a consequence of Lemma~\ref{Theforfinite} and Proposition~\ref{couplingimplytheorem}). Now, let $h\geq0$ and suppose $E^{\geq h}(x_0)$ is $\P^G$-a.s.\ bounded, hence compact in view of Lemma~\ref{1stpartofmaincor}. Then \eqref{capGngeqcapG'} holds and one can safely pass to the limit in \eqref{lawcapat0n} using dominated convergence, thus obtaining that \eqref{eq:laplacecap} holds on $\tilde{\G}$. Then, \eqref{eq:hdensity} holds on $\tilde{\G}$ by means of Lemma \ref{equivalenceforthelaw}. In particular, the previous argument shows that, if $h\geq0$ and $E^{\geq h}(x_0)$ is $\P^G$-a.s.\ bounded, then
\begin{equation}
\label{eq:convlaw}
\text{$\mathrm{{\rm cap}}_{\tilde{\G}_n}(E_n^{\geq h}(x_0))$ converges in law to $\mathrm{{\rm cap}}_{\tilde{\G}}(E^{\geq h}(x_0))$, which is given by \eqref{eq:laplacecap}.}
\end{equation}
Assume now that \eqref{capcondition} is fulfilled on $\G.$ Then \eqref{eq:0bounded} holds by Corollary \ref{maincor}, and so we obtain \eqref{item:LawhTrue} from \eqref{eq:convlaw}. In order to deduce \eqref{lawforhnegative}, first observe that \eqref{lawforhnegative} holds on $\tilde{\G}_n$ by means of Lemma~\ref{Theforfinite} and Corollary~\ref{C:thm2}, as \eqref{capcondition} is trivially satisfied on $\tilde{\G}_n$. For all $h \geq 0$, due to \eqref{eq:convlaw} the random variable $\text{{\rm cap}}({E}_n^{\geq h}(x_0)) 1_{\phi_{x_0}^{(n)}{\geq h}}$ converges in law to $\text{{\rm cap}}({E}^{\geq h}(x_0)) 1_{\phi_{x_0}{\geq h}}$, hence so does $\text{{\rm cap}}({E}_n^{\geq -h}(x_0))1_{\text{{\rm cap}}({E}_n^{\geq -h}(x_0))\in{(0,\infty)}}$. To identify this with the law of $\text{{\rm cap}}({E}^{\geq -h}(x_0))1_{\text{{\rm cap}}({E}^{\geq -h}(x_0))\in{(0,\infty)}}$, one applies dominated convergence, noting that, due to \eqref{capcondition} and Lemma~\ref{1stpartofmaincor}, $\text{{\rm cap}}({E}^{\geq -h}(x_0))< \infty$ is tantamount to ${E}^{\geq -h}(x_0)$ being compact, and using \eqref{capGngeqcapG'}. All in all, this gives \eqref{lawforhnegative}. Finally \eqref{eq:capinfinity} is an immediate consequence of \eqref{lawforhnegative}, as in the proof of Corollary~\ref{C:thm2}. This completes the proof of Theorem \ref{mainresultcap}.
\end{proof}

\begin{Rk}
	\label{approimplylawcap}
	\begin{enumerate}[label={\arabic*)}]
		\item \label{ifGntendGthenlaw0}
		In view of the above proof of Theorem~\ref{mainresultcap}, we see that the validity of~\hyperref[eq:laplacecap]{($\text{Law}_0$)} (and thus equivalently of \eqref{eqcouplingintergffszn} by \eqref{equivisom} after Theorem~\ref{couplingintergff} is proved) can be viewed as a question about removing the compactness assumption in \eqref{capGngeqcapG'}. Indeed \hyperref[eq:laplacecap]{($\text{Law}_0$)} holds if and only if
there exists a sequence $\G_n$ of graphs verifying \hyperref[eq:laplacecap]{($\text{Law}_0$)} increasing to $\G$ in the sense of \eqref{eq:def_approx} such that, $\P$-a.s.,
\begin{equation}\label{eq:capconvcrit}
\mathrm{{\rm cap}}_{\tilde{\G}_n}\big(E_n^{\geq0}(x_0)\big)\stackrel{n \to \infty}{\longrightarrow} \mathrm{{\rm cap}}_{\tilde{\G}}\big(E_{\infty}^{\geq0}(x_0)\big)\text{ for all }x_0\in{\tilde{\G}}.
\end{equation}		
		\item\label{stronglawnegative} Let $K \subset \tilde{\G}$ be connected and compact. By Lemmas  \ref{Theforfinite} and \ref{h-hsamelaw}, it follows that if $\G$ is a finite graph, then the compact clusters of $E^{\geq -h}$ and $E^{\geq h}$ have the same law. In particular, 
		\begin{equation}
		\label{eq:symclustersinK}
		\text{the clusters of $E^{\geq -h}$ and $E^{\geq h}$ \text{included in $K$} have the same law.}
		\end{equation}
	The conclusion	\eqref{eq:symclustersinK} remains true for arbitrary transient graph $\G$. Indeed, by following the arguments of Proposition 1.11~in \cite{MR2932978}, starting from ${\G}^{\partial K}$, one can construct a transient weighted graph ${\G}_*^{\partial K}$ with (finite) vertex set $G^{\partial K}\cap K$ (recall Lemma~\ref{GA} for notation)
whose weights coincide with $\lambda_{x,y}^{\partial K}$ whenever $x,y \in G^{\partial K}\cap K$ are neighbors in ${\G}^{\partial K}$, in such a way that $(\phi_x)_{x\in{K}}$  has the same law under $\P_{\tilde{\G}}^G$ as under $\P_{\tilde{\G}^{\partial K}_*}^G$. The conclusion \eqref{eq:symclustersinK} for arbitrary $\G$ then simply follows by regarding the
clusters of $E^{\geq -h}$ and $E^{\geq h}$ included in $K$ as parts of $\tilde{\G}^{\partial K}_*$. One can also prove that the conclusion \eqref{lawforhnegativecompact} holds under condition \eqref{eq:0bounded} using \eqref{eq:symclustersinK}, by considering a sequence of compacts increasing to $\tilde{\G}.$

	\end{enumerate}
\end{Rk}

\section{Proof of Theorem~\ref{couplingintergff}}
\label{sec:iso}

In this section, we prove Theorem \ref{couplingintergff}, along with its corollaries. In particular, this comprises the isomorphism between random interlacements and the Gaussian free field and the equivalences \eqref{equivisom}, as well as its discrete counterpart \eqref{eqcouplingintergffdis}. We first compare random interlacements on $\G=\G_{\bar{\kappa}}$ (recall the notation from above \eqref{eq:graphinclusion}) with random interlacements on $\G_{\bar \kappa'}$ for some $\bar\kappa'\geq \bar\kappa$ in Lemma \ref{limitKn}, and then take advantage of this comparison to approximate random interlacements on any transient graphs by random interlacements on finite graphs as in \eqref{eq:def_approx}, see Lemma \ref{approximationprop}. Together with the corresponding `finite-volume'  approximation of the Gaussian free field from Lemma \ref{LemmaapproGFF} and in combination with the fact that Theorem \ref{couplingintergff} holds on finite graphs (see Lemma \ref{Theforfinite}), we can then prove the isomorphism \eqref{eqcouplingintergffszn}, see Lemma \ref{couplingisalwaystrue}, under suitable assumptions. This is the key step of the proof of  Theorem \ref{couplingintergff}, presented thereafter. Finally, at the end of the section, we deduce from Theorem \ref{couplingintergff} that Corollaries \ref{dichotomy} and \ref{percoath*} also hold.

We first dispense with the equivalence between~\eqref{eqcouplingintergffszn} (see p.\pageref{eqcouplingintergffszn}) and \eqref{eqcouplingintergff} (see p.\pageref{eqcouplingintergff}).

\begin{Lemme}
	\label{isomequivisom'}
	The identity \eqref{eqcouplingintergffszn} holds true if and only if \eqref{eqcouplingintergff} does.
\end{Lemme}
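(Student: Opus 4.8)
The plan is to show that, \emph{irrespective} of whether either isomorphism actually holds, the two candidate fields on the left-hand sides of \eqref{eqcouplingintergffszn} and \eqref{eqcouplingintergff} already coincide in law. Since both identities assert that the respective field agrees in law with $(\phi_x+\sqrt{2u})_{x\in\tilde{\G}}$ under $\P^G$ (and the case $u=0$ is trivial, as then $\I^0=\varnothing$, $\ell_{\cdot,0}\equiv0$ and $\mathcal{C}_0=\varnothing$), the equivalence follows at once. Write
\[
\Phi_x^u=\phi_x 1_{x\notin\mathcal{C}_u}+\sqrt{\phi_x^2+2\ell_{x,u}}\,1_{x\in\mathcal{C}_u},\qquad \Psi_x^u=\sigma_x^u\sqrt{2\ell_{x,u}+\phi_x^2}
\]
for the fields in \eqref{eqcouplingintergffszn} and \eqref{eqcouplingintergff}, respectively. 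First I would record two elementary simplifications. Since $\ell_{x,u}=0$ whenever $x\notin\mathcal{C}_u$ (as noted below \eqref{eqcouplingintergffszn}), one has $\sqrt{2\ell_{x,u}+\phi_x^2}=|\phi_x|$ off $\mathcal{C}_u$; and since $\sigma_x^u=1$ on $\mathcal{C}_u$ by construction, $\Psi^u$ coincides with $\Phi^u$ on $\mathcal{C}_u$, both equal to $\sqrt{\phi_x^2+2\ell_{x,u}}$ there. Thus $\Phi^u$ and $\Psi^u$ differ at most off $\mathcal{C}_u$, where $\Phi_x^u=\phi_x$ and $\Psi_x^u=\sigma_x^u|\phi_x|$.

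Next I would condition on $(|\phi_x|)_{x\in\tilde{\G}}$ and $\omega_u$. The set $\mathcal{C}_u$ is measurable with respect to this pair, since the connected components of $\{x:|\phi_x|>0\}$ depend only on $|\phi|$ while $\I^u$ depends only on $\omega_u$. The key structural input I would invoke is that, conditionally on $|\phi|$, the signs of the distinct sign clusters of $\phi$ (equivalently, of the clusters of $\{x:|\phi_x|>0\}$) are i.i.d.\ uniform on $\{-1,+1\}$; this is precisely the content of Lupu's coupling \eqref{eqcouplingloopsgff}, in which $\phi_x=\sigma_x\sqrt{2L_x}$ with $\sigma$ constant on the clusters of $\{L>0\}=\{|\phi|>0\}$ and i.i.d.\ uniform thereon. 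Because these cluster signs are a function of $\phi$ alone, they are independent of $\omega_u$ under $\P^I\otimes\P^G$; hence, conditionally on $(|\phi|,\omega_u)$, they remain i.i.d.\ uniform, and in particular the signs of the clusters \emph{not} meeting $\I^u$ are unaffected by the conditioning, which fixes $\mathcal{C}_u$ but not the signs.

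Finally I would compare the two conditional laws given $(|\phi|,\omega_u)$: on $\mathcal{C}_u$ both $\Phi^u$ and $\Psi^u$ are the deterministic field $\sqrt{|\phi|^2+2\ell_{\cdot,u}}$, whereas off $\mathcal{C}_u$, $\Phi^u$ equals $|\phi|$ times the original i.i.d.\ uniform cluster signs while $\Psi^u$ equals $|\phi|$ times $\sigma^u$, which by its defining property is i.i.d.\ uniform on exactly these clusters (recall the equivalent description of $\sigma^u$ given above \eqref{eqcouplingintergff}). Hence $\Phi^u$ and $\Psi^u$ have the same conditional law, and integrating out the conditioning gives $\Phi^u\stackrel{\mathrm{law}}{=}\Psi^u$; comparison with the common right-hand side of \eqref{eqcouplingintergffszn} and \eqref{eqcouplingintergff} then yields the claimed equivalence. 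The only genuinely non-trivial ingredient is the i.i.d.-uniformity of the cluster signs conditionally on $|\phi|$ coming from \eqref{eqcouplingloopsgff}; everything else is bookkeeping around $\mathcal{C}_u$. I would also briefly check continuity of $\Phi^u$ and $\Psi^u$ at zeros of $\phi$ and along $\partial\mathcal{C}_u$, but this is immediate since the relevant square roots vanish there, so the sign ambiguity is harmless.
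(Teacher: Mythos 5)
Your proposal is correct and follows essentially the same route as the paper's proof: both reduce the equivalence to showing that the two candidate fields $\Phi^u$ and $\Psi^u$ agree in law, observe that their absolute values coincide with $\sqrt{2\ell_{\cdot,u}+\phi_{\cdot}^2}$, and then match the conditional laws of the signs given $(|\phi|,\omega_u)$ using the fact that, given $|\phi|$, the sign of $\phi$ is constant on each cluster of $\{|\phi|>0\}$ with i.i.d.\ uniform values (the paper cites Lemma 3.2 of \cite{MR3502602} for this, while you derive it from \eqref{eqcouplingloopsgff}, which is the same input in slightly different packaging).
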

\begin{proof}
It suffices to argue that $(\phi_x 1_{x\notin{\mathcal{C}_u}}+\sqrt{\phi_x^2+2\ell_{x,u}}\,  1_{x\in{\mathcal{C}_u}})_{x\in{\tilde{\G}}}$ has the same law under ${\P}^{I}\otimes\P^G $ as $(\sigma_x^u\sqrt{2\ell_{x,u}+\phi_x^2})_{x\in{\tilde{\G}}}$ under $\tilde{\P}$. By definition of ${\mathcal{C}_u}$ and since $|\sigma_x^u|=1$, the absolute value of either field equals $\sqrt{2\ell_{\cdot,u}+\phi_{\cdot}^2}$ in law. To deal with the signs, rewriting $\phi_x=\text{sign}(\phi_x)\sqrt{\phi_x^2+2\ell_{x,u}}$ for all $x\notin{\mathcal{C}_u}$, one observes that  the law of $(\text{sign}(\phi_x)1_{x\notin{\mathcal{C}_u}}+1_{x\in{\mathcal{C}_u}})_{x\in{\tilde{\G}}}$ under $(\P^I\otimes\P^G)(\cdot\,|\,|\phi|,\omega^u)$ is the same as the law of $\sigma^u$ under $\tilde{\P}_{\tilde{\G}}(\cdot\,|\,|\phi|,\omega^u),$ which follows immediately from the definitions of $\mathcal{C}_u$ and $\sigma^u$, respectively, together with Lemma 3.2 in~\cite{MR3502602} (the latter asserts that given $|\varphi|$, the field $\text{sign}(\phi)$ is constant on each cluster of $\{|\phi|>0\},$ and the values on each cluster are independent and uniformly distributed, a consequence of the strong Markov property).
\end{proof}

We are now going to approximate random interlacements on any transient graph $\G$ by random interlacements on a sequence of finite graphs $\G_{n}$ increasing to $\G$ in the sense of  \eqref{eq:def_approx}. To this end, we first compare random interlacements on two graphs $\G = (\overline{G}, \bar{\lambda}, \bar{\kappa})$ and $\G' =(\overline{G}, \bar{\lambda}, \bar{\kappa}')$ with killing measures $ \bar{\kappa}' \geq  \bar{\kappa}$, and corresponding cable systems $\tilde{\G}$ and $\tilde{\G}'$. Thus, $\tilde{\G}= \tilde{\G}_{\bar \kappa}$, $\tilde{\G}'= \tilde{\G}_{\bar \kappa'}$ in the notation from the beginning of Section \ref{S:I_x} and in particular, cf.~\eqref{eq:graphinclusion}, one can regard $\tilde{\G}'$ as a subset of $\tilde{\G}.$ Accordingly, for all trajectories $w\in{W_{\tilde{\G}}}$ with $\zeta^- < 0 < \zeta^+$ (see Section \ref{subsec:RI} for notation; recall in particular that $\zeta^\pm$ are such that $w(t)=\Delta$ if and only if $t\notin{(\zeta^-,\zeta^+)}$), we define the killing times $\zeta_{\bar\kappa'}^{\pm}$ by
\begin{equation*}
\zeta_{\bar\kappa'}^\pm(w)\overset{\text{def.}}{=}\pm \inf\big\{t\in{[0, \pm\zeta^\pm(w))}:\, w(\pm t)\notin{\tilde{\G}'}\big\}
\end{equation*}
with the convention 
\begin{equation} \label{eq:emptyConvention}
\inf\varnothing=\pm\zeta^\pm(w)
\end{equation}
so that $ \zeta^-(w) \leq \zeta_{\bar\kappa'}^{-}(w)< 0 < \zeta_{\bar\kappa'}^+(w) \leq \zeta^+(w)$ for any $w\in{W_{\tilde{\G}}}$. For any compact $K\subset\tilde{\G},$ we then introduce $\pi_{K} :W_{K,\tilde{\G}}^{0}\rightarrow W_{K,\tilde{\G}'}^{0}$ by
\begin{equation}
\label{eq:piK}
\pi_{K}(w) \equiv \pi_{K,\tilde{\G},\tilde{\G}'} (w)=\begin{cases}
w(t),&\text{if }t\in{(\zeta_{\bar\kappa'}^-(w),\zeta_{\bar\kappa'}^+(w))},
\\\Delta,&\text{otherwise,}
\end{cases}
\end{equation}
and denote by $\pi_{K}^{*}:W_{K,\tilde{\G}}^{*}\rightarrow W_{K,\tilde{\G}'}^{*}$ the unique function such that $p_{\tilde{\G}'}^*\circ\pi_{K}(w)=\pi_{K}^{*}\circ p^*_{\tilde{\G}}(w)$ for all $w\in{W_{K,\tilde{\G}}^{0}}.$ In words $\pi_{K}^{*}(w^*)$ is the doubly infinite trajectory modulo time shift on $\tilde{\G}',$ whose forward and backward parts seen from the first time of hitting $K$ are the forward and backward parts of $w^*$ seen from the first time of hitting $K,$ both stopped on exiting~$\tilde{\G}'.$

\begin{Lemme}
	\label{limitKn}
$(\tilde{\G}=  (\overline{G}, \bar{\lambda}, \bar{\kappa}),\, \tilde{\G}'=  (\overline{G}, \bar{\lambda}, \bar{\kappa}'), \,  \bar{\kappa}' \geq  \bar{\kappa} )$. Let $V\subset K$ be compact subsets of $\tilde{\G}'$. There exists a non-negative measure ${\mu}^{K,V}={\mu}^{K,V}_{\tilde{\G},\tilde{\G}'}$ on $W^*_{K,\tilde{\G}'}$ such that 
	\begin{equation}
	\label{defmutildeGkappa}
	\big(\nu_{\tilde{\G}}1_{W^*_{K,\tilde{\G}}\setminus W^*_{V,\tilde{\G}}}\big)\circ({\pi}_{K}^*)^{-1}+{\mu}^{K,V}=\nu_{\tilde{\G}'}1_{W^*_{K,\tilde{\G}'}\setminus W^*_{V,\tilde{\G}'}}
	\end{equation}
(with a slight abuse of notation, the right-hand side is viewed as a measure on $W^*_{K,\tilde{\G}'}$). Moreover, 
	\begin{equation}
	\label{massmu}
	{\mu}^{K,V}(W^*_{K,\tilde{\G}'})=\mathrm{{\rm cap}}_{\tilde{\G}'}(K)-\mathrm{{\rm cap}}_{\tilde{\G}'}(V)-\mathrm{{\rm cap}}_{\tilde{\G}}(K)+\mathrm{{\rm cap}}_{\tilde{\G}}(V).
	\end{equation}
\end{Lemme}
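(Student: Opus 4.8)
The plan is to \emph{define} the measure $\mu^{K,V}$ as the difference between the two sides of \eqref{defmutildeGkappa}, namely
\[
\mu^{K,V}\stackrel{\mathrm{def.}}{=}\nu_{\tilde{\G}'}1_{W^*_{K,\tilde{\G}'}\setminus W^*_{V,\tilde{\G}'}}-\big(\nu_{\tilde{\G}}1_{W^*_{K,\tilde{\G}}\setminus W^*_{V,\tilde{\G}}}\big)\circ(\pi_K^*)^{-1},
\]
so that \eqref{defmutildeGkappa} holds by construction, and the entire content of the lemma becomes the assertion that $\mu^{K,V}$ is a \emph{non-negative} measure, together with the mass identity \eqref{massmu}. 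I would first record two easy points. Since $V\subset K\subset\tilde{\G}'$ and $\pi_K$ only truncates a trajectory (it kills its forward and backward parts upon exiting $\tilde{\G}'$, see \eqref{eq:piK}), the image $\pi^*_K(w^*)$ of a trajectory hitting $K$ but not $V$ still hits $K$, at the same first-entrance point, which lies in $\tilde{\G}'$, and, its range being contained in that of $w^*$, still avoids $V$; hence the pushforward is supported on $W^*_{K,\tilde{\G}'}\setminus W^*_{V,\tilde{\G}'}$ and \eqref{defmutildeGkappa} is meaningful. Moreover \eqref{massmu} will follow for free once non-negativity is shown: by \eqref{definter} and \eqref{defQK} one has $\nu_{\tilde{\G}}(W^*_{K,\tilde{\G}})=Q_{K,\tilde{\G}}(W^0_{K,\tilde{\G}})=\sum_{x\in\hat{\partial}K}e_{K,\tilde{\G}}(x)=\mathrm{cap}_{\tilde{\G}}(K)$, and likewise for $V$ and on $\tilde{\G}'$; since $\pi^*_K$ preserves total mass, taking total masses in the displayed definition yields exactly \eqref{massmu}.

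The heart of the matter is therefore the domination $\big(\nu_{\tilde{\G}}1_{W^*_{K,\tilde{\G}}\setminus W^*_{V,\tilde{\G}}}\big)\circ(\pi^*_K)^{-1}\le\nu_{\tilde{\G}'}1_{W^*_{K,\tilde{\G}'}\setminus W^*_{V,\tilde{\G}'}}$. I would establish it through the first-entrance decomposition \eqref{defQK}: restricting $\nu$ to trajectories hitting $K$ amounts, on canonical representatives with $H_K=0$, to the measure $\sum_{x\in\hat{\partial}K}e_{K}(x)\,P_x(\mathrm{fwd}\in\cdot)\otimes P^{K}_x(\mathrm{bwd}\in\cdot)$, with forward and backward parts conditionally independent given the entrance point $x$. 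Because the backward part under $P^{K}_x$ never returns to $K\supset V$, avoiding $V$ constrains only the forward part and the requirement $x\notin V$; the same holds on $\tilde{\G}'$. As $K\subset\tilde{\G}'$, the finite entrance set $\hat{\partial}K$ and the entrance point are common to both graphs, so it suffices to prove, for each fixed $x\in\hat{\partial}K\setminus V$, the product-measure inequality obtained by comparing the forward factors and the (weighted) backward factors separately.

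For the forward factor, \eqref{lawkappa'vskappa} says that killing the $\tilde{\G}$-diffusion from $x$ on exiting $\tilde{\G}'$ produces exactly the $\tilde{\G}'$-diffusion from $x$; and since a sub-path of a path avoiding $V$ again avoids $V$, the event $\{\text{the full } \tilde{\G}\text{-forward path avoids } V\}$ is contained in $\{\text{its truncation avoids } V\}$. Pushing forward thus gives $\mathrm{Tr}\big(P^{\tilde{\G}}_x(\,\cdot\,;\text{fwd avoids } V)\big)\le P^{\tilde{\G}'}_x(\,\cdot\,;\text{fwd avoids } V)$ as measures on forward paths in $\tilde{\G}'$, where $\mathrm{Tr}$ denotes killing on exit from $\tilde{\G}'$. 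For the backward factor together with the weight, I would invoke the time-reversal invariance of $\nu$, built into \eqref{defQK} via \eqref{exitequi}, to rewrite $e_{K,\tilde{\G}}(x)P^{K,\tilde{\G}}_x(\mathrm{bwd}\in\cdot)$, using the representation $e_{K,\tilde{\G}}(x)=\lambda^{\hat{\partial}K}_x P_x(\tilde{H}_K=\infty)$ from \eqref{defeAcap}, as the reversal of a $\lambda^{\hat{\partial}K}_x$-weighted forward diffusion from $x$ conditioned to avoid $K$; the identical truncation argument, now with $K$ in place of $V$, then yields $e_{K,\tilde{\G}}(x)\,\mathrm{Tr}(P^{K,\tilde{\G}}_x)\le e_{K,\tilde{\G}'}(x)\,P^{K,\tilde{\G}'}_x$, the growth of the prefactor being precisely the monotonicity $e_{K,\tilde{\G}}(x)\le e_{K,\tilde{\G}'}(x)$ of the equilibrium measure under additional killing (equivalently, monotonicity of $\mathrm{cap}$ via \eqref{variational} and $g_{\tilde{\G}'}\le g_{\tilde{\G}}$). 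Multiplying the two factor-inequalities and summing over $x\in\hat{\partial}K\setminus V$ gives the domination, hence the non-negativity of $\mu^{K,V}$.

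The main obstacle is the backward factor: unlike the forward part, the weight $e_K(x)$ and the backward law $P^{K}_x$ are \emph{entangled}, since the defining event ``never return to $K$'' depends on the ambient graph, and killing simultaneously enlarges $e_K(x)$ and alters $P^{K}_x$. Making the reversal step rigorous — identifying $e_{K,\tilde{\G}}(x)P^{K,\tilde{\G}}_x$ with a reversed, $K$-avoiding forward measure and checking that $\mathrm{Tr}$ commutes with this identification and only discards the mass carried by paths whose $\tilde{\G}$-continuation re-enters $K$ — is where the real care is needed; this discarded mass is exactly what constitutes $\mu^{K,V}$. One must also confirm that the local jump rate $\lambda^{\hat{\partial}K}_x$ and the entrance set $\hat{\partial}K$ are genuinely common to $\tilde{\G}$ and $\tilde{\G}'$ — they depend only on the cable structure near $K\subset\tilde{\G}'$, cf.\ \eqref{eq:graphinclusion} — so that the per-$x$ comparison is legitimate.
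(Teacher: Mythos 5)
Your proposal is correct and follows essentially the same route as the paper: define $\mu^{K,V}$ as the difference of the two sides, reduce non-negativity via the first-entrance decomposition \eqref{defQK} to a per-entrance-point comparison of the forward factor (handled by \eqref{lawkappa'vskappa} and the fact that truncation preserves avoidance of $V$) and of the weighted backward factor (handled by the last-exit/reversal identity $e_{K,\tilde{\G}}(x)P^{K,\tilde{\G}}_x(\cdot)=\lambda_x P^{\tilde{\G}}_x(\cdot,\hat{L}_K=0)$), and then read off \eqref{massmu} from total masses. The point you flag as delicate — disentangling $e_K(x)$ from $P_x^K$ and ensuring $\hat{\partial}K$ and $\lambda_x$ agree on both graphs — is exactly what the paper's proof settles via the enhancement reduction $d(\hat{\partial}K,\partial\tilde{\G}')>1$ and the computation leading to \eqref{eq:mupos_calc1}.
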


\begin{proof}
Throughout the proof, let $ \hat{\partial}K$ be as in \eqref{defpartialext} but relative to $P_x^{\tilde{\G}'}$ (rather than $P_x= P_x^{\tilde{\G}}$). 
Let $(G,\lambda,\kappa)$ and $(G',\lambda',\kappa')$ refer to the induced graphs corresponding to $\G$ and $\G'$, respectively (cf. \eqref{eq:defGfinite}). By considering the graphs $\G^{A}$ and $(\G')^{A}$ for any $A \supset \hat{\partial} K$, see Lemma \ref{GA} instead of $\G$ and $\G'$, we can assume without loss of generality that $ \hat{\partial} K\subset({G} \cap G')$. 
By choosing $A= A' \cup \hat{\partial} K$ where $A'\subset\tilde{\G}'$ is a set containing exactly one (arbitrary) vertex between each $x\in{\hat{\partial} K}$ and $y\in{\partial\tilde{\G}'}$ which are connected by a cable, we can further `move away' $\hat{\partial}K$ from $\partial\tilde{\G}',$ so that $d(\hat{\partial}K,\partial{\tilde{\G}'})>1,$ where $d$ is the canonical distance on $\tilde{\G}$ defined above \eqref{Ehx0}. All in all, we thus assume henceforth that
\begin{equation} \label{eq:kappaCoincide}
\hat{\partial} K\subset{(G \cap G')}
\quad  \text{and}\quad d(\hat{\partial}K,\partial{\tilde{\G}'})>1,
\end{equation}
which is no loss of generality.
Recall $X' \equiv X^{\bar \kappa'}$ and $\zeta' \equiv \zeta_{\bar\kappa'}$ from \eqref{eq:zeta} and note that for all $w\in{W_{K,\tilde{\G}}^0},$ the forward part $\{ (\pi_{K}(w))_t : 0 \leq t \leq \zeta_{\bar\kappa'}^+\}$ of $\pi_{K}(w)$ from the time of first hitting $K$ onward, is precisely $\{X_t'(w^+) : 0\leq t \leq \zeta'\}$, where $w^+$ is the forward part of $w.$ Recalling \eqref{eq:emptyConvention} as well as the notation from \eqref{defequilibriumcable} and \eqref{eq:PKG}, we then define the countably additive set function $\tilde{\mu}^{K,V}$ on  $\mathcal{W}_{K,\tilde{\G}'}^0$ by
	\begin{equation}
	\label{eq:mudefapprox}
	\begin{split}
	\tilde{\mu}^{K,V}(A)\stackrel{\text{def.}}{=}\sum_{x\in{ \hat{\partial} K}}&\Big(e_{K,\tilde{\G}'}(x)P_x^{\tilde{\G}'}\big(X \in A^+,{H}_{V}=\zeta\big)P^{K,\tilde{\G}'}_x(X\in A^-)
	\\&-e_{K,\tilde{\G}}(x)P_x^{\tilde{\G}}\big(X'\in{A^+},{H}_{V}=\zeta\big)P^{K,\tilde{\G}}_x\big(X'\in{A^-}\big)\Big)
	\end{split}
	\end{equation}
(note that following our convention below \eqref{swappinglemma}, $\{{H}_{V}=\zeta \}$ under $P_x^{\tilde{\G}}$ refers to the event that $V$ is not visited by $X$) with $A^{\pm}$ denoting $\{(w(\pm t))_{t\geq0}:\,w\in{A}\}$ for all $A \in \mathcal{W}_{K,\tilde{\G}'}^0$ and $X'$ as introduced below \eqref{eq:kappaCoincide}.  In \eqref{eq:mudefapprox}, we also used implicitly the convention that $e_{K,\tilde{\G}}(x)P^{K,\tilde{\G}}_x=0$ for all $x\in{ \hat{\partial}K}$ with $e_{K,\tilde{\G}}(x)=0.$ Moreover, $e_{K,\tilde{\G}}(x)\leq e_{K,\tilde{\G}'}(x)$ for all $x\in{\tilde{\G}}$ by \eqref{defeAcap} and \eqref{consistencyequilibrium}, and so it follows from \eqref{exitequi} that $\text{supp}(e_{K,\tilde{\G}}) \subset  \hat{\partial} K.$ If $\tilde{\mu}^{K,V}$ is non-negative on $\W_{K,\tilde{\G}'}^0$ we can extend it to a measure on $\W_{K,\tilde{\G}'}$ by taking $\tilde{\mu}^{K,V}(A)=0$ for all $A\in{\mathcal{W}_{K,\tilde{\G}'}}$ with $A\cap {W}_{K,\tilde{\G}'}^0=\varnothing.$ Defining ${\mu}^{K,V}=\tilde{\mu}^{K,V}\circ(p_{\tilde{\G}'}^*)^{-1},$ in view of \eqref{eq:mudefapprox}, \eqref{defQK} and \eqref{definter}, it then follows that \eqref{defmutildeGkappa} is fulfilled. 

We now show that \(\tilde{\mu}^{K,V}\) is non-negative. Recall $\widehat{Z}$, the discrete skeleton of $Z$, from below \eqref{traceonG}. We denote by $\hat{L}_K=\sup\{n\in{\N}:\,\hat{Z}_n\in{K}\}$ the last exit time of $K$ for $\hat{Z}$ and by $L_K=\sup\{t\geq0:\,X_t\in{K}\}$ the last exit time of $K$ for $X,$ with the convention $\sup\varnothing=\infty,$ so in particular $\{X_{L_K}=x\}=\{\hat{Z}_{\hat{L}_K}=x\}$ for all $x\in{\hat{\partial}K}$ (on the event $\{L_K < \infty \}=\{\hat{L}_K < \infty\} $, which has full $P_x^{\tilde{\G}}$-measure by transience). 
We also define $(Y_t)_{t\geq0}$ the same process as $(X_{t+L_K})_{t\geq 0},$ but killed the first time $(X_{t+L_K})_{t\geq 0}$ hits $\partial\tilde{\G}'.$ By definition of $P^{K,\tilde{\G}}_x$, see \eqref{eq:PKG}, and \eqref{exitequi}, we have for all $x\in{ \hat{\partial} K}$ with $e_{K,\tilde{\G}}(x)>0$  that
	\begin{align*}
	&e_{K,\tilde{\G}}(x)P^{K,\tilde{\G}}_x(X'\in{\cdot}\, )
		\\&\qquad =e_{K,\tilde{\G}}(x)P_x^{\tilde{\G}}\big((Y_t)_{t>0}\in{\cdot}\,|\,X_{L_K}=x\big)
 =\frac{1}{g_{\tilde{\G}}(x,x)}P_x^{\tilde{\G}}\big((Y_t)_{t>0}\in{\cdot},\hat{Z}_{\hat{L}_K}=x\big)
	\\&\qquad =\frac{1}{g_{\tilde{\G}}(x,x)}\sum_{n\geq0}P_x^{\tilde{\G}}\big((Y_t)_{t>0}\in{\cdot},\hat{Z}_{n}=x,\hat{L}_{K}=n\big) =\lambda_xP_x^{\tilde{\G}}\big((Y_t)_{t>0}\in{\cdot},{\hat{L}_K}=0\big);
	\end{align*}
here, we used in the last equality the strong Markov property at the time of $n$-th jump and the fact that	$g_{\tilde{\G}}(x,x)=\frac{1}{\lambda_x}\sum_{n\geq0}P_x^{\tilde{\G}}(\hat{Z}_n=x)$. By a similar calculation, and in view of \eqref{lawkappa'vskappa}, we obtain for $x \in \hat{\partial}K$,
	\begin{align*}
	e_{K,\tilde{\G}'}(x)P^{K,\tilde{\G}'}_x(X\in \cdot\,)=\lambda'_xP_x^{\tilde{\G}'}\big((X_{t+L_K})_{t>0}\in{\cdot},{\hat{L}_K}=0\big)=\lambda'_xP_x^{\tilde{\G}}\big((X_{t+L_K'}')_{t>0}\in{\cdot},{\hat{L}'_K}=0\big),
	\end{align*}
where $L_K'$, $\hat{L}'_K$ are defined as above but with $X'$ in place of $X$.
	On the event $\hat{L}_K=0,$ since $d(\hat{\partial}K,\partial{\tilde{\G}'})>1$ due to \eqref{eq:kappaCoincide}, we have $L_K'=L_K,$ $(Y_t)_{t>0}=(X'_{t+L_K'})_{t>0}$ and $\lambda_x=\lambda'_x$ for all $x\in{\hat{\partial} K}.$ Hence, for all $x\in \hat{\partial} K$ with $e_{K,\tilde{\G}}(x)>0$,
	\begin{equation}\label{eq:mupos_calc1}
	\begin{split}
	&e_{K,\tilde{\G}'}(x)P^{K,\tilde{\G}'}_x(X\in \cdot\,)-e_{K,\tilde{\G}}(x)P^{K,\tilde{\G}}_x\big(X'\in{\cdot}\big)=\lambda_xP_x^{\tilde{\G}}\big((X_{t+L_K'}')_{t>0}\in{\cdot},{\hat{L}_K'}=0<{\hat{L}_K}\big)
	\\&\quad =e_{K,\tilde{\G}'}(x)P^{\tilde{\G}}_x\big((X_{t+L_K'}')_{t>0} \in \cdot ,L_K'<L_K\,|\,X_{L_K'}=x\big).
	\end{split}
	\end{equation}
	Note that if $e_{K,\tilde{\G}}(x)=0$ and $x\in{\hat{\partial}K},$ then $L_K'<L_K$ $P^{\tilde{\G}}_x$-a.s., and so the previous equality still holds. Moreover, using \eqref{lawkappa'vskappa}, we have for all $x\in{ \hat{\partial} K}$ that
	\begin{equation}\label{eq:mupos_calc2}
	P_x^{\tilde{\G}'}\big(X \in \cdot,{H}_{V}=\zeta\big)-P_x^{\tilde{\G}}\big(X'\in{\cdot},{H}_{V}=\zeta\big)=P_x^{\tilde{\G}}\big(X'\in\cdot,\zeta>{H}_{V}>\zeta'\big).
	\end{equation}
	Combining \eqref{eq:mudefapprox}, \eqref{eq:mupos_calc1} and \eqref{eq:mupos_calc2}, we thus obtain that, for $ A \in \mathcal{W}^0_{K,\tilde{\G}'},$
	\begin{align}
	\label{threepartmu}
	\begin{split}\tilde{\mu}^{K,V}(A)=\sum_{x\in{ \hat{\partial} K}}&\Big(e_{K,\tilde{\G}'}(x)P_x^{\tilde{\G}'}\big(X\in A^+,{H}_{V}=\zeta\big)P^{\tilde{\G}}_x\big((X_{t+L_K'}')_{t>0}\in A^-,L_K'<L_K\,|\,X_{L_K'}=x\big)
	\\&+e_{K,\tilde{\G}}(x)P_x^{\tilde{\G}}\big(X'\in A^+,\zeta >{H}_{V}>\zeta'\big)P^{K,\tilde{\G}}_x\big(X'\in A^-\big) \Big),
	\end{split}
	\end{align}
	and so $\tilde{\mu}^{K,V}$ is positive on $\mathcal{W}^0_{K,\tilde{\G}'}.$ Finally, we have by \eqref{defcap} and \eqref{swappinglemma} that
	\begin{align*}
	{\mu}^{K,V}(W^*_{K,\tilde{\G}'})&=\tilde{\mu}^{K,V}(W_{K,\tilde{\G}'}^0) \stackrel{\eqref{eq:mudefapprox}}{=}\sum_{x\in{ \hat{\partial} K}}\Big(e_{K,\tilde{\G}'}(x)P^{\tilde{\G}'}_x({H}_{V}=\zeta)-e_{K,\tilde{\G}}(x)P^{\tilde{\G}}_x({H}_{V}=\zeta)\Big)
	\\&=\mathrm{{\rm cap}}_{\tilde{\G}'}(K)-\mathrm{{\rm cap}}_{\tilde{\G}'}({V})-\mathrm{{\rm cap}}_{\tilde{\G}}(K)+\mathrm{{\rm cap}}_{\tilde{\G}}({V}),
	\end{align*}
which gives \eqref{massmu} and completes the proof.
\end{proof}

In words, the difference between the trajectories under $\nu_{\tilde{\G}}$ and $\nu_{\tilde{\G}'}$ that hit $K$ but not $V,$ when $V\subset K$ are compact subsets of $\tilde{\G}',$ comes in two parts: first it is more likely for the forward trajectories to not hit $V$ before time $\zeta'$ than before time $\zeta,$ and secondly it is more likely for the backward trajectories to not come back to $K$ before time $\zeta'$ than before time $\zeta$. These two differences are contained in the measure $\mu_{\tilde{\G},\kappa'}^{K,V}$ from \eqref{defmutildeGkappa}, see \eqref{threepartmu}. 

Taking a sequence $(K_p)_{p\in\N}$ of compacts increasing to $\tilde{\G}',$ one can then use Lemma \ref{limitKn} to construct a random interlacement process on $\tilde{\G}'$ from the random interlacement process $\omega$ on $\tilde{\G}$: take the image through $\pi_{K_p}^*$ of each trajectory in the support of $\omega$ hitting $K_p$ but not $K_{p-1}$ for all $p\in\N,$ with $K_0=\varnothing,$ and add Poisson point processes with intensity $\mu_{\tilde{\G},\tilde{\G}'}^{K_{p},K_{p-1}}\otimes\lambda$ for all $p\in\N.$ Using this construction and the estimate \eqref{massmu}, we will now suitably approximate random interlacements on $\G$ by random interlacements on a sequence of finite graphs, thus mirrorring Lemma~\ref{LemmaapproGFF}.

\begin{Lemme}
	\label{approximationprop}
	Let $\G$ be a transient weighted graph and $\G_n,$ $n\in\N,$ be a sequence of transient weighted graphs increasing to $\G_{\infty}=\G$ in the sense of \eqref{eq:def_approx}. There exists a probability space $(\Omega',\mathcal{F}',\P')$ on which one can define a sequence of processes $\omega^{(n)},$ $n\in\N,$ and $\omega^{(\infty)}$ with the following properties: 
	\begin{align}
&\label{eq:RIappro1}\ \text{ for all $n\in\N\cup\{\infty\}$, the process $\omega^{(n)}$ has the same law as $\omega$ under $\P_{\tilde{\G}_{n}}^{I}$;}\\
&\begin{array}{l}
\text{there exists an increasing sequence $(a_n)_{n\in\N}$ such that for each $u>0$, $\P'$-a.s.~for}\\
\text{all compact $K\subset \tilde{\G}$, the restriction to $K$ of the set of trajectories hitting $K$ is the}\\
\text{same for $\omega^{(a_n)}_u$ and $\omega^{(\infty)}_u$ for all $n$ large enough.}
\end{array}
\label{eq:equalityKapproxRI}
	\end{align}
\end{Lemme}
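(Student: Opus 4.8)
The plan is to mirror the construction of Lemma~\ref{LemmaapproGFF} at the level of trajectories, using Lemma~\ref{limitKn} as the basic comparison device. Since $\overline{\kappa}^{(n)}\geq\overline{\kappa}$ by \eqref{eq:def_approx}, we have $\tilde{\G}_n\subset\tilde{\G}_\infty=\tilde{\G}$ by \eqref{eq:graphinclusion}, so each $\tilde{\G}_n$ plays the role of the smaller graph $\tilde{\G}'$ in Lemma~\ref{limitKn}, with $\tilde{\G}$ as the larger one. First I would let $\omega^{(\infty)}$ be a Poisson point process on $W^*_{\tilde{\G}}\times(0,\infty)$ with intensity $\nu_{\tilde{\G}}\otimes\lambda$ (so that $\omega^{(\infty)}$ has the law of $\omega$ under $\P^I_{\tilde{\G}}$), defined on a space $(\Omega',\mathcal{F}',\P')$ also carrying, for every $n$, an independent family of Poisson point processes. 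Fixing a sequence of compacts $(K_p)_{p\in\N}$ increasing to $\tilde{\G}$ with $K_0=\varnothing$, I would define $\omega^{(n)}$ by keeping, for each $p$, the image under $\pi^{*}_{K_p}=\pi^{*}_{K_p,\tilde{\G},\tilde{\G}_n}$ (see \eqref{eq:piK}) of the trajectories of $\omega^{(\infty)}$ hitting $K_p$ but not $K_{p-1}$, and superposing independent Poisson point processes of intensity $\mu^{K_p,K_{p-1}}_{\tilde{\G},\tilde{\G}_n}\otimes\lambda$. Summing the identity \eqref{defmutildeGkappa} over $p$ (the $K_p$ exhaust $\tilde{\G}$, hence also $\tilde{\G}_n$) shows that $\omega^{(n)}$ has intensity $\nu_{\tilde{\G}_n}\otimes\lambda$, which is \eqref{eq:RIappro1}.

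It remains to establish the local agreement \eqref{eq:equalityKapproxRI}. Fix a compact $K\subset\tilde{\G}$ and $u>0$; for $n$ large enough $K\subset\tilde{\G}_n$. Comparing the traces on $K$ of the trajectories hitting $K$ in $\omega^{(n)}_u$ and in $\omega^{(\infty)}_u$, I would isolate exactly two sources of discrepancy. The first consists of the superposed (``extra'') trajectories that reach $K$; since $\mu^{K_p,K_{p-1}}_{\tilde{\G},\tilde{\G}_n}$ is supported on trajectories hitting $K_p$ but not $K_{p-1}$ (the condition $\{H_{K_{p-1}}=\zeta\}$ in \eqref{eq:mudefapprox}), once $p$ is large enough that $K\subset K_{p-1}$ these trajectories cannot meet $K$; thus only finitely many $p$ contribute, say $p\leq p_0(K)$. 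The second source consists of trajectories of $\omega^{(\infty)}$ hitting $K$ whose trace on $K$ is altered by the truncation $\pi^{*}_{K_p}$ at the exit time of $\tilde{\G}_n$: because the first hit of $K$ occurs at an interior time of the surviving interval $(\zeta^-_{\bar\kappa^{(n)}},\zeta^+_{\bar\kappa^{(n)}})$, the hit of $K$ is preserved, and the trace on $K$ can change only if the trajectory, after or before hitting $K$, exits $\tilde{\G}_n$ and returns to $K$.

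Both contributions are then controlled quantitatively. For the extra trajectories, the telescoping bound from \eqref{massmu} gives
\begin{equation*}
\sum_{p\leq p_0(K)}\mu^{K_p,K_{p-1}}_{\tilde{\G},\tilde{\G}_n}\big(W^*_{K_p,\tilde{\G}_n}\big)=\mathrm{cap}_{\tilde{\G}_n}\big(K_{p_0(K)}\big)-\mathrm{cap}_{\tilde{\G}}\big(K_{p_0(K)}\big),
\end{equation*}
which tends to $0$ as $n\to\infty$, since the equilibrium measure of a fixed compact on $\tilde{\G}_n$ decreases to that on $\tilde{\G}$ (as recorded in the proof of Corollary~\ref{Cor:limcap}). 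For the altered trajectories, transience of $X$ yields $\sup_{x\in K}P_x^{\tilde{\G}}(X\text{ reaches }\partial\tilde{\G}_n\text{ and returns to }K)\to0$ as $\tilde{\G}_n\uparrow\tilde{\G}$, so the $\nu_{\tilde{\G}}$-measure of altered trajectories hitting $K$, bounded by this supremum times $\mathrm{cap}_{\tilde{\G}}(K)$, also vanishes. Hence the $\P'$-expected number of discrepant trajectories of either type, among those of label $\leq u$ meeting $K$, tends to $0$. Taking a countable family of compacts $K_m$ exhausting $\tilde{\G}$ and a dense set of levels, I would select the subsequence $(a_n)$ diagonally so that these expectations are summable in $n$ for each $m$; Borel--Cantelli, together with the monotonicity of the relevant events in $K$ and $u$, then gives \eqref{eq:equalityKapproxRI}.

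The main obstacle is the trajectory-by-trajectory bookkeeping certifying that, outside the two discrepancy sets above, the $K$-trace of $\omega^{(n)}_u$ coincides with that of $\omega^{(\infty)}_u$: namely that $\pi^{*}_{K_p}$ neither destroys the hit of $K$ nor modifies the restriction to $K$ of a trajectory all of whose excursions into $K$ precede its exit from $\tilde{\G}_n$, and that no trajectory missing $K$ in $\omega^{(\infty)}$ acquires a $K$-hit after projection (true, as $\pi^{*}_{K_p}$ only truncates). The quantitative engine throughout is the capacity convergence $\mathrm{cap}_{\tilde{\G}_n}(K)\to\mathrm{cap}_{\tilde{\G}}(K)$ delivered by \eqref{massmu}, which simultaneously forces both error terms to zero.
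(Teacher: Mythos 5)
Your construction is essentially the paper's own proof: the same decomposition of $\omega^{(\infty)}$ according to the first $K_p$ hit, pushed forward by $\pi^{*}_{K_p,\tilde{\G},\tilde{\G}_n}$ and compensated by independent Poisson processes of intensity $\mu^{K_p,K_{p-1}}_{\tilde{\G},\tilde{\G}_n}\otimes\lambda$, with the telescoping capacity bound \eqref{massmu} and Borel--Cantelli along a subsequence controlling the discrepancy on any fixed compact. The one (easily repaired) wrinkle is that you sum over all $p$, whereas Lemma~\ref{limitKn} only defines $\mu^{K_p,K_{p-1}}_{\tilde{\G},\tilde{\G}_n}$ when $K_p\subset\tilde{\G}_n$; the paper therefore stops the sum at $p\leq n$ and adds one further independent Poisson process with intensity $(\nu_{\tilde{\G}_n}1_{(W^*_{K_n,\tilde{\G}_n})^c})\otimes\lambda$ to account for the trajectories of $\tilde{\G}_n$ avoiding $K_n$.
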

\begin{proof}
	Let $(K_n)_{n\in\N}$ be a sequence such that $K_n$ is a compact subset of $\tilde{\G}_n$ for each $n\in\N,$ and such that $K_n,$ $n\in\N,$ increases to $\tilde{\G}.$ Let $\omega^{(\infty)}$ be a Poisson point process under $(\Omega',\mathcal{F}',\P')$ with the same law as the random interlacement process $\omega$  under $\P^I_{\tilde{\G}}.$ For each $n\in\N$ and $k\in\{1,\dots,n\},$ we define, recalling the notation from \eqref{eq:def_approx}, the process $\omega_1^{(k,n)}$ as the Poisson point process which is given by the image through $ \pi^*_{k,n} \equiv \pi_{ K_k, \tilde{\G}, \tilde{\G}_n}^*$, cf.~\eqref{eq:piK}, of all the trajectories in $\omega^{(\infty)}_u$ which hit $K_{k}$ but not $K_{k-1},$ with the convention $K_0=\varnothing;$ this constitutes a Poisson point process with intensity $(\nu_{\tilde{\G}}1_{W^*_{K_k,\tilde{\G}}\setminus W^*_{K_{k-1},\tilde{\G}}})\circ\big( \pi^*_{k,n} )^{-1}.$ By suitably extending $\P'$ we further introduce $\omega_2^{(k,n)}$ as an independent Poisson point process with intensity $\mu^{K_{k},K_{k-1}}_{\tilde{\G}, \tilde{\G}_n}\otimes\lambda$ (see Lemma~\ref{limitKn}) and $\omega_3^{(n)}$ as an independent Poisson point process with intensity $(\nu_{\tilde{\G}_{n}}1_{(W_{K_n,\tilde{\G}_{n}}^*)^c})\otimes\lambda.$ Thus, defining for each $n\in\N$
	\begin{equation*}
	\omega^{(n)}\stackrel{\text{def.}}{=}\omega_3^{(n)}+\sum_{k=1}^{n}\big(\omega_1^{(k,n)}+\omega_2^{(k,n)}\big),
	\end{equation*}
	we have by \eqref{defmutildeGkappa} that $\omega^{(n)}$ has the same law as $\omega$ under $\P_{\tilde{\G}_{n}}^{I},$ whence~\eqref{eq:RIappro1}.
	
	We now argue that~\eqref{eq:equalityKapproxRI} holds. Let $u>0$ and $p\in\N.$ By definition, no trajectories of $\omega_1^{(k,n)},$ $\omega_2^{(k,n)}$ and $\omega_3^{(n)}$ hit $K_p$ if $p<k\leq n.$ Moreover, there is a only a finite number of trajectories in $\omega_u^{(\infty)}$ hitting $K_p,$ each returning finitely many times to $K_p,$ and so for each $k\in\{1,\dots,p\},$ we have that the restriction to $K_p$ of all the trajectories of $\omega_1^{(k,n)}$ at level $u$ hitting $K_p$ is constant for all $n$ large enough.  By \eqref{massmu}, for each $n\geq p,$ the number of trajectories in $\sum_{k=1}^p\omega_2^{(k,n)}$ at level $u$ is a Poisson random variable with parameter $u(\mathrm{{\rm cap}}_{\tilde{\G}_n}(K_p)-\mathrm{{\rm cap}}_{\tilde{\G}}(K_p)),$ and one can easily prove by \eqref{defeAcap}, \eqref{defequilibriumcable} and \eqref{defcap} since $K_p$ is compact that $\mathrm{{\rm cap}}_{\tilde{\G}_n}(K_p)-\mathrm{{\rm cap}}_{\tilde{\G}}(K_p)\to0$ as $n \to \infty$. As a consequence of Borel-Cantelli, one can thus find a sequence $(a_n)_{n\in\N}$ such that $\P'$-a.s., $\sum_{k=1}^p\omega_2^{(k,a_n)}$ contains no trajectory at level $u$ for all $u>0$ and $n$ large enough, and by a diagonal argument, one can take $(a_n)_{n\in\N}$ independent of the choice of $p.$ Since for all compacts $K \subset \tilde{\G},$ there exist $p\in\N$ such that $K\subset K_p,$ and $\P'$-a.s., the restriction to $K_p$ of all the trajectories of $\omega^{(a_n)}_u$ hitting $K_p$ is constant for all $n$ large enough,  we conclude~\eqref{eq:equalityKapproxRI}.
\end{proof}

Together, Lemmas \ref{LemmaapproGFF} and \ref{approximationprop} supply suitable `finite-volume' approximations for the Gaussian free field and random interlacements on a general transient weighted graph $\tilde{\G}$. With the help of  Lemma \ref{Theforfinite}, this yields the following result, from which Theorem \ref{couplingintergff} will readily follow. 

\begin{Lemme}
	\label{couplingisalwaystrue}
	If either \eqref{eq:0bounded} or \hyperref[eq:laplacecap]{\emph{($\text{Law}_0$)}} is fulfilled, then \eqref{eqcouplingintergffszn} and \eqref{lawofEu} hold true on $\G.$
\end{Lemme}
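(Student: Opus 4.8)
The plan is to deduce the infinite-volume isomorphism by a limiting procedure from its finite-volume counterpart. By Lemma~\ref{isomequivisom'} it suffices to establish \eqref{eqcouplingintergff} together with \eqref{lawofEu}, and \eqref{eqcouplingintergff} is better suited to approximation since its left-hand side is built directly from the signed field $\sigma^u_\cdot\sqrt{2\ell_{\cdot,u}+\phi_\cdot^2}$. Fix $u>0$ and an exhausting sequence of finite graphs $\G_n\nearrow\G$ as in \eqref{eq:def_approx}. On a common probability space I would simultaneously realize the free fields $\phi^{(n)}$, $n\in\N\cup\{\infty\}$, from Lemma~\ref{LemmaapproGFF} and the interlacements $\omega^{(n)}$ from Lemma~\ref{approximationprop}; the two couplings rest on independent sources of randomness (a loop soup, resp.\ interlacement trajectories) and can be carried on the same space, together with the auxiliary i.i.d.\ sign variables $(\sigma'_m)$ and edge-coins used to define $\sigma^u$ and $\mathcal{E}_u$. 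Passing to the subsequence $(a_n)$ of \eqref{eq:equalityKapproxRI}, I would arrange that on every compact $K\subset\tilde\G$ and all large $n$ one has simultaneously $\phi^{(a_n)}=\phi^{(\infty)}$ on $K$ (by \eqref{eq:equalityKapprox}) and $\ell^{(a_n)}_{\cdot,u}=\ell^{(\infty)}_{\cdot,u}$ on $K$ (the $\omega_u$-trajectories hitting $K$ coinciding, by \eqref{eq:equalityKapproxRI}).

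The finite-volume input is Lemma~\ref{Theforfinite}: on each $\G_{a_n}$ the field $\Psi^{(a_n)}:=\big(\sigma^{u,(a_n)}_x\sqrt{2\ell^{(a_n)}_{x,u}+(\phi^{(a_n)}_x)^2}\big)_x$ has the law of $\big(\phi^{(a_n)}_x+\sqrt{2u}\big)_x$, and \eqref{lawofEu} holds. Since $\phi^{(a_n)}\to\phi$ in law (Lemma~\ref{LemmaapproGFF}), the right-hand sides converge in law to $(\phi_x+\sqrt{2u})_x$ on $\tilde\G$, so it suffices to prove $\Psi^{(a_n)}\to\Psi^{(\infty)}:=\big(\sigma^u_x\sqrt{2\ell_{x,u}+\phi_x^2}\big)_x$ in law, which I would obtain from a.s.\ convergence of finitely many coordinates along the coupling. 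The \emph{magnitudes} $|\Psi^{(a_n)}_x|$ converge to $\sqrt{2\ell_{x,u}+\phi_x^2}$ a.s.\ for each fixed $x$, immediately from the coincidence of fields and local times on compacts. The difficulty is therefore entirely in the convergence of the \emph{signs} $\sigma^{u,(a_n)}_x\to\sigma^u_x$ on $\{2\ell_{x,u}+\phi_x^2>0\}$: recall that $\sigma^u_x=1$ exactly when the sign cluster $\Sigma(x)$ meets $\I^u$ (i.e.\ $x\in\mathcal{C}_u$), and is otherwise a fair coin shared consistently across the cluster via the dense sequence and the fixed coins $(\sigma'_m)$. Thus one must show that both the hitting event $\{\Sigma(x)\cap\I^u\neq\varnothing\}$ and the relevant cluster of $x$ stabilize along $(a_n)$.

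Under \eqref{eq:0bounded} this is transparent and constitutes the easy case. Indeed $\Sigma(x)$ is a.s.\ compact (Lemma~\ref{1stpartofmaincor}) and its boundary lies in $\{\phi=0\}$, so it is surrounded by a ``moat'' of zeros; for large $n$ the equality $\phi^{(a_n)}=\phi$ on a compact containing this moat forces $\Sigma^{(a_n)}(x)=\Sigma(x)$, while the coincidence of the $\omega_u$-trajectories on that compact forces $\Sigma(x)\cap\I^{u,(a_n)}=\Sigma(x)\cap\I^u$. Hence the classification $x\in\mathcal{C}_u^{(a_n)}$ and the consistently chosen coin both stabilize, giving $\sigma^{u,(a_n)}_x\to\sigma^u_x$ and thus \eqref{eqcouplingintergff}.

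The main obstacle is the case \hyperref[eq:laplacecap]{(\emph{$\text{Law}_0$})}, where $\Sigma(x)$ need not be bounded (cf.\ Theorem~\ref{T:main},\ref{maint4}) and the moat argument breaks down; here the precise strength of \hyperref[eq:laplacecap]{(\emph{$\text{Law}_0$})} must be exploited. The key point is that the hitting event is governed through \eqref{defIu} by $\mathrm{cap}(\Sigma(x))$, which is a.s.\ finite by Theorem~\ref{mainresult}, so that only finitely many interlacement trajectories enter $\Sigma(x)$. Comparing the finite-volume identity \eqref{lawcapat0nlim}, which holds with equality under \hyperref[eq:laplacecap]{(\emph{$\text{Law}_0$})}, against the general lower bound $\liminf_n\mathrm{cap}_{\tilde\G_{a_n}}(E^{\geq0}_{a_n}(x))\geq\mathrm{cap}_{\tilde\G}(E^{\geq0}(x))$ furnished by \eqref{capGngeqcapG} and monotonicity, a reverse-Fatou argument upgrades this to the a.s.\ capacity convergence \eqref{eq:capconvcrit}. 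With the capacities converging, the conditional hitting probabilities $1-\exp(-u\,\mathrm{cap}(\Sigma^{(a_n)}(x)))$ converge to $1-\exp(-u\,\mathrm{cap}(\Sigma(x)))$; localizing $\Sigma(x)$ by $\Sigma(x)\cap K$ with $K\nearrow\tilde\G$ and invoking the trajectory coupling of Lemma~\ref{approximationprop} then transfers the hitting event to the limit, yielding $\sigma^{u,(a_n)}_x\to\sigma^u_x$ also in this case. Finally, \eqref{lawofEu} passes to the limit by the same mechanism, since for each fixed $e\in E\cup G$ the quantity $p_e(\phi,\ell_{\cdot,u})$ and the indicator $1_{e\in\I_E^u}$ depend only on the data on the compact $\overline{I}_e$, which stabilizes along the coupling.
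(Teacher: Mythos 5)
Your overall architecture coincides with the paper's: realize the finite-volume free fields and interlacements of Lemmas \ref{LemmaapproGFF} and \ref{approximationprop} on a common space, invoke Lemma \ref{Theforfinite} on each $\G_{a_n}$, note that the magnitudes $\sqrt{2\ell_{\cdot,u}+\phi_\cdot^2}$ stabilize on compacts, and reduce everything to the a.s.\ stabilization of the event $\{x\in\mathcal{C}_{u,n}\}$. Your treatment of the \eqref{eq:0bounded} case via compactness of $\overline{\Sigma(x)}$ is exactly the paper's, as is the locality argument for \eqref{lawofEu}.

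The soft spot is how you close the \hyperref[eq:laplacecap]{($\text{Law}_0$)} case. First, reverse Fatou applied to $\exp(-u\,\mathrm{cap}_{\tilde\G_{a_n}}(\overline{\Sigma_{a_n}(x)}))$ together with the deterministic bound $\liminf_n\mathrm{cap}_{\tilde\G_{a_n}}(\overline{\Sigma_{a_n}(x)})\geq\mathrm{cap}_{\tilde\G}(\overline{\Sigma_\infty(x)})$ only yields the a.s.\ equality of the $\liminf$ of the capacities with the limiting capacity, not the full a.s.\ convergence \eqref{eq:capconvcrit}; to upgrade, you must additionally observe that the negative parts of $e^{-u\,\mathrm{cap}_{a_n}}-e^{-u\,\mathrm{cap}_\infty}$ converge to $0$ in $L^1$ and extract a further deterministic subsequence. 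Second, and more importantly, even granting capacity convergence, passing from convergence of the conditional hitting probabilities $1-e^{-u\,\mathrm{cap}(\cdot)}$ to a.s.\ convergence of the actual indicators $1_{\{x\in\mathcal{C}_{u,n}\}}$ is not automatic, and this is the real crux. The paper bypasses capacities entirely: it computes $\P(x\in\mathcal{C}_{u,n})$ via \eqref{defIu} and \hyperref[eq:laplacecap]{($\text{Law}_0$)} on $\G_n$ (valid by Lemma \ref{Theforfinite} and Proposition \ref{couplingimplytheorem}), shows $\P(x\in\mathcal{C}_{u,n})\to\P(x\in\mathcal{C}_{u,\infty})$ using \hyperref[eq:laplacecap]{($\text{Law}_0$)} on $\G$ and convergence of the Green function, combines this with the a.s.\ one-sided inclusion $\{x\in\mathcal{C}_{u,\infty}\}\subset\liminf_n\{x\in\mathcal{C}_{u,n}\}$ to conclude $\P(\{x\in\mathcal{C}_{u,n}\}\setminus\{x\in\mathcal{C}_{u,\infty}\})\to0$, and then applies Borel--Cantelli along a subsequence plus a diagonal argument over a countable dense set of $x$. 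Your ``localize by $K$ and invoke the trajectory coupling'' strategy can be made rigorous, but it ultimately requires precisely this Borel--Cantelli/subsequence mechanism; as written, the transfer of the hitting event to the limit is asserted rather than proved, so you should spell that step out (or adopt the paper's more direct comparison of event probabilities).
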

\begin{proof}
	
	Let $\G_n,$ $n\in\N$ be a sequence of finite graphs increasing to $\G$  in the sense of \eqref{eq:def_approx} (for instance, the one introduced at the beginning of the proof of Theorem \ref{mainresult}) and consider the space $(\Omega\times\Omega',\F\otimes\F',\P\otimes\P'),$ which is the product of the probability spaces from Lemmas \ref{LemmaapproGFF} and \ref{approximationprop}. By passing to a subsequence of $\G_n,$ $n\in\N,$ we may assume that $a_n=n$ in \eqref{eq:equalityKapproxRI}. Note that Lemma \ref{Theforfinite} applies to $\G_n.$ For each $n\in\N\cup\{\infty\},$ let $(\ell_{x,u}^{(n)})_{x\in{\tilde{\G}_{n}}}$ denote the total local times of the trajectories of $\omega_u^{(n)},$ $\I^u_{n}=\{x\in{\tilde{\G}_{n}}:\,\ell_{x,u}^{(n)}>0\},$ ${\Sigma_n(x)}=\{y\in{\tilde{\G}_{n}}:x\leftrightarrow y\text{ in }\{z\in{\tilde{\G}_{n}}:|\phi_z^{(n)}|>0\}\}$ and $\overline{\Sigma_n(x)}$ its closure for all $x\in{\tilde{\G}_{n}},$ as well as  $\mathcal{C}_{u,n}$ the closure of $\{x\in{\tilde{\G}}:{\Sigma_n(x)}\cap\I^u_n\neq\varnothing\}.$ Let us first prove that there exists a sequence $(b_n)_{n\in\N}$ such that, $\P\otimes\P'$-a.s.\ for all $x\in{\tilde{\G}}$ with $|\phi_x^{(\infty)}|>0,$
	\begin{equation}
	\label{IinftyimpliesIn}
	\big\{x\in{\mathcal{C}_{u,\infty}}\big\}=\liminf_{n\rightarrow\infty}\big\{x\in{\mathcal{C}_{u,b_n}}\big\}=\limsup_{n\rightarrow\infty}\big\{x\in{\mathcal{C}_{u,b_n}}\big\}.
	\end{equation}
	For this purpose, consider $x\in{\tilde{\G}}$ with $|\phi_x^{(\infty)}|>0.$ If $x\in{\mathcal{C}_{u,\infty}},$ then there exists $y\in{\I_{\infty}^u\cap \Sigma_{\infty}(x)}.$ By \eqref{eq:equalityKapproxRI},
	$y\in{\I^{u}_{n}}$ for $n$ large enough and there is a path $\pi\subset\tilde{\G}$ between $x$ and $y$ in $\{z\in{\tilde{\G}}:|\phi_z^{(\infty)}|>0\}.$ Since $\pi$ can be chosen to be compact, by \eqref{eq:equalityKapprox} we have $\phi^{(n)}=\phi^{(\infty)}$ on $\pi$ for all $n$ large enough.  Therefore, $\pi$ is also a path between $x$ and $y$ in $\{z\in{\tilde{\G}}:|\phi_z^{(n)}|>0\},$ and so $y\in{\I^u_{n}\cap \Sigma_{n}(x)}$ for $n$ large enough, that is $x\in{\mathcal{C}_{u,n}}.$ As a consequence,
	\begin{equation}
	\label{firstinclusion}
	\big\{x\in{\mathcal{C}_{u,\infty}}\big\}\subset\liminf_{n\rightarrow\infty}\big\{x\in{\mathcal{C}_{u,n}}\big\}(\subset\limsup_{n\rightarrow\infty}\big\{x\in{\mathcal{C}_{u,n}}\big\}).
	\end{equation}
	To prove the reverse inclusions in \eqref{IinftyimpliesIn}, first assume that \eqref{eq:0bounded} is fulfilled and that $x\in{\mathcal{C}_{u,n}}$ for infinitely many $n.$ By \eqref{eq:equalityKapprox} and \eqref{eq:equalityKapproxRI}, since $\overline{\Sigma_{\infty}(x)}$ is compact, we have that $\phi^{(n)}$ and $\I^u_n$ are constant for $n$ large enough on $\overline{\Sigma_{\infty}(x)},$ and then $\Sigma_n(x)\cap \I_n^u=\Sigma_{\infty}(x)\cap \I_\infty^u$ for $n$ large enough. Therefore, infinitely often, $\I_{\infty}^u\cap \Sigma_{\infty}(x)=\I_{n}^u\cap \Sigma_n(x)\neq\varnothing$ (note that $x$ cannot lie in the boundary since $\I_{\infty}^u$, $\I_{n}^u$ are open and $|\phi_x^{(n)}|>0$ for large enough $n$), that is $x\in{\mathcal{C}_{u,\infty}}.$ Combining with \eqref{firstinclusion}, we obtain \eqref{IinftyimpliesIn} with $b_n=n.$
	
	Now suppose that \hyperref[eq:laplacecap]{($\text{Law}_0$)} holds on $\G.$ For all $n\in\N\cup\{\infty\},$ by \eqref{defIu}, since $\I^u_n$ is open,
	\begin{equation}
	\label{eq:Cnuformula}
	(\P\otimes\P')\big(x\in{\mathcal{C}_{u,n}}\big)=(\P\otimes\P')\big(\I^u_n\cap\overline{\Sigma_n(x)} \ne \emptyset \big)=1-\E\Big[\exp\Big(-u\mathrm{{\rm cap}}_{\tilde{\G}_n}\big(\overline{\Sigma_n(x)}\big)\Big)\Big].
	\end{equation}
	As $\G_n$ is finite for each $n\in\N,$ Lemma \ref{Theforfinite} and Proposition \ref{couplingimplytheorem} imply that \hyperref[eq:laplacecap]{($\text{Law}_0$)} holds on $\tilde{\G}_{n}.$ Therefore, denoting by $\Phi$ the distribution function of a standard Gaussian random variable, by symmetry of $\phi^{(n)}$ we obtain that
	\begin{equation}
	\label{eq:lastBC}
\begin{split}
	(\P\otimes\P')\big(x\in{\mathcal{C}_{u,n}}\big)
	&\stackrel{\eqref{eq:Cnuformula},\text{\hyperref[eq:laplacecap]{($\text{Law}_0$)}}}{=}1-2\P^G_{\tilde{\G}_{n}}(\phi_{x}\geq\sqrt{2u})
	=2\Phi\big(\sqrt{2u}(g_{\tilde{\G}_{n}}(x,x))^{-1/2}\big)-1\\
	&\quad \ \tend{n}{\infty}2\Phi\big(\sqrt{2u}(g_{\tilde{\G}}(x,x))^{-1/2}\big)-1
	=(\P\otimes\P')\big(x\in{\mathcal{C}_{u,\infty}}\big),
	\end{split}
	\end{equation}
	taking advantage of the validity of \hyperref[eq:laplacecap]{($\text{Law}_0$)} for the graph $\G$ and \eqref{eq:Cnuformula} in the last equality. Hence, using \eqref{firstinclusion} and \eqref{eq:lastBC}, there exists a sequence $(b_n)_{n\in\N}$ such that for all $n\in\N,$
	\begin{equation*}
	\sum_{n\in\N}\P\otimes\P'\big(\big\{x\in{\mathcal{C}_{u,b_n}}\big\}\setminus\{x\in{\mathcal{C}_{u,\infty}}\big\}\big)<\infty,
	\end{equation*}
	and Borel-Cantelli entails that $(\P\otimes\P')$-a.s., $\limsup_{n\rightarrow\infty}\big\{x\in{\mathcal{C}_{u,b_n}}\big\}=\{x\in{\mathcal{C}_{u,\infty}}\big\}$. Using a diagonal argument and the separability of $\tilde{\G},$ we can actually choose the sequence $(b_n)_{n\in\N}$ uniformly in $x\in{\tilde{\G}}.$ Combining with \eqref{firstinclusion}, we obtain \eqref{IinftyimpliesIn}.
	
	By passing to a subsequence of $\G_n,$ $n\in\N,$ we assume without loss of generality from now on that $b_n=n$ in \eqref{IinftyimpliesIn}, which, together with \eqref{eq:equalityKapprox} and \eqref{eq:equalityKapproxRI} directly implies that 
	\begin{equation}\label{eq:lastlimit}
	\lim\limits_{n\rightarrow\infty}\Big( \phi_x^{(n)} 1_{x\notin{\mathcal{C}_{u,n}}}+\sqrt{(\phi_x^{(n)})^2+2\ell_{x,u}^{(n)}} \,  1_{x\in{\mathcal{C}_{u,n}}}\Big)
	=\phi_x^{(\infty)} 1_{x\notin{\mathcal{C}_{u,\infty}}}+\sqrt{(\phi_x^{(\infty)})^2+2\ell_{x,u}^{(\infty)}}\,  1_{x\in{\mathcal{C}_{u,\infty}}}.
	\end{equation}
	for all $x\in{\tilde{\G}}$ with $\phi_x^{(\infty)}\neq0.$
	Moreover if $\phi_x^{(\infty)}=0,$ then by \eqref{eq:equalityKapprox} and \eqref{eq:equalityKapproxRI} we have $\phi_x^{(n)}=0$ and $\ell_{x,u}^{(n)}=\ell_{x,u}^{(\infty)}$ for all $n$ large enough, and so \eqref{eq:lastlimit} remains true. Since $\G_n$ is finite for all $n\in\N,$ Lemma \ref{isomequivisom'} and Lemma~\ref{Theforfinite} yield that \eqref{eqcouplingintergffszn} holds on $\G_n$ for all $n\in\N,$ and, noting that $\phi_x^{(n)}+\sqrt{2u}\to\phi_x^{(\infty)}+\sqrt{2u}$ as $n \to \infty$ and applying \eqref{eq:lastlimit}, we infer that \eqref{eqcouplingintergffszn} holds for $\G_{\infty}=\G.$

	It remains to show that \eqref{lawofEu} holds (on ${\G}$). Fix $e \in E\cup G$. For sufficently large $n$, which we will tacitly assume henceforth, $e\in E_n\cup G_n$, where $(G_n,E_n)$ refers to the graph induced by $\G_n$. Define for all $n\in\N\cup\{\infty\}$ the random set of edges and vertices $\mathcal{E}_u^{(n)}=\{e\in{E_n\cup{G}_n}:\,2\ell_{x,u}^{(n)}+(\phi_x^{(n)})^2>0\text{ for all }x\in{I_e}\}.$ By Lemma~\ref{Theforfinite} applied to $\G_n$, we have for all $n\in\N$ that
	\begin{equation*}
	(\P\otimes\P')\big(e\in{\mathcal{E}_u^{(n)}}\,|\,\hat{\omega}_u^{(n)},\phi^{(n)}_{|G_n}\big)= 1_{e\in{\I_{E,n}^u}}\vee p_e^{\G_n}(\phi^{(n)},\ell^{(n)}_{.,u}),
	\end{equation*}
	where $\I_{E,n}^u$ is the union of the set of edges crossed by the trace $\hat{\omega}_u^{(n)}$ of $\omega_u^{(n)}$ on $G_n,$ and of the set of vertices on which a trajectory of $\hat{\omega}_u^{(n)}$ is killed. Moreover, using \eqref{phiedgeind} and \eqref{omegaedgeind}, we have that for any finite set $S \subset (E\cup G)$, conditionally on $(\phi_x^{(n)})_{x\in{{G_n}}}$ and $\hat{\omega}_u^{(n)},$ the family $\{e\in{\mathcal{E}_u^{(n)}}\},$ $e \in S,$ is independent for all large enough $n$ (including $\infty$), and for all $e\in S,$
	\begin{equation*}
	(\P\otimes\P')\big(e\in{\mathcal{E}_u^{(n)}}\,|\,\hat{\omega}_u^{(n)},\phi^{(n)}_{|G_n}\big)=(\P\otimes\P')\big(e\in{\mathcal{E}_u^{(n)}}\,|\,\hat{\omega}_{u,e}^{(n)},(\phi^{(n)})_{|e}\big).
	\end{equation*}
	Note that $(\P\otimes\P')$-a.s., for all large enough $n$, we have $(\phi^{(n)})_{|e}=(\phi^{(\infty)})_{|e}$ and $\hat{\omega}_{u,e}^{(n)}=\hat{\omega}_{u,e}^{(\infty)}$ as well as $1\{ e\in \I_{E,n}^u \}= 1\{ e\in \I_{E,\infty}^u\}$ for each $e\in S$ by \eqref{eq:equalityKapprox} and \eqref{eq:equalityKapproxRI}.  Now due to \eqref{defpe} and \eqref{defpx}, we also have $p_e^{\G_n}(\phi^{(n)},\ell^{(n)}_{.,u})=p_e^{\G}(\phi^{(\infty)},\ell^{(\infty)}_{.,u})$ for each $e\in S$ and all $n$ large enough, and so
	\begin{equation*}
	(\P\otimes\P')\big(e\in{\mathcal{E}_u^{(\infty)}}\,|\,\hat{\omega}_u^{(\infty)},\phi^{(\infty)}_{|G}\big)= 1_{e\in{\I_{E,\infty}^u}}\vee p_e^{\G}(\phi^{(\infty)},\ell^{(\infty)}_{.,u}),
	\end{equation*}
	which yields \eqref{lawofEu} for the graph $\G$ on account of \eqref{eq:GFFappro1}, \eqref{eq:RIappro1} and since $S \subset (E\cup G)$ was arbitrary.
\end{proof}

Let us now quickly explain how to deduce Theorem \ref{couplingintergff} and Corollaries \ref{dichotomy} and \ref{percoath*} from Lemma \ref{couplingisalwaystrue}.

\begin{proof}[Proof of Theorem \ref{couplingintergff}]
	We start with the proof of \eqref{equivisom}. If \eqref{eqcouplingintergff} holds, then \eqref{eq:laplacecap}$_{h>0}$ also holds by Proposition \ref{couplingimplytheorem}. If \eqref{eq:laplacecap}$_{h>0}$ holds, then \hyperref[eq:laplacecap]{($\text{Law}_0$)} also holds by taking the limit as $h\searrow0$ in \eqref{eq:laplacecap} and using \eqref{capincreasing}. If \hyperref[eq:laplacecap]{($\text{Law}_0$)} holds, then  \eqref{eqcouplingintergffszn} also holds by Lemma \ref{couplingisalwaystrue}. Since \eqref{eqcouplingintergff} and \eqref{eqcouplingintergffszn} are equivalent by Lemma \ref{isomequivisom'}, we obtain \eqref{equivisom}.
	
	Let us now assume that one of the conditions in \eqref{equivisom} holds. Then by Lemma \ref{couplingisalwaystrue}, we have that \eqref{eqcouplingintergff} and \eqref{lawofEu} hold.  Moreover, the family $\{e\in{\mathcal{E}_u}\},$ $e\in{E\cup{G}},$ is independent conditionally on $\hat{\omega}_u$ and $(\phi_x)_{x\in{G}}$ by \eqref{phiedgeind} and \eqref{omegaedgeind}, and, by \eqref{lawofEu}  we thus have that $(\mathcal{E}_u,(\phi_x)_{x\in{{G}}},\hat{\omega}_u)$ has the same law under $\tilde{\P}$ as $(\mathcal{\hat{E}}_u,(\phi_x)_{x\in G},\hat{\omega}_u)$ under $\hat{\P}.$ Finally, since by \eqref{defIu} and \eqref{capIx} $\P^I(\I^u\cap I_x\neq\varnothing)=1$ for all $x\in{G}$ with $\kappa_x>0,$ for each $x\in{G},$ we have $x\in{\mathcal{C}_u}\cap G$ if and only if there is a path $\pi\subset\mathcal{E}_u\cap E$ between $x$ and some $y\in{(\I^u\cup \mathcal{E}_u)\cap G},$ and so $(\sigma_x)_{x\in{G}}$ and $\hat{\sigma}$ also have the same law.  The equality \eqref{eqcouplingintergffdis} then follows directly from \eqref{eqcouplingintergff}.
\end{proof}

\begin{proof}[Proof of Corollary \ref{dichotomy}]
	Let $\G$ be a graph such that \hyperref[eq:laplacecap]{($\text{Law}_0$)} is fulfilled. Then \eqref{eqcouplingintergffszn} holds by \eqref{equivisom}. Let us assume that ${E}^{\geq0}$ contains at least one non-compact component with positive probability. In particular, there exists $x_0\in{\tilde{\G}}$ such that $E^{\geq 0}(x_0)$ is non-compact with positive probability. By Theorem \ref{mainresult}, we know that $\mathrm{{\rm cap}}(E^{\geq 0}(x_0))<\infty$ $\P^G$-a.s, and so by Lemma \ref{1stpartofmaincor}, $E^{\geq 0}(x_0)$ is also unbounded with positive probability. Now, by \eqref{defIu}, it follows that for all $u>0,$ with $({\P}^I\otimes\P^G)$-positive probability, $E^{\geq0}(x_0)$ is unbounded and $x_0\notin{\mathcal{C}_{u}}.$ By \eqref{eqcouplingintergffszn} and symmetry of the Gaussian free field, we obtain that for all $u>0$ $E^{\geq\sqrt{2u}}(x_0)$ is unbounded with positive probability. In particular, if $\tilde{h}_*^{{\rm com}} >0,$ then $E^{\geq0}$ contains a non-compact component with positive probability, and so $E^{\geq h}$ contains an unbounded component for all $h>0$ by the above reasoning, that is $\tilde{h}_*=\infty.$ If moreover $\h_{\text{kill}}<1,$ then $\tilde{h}_*\geq0$ by \eqref{ifhkill<1thenh_*>0}.  Therefore by \eqref{capcombougen}, we have $\tilde{h}_*^{{\rm com}} \geq \tilde{h}_*\geq0.$ Since $\tilde{h}_*=\infty$ if $\tilde{h}_*^{{\rm com}} >0,$ we thus obtain $\tilde{h}_*=\tilde{h}_*^{{\rm com}} \in{\{0,\infty\}}.$
\end{proof}

\begin{proof}[Proof of Corollary \ref{percoath*}]
	Let us assume that $\tilde{h}_*\leq0,$ then $E^{\geq h}$ is $\P^G$-a.s.\ bounded for all $h>0.$ By Theorem \ref{mainresultcap}, we thus have that \eqref{eq:laplacecap} holds for all $h>0,$ and so \hyperref[eq:laplacecap]{($\text{Law}_0$)} also holds by \eqref{equivisom}. Since  $E^{\geq h}$ is $\P^G$-a.s.\ bounded for all $h>0,$ we thus obtain by Corollary \ref{dichotomy} that $E^{\geq0}$ is $\P^G$-a.s.\ bounded.
\end{proof}

\begin{Rk}
	\label{remarkend}
	\begin{enumerate}[label={\arabic*)}]
		\item From Proposition \ref{couplingimplytheorem}, Corollary \ref{C:thm2} and Lemma \ref{couplingisalwaystrue}, one could immediately prove again Theorem \nolinebreak \ref{mainresultcap} (which however does not require accessing to the information  \eqref{eqcouplingintergff} on $\tilde{\G}$).
		\item Similarly to Theorem 8 of \cite{LuSaTa}, one could also use \eqref{eqcouplingloopsgff} to deduce an isomorphism theorem between random interlacements and the Gaussian free field even if $G$ is infinite. More precisely, if $\G$ is a graph such that $|\{x\in{G}:\kappa_x>0\}|<\infty,$ one can merge all the open ends of the cables $I_x,$ $x\in{G}$ with $\kappa_x>0,$ into a new vertex $x_*,$ and apply \eqref{eqcouplingloopsgff} to the new (locally finite) graph $\G\cup\{x_*\}.$ Decomposing the loop soup into loops hitting $x_*$ and loops avoiding $x_*$ similarly as in Appendix \ref{App:isom}, one can then prove an isomorphism similar to Theorem \ref{couplingintergff}, but replacing random interlacements on $\tilde{\G}$ by killed random interlacements on $\tilde{\G},$ that is all the trajectories in the random interlacement process whose forward and backward parts are both killed before escaping all bounded sets, and replacing $\phi+\sqrt{2u}$ by $\phi+\sqrt{2u}\h_{\text{kill}},$ see \eqref{defh0}. In Corollary \ref*{Pre1:h0transformiso} of \cite{Pre1}, this isomorphism between killed random interlacements and the Gaussian free field is extended to any graphs satisfying \hyperref[eq:laplacecap]{($\text{Law}_0$)}.
		\item An interesting open question is whether a transient graph $\G$ exists such that \hyperref[eq:laplacecap]{($\text{Law}_0$)} does not hold, or any of the other equivalent conditions appearing in \eqref{equivisom}. In view of Corollary \ref{dichotomy}, one could also ask if a transient graph $\G$ exists, such that $\h_{\text{kill}}<1$ is fulfilled, but $\tilde{h}_*\in{(0,\infty)}$ or $\tilde{h}_*^{{\rm com}} \in{(0,\infty)},$ and then \hyperref[eq:laplacecap]{($\text{Law}_0$)} would not hold. On such a graph, we would still have by Theorem \ref{mainresultcap} that \eqref{eq:laplacecap} holds for all $h>\tilde{h}_*^{{\rm com}} .$
	\end{enumerate}
\end{Rk}

\appendix
\renewcommand*{\theThe}{A.\arabic{The}}

\section{Appendix: the condition \eqref{capcondition}}
\label{subsec:capandkappa=0}

We gather in this section various pertinent observations around the condition  \eqref{capcondition} appearing on p.\pageref{capcondition}, including a proof of Lemma \ref{equivcapcondition}. 
The following result is simple but useful in absence of any quantitative information on the asymptotic behavior of $g(\cdot,\cdot)$.
\begin{Lemme}[Decay of Green's function] \label{L:Acap1}
If $A\subset G$ is an infinite set, then for all sequences $x_n \in A$, $n \geq 0$, such that $ \lim_n d_{\G}(x_0,x_n)= \infty$ and $g(x_n,x_n) \leq g_0 \in (0,\infty)$ for all (but finitely many) $n$, one has
\begin{equation}
\label{eq:Acap3}
g(x_0,x_n) \to 0, \text{ as } n\to \infty.
\end{equation}

\end{Lemme}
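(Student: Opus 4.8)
The plan is to use the first-entrance decomposition of the (symmetric) Green density together with transience, rather than the naive one-sided bound. First I would record the standard identity $g(x,y)=P_x(H_y<\zeta)\,g(y,y)$, valid for all vertices $x,y\in G$ (strong Markov property of $Z$ at $H_y$, together with the fact that $g(y,y)$ is the expected occupation at $y$ started from $y$). The tempting move is to combine this with the hypothesis $g(x_n,x_n)\le g_0$ to obtain $g(x_0,x_n)\le g_0\,P_{x_0}(H_{x_n}<\zeta)$ and then to argue that the hitting probability tends to $0$. This is precisely where the \emph{main obstacle} lies: $P_{x_0}(H_{x_n}<\zeta)$ need \emph{not} vanish as $x_n\to\infty$ (for strongly drifted, i.e.\ reversible but spatially inhomogeneous, walks it can stay identically equal to $1$), so the one-sided estimate is genuinely insufficient. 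The resolution is to symmetrize and produce a \emph{round-trip} quantity that does decay.

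Concretely, I would use $g(x_0,x_n)=g(x_n,x_0)$ (recall $g$ is symmetric, being the covariance/occupation density) and apply the first-entrance identity in both directions to get
\[
g(x_0,x_n)^2=P_{x_0}(H_{x_n}<\zeta)\,P_{x_n}(H_{x_0}<\zeta)\,g(x_n,x_n)\,g(x_0,x_0)\le g_0\,g(x_0,x_0)\,P_{x_0}(H_{x_n}<\zeta)\,P_{x_n}(H_{x_0}<\zeta).
\]
By the strong Markov property at $H_{x_n}$, the product $P_{x_0}(H_{x_n}<\zeta)\,P_{x_n}(H_{x_0}<\zeta)$ equals the $P_{x_0}$-probability that $X$ visits $x_n$ and \emph{thereafter} returns to $x_0$. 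On that event, $x_0$ is visited at some time beyond the first visit to $x_n$, so $x_n$ is necessarily visited no later than the last visit time $L_{\{x_0\}}=\sup\{t>0:X_t=x_0\}$ of $x_0$. Hence the event is contained in $D_n\overset{\text{def.}}{=}\{x_n\text{ is visited during }[0,L_{\{x_0\}}]\}$, and it remains to show $P_{x_0}(D_n)\to0$.

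Finally I would invoke transience of $Z$. Almost surely $L_{\{x_0\}}<\zeta$ (finitely many visits to $x_0$), and on the finite time interval $[0,L_{\{x_0\}}]$ the trajectory makes only finitely many jumps, so the random set $V$ of vertices visited before $L_{\{x_0\}}$ is $P_{x_0}$-a.s.\ finite. Since $d_{\G}(x_0,x_n)\to\infty$, every fixed finite (hence $d$-bounded) set eventually excludes $x_n$; consequently $x_n\in V$ for only finitely many $n$ almost surely, i.e.\ $\mathbf 1_{D_n}\to0$ $P_{x_0}$-a.s. Bounded convergence then yields $P_{x_0}(D_n)\to0$, and plugging this into the displayed estimate gives $g(x_0,x_n)^2\le g_0\,g(x_0,x_0)\,P_{x_0}(D_n)\to0$, which is exactly \eqref{eq:Acap3}. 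The only slightly delicate points to verify carefully are the first-entrance identity with the paper's normalization of $g$ and the a.s.\ finiteness of the visited-vertex set up to $L_{\{x_0\}}$; both follow from transience and the (non-explosive) minimal-chain construction, and neither requires any quantitative control on $g$ beyond the stated diagonal bound.
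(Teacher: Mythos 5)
Your proof is correct, and it shares the paper's central device --- the first-entrance identity $g(x,y)=P_x(H_y<\zeta)g(y,y)$ applied in \emph{both} directions so that $g(x_0,x_n)^2$ is controlled by the round-trip probability $P_{x_0}(H_{x_n}<\zeta)P_{x_n}(H_{x_0}<\zeta)$; your preliminary remark that the one-sided hitting probability need not vanish is exactly why the paper symmetrizes as well. Where you diverge is in how the round-trip probability is killed. The paper argues by contradiction: a uniform lower bound $\varepsilon^2/g_0^2$ on the round-trip probability forces, via the strong Markov property at exit times of the balls $B(x_0,n)$ and a choice of well-separated scales $n_k$, infinitely many expected returns of $\hat Z$ to $x_0$, contradicting $g(x_0,x_0)<\infty$. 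You instead observe directly that the round-trip event is contained in $D_n=\{x_n\text{ is visited before }L_{\{x_0\}}\}$ and that $1_{D_n}\to 0$ a.s., because $L_{\{x_0\}}<\zeta$ by transience and the range of $X$ up to $L_{\{x_0\}}$ is a.s.\ finite, so bounded convergence finishes the job. Your version is softer and avoids both the subsequence extraction and the scale decomposition; the paper's version is more quantitative in spirit (it converts the hypothesis into a divergent series for $g(x_0,x_0)$). One small point of rigor in your write-up: the phrase ``finitely many jumps on $[0,L_{\{x_0\}}]$'' is slightly off for the diffusion $X$ (and explosion of $Z$ is not a priori excluded in this setup); the clean justification is that $L_{\{x_0\}}<\zeta$ a.s.\ (the discrete skeleton visits $x_0$ only finitely often by transience), so $X|_{[0,L_{\{x_0\}}]}$ is a continuous path in $\tilde{\G}$ with compact image, which by local finiteness meets only finitely many vertices. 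With that phrasing your argument is complete.
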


\begin{proof}
We argue by contradiction. Suppose that for some $\varepsilon >0$ and some $x_n \in A$ with $g(x_n,x_n)\leq g_0,$ $n \geq 0$ and $\lim_nd_{\G}(x_0,x_n)=\infty$
\begin{equation}
\label{eq:Acap4}
g(x_0,x_n)\geq \varepsilon, \text{ for all } n \geq 0. 
\end{equation}
By passing to a subsequence we may also assume that $d_{\G}(x_0,x_{n})> n,$ for all $n \geq 0$. Let $\hat{H}_y=\inf\{n\in\N:\hat{Z}_n=y\}$ the hitting time of $y$ for the discrete skeleton $\hat{Z},$ with $\inf\varnothing=\infty.$ Since for all $x,y\in{{G}}$ $g(x,y)=P_x(\hat{H}_y<\infty)g(y,y)$ and $g$ is symmetric, one then has by \eqref{eq:Acap4}
\begin{equation}
\label{eq:Acap5}
    P_{x_0}(\hat{H}_{x_n}<\infty){\geq} \, g_0^{-1}\varepsilon\text{ and } P_{x_n}(\hat{H}_{x_0}<\infty){\geq} \, g_0^{-1}\varepsilon ,\text{ for all }n \geq 0.
\end{equation}
Since $d_{\G}(x_0,x_{n})> n$, \eqref{eq:Acap5} and the strong Markov property then imply, for all $n \geq 0$
\begin{equation*}
    P_{x_0}(\exists\,p\geq \hat{T}_{B(x_0,n)}:\,\hat{Z}_p=x_0)\geq P_{x_0}(H_{x_{n}}<\infty,\exists\,p\geq \hat{H}_{x_{n}}:\,\hat{Z}_p=x_0)\geq g_0^{-2}\eps^2,
\end{equation*}
where $\hat{T}_{B(x,n)}=\inf\{p\in\N:\,\hat{Z}_p\notin B(x,n)\}$ is the first exit time of the discrete ball $B(x,n)$ for the graph distance $d_{\G}$ on $\G,$ with $\inf\varnothing=\infty.$ Since $\hat{T}_{n} = \hat{T}_{B(x_0,n)}$ increases to $\infty,$ there exists a sequence $(n_k)_{k\geq 0}$ such that
\begin{equation*}
    P_{x_0}\big(\exists\,p\in\{\hat{T}_{n_k},\dots,\hat{T}_{n_{k+1}}-1\}:\,\hat{Z}_p=x_0\big)\geq\frac{\eps^2}{2g_0^2}, \text{ for all $k \geq 0$,}
\end{equation*}
whence
\begin{equation*}
    g(x_0,x_0)=\frac{1}{\lambda_{x_0}}E_{x_0}\Big[\sum_{p=0}^{\infty} 1_{\hat{Z}_p=x_0}\Big]\geq\frac{1}{\lambda_{x_0}}\sum_{k \geq 0}E_{x_0}\Big[\sum_{\hat{T}_{n_k}\leq p <\hat{T}_{n_{k+1}}} 1_{\hat{Z}_p=x_0} \Big]\geq\sum_{k\geq 0}\frac{\eps^2}{2g_0^2\lambda_{x_0}}=\infty,
\end{equation*}
a contradiction to the transience of $\mathcal G.$
\end{proof}

The utility of a control like \eqref{eq:Acap3} is illustrated by the following criterion. 

\begin{Lemme}[Criterion for infinite capacity] \label{L:Acap2}
If $A\subset G$ satisfies
\begin{equation}
\label{eq:Acap9}
\begin{cases}
|A|=\infty, \text{ and }\\[0.3em] 
g(x,x) \leq g_0, \text{ for all }x \in A,
\end{cases}
\end{equation}
then $\textnormal{cap}(A)=\infty$. 
\end{Lemme}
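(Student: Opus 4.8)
The plan is to reduce the assertion to a statement about finite subsets of $A$ and then, for each prescribed level $R$, to exhibit a finite $S\subset A$ with $\mathrm{cap}(S)\ge R$. Since $A\subset G$ and $(G,E)$ is locally finite, $A$ has no accumulation point in $\tilde{\mathcal G}$ and is therefore closed, so $\mathrm{cap}(A)$ is defined by \eqref{defcapinfinity}. Choosing there an exhausting sequence $K_n\uparrow\tilde{\mathcal G}$ of compacts, each $A\cap K_n$ is a \emph{finite} set of vertices (a compact set meets only finitely many vertices), and by the monotonicity \eqref{capincreasing} one gets $\mathrm{cap}(A)=\sup_n\mathrm{cap}(A\cap K_n)=\sup\{\mathrm{cap}(S):S\subset A\text{ finite}\}$. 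Hence it suffices to make $\mathrm{cap}(S)$ arbitrarily large over finite $S\subset A$.

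The key device is the energy characterization \eqref{variational}: for finite $S\subset G$ one has $\mathrm{cap}(S)=\big(\inf_\mu\sum_{x,y\in S}g(x,y)\mu(x)\mu(y)\big)^{-1}$, the infimum over probability measures $\mu$ on $S$. Testing against the uniform measure $\mu=\tfrac1{|S|}\sum_{x\in S}\delta_x$ gives $\inf_\mu(\cdots)\le\tfrac1{|S|^2}\sum_{x,y\in S}g(x,y)$, so that $\mathrm{cap}(S)\ge|S|^2\big(\sum_{x,y\in S}g(x,y)\big)^{-1}$. Splitting the double sum into diagonal and off-diagonal parts, the diagonal contributes at most $|S|\,g_0$ by \eqref{eq:Acap9}. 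Thus the whole task reduces to choosing the points of $S$ so that the off-diagonal values $g(x,y)$, $x\ne y$, are uniformly small.

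This is where Lemma~\ref{L:Acap1} enters. Rephrasing \eqref{eq:Acap3}, for each fixed $x_0\in A$ and each $\varepsilon>0$ there is a radius $N(x_0,\varepsilon)$ with $g(x_0,y)<\varepsilon$ for every $y\in A$ satisfying $d_{\mathcal G}(x_0,y)\ge N(x_0,\varepsilon)$; indeed, a failure of this would produce a sequence in $A$ escaping to infinity along which $g(x_0,\cdot)$ stays bounded below, contradicting \eqref{eq:Acap3} (recall $g(y,y)\le g_0$ for $y\in A$). I would then build $S=\{x_1,\dots,x_m\}$ greedily: having selected $x_1,\dots,x_k\in A$, set $N=\max_{i\le k}N(x_i,\varepsilon)$; the union $\bigcup_{i\le k}B(x_i,N)$ is finite by local finiteness, and since $|A|=\infty$ there is $x_{k+1}\in A$ lying outside it, whence $g(x_i,x_{k+1})<\varepsilon$ for all $i\le k$. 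The resulting $S$ has $g(x,y)<\varepsilon$ for all distinct $x,y\in S$, so $\sum_{x,y\in S}g(x,y)\le mg_0+m(m-1)\varepsilon$ and therefore $\mathrm{cap}(S)\ge(g_0/m+\varepsilon)^{-1}$.

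Finally, given $R$, choosing $\varepsilon=\tfrac1{2R}$ and $m\ge 2g_0R$ makes $g_0/m+\varepsilon\le 1/R$, hence $\mathrm{cap}(S)\ge R$; letting $R\to\infty$ yields $\mathrm{cap}(A)=\infty$. The only genuinely substantive step is the greedy construction, whose correctness rests entirely on converting the diagonal bound $g(x,x)\le g_0$ into off-diagonal decay via Lemma~\ref{L:Acap1}; the remainder is a routine energy estimate. I expect no real obstacle beyond bookkeeping, as local finiteness guarantees at each stage that infinitely many admissible candidates for the next point remain.
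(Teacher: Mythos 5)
Your argument is correct and is essentially the proof the paper gives: both reduce to finite subsets via monotonicity, use Lemma \ref{L:Acap1} to greedily extract points of $A$ with pairwise Green function values below $\varepsilon$, and then apply the variational characterization \eqref{variational} with the uniform measure to get $\mathrm{cap}(S)\geq (g_0/|S|+\varepsilon)^{-1}$. The only cosmetic difference is your explicit extraction of a radius $N(x_0,\varepsilon)$ from \eqref{eq:Acap3} and the choice of $\varepsilon$ and $|S|$ in terms of a target level $R$ rather than taking iterated limits, which changes nothing of substance.
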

\begin{proof}
A proof of this can be found in~\cite{Hu19}, Lemma~2.13. We give a different proof.
Let $\varepsilon > 0$ and $n \geq 1$. Consider the `refined' set $A_{\varepsilon,n}=\{x_0,\dots,x_n\} \subset A$ defined as follows. Fix $x_0 \in A$ arbitrary. Given $\{x_0,\dots,x_{k-1}\}$ for some $1\leq k < n $, applying Lemma~\ref{L:Acap2}, which is in force due to \eqref{eq:Acap9}, we find by means of \eqref{eq:Acap3} a point $x_k \in A$ such that $g(x_k,x_{k'})< \varepsilon$ for all $k' < k$. Overall it follows that
\begin{equation}
\label{eq:Acap10}
g(x,y) \leq \varepsilon, \text{ for all } x\neq y \in A_{\varepsilon,n}.
\end{equation}
Now, by the variational principle \eqref{variational} and by monotonicity, see \eqref{capincreasing}, one obtains that
\begin{equation*}
      \mathrm{{\rm cap}}(A) \geq \mathrm{{\rm cap}}( A_{\varepsilon,n})\geq\Big(\frac{1}{n^2}\sum_{x,y\in  A_{\varepsilon,n}}g(x,y)\Big)^{-1}\stackrel{\eqref{eq:Acap9},\eqref{eq:Acap10}}{\geq} \left(\frac{g_0}{n}+\eps\right)^{-1}.
\end{equation*} 
from which $\mathrm{{\rm cap}}(A)=\infty$ follows by letting first $n\to \infty$ and then $\varepsilon \to 0$.
\end{proof}

We conclude this section with the

\begin{proof}[Proof of Lemma ~\ref{equivcapcondition}]
\ref{equivcapcondition_a}
Let us first assume that \eqref{capcondition} holds true for the graph $\G.$ In this case, for all infinite and connected $A\subset G,$ writing $\tilde{A}$ for the union of the $I_e$ for all edges $e\in{E}$ between two vertices of $A,$ we have by \eqref{defequilibriumcable} and \eqref{defcapinfinity}
\begin{equation*}
    \mathrm{{\rm cap}}(A)=\mathrm{{\rm cap}}(\tilde{A})=\infty,
\end{equation*}
since $\tilde{A}$ is an unbounded and connected set of $\tilde{\G},$ and so \eqref{capconditiondis} is satisfied. Assume now that $\G$ is a graph such that \eqref{capconditiondis} is verified, and let $\tilde{A}$ be a connected and unbounded subset of $\tilde{\G}.$ Then $\tilde{A}$ contains an infinite and connected set $A\subset G,$ and so by \eqref{capincreasing} and \eqref{capconditiondis} $\mathrm{{\rm cap}}(\tilde{A})\geq\mathrm{{\rm cap}}(A)=\infty,$ that is \eqref{capcondition} holds.
    
\ref{equivcapcondition_b}  
By \eqref{gbounded}, the set $A'\stackrel{\text{def.}}{=}\{ x \in A : g(x,x)\leq g_0\}$ is infinite for any infinite and connected sets $A\subset G.$ 
Thus, $A'$ satisfies \eqref{eq:Acap9}, and Lemma~\ref{L:Acap2} yields that $(\textnormal{cap}({A})\geq) \, \textnormal{cap}({A'})=\infty$. Hence by Lemma \ref{equivcapcondition},\ref{equivcapcondition_a}, \eqref{capcondition} holds. If $\G$ is vertex-transitive, then $g(x,x)=g_0$ is constant, and so \eqref{gbounded} holds.

\ref{capconditionontrees} By Lemma \ref{equivcapcondition},\ref{equivcapcondition_a},\eqref{capincreasing} and \eqref{defcapinfinity}, it is enough to prove that $\mathrm{{\rm cap}}(B)=\infty$ for all infinite and connected sets $B$ containing exactly one vertex per generation. Let us fix some $x_0\in{B},$ and for all $i\geq0$ define recursively $x_{i+1}$ as the first descendant $x\in{B}$ of $x_i$ in $B$ such that $\mathbb{T}_{x}\setminus B$ is infinite. Note that such a vertex $x_{i+1}$ must exists, otherwise $R^{\infty}_x=\infty$ for all descendants $x$ of $x_i$ in $B.$ For each $i\in\N,$ $\{x\in{\mathbb{T}_{x_i}\setminus B}:\,R_x^{\infty}>A\}$ is finite, and so there exists a cut-set $C_i$ between $x_i$ and infinity in $\mathbb{T}_{x_i}\setminus B,$ such that $R_y^{\infty}\leq A$ for all $y\in{C_i}.$ Taking $B_n=\{x_0,\dots,x_n\},$ we have for all $n\in\N$ and $i\in{\{1,\dots,n-1\}}$ that
\begin{align*}
    e_{B_n}(x_i)&=\lambda_{x_i}P^{\mathbb{T}}_{x_i}(Z_n\in{\mathbb{T}_{x_i}\setminus B}\text{ for all }n\in\N)
    \\&\geq \lambda_{x_i}\sum_{y\in{C_i}}P^{\mathbb{T}}_{x_i}(Z_{H_{C_i}}=y,H_{C_i}<\tilde{H}_{x_i})P_{y}(H_{y^-}=\infty)
    \\&\geq \lambda_{x_i}\sum_{y\in{C_i}}P^{\mathbb{T}}_{x_i}(Z_{H_{C_i}}=y,H_{C_i}<\tilde{H}_{x_i})\frac{1}{1+R_y^{\infty}}
    \geq\frac{\lambda_{x_i}}{1+A}P^{\mathbb{T}}_{x_i}(H_{C_i}<\tilde{H}_{x_i}),
\end{align*}
where $y_i^-$ is the first ancestor of $y_i$ and we used (1.11) in \cite{MR3765885} in the second inequality. Since $\mathbb{T}$ is transient and the random walk on $\Z$ is recurrent, it is easy to see that $B$ is visited infinitely often with probability $0.$  Therefore, for each $i\in\N,$ under $P^{\mathbb{T}}_{x_i},$ if $Z_n\in{\mathbb{T}_{x_i}}$ for all $n\in\N,$ then there exists $p\geq i$ such that $H_{C_p}<\tilde{H}_{x_i},$ and so
\begin{align}
\label{boundRxi}
    \frac{1}{R_{x_i}^{\infty}}&\leq\lambda_{x_i}P^{\mathbb{T}}_{x_i}(\exists\,p\geq i,H_{C_p}<\tilde{H}_{x_i})
    \leq \sum_{p\geq i}\lambda_{x_p}P^{\mathbb{T}}_{x_p}(H_{C_p}<\tilde{H}_{x_i}),
\end{align}
where in the last inequality we used $\lambda_{x_i}P^{\mathbb{T}}_{x_i}(H_{x_p}<\tilde{H}_{x_i})=\lambda_{x_p}P^{\mathbb{T}}_{x_p}(H_{x_i}<\tilde{H}_{x_p})\leq\lambda_{x_p}.$ Moreover, for all $y\in{B}$ between $x_i$ and $x_{i+1},$ the effective resistance between $y$ and $\infty$ in $\mathbb{T}_{x_i}$ is $R_y^{\infty},$ and so using a series transformation we have $R_y^{\infty}\geq R_{x_{i+1}}^{\infty}.$ Therefore, since $B$ is an unbounded and connected set, we have $R_{x_i}^{\infty}\leq A$ infinitely often, and so the sum on the right-hand side of \eqref{boundRxi} must be infinite. Using \eqref{defcap} and \eqref{defcapinfinity} we conclude that
\begin{equation*}
    \mathrm{{\rm cap}}(B)=\lim\limits_{n\rightarrow\infty}\sum_{i\in{\{0,\dots,n\}}}e_{B_n}(x_i)\geq \frac{1}{1+A}\sum_{i\in\N}\lambda_{x_i}P^{\mathbb{T}}_{x_i}(H_{C_i}<\tilde{H}_{x_i})=\infty,
\end{equation*}
which completes the proof.
\end{proof}

\section{Appendix: Proof of Lemma \ref{Theforfinite}} \label{App:isom}
\renewcommand{\theequation}{B.\arabic{equation}}
\renewcommand*{\theThe}{B.\arabic{The}}

In this Appendix we are going to prove that the coupling between loop soups and the Gaussian free field, \eqref{eqcouplingloopsgff}, implies the coupling between random interlacements and the Gaussian free field on finite graphs, Lemma \ref{Theforfinite}, following similar ideas to the proof of Theorem 8 in \cite{LuSaTa}. Let us define
\begin{equation*}
    U_\kappa\stackrel{\text{def.}}{=}\{x\in{{G}}:\kappa_x>0\}
\end{equation*}
and let $\G^*$ be the graph with vertex set $G,$ plus an additional vertex $x_*.$ The symmetric weights on $\G^*$ are 
\begin{equation*}
    \lambda_{x,y}^{*}=\begin{cases}
         \lambda_{x,y}&\text{ when }x,y\in{G}\\
         \kappa_x&\text{ when }x\in{U_{\kappa}}\text{ and }y=x_*\\
         0&\text{ when }x\notin{U_{\kappa}}\text{ and }y=x_*,
    \end{cases}
\end{equation*}
and the killing measure $\kappa^{*}= 1_{x_*}.$ We write $G^*=G\cup\{x_*\}$ and $E^*=\{\{x,y\}\in{G^* \times G^*}:\lambda_{x,y}^*>0\}$ for the vertex and edge set of $\G^*.$ Note that each edge $I_e$ of $\tilde{\G}^*,$ $e\in{E^*},$ can be identified with some edge $I_e$ of $\tilde{\G},$ $e\in{E\cup U_{\kappa}},$ and one can then identify the cable system $\tilde{\G}^*\setminus\{I_{x_*}\cup\bigcup_{x\in{U_{\kappa}}}I_x\}$ with $\tilde{\G}.$ By \eqref{lawkappa'vskappa}, for all $x\in{\tilde{\G}}$ the law of the trace of $X$ on $\tilde{\G}$ killed on hitting $x_*$ under $P^{\tilde{\G}^*}_x$ is thus $P^{\tilde{\G}}_x.$ Recall the decomposition of the loop soup $\tilde{\mathcal{L}}_{\frac12}=\tilde{\mathcal{L}}_{\frac12}^{\, \textnormal{in}}+\widetilde{\mathcal{L}}_{\frac12}^{\, *}$ on $\G^*$ defined below Lemma \ref{Theforfinite}, and let $({L}_x^{*})_{x\in{\tilde{\G}^*}}$ be the local times of $\widetilde{\mathcal{L}}_{\frac12}^{\, *}$ under $\P^L_{\tilde{\G}^*},$ and ${\mathcal{L}}_{\frac12}^{\, *}$ be the trace of $\widetilde{\mathcal{L}}_{\frac12}^{\, *}$ on $G^*.$ Each loop in $\mathcal{L}_{\frac12}^{\, *}$ can be decomposed into its excursions outside $x_*,$ that is a trajectory entirely contained in ${G},$ starting and ending in $U_{\kappa},$ and the process $\mathcal{L}_{\frac12}^{e,*}$ of excursions is then defined as the point process consisting of all the excursions outside $x_*$ for all the loops in $\mathcal{L}_{\frac12}^{\, *}.$ We can now compare the Gaussian free field on $\tilde{\G}^*$ with the Gaussian free field on $\tilde{\G},$  and the loops $\widetilde{\mathcal{L}}_{\frac12}^{\, *}$ hitting $x_*$ on $\tilde{\G}^*$ with random interlacements on $\tilde{\G}.$
    
\begin{Prop}
Let $\G$ be a transient graph such that $G$ is finite. For any $u>0,$
\begin{equation}
\label{phinonG*}
    (\phi_x)_{x\in\tilde{\G}}\text{ has the same law under } \P_{\tilde{\G}^*}^G(\cdot\,|\,\phi_{x_*}=\sqrt{2u})\text{ as }(\phi_x+\sqrt{2u})_{x\in\tilde{\G}}\text{ under }\P_{\tilde{\G}}^G,
\end{equation}
and
\begin{equation}
    \label{RidisonG*}
    \mathcal{L}_{\frac12}^{e,*}\text{ has the same law under }\P_{\tilde{\G}^*}^L(\cdot\,|\,L_{x_*}=u)\text{ as }\hat{\omega}_u\text{ under }\P_{\G}^I.
\end{equation}
In particular,
\begin{equation}
\label{RIonG*}
    ({L}_x^{*})_{x\in{\tilde{\G}}}\text{ has the same law under }\P_{\tilde{\G}^*}^{L}(\cdot\,|\,L_{x_*}=u)\text{ as }(\ell_{x,u})_{x\in{\tilde{\G}}}\text{ under }\P_{\tilde{\G}}^{I}. 
\end{equation}
\end{Prop}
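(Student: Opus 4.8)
The plan is to derive \eqref{RIonG*} from \eqref{RidisonG*} by observing that, on both sides of the asserted identity, the continuous field of local times on $\tilde{\G}$ is obtained from an underlying \emph{discrete} excursion process through one and the same operation, namely adding independent Brownian excursions along the cables. Since \eqref{RidisonG*} already matches the laws of these discrete processes, applying a common reconstruction kernel will transport the identity in law from the discrete to the continuous level.

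First I would recall, from the paragraph preceding \eqref{omegaedgeind}, that $\omega_u$ can be reconstructed from its trace $\hat{\omega}_u$ on $G$ by inserting independent Brownian excursions on the edges; consequently the interlacement local time field $(\ell_{x,u})_{x\in\tilde{\G}}$ is a measurable functional of $\hat{\omega}_u$ together with an independent family of Brownian excursions, whose conditional structure is precisely the one recorded in \eqref{omegaedgeind}. Denote by $\Psi$ the resulting Markov kernel, which sends a discrete excursion configuration on $G$ to a random field on $\tilde{\G}$ by filling in each cable $I_e$ independently; thus, under $\P_{\tilde{\G}}^I$, the field $(\ell_{x,u})_{x\in\tilde{\G}}$ has the law of $\Psi$ applied to $\hat{\omega}_u$.

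Next I would establish the analogous representation on the loop-soup side. All loops of $\widetilde{\mathcal{L}}_{\frac12}^{\,*}$ pass through $x_*$, and decomposing each of them into its successive excursions outside $x_*$ yields exactly the discrete excursion process $\mathcal{L}_{\frac12}^{e,*}$ on $G$; the portions of these loops lying in $\tilde{\G}$, and hence the field $(L_x^*)_{x\in\tilde{\G}}$, are recovered from $\mathcal{L}_{\frac12}^{e,*}$ by adding independent Brownian excursions on the edges, in the same way the cable loop soup is built from the discrete loop soup in \cite{MR3502602}. The crucial point is that this reconstruction is governed by the \emph{same} Brownian excursion measure on each cable $I_e$ as in the interlacement case, because in both settings the continuous object is the underlying cable diffusion $X$ conditioned on its discrete trace; the conditional independence across distinct edges needed to make the reconstruction a product kernel is supplied by the restriction property \eqref{looprestriction} on the loop-soup side, just as \eqref{omegaedgeind} supplies it on the interlacement side. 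Consequently $(L_x^*)_{x\in\tilde{\G}}$ has, under $\P_{\tilde{\G}^*}^L(\cdot\,|\,L_{x_*}=u)$, the law of the \emph{same} kernel $\Psi$ applied to $\mathcal{L}_{\frac12}^{e,*}$.

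With both representations in hand, \eqref{RIonG*} is immediate: by \eqref{RidisonG*} the inputs $\mathcal{L}_{\frac12}^{e,*}$ (under $\P_{\tilde{\G}^*}^L(\cdot\,|\,L_{x_*}=u)$) and $\hat{\omega}_u$ (under $\P_{\G}^I$) have the same law, so their images under the common kernel $\Psi$ coincide in law, which is exactly the claim. I expect the main obstacle to be the careful verification that the two filling-in procedures are given by identical conditional laws: one must check that, conditionally on the discrete endpoint data (the entrance and exit points in $U_\kappa$, the crossing counts, and the boundary local times at the vertices, all encoded in $\mathcal{L}_{\frac12}^{e,*}$ and in $\hat{\omega}_u$ alike), the excursions added inside each cable for the loops through $x_*$ follow the same Brownian excursion law as those added for the interlacement trajectories. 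This matching, which is what makes $\Psi$ well defined independently of its source process, hinges on both $\widetilde{\mathcal{L}}_{\frac12}^{\,*}$ and $\omega_u$ being built from the same cable Brownian motion, and is the one step that warrants a detailed argument rather than a direct appeal to \eqref{omegaedgeind} and \eqref{looprestriction}.
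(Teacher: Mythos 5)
Your proposal only addresses the implication $\eqref{RidisonG*}\Rightarrow\eqref{RIonG*}$, i.e.\ the ``in particular'' part of the statement; the two principal claims are left unproved. For \eqref{phinonG*} nothing is said at all: the paper obtains it in one line from the Markov property \eqref{Markov} applied on $\tilde{\G}^*$ to the compact $\{x_*\}$, noting that the harmonic extension $\eta^{\phi}_{\{x_*\}}$ is identically $\phi_{x_*}$ (every trajectory on $\tilde{\G}^*$ eventually hits $x_*$ since $G$ is finite and the only killing is at $x_*$) and that $g_{\{x_*\}^c}=g_{\tilde{\G}}$, so that $(\phi_x-\phi_{x_*})_{x\in\tilde{\G}}$ conditioned on $\mathcal{A}^+_{\{x_*\}}$ has law $\P^G_{\tilde{\G}}$. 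More importantly, \eqref{RidisonG*} is the substantive assertion of the proposition and you take it as an input rather than proving it. Its proof requires (i) the excursion decomposition of the loops through $x_*$ (Proposition~3.7 in \cite{LuSaTa}), which identifies the conditional law of $\mathcal{L}_{\frac12}^{e,*}$ given $L_{x_*}=u$ as a Poisson point process of excursions with intensity $u\sum_{x\in U_\kappa}\kappa_x P_x^{\tilde{\G}}(Z\in\cdot)$, and (ii) the observation that $e_{G}(x)=\kappa_x 1_{x\in U_\kappa}$, so that by \eqref{defQK} and \eqref{definter} with $K=G$ this intensity is exactly $u\nu_{\G}$. Neither ingredient appears in your argument, so the core of the proposition is missing.

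For the part you do treat, your route differs from the paper's: you propose to lift the discrete identity to the continuous one via a common ``filling-in'' kernel $\Psi$ adding independent Brownian excursions on the cables, invoking \eqref{omegaedgeind} on the interlacement side and \eqref{looprestriction} on the loop-soup side, whereas the paper simply applies \eqref{RidisonG*} to the enhanced graphs $\G^A$ of Lemma~\ref{GA} for arbitrary finite $A\subset\tilde{\G}$ and uses continuity of the local time fields. Your approach is workable in principle, but you yourself flag that the identity of the two conditional excursion laws given the discrete trace is ``the one step that warrants a detailed argument'' and then do not supply it; the paper's enhancement trick sidesteps this verification entirely, which is why it is the cleaner route here.
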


\begin{proof}    
    We begin with \eqref{phinonG*}. By the Markov property applied to the graph $\G^*,$ see \eqref{Markov}, conditionally on $\A_{\{x_*\}}^+,$ $(\phi_x)_{x\in{\tilde{\G}}}$ is a Gaussian field with mean $\eta_{\{x_*\}}^\phi=\phi_{x_*}$ and variance $g_{\{x_*\}^c}=g_{\tilde{\G}},$ and thus $(\phi_x-\phi_{x_*})_{x\in{\tilde{\G}}}$ has the same law under $\P^G_{\tilde{\G}^*}(\cdot\,|\,\A_{\{x_*\}}^+)$ as $\phi$ under $\P^G_{\tilde{\G}},$ and \eqref{phinonG*} follows. 
    
    Let us now prove \eqref{RidisonG*}. By Proposition 3.7 in \cite{LuSaTa}, conditionally on $L_{x_*}(=L_{x_*}^*)=u,$ the excursions outside $x_*$ in ${\mathcal{L}}_{\frac12}^{*}$ have the same law as the excursions of the Markov jump process $Z$ outside $x_*$ stopped when reaching local time $u$ at $x_*$ under $P_{x_*}^{\tilde{\G}^*}(Z\in{\cdot}\,|\,\ell_{x_*}(\zeta)>u),$ which can be described as follows: first stay an exponential time with parameter $\lambda_{x_*}^*-\kappa_{x_*}^*$ in $x_*,$ then jump to an $x\in{U_{\kappa}}$ with probability $\frac{\kappa_{x}}{\lambda_{x_*}^*-\kappa_{x_*}^*}$ and follow on ${G}$ a process with the same law as $Z$ under $P^{\tilde{\G}}_x.$ Once this process is killed, jump back to $x_*$ and iterate this process until reaching local time $u$ at $x_*.$ By a property of exponential variables, the number of time this process is iterated is a Poisson variable with parameter $u(\lambda_{x_*}^*-\kappa_{x_*}^*),$ and thus, conditionally on $L_{x_*}=u,$ ${\mathcal{L}}_{\frac12}^{e,*}$ is a Poisson point process with intensity
    \begin{equation*}
        u\sum_{x\in{U_\kappa}}{\kappa_{x}}P^{\tilde{\G}}_{x}(Z\in{\cdot}).
    \end{equation*} 
    Note that, under $P^{\tilde{\G}}_x,$ we have $\tilde{H}_{{G}}=\infty$ if and only if $x\in{U_{\kappa}}$ and the discrete skeleton $\hat{Z}$ of $Z$ is killed at time $1,$ and thus $e_{{G}}(x)=\kappa_x$ for all $x\in{U_{\kappa}}$ and $e_{{G}}(x)=0$ otherwise. Therefore by \eqref{defQK} and \eqref{definter} with $K=G,$ conditionally on $L_{x_*}=u,$ $\mathcal{L}_{\frac12}^{e,*}$ is a Poisson point process with intensity $u\nu_{\G},$ where $\nu_{\G}$ is the print on $G$ of the intensity measure $\nu_{\tilde{\G}}$ of random interlacements, and we obtain \eqref{RidisonG*}. This implies in particular that $({L}_x^{*})_{x\in{{{G}}}}$ has the same law under $\P_{\tilde{\G}^*}^{L}(\cdot\,|\,L_{x_*}=u)$ as $(\ell_{x,u})_{x\in{{{G}}}}$ under $\P_{\tilde{\G}}^{I},$ and thus \eqref{RIonG*} follows by considering the graph $\G^A$ for any finite subset $A$ of $\tilde{\G},$ see Lemma \ref{GA}.
\end{proof}

Using \eqref{eqcouplingloopsgff} for the graph $\G^*,$ and decomposing $\tilde{\mathcal{L}}_{\frac12}$ on $\tilde{\G}^*$ into $\tilde{\mathcal{L}}_{\frac12}^{\, \textnormal{in}}$ and $\widetilde{\mathcal{L}}_{\frac12}^{\, *},$ we are now ready to prove Lemma \ref{Theforfinite}.

\begin{proof}[Proof of Lemma \ref{Theforfinite}]
Let us define $(L_x^{\textnormal{in}})_{x\in{\tilde{\G}^*}}$ the total local times of the loops in $\tilde{\mathcal{L}}_{\frac12}^{\, \textnormal{in}}$ under $\P^L_{\tilde{\G}^*}.$ By \eqref{looprestriction}, $(L_x^{\textnormal{in}})_{x\in{\tilde{\G}}}$ has the same law as the restriction to $\tilde{\G}$ of the local time of a loop soup on $\tilde{\G}^{\, \{ x_*\}^c}_{\infty},$ and thus the same law as the local time of a loop soup on $\tilde{\G}.$ By \eqref{eqcouplingloopsgff}, $(L_x^{\textnormal{in}})_{x\in{\tilde{\G}}}$ has thus the same law under $\P^L_{\tilde{\G}^*},$ or also $\tilde{\P}^L_{\tilde{\G}^*}(\cdot\,|\,\sigma_{x_*}=1,{L}_{x_*}^{*}=u),$ as $\frac12\phi^2$ under $\P^G_{\tilde{\G}}.$ Moreover, under $\tilde{\P}^L_{\tilde{\G}^*}(\cdot\,|\,\sigma_{x_*}=1,{L}_{x_*}^{*}=u),$ using the equality $L_x=L_x^{\textnormal{in}}+{L}_x^{*}$ for all $x\in{\tilde{\G}},$ the law of $(\sigma_x)_{x\in{\tilde{\G}}}$ can be described as follows: conditionally on $({L}_x^{\textnormal{in}})_{x\in{\tilde{\G}}}$ and $({L}_x^{*})_{x\in{\tilde{\G}}},$ $\sigma$ is constant on each cluster of $\{x\in{\tilde{\G}}:{L}_x^{*}+{L}_x^{\textnormal{in}}>0\},$ with $\sigma_x=1$ for all $x\in{\tilde{\G}}$ such that ${L}_x^{*}>0,$ and the values of $\sigma$ on each other cluster are independent and uniformly distributed.  Using \eqref{RIonG*} we thus have that, under $\tilde{\P}^L_{\tilde{\G}^*}(\cdot\,|\,\sigma_{x_*}=1,{L}_{x_*}^{*}=u),$
\begin{equation*}
    ({\sigma}_x\sqrt{2L_x})_{x\in{\tilde{\G}}}\text{ has the same law as }\big(\sigma_x^u\sqrt{2\ell_{x,u}+\phi_x^2}\big)_{x\in{\tilde{\G}}}\text{ under }\tilde{\P}_{\tilde{\G}}.
\end{equation*}
According to \eqref{eqcouplingloopsgff}, the law of $({\sigma}_x\sqrt{2L_x})_{x\in{\tilde{\G}}}$ under $\tilde{\P}^L_{\tilde{\G}^*}(\cdot\,|\,{\sigma}_{x_*}=1,L_{x_*}=u)$ is the same as the law of $(\phi_x)_{x\in{\tilde{\G}}}$ under ${\P}^G_{\tilde{\G}^*}(\cdot\,|\,\phi_{x_*}=\sqrt{2u}),$ and thus by \eqref{phinonG*} the same as the law of $(\phi_x+\sqrt{2u})_{x\in{\tilde{\G}}}$ under $\P_{\tilde{\G}}^G,$ and we obtain \eqref{eqcouplingintergff}.

By \eqref{phiedgeind} and \eqref{omegaedgeind}, it is clear that, conditionally on $\hat{\omega}_u$ and $(\phi_x)_{x\in{{G}}},$ the family $\{e\in{\mathcal{E}_u}\},$ $e\in{E\cup{G}},$ is independent, and we now turn to the proof of \eqref{lawofEu}.  Let $\mathcal{E}^{\textnormal{in}}=\{e\in{E^*}:L_x^{\textnormal{in}}>0\text{ for all }x\in{I_e}\},$ and, conditionally on $\mathcal{E}^{\textnormal{in}},$  let $(\sigma^{\textnormal{in}}_x)_{x\in{G}}$ be an independent additional process, such that $\sigma^{\textnormal{in}}$ is constant on each cluster induced by $\mathcal{E}^{\textnormal{in}},$ and its values on each cluster are independent and uniformly distributed. Note that the clusters of $G$ induced by $\mathcal{E}^{\textnormal{in}}$ are the same as the intersection with $G$ of the clusters of $\{x\in{\tilde{\G}^*}:\,L_x^{\textnormal{in}}>0\},$ and so by \eqref{looprestriction} and \eqref{eqcouplingloopsgff}, $(\sigma^{\textnormal{in}}_x\sqrt{2L^{\textnormal{in}}_x})_{x\in{{G}}}$ has the same law under ${\P}^L_{\tilde{\G}^*}(\cdot\,|\,L_{x_*}=u)$ as $\phi$ under ${\P}^G_{\tilde{\G}},$ and by \eqref{RIonG*}, $\mathcal{E}$ has the same law as $\mathcal{E}'_u:=\mathcal{E}_u\setminus\{I_x,x\in{G\setminus U_{\kappa}}\}$ under $\tilde{\P}_{\tilde{\G}},$ where $\mathcal{E}$ is defined in \eqref{hatEsameasE}. Therefore using \eqref{RidisonG*} we obtain that
\begin{equation}
\label{disloopsvsinter}
\big(\mathcal{E},\big(\sigma_x^{\text{in}}\sqrt{2L_{x}^{\text{in}}}\big)_{x\in{G}},\mathcal{L}_{\frac12}^{e,*}\big)\text{ has the same law under }\tilde{\P}^L_{\tilde{\G}^*}(\cdot\,|\,L_{x_*}=u)\text{ as }\big(\mathcal{E}_u',\phi_{|{G}},\hat{\omega}^u\big)\text{ under }\tilde{\P}.
\end{equation}
For each $e\in{E^*},$ the event $\{e\notin{\mathcal{E}^{\textnormal{in}}}\}$ is independent of ${\mathcal{L}}^{*}_{\frac12},$ and, conditionally on $\{e\notin{\mathcal{E}^{\textnormal{in}}}\},$ ${\mathcal{L}}^{*}_{\frac12}$ and $L_{|G}^{\textnormal{in}}=(L_x^{\textnormal{in}})_{x\in{G}},$ the event $\{e\notin\mathcal{E}\}$ is independent of $\sigma^{\textnormal{in}}_{|G}=(\sigma^{\textnormal{in}}_x)_{x\in{G}}.$ Therefore, since $\{e\notin{\mathcal{E}}\}\subset\{e\notin{\mathcal{E}^{\textnormal{in}}}\},$ we obtain
\begin{equation}
\label{decomposeEinEandE*}
    \tilde{\P}^L_{\tilde{\G}^*}\big(e\notin{{\mathcal{E}}}\,|\,{\mathcal{L}}^{*}_{\frac12},L_{|G}^{\textnormal{in}},\sigma^{\textnormal{in}}_{|G}\big)=\tilde{\P}^L_{\tilde{\G}^*}\big({e\notin{{\mathcal{E}^{\textnormal{in}}}}}\,|\,L_{|G}^{\textnormal{in}},\sigma^{\textnormal{in}}_{|G}\big)\tilde{\P}^L_{\tilde{\G}^*}\big(e\notin{\mathcal{E}}\,|\,{\mathcal{L}}^{*}_{\frac12},L_{|G}^{\textnormal{in}},e\notin{{\mathcal{E}^{\textnormal{in}}}}\big).
\end{equation}
Now, since $(\sigma_x^{\textnormal{in}}\sqrt{2L_x^{\textnormal{in}}},\{e\notin{\mathcal{E}^{\textnormal{in}}}\})$ has the same law as $((\phi_x)_{x\in{G}},\{\forall y\in{I_e}:|\phi_y|>0\}^c)$ under $\P^G_{\tilde{\G}},$ it follows from  \eqref{phionIx} that for all $e\in{E^*},$
\begin{equation}
\label{enotinex*}
    \tilde{\P}^L_{\tilde{\G}^*}\Big({e\notin{{\mathcal{E}^{\textnormal{in}}}}}\,|\,\big(\sigma_x^{\textnormal{in}}\sqrt{2L_x^{\textnormal{in}}}\big)_{x\in{G}}\Big)=p_e^{\G}(\sigma^{\textnormal{in}}\sqrt{2L^{\textnormal{in}}}) 1_{e\in{E}}+ 1_{e\notin{E}},
\end{equation}
where we identified $e$ with the corresponding edge or vertex of $E\cup G.$ Let us write $\I_E^{\mathcal{L}}\subset E\cup G$ for the set of edges of $\G$ crossed by at least one single trajectory in $\mathcal{L}^{e,*}_{\frac12},$ union with the set of vertices of $G$ at which a trajectory in $\mathcal{L}^{e,*}_{\frac12}$ is killed, which corresponds to the set of edges of $\G^*$ crossed by at least one single trajectory in $\mathcal{L}^*_{\frac12}.$  Now since $\{e\notin{\mathcal{E}^{\textnormal{in}}}\}$ is independent of ${\mathcal{L}}_{\frac12}^{\, *},$ we have by \eqref{hatEsameasE} that for all edges $e\in{E}$
\begin{align*}
    \tilde{\P}^L_{\tilde{\G}^*}\big(e\notin{\mathcal{E}}\,|\,{\mathcal{L}}^{*}_{\frac12},L_{|G}^{\textnormal{in}},e\notin{{\mathcal{E}^{\textnormal{in}}}}\big)&=\frac{\tilde{\E}^L_{\tilde{\G}^*}\big[\tilde{\P}^L_{\tilde{\G}^*}\big(e\notin{\mathcal{E}}\,|\,{\mathcal{L}}_{\frac12}\big)\,|\,{\mathcal{L}}^{*}_{\frac12},L_{|G}^{\textnormal{in}}\big]}{\tilde{\E}^L_{\tilde{\G}^*}\big[\tilde{\P}^L_{\tilde{\G}^*}\big(e\notin{\mathcal{E}^{\textnormal{in}}}\,|\,{\mathcal{L}}^{\, \textnormal{in}}_{\frac12}\big)\,|\,L_{|G}^{\textnormal{in}}\big]} =\frac{p_e^{\G^*}\big(\sqrt{L^{\textnormal{in}}+{L}^{*}}\big)}{p_e^{\G}(\sqrt{L^{\textnormal{in}}})} 1_{e\notin{\I_E^{\mathcal{L}}}}.
\end{align*}
Combining with \eqref{decomposeEinEandE*} and \eqref{enotinex*}, we thus obtain that for all edges $e\in{E},$ 
\begin{equation}
\label{equalitypeG}
\begin{split}
    \tilde{\P}^L_{\tilde{\G}^*}\big(e\notin{{\mathcal{E}}}\,|\,{\mathcal{L}}^{*}_{\frac12},L_{|G}^{\textnormal{in}},\sigma^{\textnormal{in}}_{|G}\big)&=\frac{p_e^{\G^*}\big(\sqrt{L^{\textnormal{in}}+{L}^{*}}\big)p_e^{\G}\big(\sigma^{\textnormal{in}}\sqrt{2L^{\textnormal{in}}}\big)}{p_e^{\G}\big(\sqrt{L^{\textnormal{in}}}\big)} 1_{e\notin{\I_E^{\mathcal{L}}}}
    \\&=p_e^{\G}\big(\sigma^{\textnormal{in}}\sqrt{2L^{\textnormal{in}}},{L}^{*}\big) 1_{e\notin{\I_E^{\mathcal{L}}}},
\end{split}
\end{equation}
where we used \eqref{defpe} and \eqref{defpef} in the last equality. Now if $e\in{E^*\setminus E},$ then one can identify $e$ with some $x_e\in{U_{\kappa}},$ and by \eqref{enotinex*}, we have $e\notin{\mathcal{E}^{\textnormal{in}}}$ $\tilde{\P}^L_{\tilde{\G}^*}$-a.s, and so $e$ is crossed by a loop in $\mathcal{L}_{\frac12},$ if and only if $e$ is crossed by a loop in ${\mathcal{L}}_{\frac12}^{\, *},$ that is $x_e\in{\I_E^{\mathcal{L}}}.$ Therefore by \eqref{hatEsameasE},
\begin{equation}
\label{equalitypxG}
\begin{split}
    \tilde{\P}^L_{\tilde{\G}^*}\big(e\notin{{\mathcal{E}}}\,|\,{\mathcal{L}}^{e,*}_{\frac12},L_{|G}^{\textnormal{in}},\sigma^{\textnormal{in}}_{|G},{L}^{*}_{x_*}=u\big)&=\tilde{\P}^L_{\tilde{\G}^*}\big(e\notin{\mathcal{E}}\,|\,{\mathcal{L}}^{e,*}_{\frac12},L_{|G}^{\textnormal{in}},{L}^{*}_{x_*}=u\big)
    \\&=p_e^{\G^*}\big(\sqrt{L^{\textnormal{in}}+{L}^{*}}\big) 1_{x_e\notin{\I_E^{\mathcal{L}}},{L}^{*}_{x_*}=u}
    \\&=p_{x_e}^{u,\G}\big(\sigma^{\textnormal{in}}\sqrt{2L^{\textnormal{in}}},{L}^{*}\big) 1_{x_e\notin{\I_E^{\mathcal{L}}}},
\end{split}
\end{equation}
where we used \eqref{defpe} and \eqref{defpef} in the last equality. Finally, if $x\in{{G}\setminus U_{\kappa}},$ then $\kappa_x=0,$ $x\notin{\I_E^u},$ and $\tilde{\P}_{\tilde{\G}}(x\notin{\mathcal{E}_u}\,|\,\hat{\omega}_u,(\phi_x)_{x\in{{G}}})=1=p_x^{u,\G}\big(\phi,\ell_{\cdot,u}\big).$ Therefore since $\I_E^{\mathcal{L}}$ is obtained from $\mathcal{L}_{\frac12}^{e,*}$ in the same way that $\I_E^u$ is obtained from $\hat{\omega}^u,$ we obtain \eqref{lawofEu} by \eqref{disloopsvsinter}, \eqref{equalitypeG} and \eqref{equalitypxG}.
\end{proof}

\bibliography{bibliographie}
\bibliographystyle{abbrv}
\end{document}